\tikzset{%
implies/.style={double,double equal sign distance,-implies},
shorten <>/.style={shorten >=#1,shorten <=#1}}
\newlength{\spacing}
\def\scaling{.2}
\newlength{\raising}
\def\drawboxvoid{
\begin{tikzpicture}[scale=\scaling]
\draw (0,0) rectangle (1,1);%
\end{tikzpicture}}
\def\boxvoid   {\raisebox{\raising}{\drawboxvoid}}
\newcommand{\htth}{{\scriptsize $\triangle$}\textsc{tth}\@\xspace}
\newcommand{\hfs}{{\scriptsize $\triangle$}\textsc{fs}\@\xspace}
\newcommand{\phfs}{{\scriptsize $\triangle$}\textsc{pfs}\@\xspace}
\def\hdisplayed{\mathbin{\ooalign{\hfil\raisebox{.255em}{$\shortmid$}\hfil\cr\raisebox{0em}{\scalebox{1}[.6]{$\approx$}}\cr}}}
\def\hnormaled{\hdisplayed}
\def\hscripted{\ooalign{\hfil\raisebox{.18em}{$\scriptstyle\shortmid$}\hfil\cr\raisebox{0em}{\scalebox{1}[.6]{$\scriptstyle\approx$}}\cr}}
\def\hscriptscripted{\ooalign{\hfil\raisebox{.12em}{$\scriptscriptstyle\shortmid$}\hfil\cr\raisebox{0em}{\scalebox{1}[.6]{$\scriptscriptstyle\approx$}}\cr}}
\def\cdisplayed{\mathbin{\ooalign{\hfil\raisebox{.225em}{$\shortmid$}\hfil\cr\raisebox{-.12em}{\scalebox{1}[1]{$=$}}\cr}}}
\def\cnormaled{\cdisplayed}
\def\cscripted{\ooalign{\hfil\raisebox{.14em}{$\scriptstyle\shortmid$}\hfil\cr\raisebox{-.12em}{\scalebox{1}[1]{$\scriptstyle =$}}\cr}}
\def\cscriptscripted{\ooalign{\hfil\raisebox{.05em}{$\scriptscriptstyle\shortmid$}\hfil\cr\raisebox{-.12em}{\scalebox{1}[1]{$\scriptscriptstyle =$}}\cr}}
\def\horth{\mathchoice{\hdisplayed}{\hnormaled}{\hscripted}{\hscriptscripted}}%
\def\corth{\mathchoice{\cdisplayed}{\cnormaled}{\cscripted}{\cscriptscripted}}%
\newcommand{\var}[2]{\left[ \begin{smallmatrix} #1 \\ \downarrow \\ #2 \end{smallmatrix}\right]}
\newcommand{\smat}[1]{\left[ \begin{smallmatrix} #1 \end{smallmatrix}\right]}
\def\lhorth#1{\leftidx{^{\horth}}{#1}{}}
\def\mho{\rotatebox[origin=c]{180}{$\Omega$}}
\def\ass{\text{\cjRL{M}}}
\def\uni{\text{\cjRL{h}}}
\def\yu{\text{\begin{otherlanguage*}{russian}ю\end{otherlanguage*}}}
\def\me{\text{\begin{otherlanguage*}{russian}м\end{otherlanguage*}}}
\def\tee{\mathfrak{t}}
\def\t{\tee}
\DeclareMathOperator{\id}{id}
\DeclareMathOperator{\hocolim}{hocolim}
\def\Nat{\textsf{Nat}}
\newcommand{\yon}{\text{\begin{CJK}{UTF8}{min}よ\end{CJK}}\!}
\DeclareMathAlphabet\EuScript{U}{eus}{m}{n}
\SetMathAlphabet\EuScript{bold}{U}{eus}{b}{n}
\def\cate#1{\textbf{#1}}
	\def\C  {\cate{C}}
	\def\iC{\EuScript{C}}
		\def\sD  {\cate{D}}
	\def\T  {\cate{T}}
	\def\cD {\cate{D}}
	\def\cat{\cate{Cat}}
    \def\Cat{\cate{CAT}}
	\def\CAT{\cate{CAT}}
	\def\fincat{\cate{fdCat}}
	\def\PDer{\cate{PDer}}
	\def\Der{\cate{Der}}
		\def\Fun{\mathrm{Fun}}
	\def\bDelta{\boldsymbol{\Delta}}
	\def\F {\mathfrak{F}}
\def\fs#1{\mathbb{#1}}
	\def\D {\fs{D}}
\def\class#1{\mathcal{#1}}
	\def\M{\class{M}}
	\def\E{\class{E}}
	\def\A{\cate{A}}
\newcommand{\xto}[1]{\xrightarrow{#1}}
\def\Dia{\cate{Dia}}
\def\dia{\text{dia}}
\def\opp{\text{op}}
\def\ho{\textsf{Ho}}
\def\dfs{\textsc{dfs}\@\xspace}
\def\pt{\text{pt}}
\def\tre{\textbf{3}}
\def\due{\textbf{2}}
\def\uno{\textbf{1}}
\def\numbers#1{\mathbb{#1}}
	\def\Z{\numbers{Z}}
	\def\N{\numbers{N}}
\DeclareSymbolFont{symbols2}{LS1}{stixfrak}{m}{n}
\DeclareMathSymbol{\lmark}{\mathalpha}{symbols2}{"C0}
\DeclareMathSymbol{\rmark}{\mathalpha}{symbols2}{"C1}
\DeclareMathSymbol{\lrmark}{\mathalpha}{symbols2}{"C2}
\newcommand{\Nearrow}{\rotatebox[origin=c]{45}{$\Rightarrow$}}
\newcommand{\Searrow}{\rotatebox[origin=c]{-45}{$\Rightarrow$}}
\newcommand{\Swarrow}{\rotatebox[origin=c]{225}{$\Rightarrow$}}
\def\restriction{|}
\def\TT{\textsf{T}}
\def\circledast{\protect{\tiny \oast}}
\def\circledbang{\protect{\rotatebox[origin=c]{90}{\tiny $\ominus$}}}
\def\ee{e}
\def\mm{m}
\def\cpfs{\textsc{dpfs}\@\xspace}
\def\dpfs{\textsc{dpfs}\@\xspace}
	\newcommand{\adjunct}[2]{\nsststile{#2}{#1}}
\setlist[1]{itemsep=0pt}
\renewcommand{\textbf}[1]{\text{\fontseries{b}\selectfont{\upshape #1}}}
\newtheoremstyle{reference}%
   {}
   {}
   {}                      
   {}                      
   {\bfseries}             
   {:}                     
   {.2em}                  
   {\thmname{#1}           
    \thmnumber{#2}         
    \thmnote{{\sc [#3]}}}  
\numberwithin{equation}{section}
\theoremstyle{reference}
  \newtheorem{theorem}{Theorem}[section]
  \newtheorem{lemma}[theorem]{Lemma}
  \newtheorem{proposition}[theorem]{Proposition}
  \newtheorem{remark}[theorem]{Remark}
  \newtheorem{definition}[theorem]{Definition}
  \newtheorem{corollary}[theorem]{Corollary}
  \newtheorem{notat}[theorem]{Notation}
   \newtheorem{setting}[theorem]{Setting}
  \newtheorem*{theorem*}{Theorem}
  \newtheorem*{lemma*}{Lemma}
  \newtheorem*{proposition*}{Proposition}
  \newtheorem*{example*}{Example}
  \newtheorem*{exercise*}{Exercise}
  \newtheorem*{remark*}{Remark}
  \newtheorem*{definition*}{Definition}
  \newtheorem*{corollary*}{Corollary}
  \newtheorem*{notat*}{Notation}
  \newtheorem*{scholium*}{Scholium}
  \newtheorem*{counterex*}{Counterexample}
  \newtheorem*{conjec*}{Conjecture}
  \newtheorem*{quest*}{Question}
\providecommand{\refbf}[1]{\textbf{\ref{#1}}}
  \def\@cite#1#2{[\textbf{#1}\if@tempswa , #2\fi]}
  \def\@biblabel#1{[\textsf{#1}]}
\providecommand{\abbrv}[1]{#1.\@\xspace}
  \providecommand{\ie}{\abbrv{i.e}}
  \providecommand{\achap}{\abbrv{Ch}}
  \providecommand{\adef}{\abbrv{Def}}
  \providecommand{\acor}{\abbrv{Cor}}
  \providecommand{\aprop}{\abbrv{Prop}}
  \providecommand{\athm}{\abbrv{Thm}}
\newlength{\seplen}
\newlength{\sepwid}
\def\firstblank{\,\rule{\seplen}{\sepwid}\,}
\def\secondblank{\firstblank\llap{\raisebox{2pt}{\firstblank}}}
\title{Factorization systems on (stable) derivators}
\author{Fosco Loregian}
\address{%
\textsf{Fosco Loregian}: \newline
Department of Mathematics and Statistics             \newline
Masaryk University, Faculty of Sciences              \newline
Kotl\'{a}\v{r}sk\'{a} 2, 611 37 Brno, Czech Republic \newline
\href{mailto:loregianf@math.muni.cz}
     {\sf loregianf@math.muni.cz}}
\thanks{The first-named author is supported by the Grant Agency of the 
        Czech Republic under the grant \textsc{P}201/12/\textsc{G}028.}
\author[Simone Virili]{Simone Virili}
\address{
\textsf{Simone Virili}: \newline
Facultad de Matem\'aticas, Universidad de Murcia \newline
Campus Espinardo, \oldstylenums{30100}, Murcia.  \newline
\href{mailto:virili.simone@gmail.com}
     {\sf virili.simone@gmail.com}}
\thanks{The second-named author is supported by the Ministerio de Econom\'ia y Competitividad of Spain via a grant `Juan de la Cierva-formaci\'on'. He is also supported by a research project from the Ministerio de Econom\'ia y Competitividad of Spain (MTM2014-53644-P) and from the Fundaci\'on `S\'eneca' of Murcia (19880/GERM/15), both with a part of FEDER funds.}
\subjclass[2010]{18E30, 18E35, 18C15 (primary), and 18C20, 13D09, 18G99(secondary)} 
\keywords{Factorization system, normal torsion theory, triangulated category, stable derivator, factorization algebra, squaring monad}
\begin{document}

\maketitle
\begin{abstract}
We define \emph{triangulated factorization systems} on triangulated categories, and prove that a suitable subclass thereof (the \emph{normal triangulated torsion theories}) corresponds bijectively to $t$-structures on the same category.
This result is then placed in the framework of derivators regarding a triangulated category as the base of a stable derivator. More generally, we define \emph{derivator factorization systems} in the 2-category $\PDer$, describing them as algebras for a suitable strict 2-monad (this result is of independent interest), and prove that a similar characterization still holds true: for a stable derivator $\D$, a suitable class of derivator factorization systems (the \emph{normal derivator torsion theories}) correspond bijectively with $t$-structures on the base $\D(\uno)$ of the derivator.
These two result can be regarded as the triangulated- and derivator- analogues, respectively, of the theorem that says that `$t$-structures are normal torsion theories' in the setting of stable $\infty$-categories, showing how the result remains true whatever the chosen model for stable homotopy theory is.
\end{abstract}

\tableofcontents
\section*{Introduction}
Factorization systems surely form a conspicuous part of modern category theory; this is especially because they provide the category where they live in with a rather rich structure, and they are commonly found (although very few of them can be easily built): for example, a trace of what we would today call a factorization system on the category of groups appears in the pioneering \cite{maclane1948groups}, published in 1948; more interestingly, as acknowledged by \cite{whitehead61elements}, any ``synthetic'' approach to homotopy theory inevitably relies on the notion of a --weak-- factorization system.

Soon after having reached a consensus on the definition for these gadgets \cite{FK}, category theorists wanted to make explicit the evident tight relation between (weak) factorization systems and (weakly) reflective subcategories on a same ambient category $\C$: this culminated with the proof, given in \cite{CHK}, that under mild assumptions the reflective subcategories of $\C$ are in bijection with the so-called \emph{reflective pre-factorization systems} on $\C$. 

Let us briefly recall this notion: a morphism $f$ in $\C$ is \emph{left orthogonal} to another morphism $g$ (or $g$ is \emph{right orthogonal} to $f$), in symbols $f\perp g$, if for any commutative square of solid arrows
\[
\xymatrix{
\cdot\ar[r]\ar[d]_f&\cdot\ar[d]^g\\
\cdot\ar[r]\ar@{.>}[ur]|{\ d\ }&\cdot
}
\]
there is a unique morphism $d$ that makes the two above triangles commute (this defines \emph{strong} orthogonality; in case \emph{at least one} such $d$ exist, we speak of weak orthogonality). Then,
\begin{itemize}
\item for a class $\mathcal X\subseteq \C^\due$ (where $\C^\due$ is the arrow category) we let ${}^{\perp}\mathcal X$ (resp., $\mathcal X^{\perp}$) be the class of morphisms which are left (resp., right) orthogonal to each element in $\mathcal X$;
\item a \emph{pre-factorization system} (\textsc{pfs} for short) on $\C$ is a pair $(\E,\M)$ of sub-classes of $\C^\due$ such that $\E={}^{\perp}\M$ and $\E^{\perp}=\M$;  
\item a pre-factorization system $\F=(\E,\M)$ on $\C$ such that every map $f\in \C^\due$ can be factored as a composition $f=m_f\circ e_f$, for $m_f\in \M$ and $e_f\in \E$ is called a \emph{factorization system} (\textsc{fs} for short; we informally call a morphism that can be factored by a \textsc{pfs} an $\F$-\emph{crumbled} arrow: then, a factorization system is such that every arrow is $\F$-crumbled);
\item a class $\mathcal X$ of morphisms of $\C$ is said to have the \emph{3-for-2 property} if, given two composable morphisms $\cdot\xto{f}\cdot\xto{g}\cdot$ in $\mathcal X$, if two elements of the set $\{f,g,g\circ f\}$ belong to $\mathcal X$, so does the third.
\end{itemize}
A \textsc{pfs} $\F=(\E,\M)$ is said to be \emph{reflective} if $\M$ has the 3-for-2 property and if any map of the form $\var{X}{0}$ is $\F$-crumbled. For such a \textsc{pfs}, the associated reflective subcategory of $\C$ is
\[
\M/0 \coloneqq \left\{X\in \C \mid \var{X}{0}\in \M\right\}\subseteq \C
\]
(uniqueness of lifts ensures that there is a functorial choice of an object in $\M/0$ for each $X\in \C$, precisely the object such that $X \xto{e_X} RX \xto{m_X} 0$). It is a remarkable result that \emph{all} the reflective subcategories of $\C$ arise in fact in this way: given such a subcategory $\cate{S}$, there is a reflective \textsc{pfs} \emph{generated} by all morphisms of $\cate{S}$.

The authors of \cite{CHK} then specialize this result attempting to describe the tight relation between factorization systems and \emph{torsion theories}, under similarly mild assumptions on $\C$. This approach has been extended sensibly in \cite{rosicky2007factorization}.

A factorization system $\F=(\E,\M)$ on $\C$ is said to be a \emph{torsion theory} (\textsc{tth} for short) if \emph{both} $\E$ and $\M$ have the 3-for-2 property. This gives (thanks to the above result and its dual) a \emph{pair} of subcategories $\M/0$ and $0/\E$ whose inclusions in $\C$ admit respectively a left and a right adjoint: these two subcategories form the classes of so\hyp{}called \emph{torsion} and \emph{torsion\hyp{}free} objects respectively, and relate to the classical notion of a \emph{torsion theory} given in \cite{dickson1966torsion}.

Suppose indeed that $\C$ is an abelian category. A \textsc{tth} $\F=(\E,\M)$ on $\C$ is said to be \emph{normal} if taking the $\F$-factorization
\[
X\xto{e}RX\xto{m}0
\]
of the final map $X\to 0$ for a given object $X\in \C$, and then taking the pullback
\begin{equation}\label{pb_diagram}
\begin{matrix}\xymatrix{
T\ar[r]\ar[d]\ar@{}[dr]|(.25)\lrcorner&X\ar[d]^{e}\\
0\ar[r]& RX
}\end{matrix}
\end{equation}
we have $\var{T}{0}\in \E$.

Applying the definitions, one can show that the pair $(0/\E,\M/0)$ is a \emph{classical} torsion theory (\ie a torsion theory as defined in \cite{dickson1966torsion}). In fact, it is also true that \emph{every} torsion theory arises this way (see \cite{rosicky2007factorization}); this gives a bijection between classical torsion theories and normal \textsc{tth}s. 

Switching to the triangulated context, the r\^ole played by classical \textsc{tth}s in abelian categories is now played by $t$-structures (\cite{BBD,keller2007derived}). The analogy between these two concepts was made completely formal by Beligiannis and Reiten \cite{beligiannis-reiten} where they introduced \emph{torsion pairs} in pre-triangulated categories. In fact, if the pre-triangulated structure is inherited from the abelian-ness of the ambient category, then torsion pairs correspond bijectively to classical \textsc{tth}s, while if the pre-triangulated structure is triangulated, then torsion pairs correspond bijectively to $t$-structures.

The strong analogies between classical \textsc{tth}s and $t$-structures suggests that there should be a way to describe them in terms of some kind of factorization systems, just like for \textsc{tth}s in abelian categories. In fact, pursuing a similar characterization in the non-abelian setting is acknowledged in \cite{rosicky2007factorization} as one of the most natural generalization of this technology. Nevertheless, the authors are not able to show a correspondence between $t$-structures on triangulated categories and factorization systems.

Somehow, this result has been prevented by a certain number of awkward properties of triangulated categories (see the introduction of \cite{maltsiniotis:k-theory} for a good account on this). In this respect, it is remarkable that such a theorem can be stated and proved quite naturally by getting rid of all these unwieldy features, ascending to the realm of stable $(\infty,1)$\hyp{}categories: the proof that $t$-structures on (the homotopy category of) a stable quasicategory correspond bijectively to normal torsion theories, regarded as particular $\infty$\hyp{}categorical factorization systems, has been the central result of the first author's PhD thesis \cite{tstructures}.\footnote{The fact that few triangulated categories generate an interesting poset of factorization systems is probably due to the fact that a nice factorization system on a category $\cate{A}$ interacts with co/limits on $\cate{A}$, and it is somehow generated by them: few triangulated categories have interesting co/limits, hence the fact that (for example) every \emph{proper} factorization system, where the left class is contained in the class of epimorphisms, although really natural in a generic category must be trivial in a triangulated one.}

Our first point in this paper is that the reason for the absence of this theorem from the setting of triangulated categories $\cD$ is that there is no notion of triangulated orthogonality $\horth$ for a pair of morphisms in $\cD$, with formal properties comparable to those of the orthogonality relation $\perp$ but \emph{mindful of the triangulated structure}.

The present work aims to fill this gap and solve the problem of finding a class of suitably defined \emph{triangulated factorization systems} on $\cD$ in bijection with the class of $t$-structures on $\cD$.

We start §\refbf{section_homo_FS} describing the homotopy orthogonality relation $f\horth g$ for two morphisms in a triangulated category $\cD$ (see \adef\refbf{wobbly}). After proving some natural properties, we mimic the classical theory showing that this definition is sound, in that it recovers basically all the formal properties enjoyed by the $\perp$-orthogonality relation (see \refbf{horth_coprod}--\refbf{closure_homo_ho2}). We introduce triangulated \textsc{pfs}s via triangulated orthogonality, triangulated \textsc{fs}s, triangulated \textsc{tth}s and, finally, normal triangulated \textsc{tth}s as the corresponding of each of the classical definitions.

We believe that this is the correct path to follow, as \adef\refbf{wobbly} is exactly an orthogonality condition that keeps track of the triangulated structure of $\cD$: as an example of this flexibility, normality for a triangulated \textsc{tth} can be introduced exactly as normality for a \textsc{tth} but taking a \emph{homotopy} cartesian square (see \refbf{recall_hocart} for the definition) in \eqref{pb_diagram} instead of a pullback square. So apparently the definition really captures the best of both worlds.

With the theory of triangulated \textsc{fs}s at hand, in \refbf{triang-rosetta} we prove the following
\begin{quote}
\textbf{Theorem I:} For a triangulated category $\cD$, the following map is bijective:
\[
\xymatrix@R=0pt{
\left\{
{\begin{smallmatrix}
\text{normal triangulated}\\
\text{\textsc{tth}s on }\cD
\end{smallmatrix}}
\right\}
\ar[rr]&&
\left\{
{\begin{smallmatrix}
\text{$t$-structures}\\
\text{ on }\cD
\end{smallmatrix}}
\right\}\\
(\E,\M) \ar@{|->}[rr]&& \Big(0/\E, \Sigma(\M/0)\Big) 
}
\]
\end{quote}
As mentioned above, \cite{Fiorenza2014} proved a $\infty$\hyp{}categorical version of \athm\textbf{I} in the setting of stable quasicategories. In fact, quasicategories support a fairly natural theory of \textsc{fs}s, as rich as the classical one; we refer to \cite{joyal2008notes} and \cite{HTT} (we briefly recall the relevant definitions in our §\refbf{infty_cat_fs} though). 

Once quasicategorical \textsc{fs}s are defined, one can mimic the definition of normal \textsc{tth} in this setting. The main results contained in \cite{tstructures} tells us that, for a stable quasicategory $\C$, the normal \textsc{tth}s on $\C$ are in bijection with $t$-structures on the triangulated category $\ho(\C)$.
An exercise in translation between models shows how the same result remains true
\begin{itemize}
\item in the setting of stable model categories, where one can speak about \emph{homotopy factorization systems} following \cite{bousfield1977constructions,Joy}; this leads to the definition of  \emph{homotopy $t$\hyp{}structures} on stable model categories $\cate{M}$ as suitable analogues of normal torsion theories in the set $\textsc{hfs}(\cate{M})$ of homotopy factorization systems on a model category $\cate{M}$;
\item in the setting of \textsc{dg}\hyp{}categories, where we speak about factorization systems (enriched in the sense of \cite{Day1974,enrichFS}); this leads to the definition of \emph{\textsc{dg}\hyp{}$t$\hyp{}structures} as enriched analogues of normal torsion theories in the set of enriched factorization systems on a \textsc{dg}\hyp{}category $\mathcal D$.
\end{itemize}
In both these settings, it is possible to recover a theorem that characterizes what, from time to time, you would like to call $t$-structures as a class in bijection with normal torsion theories defined in that specific model.

The second major result of the present paper is having established a similar result for again a different model of a stable homotopy theory, namely \emph{stable derivators}: this has to be regarded as the nontrivial step towards a model\hyp{}independence proof saying that $t$-structures \emph{are} indeed normal torsion theories whatever our preferred model for stable homotopy theory is.

The fact that the present claims are the less easy part of this plan is especially true because  it was the very definition of a factorization system on a derivator that had to be designed to perform this task, as this notion was absent from the general theory of the 2\hyp{}category $\PDer$. Building a flexible and expressive calculus of factorization systems on a (pre)derivator is then an important conceptual step \emph{per se}, in view of a deeper understanding of the 2\hyp{}categorical features of $\PDer$. A thorough, systematic approach to the subject of factorization systems in $\PDer$ will probably be the subject of subsequent investigations.

The theory of derivators was introduced by A\@. Grothendieck in an extremely long and famous manuscript \cite{tendieckderiv}, as an attempt to correct the above\hyp{}mentioned unwieldy features of triangulated categories: currently, a few people hope that they can provide an algebraic, purely 2-categorical model for the theory of (what we call today) $(\infty,1)$\hyp{}categories.

In modern terms, a pre-derivator $\D\colon \Dia^\opp\to\Cat$ is nothing more than a (strict) 2-functor, where $\Dia$ is a suitable sub\hyp{}2\hyp{}category of the 2-category $\cat$ of small categories, while $\Cat$ is the ``2-category'' of categories (see the introduction to \cite{Moritz} for all that regards set-theoretical issues in the basic theory of derivators).

We devote §\refbf{sec:squaring} to introduce and study the notion of \emph{derivator factorization system} (\textsc{dfs} for short) on a pre-derivator $\D$. Mimicking the classical theory, such a thing will be a pair of sub-functors $\mathbb E$ and $\mathbb M\colon \Dia^\opp\to\Cat$ of $\D^\due$ that are mutually ``orthogonal'' and that ``crumble all the morphisms in $\D$'' in a suitable sense (see \adef\refbf{def_c_ort}, \refbf{def_phfs} and \refbf{def_hfs}).

The precise definition of a \textsc{dfs} is fairly technical; let us just remark here that:
\begin{itemize}
\item if the pre-derivator $\D$ is \emph{representable}, \ie if there is a (large) category $\mathcal{A}$ such that $\D(I)=\mathcal{A}^I$, then a pair of sub pre-derivators $\F=(\mathbb E,\mathbb M)$ is a \textsc{dfs} if and only if $\F_I=(\mathbb E(I),\mathbb M(I))$ is a classical \textsc{fs} in the category $\D(I)$; this shows how the definition really generalizes the classical setting;
\item if $\D$ is a \emph{stable derivator} (which ensures that each $\D(I)$ is, canonically, a triangulated category), then a pair of sub pre-derivators $\F=(\mathbb E,\mathbb M)$ is a \textsc{dfs} if and only if $\F_I=(\mathbb E(I),\mathbb M(I))$ is a triangulated \textsc{fs} in $\D(I)$. This shows how the definition of a triangulated factorization system is nothing more than the ``shadow'' left by a derivator factorization system on the base $\D(\uno)$ of $\D$.
\end{itemize}
Of course, it would be possible to make a general statement out of this remark: a triangulated factorization system as defined in \refbf{the_def_of_hfs} is the shadow left by the $(\infty,1)$\hyp{}categorical definition by passing to the triangulated homotopy category of whatever model for our stable homotopy theory: it is worth to remark that the factorization systems arising in this way are seldom orthogonal (\ie there is no unique solution to lifting problems), even though they come from orthogonal ones (where uniqueness is specified up to a suitable notion of homotopy specific to the model in study).

In §\refbf{higher_rosetta_subs} we introduce the notion of \emph{normal} derivator \textsc{tth}. For a stable derivator $\D$, this corresponds to a \textsc{dfs} $\F=(\mathbb E,\mathbb M)$ for which each $\F_I=(\mathbb E(I),\mathbb M(I))$ is a normal triangulated \textsc{tth} in $\D(I)$. We then prove the following theorem, that summarizes all we said:
\begin{quote}
\textbf{Theorem II:} For a stable derivator $\D\colon \fincat^\opp\to \Cat$, the following map is bijective:
\[
\xymatrix@R=0pt{
\left\{
{\begin{smallmatrix}
\text{normal derivator}\\
\text{\textsc{tth}s on }\D
\end{smallmatrix}}
\right\}
\ar[rr] &&
\left\{
{\begin{smallmatrix}
\text{$t$-structures}\\
\text{on }\D(\uno)
\end{smallmatrix}}
\right\}\\
(\mathbb E,\mathbb M) \ar@{|->}[rr]&& \Big(0/\mathbb E(\uno), \Sigma(\mathbb M(\uno)/0)\Big) 
}
\]
\end{quote}
Notably, as a consequence of the above theorem we can recover the main result of \cite{Fiorenza2014} as a corollary.

In the last two sections of the paper we study some formal properties of \textsc{dfs}s. For this, we extend to our setting the two main results of \cite{Korostenski199357}. There, the authors start from the observation that any factorization systems is given by a so-called \emph{factorization pre-algebra}, that is, a functor $F_\F\colon \C^\due \to \C$ (defined as a section of the composition map $c\colon \C^\tre \to \C^\due\colon (g,f)\mapsto g\circ f$) such that $F_\F(\id_X)=X$ for any $X\in \C$. To any such functor, one associates two functors
\[
e_{-},\, m_{-}\colon \C^\due\to \C^\due
\]
that give us a functorial factorization of any given morphism $f\colon X\to Y$ in $\C$,  
\[
X\xto{e_f}F_\F(f)\xto{m_f}Y
\] 
with $e_f\in \E$ and $m_f\in \M$. On the other hand, given a factorization pre-algebra $F\colon \C^\due \to \C$, one defines
\[
\E_F \coloneqq \{h\in \C^\due \mid m_h\text{ is an iso}\}\qquad\text{and}\qquad
\M_F \coloneqq \{k\in \C^\due \mid e_k\text{ is an iso}\},
\]
and says that $F$ is an \emph{Eilenberg\hyp{}Moore factorization} provided $e_f\in \E_F$ and $m_f\in \M_F$ for any $f\in \C^\due$. The major result of \cite[\athm\textbf{A}]{Korostenski199357} is that, for an Eilenberg\hyp{}Moore factorization $F$, the pair $(\E_F,\M_F)$ is a \textsc{fs}. This beautiful piece of formal category theory  ignited a certain amount of research: related topics led to what we call today \emph{algebraic factorization systems} (see \cite{Gar,grandis2006natural}, also in connection with the definition of model category in \cite{riehl2011algebraic}).

For a pre-derivator $\D$, a \emph{factorization pre-algebra}  becomes a morphism  $F\colon \D^\due\to \D$ such that $F\circ \pt^\circledast=\id_\D$ (see \adef\refbf{def_factorization_alg}). To such an $F$ one associates two endo-1-cells 
\[
F_l,\, F_r\colon \D^\due\to \D^\due
\]
playing the same r\^ole of $e_{-}$ and $m_{-}$ above. Then one defines two sub pre-derivators $\mathbb E$ and $\mathbb M$ of $\D^\due$, where
\begin{gather*}
\mathbb E_F(I) \coloneqq \{X\in \D^\due(I) \mid F_lX\text{ is an iso}\}\\
\mathbb M_F(I) \coloneqq \{Y\in \D^\due(I) \mid F_rY\text{ is an iso}\}
\end{gather*}
for any $I\in \Dia$, and says that $F$ is an \emph{Eilenberg\hyp{}Moore factorization} provided $F_rX\in \mathbb E_F(I)$ and $F_lX\in \mathbb M_F(I)$ for any $X\in \D^\due(I)$. We are able to rephrase \cite[\athm\textbf{A}]{Korostenski199357} as follows:
\begin{quote}
\textbf{Theorem III:}
Let $\D$ be a pre-derivator and  $F\colon \D^\due\to \D$ an Eilenberg\hyp{}Moore factorization  (see \adef\refbf{def_EM_algebra}). Under very mild assumptions on $\D$ (see Setting \refbf{setting_sec_3})  the pair $(\mathbb E_F,\mathbb M_F)$ is a \textsc{dfs}. If $\D$ is represented or if it is a stable derivator, then any \textsc{dfs} on $\D$ arises this way from an Eilenberg\hyp{}Moore factorization.
\end{quote}
The inherently 2-categorical content of \cite{Korostenski199357} becomes clear as the authors move to the second main statement: \cite[\athm\textbf{B}]{Korostenski199357}
\begin{quote}
Orthogonal factorization systems can described as Eilenberg\hyp{}Moore algebras for the \emph{squaring monad} on $\Cat$, that sends a category $\A$ into its functor category $\Cat(\due,\A)$.
\end{quote}
The authors explicitly suggest how the reason why this second statement holds relies on purely formal computations that can in principle be carried on in a sufficiently well\hyp{}behaved 2-category other than $\Cat$.

Our aim here is to catch this hint and follow these steps quite faithfully, exploiting the intimate connection between $\Cat$ and $\PDer$; this allows us to reformulate quite easily those parts of \cite{Korostenski199357} that depend on the features of $\Cat$ only on the surface.

Spelled out more explicitly, \cite[\athm\textbf{B}]{Korostenski199357} regards orthogonal factorization systems as \emph{normal pseudo-algebras} for the squaring monad: this is the monad $T=((-)^\due,\mu,\eta)$, consisting of the strict $2$-functor 
\[
(\firstblank)^\due\colon \Cat\to \Cat
\]
such that $\C\mapsto \C^\due$, endowed with the natural transformations $\mu$ and $\eta$ (multiplication and unit, respectively), where $\mu_\C\colon \C^{\due\times \due}\to \C^\due$ is induced by the precomposition with the diagonal map $\Delta$ that we define in \refbf{comonoid_due_subs}: an object of $\C^{\due\times\due}$, \ie a commutative square $\left[\begin{smallmatrix} X_{00} &\to& X_{10} \\ \downarrow && \downarrow \\ X_{01} &\to& X_{11}\end{smallmatrix}\right]$, goes to its diagonal $\var{X_{00}}{X_{11}}$, while $\eta_\C\colon \C\to \C^\due$ maps an object to its identity morphism. An important property for us (see \cite{Korostenski199357,RW}) is that a factorization pre-algebra $F\colon \C^\due\to \C$ is forced to be an algebra by whichever isomorphism $FF^\due\cong F\mu_\C$, that is then forced to be an \emph{extended associator} interacting with the monad multiplication in the well-known way.

This can be regarded as a coherence result which is utterly specific to this particular monad, showing how the entire $(\firstblank)^\due$-algebra structure for $F$ is a little bit redundant: in this specific case, the unit alone is enough to uniquely determine an extended associator $\alpha_m$ (see \adef\refbf{two-monad}).

We reformulate these results in the setting of pre-derivators as follows, and prove it as the last statement in §\refbf{higher_coherence_sub}:
\begin{quote}
\textbf{Theorem \textbf{IV}:} Let $\D$ be either a represented pre-derivator or a stable derivator. The following are equivalent for a normal factorization pre-algebra $F\colon \D^\due\to \D$:
\begin{enumerate}
\item $F$ can be endowed  with the structure of an algebra over the squaring monad;
\item there exists an isomorphism $\alpha\colon FF^\due \xto{\sim} F\Delta^\circledast$;
\item $F$ is a \textsc{em} factorization (so that $(\mathbb E_F,\mathbb M_F)$ is a \textsc{dfs}).
\end{enumerate}
\end{quote}
\medskip
\paragraph{\bf Acknowledgements.} The first author thanks prof\@. J\@. Rosick\'y, because it was possible to finish the hardest part of the present paper mainly thanks to the pleasant environment of Masaryk University. 
Both authors would like to express their gratitude to F\@. Mattiello, because he surely is a moral third author, and A\@. Gagna, for his careful reading of §\textbf{3} and for having spotted an error in our initial argument linking derivator- and quasicategorical factorization systems.

\medskip
\paragraph*{\bf Notation and terminology.}
Among different foundational convention that one may adopt throughout the paper we assume that every set lies in a suitable Grothendieck universe. Throughout §\textbf{1-4} this choice can be safely replaced by the more popular foundation using sets and classes. In §\textbf{5-6} the need to consider the ``category'' of 2-functors $\PDer \to \PDer$ forces us to fix such a (hierarchy of) universe(s).

More in detail we implicitly fix an universe $\mho$, whose elements are termed \emph{sets}; \emph{small categories} have a \emph{set} of morphisms; \emph{locally small} categories are always considered to be small with respect to \emph{some} universe: in particular we choose to adopt, whenever necessary, the so\hyp{}called \emph{two\hyp{}universe convention}, where we postulate the existence of a universe $\mho^+\ni \mho$ in which all the classes of objects of non\hyp{}$\mho$\hyp{}small, locally small categories live. 

We denote $\cat = \mho\text{-}\Cat$ and $\CAT = \mho^+\text{-}\Cat$ for short, and we extend this notation somewhere without further mention: this means that, for example, $\cate{sSet} = [\bDelta^\opp,\mho]$ and $\cate{sSET} = [\bDelta^\opp, \mho^+]$.

Possibly large categories and higher categories will be usually denoted as boldface letters $\A,\cate{B},\dots$; generic classes of morphisms in a category are denoted as calligraphic letters $\E, \M, \mathcal{X},\mathcal{Y},\dots$; when they are considered as objects of the category $\cat$, small categories are usually denoted as capital Latin letters like $I,J,K,\dots$, but so is an object of a possibly large category $\C$; it is always possible to solve this slight abuse of notation.

Functors between \emph{small} categories are denoted as lowercase Latin letters like $u,v,w,\dots$ and suchlike (there are of course numerous deviations to this rule); the category of functors $\A\to \cate{B}$ between two categories is invariably denoted as $\Cat(\A,\cate{B})$, $\cate{B}^{\A}$, $[\A,\cate{B}]$ and suchlike; the canonical $\hom$\hyp{}bifunctor of a category $\A$ sending $(c,c')$ to the set of all arrows $\hom(c,c')\subseteq\hom(\A)$ is almost always denoted as $\A(\firstblank,\secondblank)\colon \A^\opp\times\A\to\cate{Sets}$, and the symbols $\firstblank$, $\secondblank$ are used as placeholders for the ``generic argument'' of a functor or bifunctor; morphisms in the category $\Cat(\A,\cate{B})$ (\ie natural transformations between functors) are often written in Greek, or Latin lowercase alphabet, and collected in the set $\Nat(F,G) = \cate{B}^{\A}(F,G)$. 

The simplex category $\bDelta$ is the \emph{topologist's delta} (opposed to the \emph{algebraist's delta} $\bDelta_+$ which has an additional initial object $[-1]$), having objects \emph{nonempty} finite ordinals $[n]=\{0<1\dots<n\}$; we denote $\Delta^n$ the representable presheaf on $[n]\in\bDelta$, \ie the image of $[n]$ under the Yoneda embedding of $\bDelta$ in the category $\cate{sSet}$ of simplicial sets; the notation $\Delta^J$ for $J\subseteq \{0,\dots,n\}$ denotes the sub\hyp{}simplex generated by the vertices in $J$ (so, for example, $\Delta^{\{0,2\}} \subseteq \Delta^2$ is the copy of $\Delta^1$ that sends $0$ to $0$ and $1$ to $2$). The notation $\Lambda^k[n]\subset \Delta^n$ denotes the \emph{$k^\text{th}$ horn inclusion}, \ie the sub\hyp{}simplicial set of $\Delta^n$ resulting from the union of all the images of the face maps $d_i$, for $i\neq k$ in $\{0,\dots,n\}$. More often the objects of $\Delta$ are considered as categories via the obvious embedding $\bDelta\subset\cat$: in this case, the object $[n-1]\in\bDelta$ is denoted $\cate{n}\in\cat$ (so for example all along §\refbf{sec:thmA} we write $\due = \{0 < 1\}$, and similarly $\tre = \{0<1<2\}$).

Apart from this, we indicate the Yoneda embedding of a category $\A$ into its presheaf category with $\yon_\A$ --or simply $\yon$--, i.e\@. with the hiragana symbol for ``yo''; this choice comes from \cite{Libland2015}. Whenever there is an adjunction $F\dashv G$ between functors, the arrow $Fa\to b$ in the codomain of $F$ and the corresponding arrow $a\to Gb$ in its domain are called \emph{mates} or \emph{adjuncts}; so, the notation ``the mate/adjunct of $f\colon Fa\to b$'' means ``the unique arrow $g\colon a\to Gb$ determined by $f$''. When there is an adjunction between two functors $F,G$ we adopt $F\adjunct{\eta}{\epsilon}G$ as a compact notation to denote at the same time that $F$ is left adjoint to $G$, with unit $\eta \colon 1 \to GF$ and counit $\epsilon\colon FG\to 1$. The \emph{whiskering} between a 1-cell $F$ and a 2-cell $\alpha$ is denoted $F * \alpha$ or $\alpha * F$.
\section{Triangulated factorization systems}\label{section_homo_FS}
Throughout this section we let $\cD$ be a (fixed but arbitrary) triangulated category, with shift functor $\Sigma\colon \cD \xto{\simeq} \cD$. For a general background and notation on triangulated categories we refer to \cite{Neeman} and \cite[Appendix \textbf{A}]{hps:axiomatic}. 

Even though this assumption is not requested, as we will state and prove our theorems for fully general triangulated categories (assuming only, from time to time, the existence of countable (co)products), the reader should keep in mind that in Section \refbf{sec:squaring} will be clear how our motivating example for $\cD$ is the underlying category $\D(\uno)$ of a stable derivator.
\subsection{Homotopy orthogonality of morphisms}\label{horth_subs}
Our first task is to build a notion of orthogonality of morphisms mindful of the triangulated structure on $\cD$.
\begin{definition}[homotopy orthogonality]\label{wobbly}
Let $E_0 \xto{e} E_1$ and $M_0 \xto{m} M_1$ be two maps in $\cD$, and complete them to triangles
\begin{equation}\label{triangles}
E_0 \xto{e}  E_1 \xto{\alpha_e} C_e \xto{\beta_e} \Sigma E_0
\qquad \text{and}\qquad 
M_0 \xto{m}  M_1 \xto{\alpha_m} C_m \xto{\beta_m} \Sigma M_0.
\end{equation}
We say that $e$ is \emph{left homotopy orthogonal} to $m$ (while $m$ is \emph{right homotopy orthogonal} to $e$), in symbols $e \horth m$, if the following conditions are satisfied:
\begin{enumerate}[label=\textsc{ho}\arabic*.]
\item \label{fst}the following map is trivial:
$$\xymatrix@R=0pt{\cD(C_e,\Sigma^{-1}C_m)\ar[r]& \cD(E_1,M_0)\\
(C_e\overset{\varphi}{\longrightarrow}\Sigma^{-1}C_m)\ar@{|->}[r]&(E_1\overset{\alpha_e}{\longrightarrow}C_e\overset{\varphi}{\longrightarrow}\Sigma^{-1}C_m\overset{\Sigma^{-1}\beta_m}{\longrightarrow}M_0)\,;}$$
\item \label{snd}the following map is injective:
$$\xymatrix@R=0pt{\cD(C_e,C_m)\ar[r]& \cD(E_1,\Sigma M_0)\\
(C_e\overset{\varphi}{\longrightarrow}C_m)\ar@{|->}[r]&(E_1\overset{\alpha_e}{\longrightarrow}C_e\overset{\varphi}{\longrightarrow}C_m\overset{\beta_m}{\longrightarrow}\Sigma M_0)\,.}$$
\end{enumerate}
\end{definition}

The concept of homotopy orthogonality seems quite artificial, but this notion arises naturally in the setting of stable derivators (see Section \refbf{sec:squaring}). Notice also that one can prove by standard arguments that homotopy orthogonality does not depend on the choice of triangles in \eqref{triangles}.
\begin{remark}
Condition \refbf{wobbly}\textbf{.}\textsc{ho2} can be substituted by the following one:
\begin{enumerate}
\item[\textsc{ho}2'.] The unique morphism $\varphi$ completing a morphism $(a,b)\colon e\to m$ in $\cD^\due$ to a morphism of triangles, as in the following diagram, is $\varphi=0$:
$$\xymatrix{
E_0\ar[r]^{e}\ar[d]_a&E_1\ar[r]^{\alpha_e}\ar[d]^b&C_e\ar[r]^{\beta_e}\ar@{.>}[d]|\varphi&\Sigma E_0\ar[d]\\
M_0\ar[r]^m&M_1\ar[r]^{\alpha_m}&C_m\ar[r]^{\beta_m}&\Sigma M_0.
}$$ 
\end{enumerate}
To see this equivalence, suppose that condition \refbf{wobbly}\textbf{.}\textsc{ho2} is satisfied. Then, the map $\cD(C_e,C_m)\to \cD(E_1,\Sigma M_0)$ sends $\varphi$ to $\beta_m\varphi\alpha_e=\beta_m\alpha_m b=0$; so by the injectivity of this map, we deduce that $\varphi=0$. On the other hand, suppose \textsc{ho}2' is satisfied and consider a morphism $\psi\in \cD(C_e,C_m)$ such that $\beta_m\psi\alpha_e=0$; we have to show that $\psi=0$. Indeed, since $\beta_m\psi\alpha_e=0$ we can construct a morphism of triangles as follows:
$$\xymatrix{
0\ar[r]\ar[d]&E_1\ar@{=}[r]\ar@{.>}[d]|{\exists b}&E_1\ar[r]^{}\ar[d]|{\psi\alpha_e}&0\ar[d]\\
M_0\ar[r]^m&M_1\ar[r]^{\alpha_m}&C_m\ar[r]^{\beta_m}&\Sigma M_0.
}$$ 
Now one can complete the central square in the following diagram to a morphism of triangles:
$$\xymatrix{
E_0\ar[r]^{e}\ar@{.>}[d]_{\exists a}&E_1\ar[r]^{\alpha_e}\ar[d]^b&C_e\ar[r]^{\beta_e}\ar[d]^\psi&\Sigma E_0\ar@{.>}[d]\\
M_0\ar[r]^m&M_1\ar[r]^{\alpha_m}&C_m\ar[r]^{\beta_m}&\Sigma M_0.
}$$ 
Then, by \textsc{ho}2', $\psi=0$ as desired.
\end{remark}
In what follows we verify some properties that one should expect from any well-behaved notion of orthogonality. 
Let us start with the following property, whose proof is an easy exercise:
\begin{lemma}\label{is_an_iso_if_horth_to_all}
The following are equivalent for $f\in\cD^\due$
\begin{enumerate}[label=(\roman*)]
	\item $f$ is an isomorphism;
	\item $f\horth \cD^\due$;
	\item $\cD^\due\horth f$;
	\item $f\horth f$.
\end{enumerate}
\end{lemma}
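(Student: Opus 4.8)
The plan is to run the cycle $(i)\Rightarrow(ii)\Rightarrow(iv)\Rightarrow(i)$ together with the two extra implications $(i)\Rightarrow(iii)\Rightarrow(iv)$, so that all four statements become equivalent; the only implication with genuine content is $(iv)\Rightarrow(i)$, and even that is short. Throughout I would use the standard fact that a morphism of $\cD$ is an isomorphism if and only if any cone on it is a zero object (for the nontrivial direction, rotate the triangle, apply $\cD(X,-)$, and conclude by Yoneda).

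First I would dispatch $(i)\Rightarrow(ii)$ and $(i)\Rightarrow(iii)$ simultaneously. If $f$ is an isomorphism, then in Definition~\ref{wobbly} applied with $e=f$ one has $C_e\cong 0$, whence $\cD(C_e,\Sigma^{-1}C_m)=0=\cD(C_e,C_m)$ for every $m$, so conditions \textsc{ho1} and \textsc{ho2} hold vacuously; applied with $m=f$ one has $C_m\cong 0$, hence also $\Sigma^{-1}C_m\cong 0$, so again $\cD(C_e,\Sigma^{-1}C_m)=0=\cD(C_e,C_m)$ for every $e$. Thus $f\horth m$ for all $m$ and $e\horth f$ for all $e$. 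The implications $(ii)\Rightarrow(iv)$ and $(iii)\Rightarrow(iv)$ are then obtained merely by specializing the universally quantified statement to $f$ itself.

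The crucial step is $(iv)\Rightarrow(i)$. Assume $f=(E_0\xto{e}E_1)$ satisfies $e\horth e$, fix one distinguished triangle $E_0\xto{e}E_1\xto{\alpha}C\xto{\beta}\Sigma E_0$, and use it for both slots of Definition~\ref{wobbly} (legitimate, since homotopy orthogonality does not depend on the chosen triangles). Condition \textsc{ho2} then says that $\varphi\mapsto\beta\circ\varphi\circ\alpha$ defines an injective map $\cD(C,C)\to\cD(E_1,\Sigma E_0)$; evaluating it at $\varphi=\id_C$ yields $\beta\circ\alpha=0$, since consecutive maps in a triangle compose to zero, so injectivity forces $\id_C=0$. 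Hence $C$ is a zero object and $e$ is an isomorphism. (Note that \textsc{ho1} plays no role in this direction.)

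I do not anticipate any real obstacle: the whole argument is a direct unwinding of Definition~\ref{wobbly} together with the elementary characterization of invertible morphisms in a triangulated category via the vanishing of their cones. The only points to keep in mind are precisely that characterization and the (standard) independence of $\horth$ from the choice of completing triangles.
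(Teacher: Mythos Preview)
Your proof is correct; the paper itself omits the argument entirely, declaring it ``an easy exercise'', and your verification is precisely the sort of unwinding of Definition~\ref{wobbly} one would expect. In particular your handling of $(iv)\Rightarrow(i)$ via injectivity of \textsc{ho2} applied to $\id_C$ is clean and complete.
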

The above proposition adopted an harmless abuse of notation, that is, it denoted $\mathcal{H}\horth \mathcal{K}$ the fact that each $h\in\mathcal{H}$ is left $\horth$-orthogonal to every morphism of $\mathcal{K}$. To make this statement precise we introduce the following definitions.

\begin{notat}[$\horth$-orthogonal of a class]
We denote $\lhorth{(\firstblank)} \dashv (\firstblank)^{\horth}$ the (antitone) Galois connection induced by the relation $\horth$ on full subcategories of $\cD^\due$;
more explicitly, we denote
\begin{gather}
\mathcal{X}^{\horth} \coloneqq \{f\in \cD^\due \mid x \horth f, \ \forall x\in \mathcal{X}\}\notag\\
\lhorth{\mathcal{X}} \coloneqq \{f\in \cD^\due \mid f \horth x, \ \forall x\in \mathcal{X}\}.
\end{gather}
\end{notat}
\begin{remark}[$\horth$-locality]\label{object.ortho}
There is a related notion of orthogonality between an object $X$ and a morphism $f\in\cD^\due$, based on the fact that we can blur the distinction between objects and their initial or terminal arrows; given these data, we say that $X$ is \emph{right-orthogonal} to $f$ (or that $X$ is an \emph{$f$-local} object) if the hom functor $\cD(-,X)$ inverts $f$; in fact, the map $\cD(f,X)$ is injective if and only if the pair $(f,\var{X}{0})$ satisfies condition  \refbf{wobbly}\textbf{.}\textsc{ho1}, while it is surjective if and only if $(f,\var{X}{0})$ satisfies condition \refbf{wobbly}\textbf{.}\textsc{ho2}. (Obviously, there is a dual notion of left orthogonality between $f$ and $B\in\cD$, or a notion of a \emph{$f$-colocal} object $B$ which reduces to left orthogonality with respect to $0\to B$).
\end{remark}

By the above remark, it is natural to say that two objects $B$ and $X$ are homotopy orthogonal if $\var{0}{B}\horth\var{X}{0}$. In fact, it is not difficult to show that this happens if and only if $\cD(B,X)=0$, that is, $B\perp X$ in the usual sense.

The following lemma can be easily verified by hand:

\begin{lemma}\label{horth_coprod}
Let $\{f_i\}_{i\in I},\, g\in \cD^\due$. If $f_i\horth g$ for all $i\in I$,  then $\coprod_if_i\horth g$. On the other hand, if $g\horth f_i$ for all $i\in I$, then $g\horth \prod_i f_i$. 
\end{lemma}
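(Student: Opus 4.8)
The plan is to reduce conditions \textsc{ho}1 and \textsc{ho}2 for the (co)product map to the very same conditions for the individual $f_i$, exploiting the fact (noted right after \adef\refbf{wobbly}) that homotopy orthogonality does not depend on the choice of completing triangles, so that we are free to test orthogonality on the (co)product of the chosen triangles.

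For the first assertion, fix a triangle with cone $C_g$ for $g\colon M_0\to M_1$, and for each $f_i\colon E_0^{(i)}\to E_1^{(i)}$ a triangle $E_0^{(i)}\xto{f_i}E_1^{(i)}\xto{\alpha_i}C_i\xto{\beta_i}\Sigma E_0^{(i)}$. Assuming the relevant coproducts exist in $\cD$, a coproduct of distinguished triangles is distinguished, so (also using that $\Sigma$, being an equivalence, commutes with coproducts) the map $\coprod_i f_i$ sits inside the triangle $\coprod_i E_0^{(i)}\to\coprod_i E_1^{(i)}\to\coprod_i C_i\to\Sigma\coprod_i E_0^{(i)}$, which we may use to test $\coprod_i f_i\horth g$. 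First I would rewrite the map of \refbf{wobbly}\textbf{.}\textsc{ho1} for the pair $(\coprod_i f_i,g)$ through the canonical isomorphisms $\cD(\coprod_i A_i,B)\cong\prod_i\cD(A_i,B)$: since the inclusions $E_1^{(i)}\hookrightarrow\coprod_j E_1^{(j)}$ and $C_i\hookrightarrow\coprod_j C_j$ intertwine $\alpha_i$ with $\coprod_j\alpha_j$, the map $\varphi\mapsto(\Sigma^{-1}\beta_g)\varphi(\coprod_j\alpha_j)$ decomposes coordinatewise as the product over $i$ of the maps $\cD(C_i,\Sigma^{-1}C_g)\to\cD(E_1^{(i)},M_0)$, $\varphi_i\mapsto(\Sigma^{-1}\beta_g)\varphi_i\alpha_i$, occurring in \textsc{ho}1 for $(f_i,g)$; likewise \refbf{wobbly}\textbf{.}\textsc{ho2} for $(\coprod_i f_i,g)$ becomes the product of the maps of \textsc{ho}2 for the $(f_i,g)$. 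Since a product of trivial (resp. injective) homomorphisms of abelian groups is again trivial (resp. injective), the hypotheses yield $\coprod_i f_i\horth g$.

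The second assertion is the formal dual: now $g\colon E_0\to E_1$ is the left map (cone $C_g$) and each $f_i\colon M_0^{(i)}\to M_1^{(i)}$ the right map (cone $C_i$), $\prod_i f_i$ is completed to the product of the chosen triangles (a product of distinguished triangles being distinguished, again using that $\Sigma$ commutes with products), and one runs the same computation using the identifications $\cD(A,\prod_i B_i)\cong\prod_i\cD(A,B_i)$ and $\Sigma\prod_i M_0^{(i)}\cong\prod_i\Sigma M_0^{(i)}$: the maps of \textsc{ho}1 and \textsc{ho}2 for $(g,\prod_i f_i)$ decompose as products of those for the pairs $(g,f_i)$, and the conclusion follows exactly as before.

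The only points that are not pure bookkeeping are (a) that a (co)product of distinguished triangles is distinguished, so that the (co)product triangle is an admissible choice in \adef\refbf{wobbly}, and (b) that the canonical hom/(co)product isomorphisms carry the structural maps $\alpha_i,\beta_i$ of the summands to those of the (co)product triangle, so that the displayed maps really do decompose coordinatewise. Both are standard for triangulated categories admitting the relevant (co)products; I expect (a) to be the only step deserving a word of justification, while the rest is a naturality check plus the trivial observation about products of trivial/injective maps — which is why the lemma can indeed be verified by hand.
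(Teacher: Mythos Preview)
Your proposal is correct and is exactly the straightforward verification the paper has in mind: the paper does not give a proof at all, merely stating that the lemma ``can be easily verified by hand''. Your argument---taking the (co)product of the chosen triangles as a legitimate completing triangle for $\coprod_i f_i$ (resp.\ $\prod_i f_i$), then using the hom/(co)product isomorphisms to decompose the maps of \textsc{ho}1 and \textsc{ho}2 coordinatewise---is precisely that hand verification, with nothing missing.
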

\begin{lemma}
Let $f,\, g\in \cD^\due$ and let $f'$ be a retract of $f$, that is, there is a commutative diagram
\[
\xymatrix{F_0'\ar[r]|{i_0}\ar[d]_{f'} \ar@/^15pt/[rr]|{\id} & F_0\ar[r]|{p_0}\ar[d]^f & F_0'\ar[d]^{f'}\\ 
F_1'\ar[r]|{i_1}\ar@/_15pt/[rr]|{\id} &F_1 \ar[r]|{p_1} &F_1'}
\]
If $f\horth g$, then $f'\horth g$.
\end{lemma}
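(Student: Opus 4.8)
The plan is to transport the hypothesis $f\horth g$ along the retract; the only genuinely nonformal point is that cones are not functorial, which I circumvent by showing that the \emph{whole} distinguished triangle built on $f'$ is a direct summand of the one built on $f$.

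First I fix distinguished triangles $F_0\xto{f}F_1\xto{\alpha_f}C_f\xto{\beta_f}\Sigma F_0$, $F_0'\xto{f'}F_1'\xto{\alpha_{f'}}C_{f'}\xto{\beta_{f'}}\Sigma F_0'$ and $G_0\xto{g}G_1\xto{\alpha_g}C_g\xto{\beta_g}\Sigma G_0$. In the retract diagram each $i_k$ is a split monomorphism with retraction $p_k$ ($k=0,1$); since in a triangulated category a split mono $i\colon A\to B$ exhibits $B$ as $A\oplus\mathrm{Cone}(i)$ (the connecting map is forced to vanish), and since the section–retraction pair $(i_k,p_k)$ is then the inclusion–projection pair of the summand $F_k'$, commutativity of the two squares of the retract forces $f$, viewed as a $2\times2$ matrix $F_0'\oplus F_0''\to F_1'\oplus F_1''$ (with $F_k''\coloneqq\mathrm{Cone}(i_k)$), to be diagonal: $f=f'\oplus f''$. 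A direct sum of distinguished triangles is distinguished, and the third object of a triangle is unique up to a (non-canonical) isomorphism restricting to the identity on the first two objects; hence the chosen triangle on $f$ is isomorphic, compatibly with $f=f'\oplus f''$, to the direct sum of the chosen triangle on $f'$ with a triangle on $f''$. Reading off the inclusion and projection of the $C_{f'}$-summand produces $c\colon C_{f'}\to C_f$ and $d\colon C_f\to C_{f'}$ with $dc=\id_{C_{f'}}$, together with the compatibilities $c\alpha_{f'}=\alpha_f i_1$, $d\alpha_f=\alpha_{f'}p_1$ and $\beta_{f'}d=\Sigma p_0\cdot\beta_f$.

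It then remains to verify conditions \refbf{wobbly}\textbf{.}\textsc{ho1} and \refbf{wobbly}\textbf{.}\textsc{ho2} for the pair $(f',g)$. For \textsc{ho1}: given $\varphi\colon C_{f'}\to\Sigma^{-1}C_g$, apply \textsc{ho1} of $f\horth g$ to $\varphi d$, obtaining $\Sigma^{-1}\beta_g\circ\varphi d\circ\alpha_f=0$; substituting $d\alpha_f=\alpha_{f'}p_1$ and precomposing with $i_1$ (using $p_1i_1=\id$) yields $\Sigma^{-1}\beta_g\circ\varphi\circ\alpha_{f'}=0$, as wanted. For \textsc{ho2}: if $\beta_g\circ\varphi\circ\alpha_{f'}=0$ with $\varphi\colon C_{f'}\to C_g$, then $\beta_g\circ\varphi d\circ\alpha_f=\beta_g\varphi\alpha_{f'}p_1=0$, so the injectivity in \textsc{ho2} of $f\horth g$ forces $\varphi d=0$, whence $\varphi=\varphi(dc)=(\varphi d)c=0$. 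Therefore $f'\horth g$.

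I expect the one delicate step to be the assertion, in the second paragraph, that a retract of a distinguished triangle is a direct summand of it: precisely because idempotents need not split in $\cD$, one cannot simply ``apply the cone construction'' to the retract and must instead invoke the triangulated axioms directly (split monos split off their cones, finite direct sums of triangles are triangles, uniqueness of the third vertex). Once $dc=\id_{C_{f'}}$ and $d\alpha_f=\alpha_{f'}p_1$ are available, the checking of \textsc{ho1} and \textsc{ho2} is purely diagrammatic. One could alternatively run the argument through the equivalent condition \textsc{ho}2$'$, precomposing a triangle morphism $f'\to g$ with the retraction $\Delta_f\to\Delta_{f'}$ and then with the section $\Delta_{f'}\to\Delta_f$, but then one must argue separately that the completion so obtained is the only one, which the injectivity formulation above avoids.
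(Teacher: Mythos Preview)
Your proof is correct. The endgame---verifying \textsc{ho1} and \textsc{ho2} by transporting along section/retraction maps on the cones---is essentially identical to the paper's, but you reach that point by a different and somewhat heavier route.

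The paper does not decompose $f$ as a direct sum. It simply completes the two commutative squares $(i_0,i_1)\colon f'\to f$ and $(p_0,p_1)\colon f\to f'$ to morphisms of triangles, obtaining maps $i\colon C_{f'}\to C_f$ and $p\colon C_f\to C_{f'}$; the composite $p\circ i$ is then an isomorphism because any completion of $(\id,\id)\colon f'\to f'$ on the third vertex is an isomorphism. This is enough: in your \textsc{ho2} step ``$\varphi=\varphi(dc)$'' one only needs $dc$ invertible, not equal to the identity, and the compatibility $d\alpha_f=\alpha_{f'}p_1$ holds by construction of $p$ as a triangle morphism. Your direct-sum argument buys the sharper conclusion $dc=\id_{C_{f'}}$ and makes the retract structure on cones explicit, at the cost of the extra work you flag (checking that with the \emph{given} retraction $p_k$ one can choose the splitting $F_k\cong F_k'\oplus F_k''$ so that $(i_k,p_k)$ really is the inclusion--projection pair, which is standard but not entirely free). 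The paper's approach is more economical; yours is conceptually a bit more transparent.
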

\begin{proof}
Let $(a,b)\colon f'\to g$ be a morphism in $\cD^\due$ and consider the following commutative diagram, whose columns are triangles:
$$
\xymatrix{
F'_0\ar[r]^{i_0}\ar[d]_{f'}&F_0\ar[r]^{p_0}\ar[d]_{f}&F'_0\ar[r]^a\ar[d]_{f'}&G_0\ar[d]^g\\
F'_1\ar[r]^{i_1}\ar[d]_{\alpha_{f'}}&F_1\ar[r]^{p_1}\ar[d]_{\alpha_{f}}&F'_1\ar[r]^b\ar[d]_{\alpha_{f'}}&G_1\ar[d]^{\alpha_g}\\
C_{f'}\ar[r]^i\ar[d]_{\beta_{f'}}&C_{f}\ar[r]^p\ar[d]_{\beta_{f}}&C_{f'}\ar[r]^{\varphi}\ar[d]_{\beta_{f'}}&C_{g}\ar[d]_{\beta_{g}}\\
\Sigma F'_0\ar[r]&\Sigma F_0\ar[r]&\Sigma F'_0\ar[r]^{\Sigma a}&\Sigma G_0
}
$$
and notice that the composition $p\circ i$ is  an isomorphism. To verify \refbf{wobbly}\textbf{.}\textsc{ho1} we should prove that $\varphi=0$, but in fact, $\varphi p=0$ for the same condition applied to the pair $(f,g)$, so that $\varphi\cong \varphi p\, i=0$. On the other hand, to verify \refbf{wobbly}\textbf{.}\textsc{ho2}, consider a morphism $\psi\colon C_{f'}\to \Sigma^{-1}C_g$, then $\Sigma^{-1}(\beta_g)\psi\alpha_{f'}\cong \Sigma^{-1}(\beta_g)\psi p\, i \alpha_{f'}=\Sigma^{-1}(\beta_g)\psi p\alpha_{f} i_{1}=0\circ i_1=0$, where $\Sigma^{-1}(\beta_g)\psi p\alpha_{f}=0$ by the same condition applied to the pair $(f,g)$. 
\end{proof}
\begin{remark}\label{recall_hocart}
To simplify the formulation of some of our forthcoming observations, let us recall that a \emph{homotopy cartesian square} in $\cD$ is a commutative diagram
\begin{equation}\label{cartesian_diagram}
\begin{matrix}\xymatrix{
X\ar@{}[dr]|\boxvoid\ar[r]^{\phi}\ar[d]_{\alpha}&Y\ar[d]^{\beta}\\
X'\ar[r]_{\phi'}&Y'
}\end{matrix}
\end{equation}
such that there exists a distinguished triangle $X\to X'\oplus Y\to Y'{\to}\Sigma X$, where the map $X\to X'\oplus Y$ is $\binom{\alpha}{-\phi}$, while the map $X'\oplus Y\to Y'$ is $(\phi', \beta)$. We call $\beta$ the {\em homotopy pushout} of $\alpha$, and $\alpha$ the {\em homotopy pullback} of $\beta$. We refer to \cite[\achap\textbf{1}]{Neeman} for more details on this construction.
\end{remark}
\begin{lemma}\label{closure_homo_ho2}
Let $(\psi\colon Y_0 \to  Y_1 )\in \cD^\due$ and consider a homotopy cartesian square:
$$\xymatrix{
 X_0 \ar@{}[dr]|\boxvoid\ar[r]^{s}\ar[d]_{\phi}& X_0' \ar[d]^{\phi'}\\
 X_1 \ar[r]_{t}& X_1 '
}$$
Then the following statements hold true:
\begin{enumerate}
\item[\rm (1)] if the pair $(\phi,\psi)$ satisfies \refbf{wobbly}\textbf{.}\textsc{ho2}, so does the pair $(\phi',\psi)$;
\item[\rm (2)] if the pair $(\phi,\psi)$ satisfies \refbf{wobbly}\textbf{.}\textsc{ho1} and $(\phi,\Sigma^{-1}\psi)$ satisfies \refbf{wobbly}\textbf{.}\textsc{ho2}, then $\cD(C_{\phi'},\Sigma^{-1}C_{\psi})=0$;
\item[\rm (3)] if $\phi\horth \psi$ and $\phi\horth\Sigma^{-1}\psi$, then $\phi'\horth \psi$.
\end{enumerate}
\end{lemma}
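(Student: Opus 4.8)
The plan is to reduce all three items to a single structural fact about homotopy cartesian squares, which I would quote from \cite[\achap\textbf{1}]{Neeman}: such a square induces an isomorphism between the cones of its two parallel \emph{vertical} edges, fitting into a morphism of triangles. Concretely, applying axiom (TR3) to the commuting square with horizontal maps $\phi,\phi'$ and vertical maps $s,t$ produces a morphism $\theta$ completing
\[
\xymatrix{
X_0 \ar[r]^\phi \ar[d]_s & X_1 \ar[r]^{\alpha_\phi} \ar[d]^t & C_\phi \ar[r]^{\beta_\phi} \ar@{.>}[d]^\theta & \Sigma X_0 \ar[d]^{\Sigma s}\\
X_0' \ar[r]^{\phi'} & X_1' \ar[r]^{\alpha_{\phi'}} & C_{\phi'} \ar[r]^{\beta_{\phi'}} & \Sigma X_0'
}
\]
to a morphism of triangles, and the homotopy cartesian hypothesis forces $\theta$ to be an isomorphism. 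I will use only two consequences of this: the identity $\theta\alpha_\phi=\alpha_{\phi'}t$, and the fact that precomposition with $\theta$ induces an isomorphism $\cD(C_{\phi'},Z)\cong\cD(C_\phi,Z)$ for every object $Z$.

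For (1) I would argue by transport along $\theta$. Given $\varphi'\colon C_{\phi'}\to C_\psi$ with $\beta_\psi\varphi'\alpha_{\phi'}=0$, the composite $\varphi'\theta\colon C_\phi\to C_\psi$ satisfies, using $\theta\alpha_\phi=\alpha_{\phi'}t$,
\[
\beta_\psi(\varphi'\theta)\alpha_\phi=\beta_\psi\varphi'\alpha_{\phi'}t=0 ,
\]
hence $\varphi'\theta=0$ because $(\phi,\psi)$ satisfies \refbf{wobbly}\textbf{.}\textsc{ho2}; since $\theta$ is invertible, $\varphi'=0$. This is precisely \refbf{wobbly}\textbf{.}\textsc{ho2} for $(\phi',\psi)$.

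For (2) the key remark to isolate first is that the map occurring in \refbf{wobbly}\textbf{.}\textsc{ho1} for $(\phi,\psi)$ and the map occurring in \refbf{wobbly}\textbf{.}\textsc{ho2} for $(\phi,\Sigma^{-1}\psi)$ are literally the same morphism
\[
\cD(C_\phi,\Sigma^{-1}C_\psi)\longrightarrow\cD(X_1,Y_0),\qquad \varphi\longmapsto(\Sigma^{-1}\beta_\psi)\,\varphi\,\alpha_\phi
\]
(up to a sign which is immaterial here), because completing $\Sigma^{-1}\psi$ to a triangle produces the cone $\Sigma^{-1}C_\psi$ with connecting morphism $\Sigma^{-1}\beta_\psi$. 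By the two hypotheses this common map is at once zero and injective, so $\cD(C_\phi,\Sigma^{-1}C_\psi)=0$, and transporting along $\theta$ gives $\cD(C_{\phi'},\Sigma^{-1}C_\psi)=0$. Part (3) then follows at once: $\phi\horth\psi$ yields \textsc{ho2} for $(\phi,\psi)$, hence \textsc{ho2} for $(\phi',\psi)$ by (1); and the hypotheses $\phi\horth\psi$, $\phi\horth\Sigma^{-1}\psi$ feed part (2) to force $\cD(C_{\phi'},\Sigma^{-1}C_\psi)=0$, so that \textsc{ho1} for $(\phi',\psi)$ holds vacuously. Therefore $\phi'\horth\psi$.

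I expect the only point needing genuine care to be the bookkeeping in the first paragraph: one must check that the invoked property of homotopy cartesian squares really compares the vertical cones $C_\phi,C_{\phi'}$ (and not the horizontal ones) and supplies the compatibility $\theta\alpha_\phi=\alpha_{\phi'}t$ used throughout. Granting that, items (1)--(3) are purely formal.
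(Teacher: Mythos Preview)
Your proposal is correct and essentially matches the paper's proof: both rely on the isomorphism $\theta\colon C_\phi\xrightarrow{\sim}C_{\phi'}$ coming from the homotopy cartesian square, together with the compatibility $\theta\alpha_\phi=\alpha_{\phi'}t$, and then transport the orthogonality conditions along it. The only cosmetic difference is that for part~(1) the paper phrases the argument via the equivalent condition \textsc{ho2}$'$ (completing a square to a morphism of triangles) while you work directly with the injectivity statement \textsc{ho2}; parts~(2) and~(3) are argued identically.
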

\begin{proof}
(1) Given a morphism $(a,b)\colon \phi'\to \psi$, we get a commutative diagram:
$$
\xymatrix{
 X_0 \ar@{}[rd]|\square\ar[r]^{s}\ar[d]_\phi& X_0' \ar[r]^a\ar[d]^{\phi'}& Y_0 \ar[d]^{\psi}\\
 X_1 \ar[r]^{t}\ar[d]_\alpha& X_1' \ar[r]^b\ar[d]^{\alpha'}& Y_1 \ar[d]^{\alpha_\psi}\\
C_\phi\ar[r]|\cong^{\varphi}\ar[d]_{\beta}&C_{\phi'}\ar[r]^{\psi}\ar[d]^{\beta'}&C_\psi\ar[d]^{\beta_\psi}\\
\Sigma  X_0 \ar[r]&\Sigma  X_0 '\ar[r]&\Sigma  Y_0 }
$$
we should prove that $\psi=0$. By \refbf{wobbly}\textbf{.}\textsc{ho2} applied to $(\phi, \psi)$ we get $\psi\varphi=0$, but since $\varphi$ is an isomorphism this allows us to conclude.

(2) Our two assumptions tell us that the  map $\cD(C_\phi,\Sigma^{-1}C_{\psi})\to \cD( X_1 , Y_0 )$ is both trivial and injective, so that $\cD(C_{\phi'},\Sigma^{-1}C_{\psi})\cong \cD(C_\phi,\Sigma^{-1}C_{\psi})=0$. 

(3) By part (1) and $\phi\horth \psi$, the pair $(\phi',\psi)$ satisfies \refbf{wobbly}\textbf{.}\textsc{ho2}. Furthermore, by part (2) and our assumptions, $\cD(C_{\phi'},\Sigma^{-1}C_{\psi})=0$, so the map $\cD(C_{\phi'},\Sigma^{-1}C_{\psi})\to\cD(\phi'_1, Y_0 )$ is clearly trivial.
\end{proof}

Let us recall from \cite{neeman:new-axioms} that a morphism of triangles
\begin{equation}\label{morphism_of_tria}\xymatrix{
A_0\ar[r]^{ \phi_0 }\ar[d]_a&B_0\ar[r]^{ \psi_0 }\ar[d]_b&C_0\ar[r]\ar[d]^c&\Sigma A_0\ar[d]^{\Sigma a}\\
A_1\ar[r]^{ \phi_1 }&B_1\ar[r]^{ \psi_1 }&C_1\ar[r]&\Sigma A_1}\end{equation}
is said to be {\em middling good} if it can be completed to a $3\times 3$ diagram whose rows and columns are triangles and where everything commutes but the lower right square, which anti-commutes:
\begin{equation}\label{3x3}
\xymatrix{
A_0\ar[r]^{ \phi_0 }\ar[d]_a&B_0\ar[r]^{ \psi_0 }\ar[d]_b&C_0\ar[r]\ar[d]^c&\Sigma A_0\ar[d]^{\Sigma a}\\
A_1\ar[d]_{\alpha_a}\ar[r]^{ \phi_1 }&B_1\ar[d]_{\alpha_b}\ar[r]^{ \psi_1 }&C_1\ar[r]\ar[d]^{\alpha_c}&\Sigma A_1\ar[d]\\
C_a\ar[r]^{\varphi_a}\ar[d]_{\beta_a}&C_b\ar[r]^{\varphi_b}\ar[d]_{\beta_b}&C_c\ar[r]\ar[d]^{\beta_c}&\Sigma C_a\ar[d]^{}\\
\Sigma A_0\ar[r]&\Sigma B_0\ar[r]_{}&\Sigma C_0\ar[r]&\Sigma^2 C_0
}
\end{equation}
Let us recall that, given a morphism $(a,b)\colon  \phi_0 \to  \phi_1 $ in $\cD^\due$, one can always choose a morphism $c\colon C_0\to C_1$ such that $(a,b,c)$ is a middling good morphism of triangles. 
\begin{lemma}\label{extension}
Let $(\chi\colon  Y_0 \to  Y_1 )\in\cD^\due$ and consider a middling good morphism of triangles as in \eqref{morphism_of_tria}. 
If $a,\, \Sigma a,\, c,\, \Sigma c, \Sigma^{-1}c\horth \chi$, then $b\horth \chi$.
\end{lemma}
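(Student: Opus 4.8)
The plan is to check the two defining conditions \refbf{wobbly}\textbf{.}\textsc{ho1} and \refbf{wobbly}\textbf{.}\textsc{ho2} for the pair $(b,\chi)$, working inside the $3\times 3$ diagram \eqref{3x3} witnessing middling goodness of the given morphism of triangles. The only structural facts I take from that diagram are that its middle row is a distinguished triangle
\[
C_a\xto{\varphi_a}C_b\xto{\varphi_b}C_c\longrightarrow\Sigma C_a,
\]
so that $C_b$ sits between the cones of $a$ and $c$, and that $\varphi_a\alpha_a=\alpha_b\phi_1$. Since conditions \textsc{ho1}, \textsc{ho2} only ever involve maps out of the cone of the left-hand morphism, homotopy orthogonality to $\chi$ should propagate along this triangle through the long exact sequences obtained by applying $\cD(-,\Sigma^{-1}C_\chi)$ and $\cD(-,C_\chi)$.

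First I would record a formal remark: up to the identifications induced by the autoequivalence $\Sigma$, condition \textsc{ho1} for a pair $(f,\chi)$ and condition \textsc{ho2} for $(\Sigma f,\chi)$ are statements about \emph{the same} map $\cD(C_f,\Sigma^{-1}C_\chi)\to\cD(F_1,Y_0)$ --- the first declaring it zero, the second injective --- so that $f\horth\chi$ together with $\Sigma f\horth\chi$ forces $\cD(C_f,\Sigma^{-1}C_\chi)=0$; symmetrically, \textsc{ho2} for $(f,\chi)$ and \textsc{ho1} for $(\Sigma^{-1}f,\chi)$ are statements about the single map $\cD(C_f,C_\chi)\to\cD(F_1,\Sigma Y_0)$, so $f\horth\chi$ and $\Sigma^{-1}f\horth\chi$ force $\cD(C_f,C_\chi)=0$. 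Applying this to the hypotheses yields $\cD(C_a,\Sigma^{-1}C_\chi)=0$ and $\cD(C_c,\Sigma^{-1}C_\chi)=0$ (from $a,\Sigma a\horth\chi$ and $c,\Sigma c\horth\chi$) and $\cD(C_c,C_\chi)=0$ (from $c,\Sigma^{-1}c\horth\chi$).

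Granting this, \textsc{ho1} for $(b,\chi)$ is immediate: applying $\cD(-,\Sigma^{-1}C_\chi)$ to the middle-row triangle gives exactness of $\cD(C_c,\Sigma^{-1}C_\chi)\to\cD(C_b,\Sigma^{-1}C_\chi)\to\cD(C_a,\Sigma^{-1}C_\chi)$, whose outer terms vanish, so $\cD(C_b,\Sigma^{-1}C_\chi)=0$ and the map of \textsc{ho1} for $(b,\chi)$ has zero domain. For \textsc{ho2} for $(b,\chi)$, let $\varphi\colon C_b\to C_\chi$ satisfy $\beta_\chi\varphi\alpha_b=0$; then $\beta_\chi(\varphi\varphi_a)\alpha_a=\beta_\chi\varphi\alpha_b\phi_1=0$ by $\varphi_a\alpha_a=\alpha_b\phi_1$, so injectivity of $\psi\mapsto\beta_\chi\psi\alpha_a$ --- which is \textsc{ho2} for $a\horth\chi$ --- forces $\varphi\varphi_a=0$. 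Applying $\cD(-,C_\chi)$ to the middle-row triangle shows that $\varphi_a^*\colon\cD(C_b,C_\chi)\to\cD(C_a,C_\chi)$ is injective, since its kernel is the image of $\varphi_b^*\colon\cD(C_c,C_\chi)\to\cD(C_b,C_\chi)$ and $\cD(C_c,C_\chi)=0$; hence $\varphi_a^*(\varphi)=\varphi\varphi_a=0$ gives $\varphi=0$. This proves \textsc{ho2} for $b\horth\chi$.

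The bulk of the difficulty is the bookkeeping in the formal remark: one must be careful to pair the right half of \refbf{wobbly} with the right shift $\Sigma^{\pm1}$ of $f$, and to track the shift functors through the identifications. Beyond that, the only subtle point is the appeal to the standard fact (the one used to make sense of ``middling good'' morphisms) that the morphism of triangles in \eqref{morphism_of_tria} extends to a $3\times 3$ diagram with distinguished rows and columns; given that, the remaining work is two short diagram chases, and one checks that the argument consumes all five hypotheses --- $a,\Sigma a,c,\Sigma c\horth\chi$ for \textsc{ho1} of $(b,\chi)$ and $a,c,\Sigma^{-1}c\horth\chi$ for \textsc{ho2}.
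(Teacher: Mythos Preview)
Your proof is correct and follows essentially the same route as the paper's: both arguments first extract the vanishing $\cD(C_a,\Sigma^{-1}C_\chi)=\cD(C_c,\Sigma^{-1}C_\chi)=\cD(C_c,C_\chi)=0$ from the shifted orthogonality hypotheses (the paper packages this step as an appeal to Lemma~\refbf{closure_homo_ho2}, while you spell out the ``same map, zero and injective'' observation directly), then use the triangle $C_a\to C_b\to C_c\to\Sigma C_a$ to get \textsc{ho1} for $(b,\chi)$, and finally derive \textsc{ho2} by showing $\varphi\varphi_a=0$ via $a\horth\chi$ and concluding $\varphi=0$ from $\cD(C_c,C_\chi)=0$. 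The only cosmetic difference is that the paper phrases the last step through the reformulation \textsc{ho2}$'$ and factors $\varphi$ through $C_c$, whereas you argue injectivity of $\varphi_a^*$ directly; these are the same exactness argument.
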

\begin{proof}
By Lemma \refbf{closure_homo_ho2}, $a,\, \Sigma a\horth \chi$ implies $\cD(C_a,\Sigma^{-1}C_\chi)=0$, while  $c,\, \Sigma c\horth \chi$ implies $\cD(C_c,\Sigma^{-1}C_\chi)=0$. Hence,  $\cD(C_b,\Sigma^{-1}C_\chi)=0$. On the other hand, for a morphism $(d,e)\colon b\to \chi$, we get a commutative diagram whose columns are triangles:
$$\xymatrix{
A_0\ar[r]^{ \phi_0 }\ar[d]_a&B_0\ar[r]^d\ar[d]_b& Y_0 \ar[d]^\chi\\
A_1\ar[r]^{ \phi_1 }\ar[d]_{\alpha_a}&B_1\ar[r]^e\ar[d]_{\alpha_b}& Y_1 \ar[d]^{\alpha_\chi}\\
C_a\ar[r]^{\varphi_a}\ar[d]_{\beta_a}&C_b\ar[r]^{\varphi}\ar[d]_{\beta_b}&C_\chi\ar[d]^{\beta_\chi}\\
\Sigma A_0\ar[r]&\Sigma B_0\ar[r]&\Sigma  Y_0 
}$$ 
Since $a\horth \psi$, then $\varphi\varphi_a=0$, which implies that there exists $f\colon C_c\to C_\chi$ such that $f\circ \varphi_b=\varphi$. By $c, \Sigma^{-1}c\horth \chi$ we get $\cD(C_c,C_{\chi})=0$, so $f=0$, which implies $\varphi=0$.
\end{proof}

\begin{lemma}\label{horth_colimits}
Let $\psi\in\cD^\due$ and consider two countable chains of morphisms $A_\bullet = \{A_0\xto{j_0}A_1\xto{j_1}A_2\xto{j_2}\dots\}$ and $B_\bullet = \{B_0\xto{k_0}B_1\xto{k_1}B_2\xto{k_2}\dots\}$.  If there is a natural transformation $\alpha\colon A_\bullet \Rightarrow B_\bullet$ such that $\alpha_i,\, \Sigma \alpha_i,\, \Sigma^2 \alpha_i\horth \psi$ for all $i\in\N$, then any map $\varphi\colon  \text{hocolim } A_\bullet\to  \text{hocolim } B_\bullet$ completing the following diagram to a middling good map of triangles is such that $\varphi\horth \psi$
\[
\xymatrix{
	\coprod_{i\in\N} A_i \ar[d]\ar[r] & \coprod_{i\in\N} A_i \ar[d]\ar[r] & \text{hocolim } A_\bullet \ar[r]\ar@{.>}[d]^{\varphi} & +\\
	\coprod_{i\in\N} B_i \ar[r] & \coprod_{i\in\N} B_i \ar[r] & \text{hocolim } B_\bullet \ar[r] & +
}
\]
\end{lemma}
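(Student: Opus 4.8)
The plan is to present $\varphi$ as the middle leg of a once-rotated middling good morphism of triangles whose two outer legs are coproducts of the $\alpha_i$, and then to quote Lemma~\refbf{extension}. Recall first that the homotopy colimits sit in distinguished triangles
\[
\coprod_{i\in\N}A_i\xto{}\coprod_{i\in\N}A_i\xto{}\hocolim A_\bullet\xto{}\Sigma\coprod_{i\in\N}A_i
\]
and similarly for $B_\bullet$ (such triangles exist because in this lemma we are allowed to assume $\cD$ has countable coproducts), and that by its very construction $\varphi$ is the third vertical leg of a morphism between these two triangles whose first two vertical legs are both $\coprod_i\alpha_i$; by hypothesis this morphism of triangles is middling good.

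Since $\Sigma$ is an equivalence commuting with coproducts, Lemma~\refbf{horth_coprod} upgrades the hypotheses $\alpha_i,\Sigma\alpha_i,\Sigma^2\alpha_i\horth\psi$ (all $i\in\N$) to
\[
\coprod_i\alpha_i\horth\psi,\qquad \Sigma\coprod_i\alpha_i=\coprod_i\Sigma\alpha_i\horth\psi,\qquad \Sigma^2\coprod_i\alpha_i=\coprod_i\Sigma^2\alpha_i\horth\psi.
\]
I would then rotate the morphism of triangles once, so that $\varphi$ moves into the central vertical position of a morphism of triangles whose outer legs are $a=\coprod_i\alpha_i$ and $c=\Sigma\coprod_i\alpha_i$ (its fourth leg being $\Sigma a=\Sigma\coprod_i\alpha_i$). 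Granting that this rotated morphism is again middling good, Lemma~\refbf{extension} applies with $\chi=\psi$: its five hypotheses $a,\Sigma a,c,\Sigma c,\Sigma^{-1}c\horth\psi$ --- in which $\Sigma^{-1}c=\coprod_i\alpha_i$ and $\Sigma c=\Sigma^2\coprod_i\alpha_i$ --- are exactly the three assertions of the previous display, and its conclusion is $\varphi\horth\psi$, as claimed. The shift bookkeeping here --- only $\Sigma^0,\Sigma^1,\Sigma^2$ of the $\alpha_i$ enter --- is precisely why the statement is phrased with those three orthogonality requirements on $\alpha$.

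The one step that is not purely formal, and which I expect to be the main obstacle, is the ``middling good'' bookkeeping: one has to be sure that rotating the given middling good morphism of triangles produces again a middling good morphism, so that Lemma~\refbf{extension} genuinely applies. If one prefers not to rely on a general rotation-invariance statement for middling goodness, the alternative is to repeat, essentially verbatim, the short cone-chase from the proof of Lemma~\refbf{extension} directly on the rotated configuration; everything else --- reducing the orthogonality hypotheses to coproducts via Lemma~\refbf{horth_coprod} and matching the shift indices --- is immediate.
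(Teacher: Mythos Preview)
Your proposal is correct and matches the paper's proof, which simply says ``By Lemma~\refbf{horth_coprod}, $\coprod_i\alpha_i,\ \Sigma\coprod_i\alpha_i,\ \Sigma^2\coprod_i\alpha_i\horth\psi$, so it is enough to apply Lemma~\refbf{extension}.'' You are just being more explicit than the paper about the rotation needed to put $\varphi$ in the middle slot.

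Regarding your worry about middling goodness under rotation: this is not an obstacle. A middling good morphism is, by definition, one that embeds in a $3\times 3$ diagram as in \eqref{3x3}; rotating the two top rows by one step amounts to cyclically shifting the columns of the whole $3\times 3$ diagram (and applying $\Sigma$ where appropriate), which again yields a valid $3\times 3$ diagram with the required commutativity and anti-commutativity. So the rotated morphism is middling good as well, and Lemma~\refbf{extension} applies directly --- no need to repeat the cone-chase.
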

\begin{proof}
By Lemma \refbf{horth_coprod}, $\coprod_{i\in\N} \alpha_i,\, \Sigma\coprod_{i\in\N} \alpha_i,\, \Sigma^2\alpha_i\coprod_{i\in\N}\horth \psi$, so it is enough to apply Lemma \refbf{extension}.
\end{proof}
\subsection{Triangulated factorization systems}
Using the notion of homotopy orthogonality we can define triangulated factorization systems as follows:
\begin{definition}\label{the_def_of_hfs}
Let $\F=(\E,\M)$ be a pair of classes of morphisms in $\cD$. 
\begin{enumerate}
\item $\F$ is a \emph{triangulated pre-factorization system} (\phfs for short) if 
\begin{enumerate}
\item[\rm --] $\E^{ \horth }=\M$ and $\lhorth{\!\M}=\E$;
\item[\rm --] $\phi\in \E$ implies $\Sigma\phi\in \E$.
\end{enumerate} 
\item $\F$ is a \emph{triangulated factorization system} (\hfs for short) if it is a \phfs, and if any morphism in $\cD$ is \emph{$\F$-crumbled}, \ie it can be factored as a composition $\phi=m\circ e$ with $e\in\E, m\in\M$.
\end{enumerate}
\end{definition}
Notice that in the second condition defining a \phfs we could have equivalently asked that $\phi\in\M$ implies $\Sigma^{-1}\phi\in\M$.
\begin{remark}[left- and right\hyp{}generated $\horth$\hyp{}prefactorization]
It is evident 
that any class of morphism $\mathcal{X}\subseteq \cD^\due$ induces two {\phfs}s on $\cD$, obtained by sending $\mathcal{X}$ to $({}^{\horth} \mathcal{X}, ({}^{\horth}\mathcal{X})^{\horth})$ and $({}^{\horth}(\mathcal{X}^{\horth}),\mathcal{X}^{\horth})$. 
\end{remark}

By the properties proved in Section \refbf{horth_subs} we obtain the following closure properties for the classes composing a \phfs:

\begin{proposition}\label{closure_phfs}
Let $\F=(\E,\M)$ be a \phfs. Then 
\begin{enumerate}
\item $\E$ and $\M$ are closed under isomorphisms in $\cD^\due $;
\item $\E\cap \M$ is the class of all isomorphisms;
\item $\E$ is closed under arbitrary coproducts and $\M$ is closed under arbitrary products;
\item $\E$ and $\M$ are closed under retracts;
\item $\E$ is closed under homotopy pushouts and $\M$ is closed under homotopy pullbacks;
\item $\E$ is closed under homotopy colimits in the sense that, in the same setting of Lemma \refbf{horth_colimits}, if $\alpha_i\in \E$ for any $i\in \N$, then $\varphi\in \E$. A dual property regarding homotopy limits holds for $\M$.
\end{enumerate}
\end{proposition}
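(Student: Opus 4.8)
The plan is to deduce all six statements formally from the behaviour of $\horth$ recorded in §\refbf{horth_subs}, using only the defining equalities $\E=\lhorth{\M}$ and $\M=\E^{\horth}$ of a \phfs: these turn ``$\phi\in\E$'' into the literal assertion ``$\phi\horth m$ for every $m\in\M$'', and dually ``$\psi\in\M$'' into ``$e\horth\psi$ for every $e\in\E$''. So for each item I would verify the property for $\E$ by showing that the relevant morphism is left $\horth$-orthogonal to an arbitrary $m\in\M$, and then obtain the $\M$-half by duality (see the last paragraph).

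I would start with (2): if $f\in\E\cap\M$ then $f\horth f$, so $f$ is an isomorphism by Lemma \refbf{is_an_iso_if_horth_to_all}; conversely, for any isomorphism $f$ the same lemma gives $f\horth\cD^\due$ and $\cD^\due\horth f$, hence $f\in\lhorth{\M}=\E$ and $f\in\E^{\horth}=\M$. Item (1) is then a special case of (4), since an isomorphism $f'\cong f$ in $\cD^\due$ in particular exhibits $f'$ as a retract of $f$. For the remaining $\E$-statements the pattern is identical — feed a lemma of §\refbf{horth_subs} into $\E=\lhorth{\M}$: for (3), a family $\{f_i\}\subseteq\E$ has each $f_i\horth m$, so $\coprod_i f_i\horth m$ by Lemma \refbf{horth_coprod}, whence $\coprod_i f_i\in\E$; for (4), a retract $f'$ of $f\in\E$ has $f'\horth m$ by the retract-stability lemma of §\refbf{horth_subs}; for (5), if $\phi\in\E$ and $\phi'$ is its homotopy pushout (\refbf{recall_hocart}) then Lemma \refbf{closure_homo_ho2}(3) gives $\phi'\horth m$ once we know $\phi\horth m$ and $\phi\horth\Sigma^{-1}m$ — and the latter holds because $\Sigma^{-1}m\in\M$, using that a \phfs has $\M$ closed under $\Sigma^{-1}$ (the reformulation of the shift axiom noted just after Definition \refbf{the_def_of_hfs}); for (6), if each $\alpha_i\in\E$ then also $\Sigma\alpha_i,\Sigma^2\alpha_i\in\E$ by two uses of the shift axiom, so $\alpha_i,\Sigma\alpha_i,\Sigma^2\alpha_i\horth m$ for all $i$, and Lemma \refbf{horth_colimits} gives $\varphi\horth m$, whence $\varphi\in\E$.

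The one step I expect to require genuine (if short) work is making ``by duality'' precise, since $\horth$ is not visibly self-dual: the roles of $\Sigma$ and $\Sigma^{-1}$ in \refbf{wobbly}\textbf{.}\textsc{ho1}--\textsc{ho2} are asymmetric. I would settle this once and for all by checking that $e\horth m$ in $\cD$ is equivalent to $m^{\opp}\horth e^{\opp}$ in $\cD^{\opp}$, the triangulated category with shift $\Sigma^{-1}$: the cone of $m^{\opp}$ there is $\Sigma^{-1}C_m$, and with that identification the two defining maps of $m^{\opp}\horth e^{\opp}$ become, up to sign, the two defining maps of $e\horth m$. It then follows that $(\E,\M)$ is a \phfs on $\cD$ exactly when $(\M^{\opp},\E^{\opp})$ is one on $\cD^{\opp}$, that homotopy pushouts and retract diagrams in $\cD$ are homotopy pullbacks and retract diagrams in $\cD^{\opp}$, and that Lemmas \refbf{horth_coprod}, \refbf{closure_homo_ho2} and \refbf{horth_colimits} and the retract-stability lemma each acquire their duals for free; applying the $\E$-arguments above to the \phfs $(\M^{\opp},\E^{\opp})$ on $\cD^{\opp}$ then yields the $\M$-halves of (1)--(6). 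Apart from this op-duality verification, every item is a single-line deduction from one lemma, so I anticipate no other obstacles.
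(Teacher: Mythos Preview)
Your proposal is correct and is exactly the argument the paper has in mind: the proposition is stated without proof, preceded only by the sentence ``By the properties proved in Section \refbf{horth_subs} we obtain the following closure properties\dots'', so your item-by-item reduction to Lemmas \refbf{is_an_iso_if_horth_to_all}, \refbf{horth_coprod}, the retract lemma, \refbf{closure_homo_ho2}(3), and \refbf{horth_colimits} via $\E=\lhorth{\M}$, $\M=\E^{\horth}$ and the shift axiom is precisely what is intended. Your op-duality check (that $e\horth m$ in $\cD$ iff $m^{\opp}\horth e^{\opp}$ in $\cD^{\opp}$ with shift $\Sigma^{-1}$, whence $(\M^{\opp},\E^{\opp})$ is a \phfs on $\cD^{\opp}$) is the right way to make the ``dually'' clauses honest, and goes through as you outline.
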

The following two definitions are of capital importance for us, as they determine the class of factorization systems we are interested in:
\begin{definition}[triangulated torsion theory]\label{hott}
A \hfs $\F=(\E,\M)$ is said to be a \emph{triangulated torsion theory} (for short, \htth
) if both $\E$ and $\cD$ are $3$-for-$2$ classes.
\end{definition}
\begin{definition}[normal triangulated fs]\label{hontt}
Let $\F=(\E,\M)$ be a \hfs in $\cD$. We say that $\F$ is \emph{normal} if, whenever we have a factorization of a final map $X\to 0$ as follows
\[
X \xto{e} T \xto{m}  0\ \ \text{ with }\ \ e\in \E,\ m\in \M\,,
\] and a triangle of the form $R\to X \xto{e} T\to \Sigma R$, the map $(R\to 0)$ belongs to $\E$.
\end{definition}
\section{The triangulated Rosetta stone}\label{t_structure_subs}

As in Section \refbf{section_homo_FS}, let us fix throughout this section a triangulated category $\cD$ with shift functor $\Sigma\colon \cD \xto{\simeq} \cD$.
\begin{definition}\label{def:teestru}
Recall that a \emph{$t$-structure} in $\cD$ is a pair $\t = (\cD^{\leq  0}, \cD^{\geq 0}  )$ of full sub-categories of $\cD$ that satisfy the following properties, where $\cD^{\leq n} \coloneqq \Sigma^{-n}\cD^{\leq 0}$ and $\cD^{\geq n} \coloneqq \Sigma^{-n} \cD^{\geq 0} $, for any $n\in\Z$:
\begin{enumerate}[label=t\arabic*)]
\item\label{tee:fst} $\cD(X,Y)=0$ for any $X \in \cD^{\leq  0}$ and $Y \in  \cD^{\geq  1} $;
\item\label{tee:snd} $\cD^{\leq-1}\subseteq \cD^{\leq  0}$ and $ \cD^{\geq 1} \subseteq  \cD^{\geq  0} $;
\item\label{tee:trd} for any $X \in \cD$ there is a distinguished triangle 
\[
X^{\leq0} \to X \to X^{\geq1}\to \Sigma X^{\leq0},
\] 
with $X^{\leq0} \in \cD^{\leq 0}$ and $X^{\geq1}\in  \cD^{\geq 1} $.
\end{enumerate}
\end{definition}
Given a $t$-structure $\t = (\cD^{\leq  0}, \cD^{\geq 0} )$ in $\cD$, one obtains two functors
\[
\tau^{\leq0}\colon \cD\to \cD^{\leq  0} \quad\text{and}\quad \tau^{\geq1}\colon \cD\to  \cD^{\geq  1} ,
\]
that are respectively the right adjoint to the inclusion $\cD^{\leq  0}\to \cD$ and the left adjoint to the inclusion $ \cD^{\geq  1} \to \cD$.

\begin{notat}
For an object $X\in\cD$ we will generally write $X^{\leq0}$ for $\tau^{\leq0}X$ and $X^{\geq1}$ for $\tau^{\geq1}X$. Furthermore, we will generally denote the unit of the co-reflection $\tau^{\leq0}$ and the co-unit of the reflection $\tau^{\geq1}$ by the following symbols: 
\[
X^{\leq0} \xto{\sigma_X} X \xto{\rho_X} X^{\geq1}.
\]
For any $n\in\Z$, we let $\tau^{\leq n} \coloneqq \Sigma^{-n}\tau^{\leq0}\Sigma^n$ and $\tau^{\geq n} \coloneqq \Sigma^{-n}\tau^{\geq0}\Sigma^n$. We adopt similar notational conventions for these shifted functors.
\end{notat}


\begin{remark}
We can equally define a $t$-structure as a single full additive subcategory  $\t\subseteq \cD$ such that 
\begin{itemize}
	\item $\Sigma\tee \subseteq \tee$;
	\item each object $X\in\cD$ fits into a distinguished triangle $X_{\tee}\to X\to X_{\tee^\perp}\to \Sigma X_{\tee}$ such that $X_\tee\in\tee$, $X_{\tee^\perp}\in\tee^\perp = \{Y\mid \cD(X,Y)=0,\; \forall X\in\tee\}$.
\end{itemize}
This equivalent description of $t$-structures calls $\tee$ an \emph{aisle} and $\tee^\perp$ a \emph{coaisle}.  We will usually blur the distinction between a $t$-structure and its aisle, since the correspondence between the two is obviously bijective under $\cD^{\leq 0} \leftrightarrows \text{aisle}$.
\end{remark}

\subsection{The induced \hfs of a $t$-structure}
Fix a $t$-structure $\t = (\cD^{\leq  0}, \cD^{\geq 0} )$ in $\cD$, and consider the following two classes of morphism 
\begin{align}
\E_\t &  \coloneqq \{\phi\in \cD^\due \mid\tau^{\geq1}\phi \text{ is an iso}\}\notag\\
\M_\t &  \coloneqq \{\psi\in \cD^\due \mid\tau^{\leq0}\psi \text{ is an iso}\}.
\end{align}
This subsection is devoted to the proof of the fact that $\F_\t \coloneqq (\E_\t,\M_\t)$ is a \hfs.

\begin{lemma}[cartesian characterization of $\F_\t$]\label{classes_via_cartesian}
In the above setting, a morphism $(\phi\colon X\to Y)\in\cD^\due$ belongs to $\E_\t$ if and only if the  square
\begin{equation}\label{cartesian?}
\xymatrix{
X^{\leq0}\ar@{}[dr]\ar[r]^{}\ar[d]_{\phi^{\leq0}}&X\ar[d]^{\phi}\\
Y^{\leq0}\ar[r]_{}&Y
}
\end{equation}
is homotopy cartesian. Thus, if $\phi\in\E_\t$, the cone of $\phi$  belongs to $ \cD^{\leq 0} $. Dually, $(\psi\colon X\to Y)\in\cD^\due$ belongs to $\M_\t$ if and only if the square
\begin{equation*}
\xymatrix{
X\ar@{}[dr]\ar[r]^{}\ar[d]_{\psi}&X^{\geq1}\ar[d]^{\psi^{\geq1}}\\
Y\ar[r]_{}&Y^{\geq1}
}
\end{equation*}
is homotopy cartesian. Thus, if $\psi\in\M_\t$, the cone of $\psi$ belongs to $ \cD^{\geq 0} $.
\end{lemma}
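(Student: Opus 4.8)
The plan is to realize the square \eqref{cartesian?} as the left-hand square of the morphism of distinguished triangles induced by $\phi$ between the two canonical approximation triangles of $\t$, and to read off homotopy cartesianness from the behaviour of the third column of that morphism.

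First I would use that the approximation triangle $X^{\leq0}\xto{\sigma_X}X\xto{\rho_X}X^{\geq1}\xto{\partial_X}\Sigma X^{\leq0}$ is functorial: for $\phi\colon X\to Y$ the truncations yield $\tau^{\leq0}\phi\colon X^{\leq0}\to Y^{\leq0}$ and $\tau^{\geq1}\phi\colon X^{\geq1}\to Y^{\geq1}$, natural in the sense that $\sigma_Y\circ\tau^{\leq0}\phi=\phi\circ\sigma_X$ and $\tau^{\geq1}\phi\circ\rho_X=\rho_Y\circ\phi$. Completing the pair $(\tau^{\leq0}\phi,\phi)$ to a morphism of triangles $(\tau^{\leq0}\phi,\phi,c)$, commutativity of its middle square forces $c\circ\rho_X=\rho_Y\circ\phi$; since $Y^{\geq1}\in\cD^{\geq1}$ and $\rho_X$ is the reflection unit, $c$ is uniquely determined, so $c=\tau^{\geq1}\phi$. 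Thus $\phi$ induces the morphism of triangles
\[
\xymatrix{
X^{\leq0}\ar[r]^{\sigma_X}\ar[d]_{\tau^{\leq0}\phi}&X\ar[r]^{\rho_X}\ar[d]^{\phi}&X^{\geq1}\ar[r]^{\partial_X}\ar[d]^{\tau^{\geq1}\phi}&\Sigma X^{\leq0}\ar[d]^{\Sigma\tau^{\leq0}\phi}\\
Y^{\leq0}\ar[r]^{\sigma_Y}&Y\ar[r]^{\rho_Y}&Y^{\geq1}\ar[r]^{\partial_Y}&\Sigma Y^{\leq0}
}
\]
whose left square is precisely \eqref{cartesian?} and whose third terms $X^{\geq1},Y^{\geq1}$ are the cones of its two horizontal maps $\sigma_X,\sigma_Y$.

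Then I would invoke the standard fact that a commutative square, with its horizontal arrows completed to triangles, is homotopy cartesian if and only if the induced comparison between the two cones is an isomorphism --- here, by the previous paragraph, that comparison is the unique one and equals $\tau^{\geq1}\phi$. This is derivable from the octahedral axiom: for the implication ``$\tau^{\geq1}\phi$ invertible $\Rightarrow$ homotopy cartesian'' one computes that the cone of $(\sigma_Y,\phi)\colon Y^{\leq0}\oplus X\to Y$ is isomorphic to $\Sigma X^{\leq0}$ and identifies the preceding map with $\binom{\tau^{\leq0}\phi}{-\sigma_X}$, exhibiting the distinguished triangle that \refbf{recall_hocart} demands of \eqref{cartesian?}. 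One concludes that \eqref{cartesian?} is homotopy cartesian iff $\tau^{\geq1}\phi$ is an isomorphism, i.e.\ iff $\phi\in\E_\t$. For the assertion about $C_\phi$: the uniqueness just established makes $(\tau^{\leq0}\phi,\phi,\tau^{\geq1}\phi)$ a middling good morphism of triangles, hence it completes to a $3\times 3$ diagram as in \eqref{3x3} in which $C_{\tau^{\leq0}\phi}$, $C_\phi$ and $C_{\tau^{\geq1}\phi}$ form a triangle; when $\tau^{\geq1}\phi$ is an isomorphism the last term vanishes, so $C_\phi\cong C_{\tau^{\leq0}\phi}$, and this lies in $\cD^{\leq0}$ because it fits into a triangle $Y^{\leq0}\to C_\phi\to\Sigma X^{\leq0}\to\Sigma Y^{\leq0}$ with both outer terms in $\cD^{\leq0}$ (recall $\Sigma\cD^{\leq0}=\cD^{\leq-1}\subseteq\cD^{\leq0}$) and $\cD^{\leq0}$ closed under extensions.

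The statements for $\M_\t$ are the exact dual: one runs the same argument on the \emph{right-hand} square of the morphism of triangles associated to $\psi$ (or rotates every triangle), where now the cones of the horizontal arrows $\rho_X,\rho_Y$ are $\Sigma X^{\leq0},\Sigma Y^{\leq0}$ and the comparison between them is $\Sigma\tau^{\leq0}\psi$; so that square is homotopy cartesian iff $\tau^{\leq0}\psi$ is invertible, i.e.\ iff $\psi\in\M_\t$, and then $C_\psi\cong C_{\tau^{\geq1}\psi}$, which lies in $\cD^{\geq0}$ since it fits into a triangle $Y^{\geq1}\to C_\psi\to\Sigma X^{\geq1}\to\Sigma Y^{\geq1}$ with both outer terms in $\cD^{\geq0}$ (as $\cD^{\geq1}\subseteq\cD^{\geq0}$ and $\Sigma\cD^{\geq1}=\cD^{\geq0}$) and $\cD^{\geq0}$ closed under extensions. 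I expect the only mildly delicate points to be the two I flagged: verifying that the comparison map on cones is literally $\tau^{\geq1}\phi$ (resp.\ $\tau^{\leq0}\psi$) --- this is exactly where the universal property of the truncation is used, and it is what turns ``homotopy cartesian'' into the defining condition of $\E_\t$ (resp.\ $\M_\t$) --- and the sign bookkeeping in the octahedral derivation of the cone-comparison criterion for homotopy cartesian squares, should one not simply cite it. Everything else is routine.
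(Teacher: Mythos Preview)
Your proof is correct and follows essentially the same approach as the paper's: both arguments build the morphism of approximation triangles, use the universal property of $\tau^{\geq1}$ (what the paper cites as \cite[\aprop\textbf{1.1.9}]{BBD}) to pin down the third column as $\tau^{\geq1}\phi$, and then identify homotopy cartesianness of the left square with invertibility of that map. The only cosmetic difference is that the paper verifies the ``square cartesian $\Leftrightarrow$ cone-comparison iso'' step explicitly by splitting the mapping-cone triangle (via \cite[Lemma \textbf{1.2.4}, Remarks \textbf{1.3.15}, \textbf{1.4.5}]{Neeman}), whereas you invoke it as a standard consequence of the octahedral axiom; your additional justification that the unique completion is automatically middling good (hence the $3\times3$ argument for $C_\phi\in\cD^{\leq0}$ goes through) is a nice touch that the paper leaves implicit.
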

\begin{proof}
Suppose first that $\phi\in \E_\t$. 
By \cite[Remark \textbf{1.3.15}]{Neeman}, the square in \eqref{cartesian?} can be completed to a good morphism of triangles
\[
\xymatrix{
X^{\leq0}\ar@{}[dr]\ar[r]^{}\ar[d]_{\phi^{\leq0}}&X\ar[d]^{\phi}\ar[r]&X^{\geq1}\ar[r]\ar@{.>}[d]&\Sigma X\ar[d]\\
Y^{\leq0}\ar[r]_{}&Y\ar[r]&Y^{\geq1}\ar[r]&\Sigma Y
}
\]
while by \cite[\aprop\textbf{1.1.9}]{BBD}, the unique map completing the above square to a morphism of triangles is $\tau^{\geq1}\phi$. Thus, we get that the following candidate triangle is in fact a triangle
\[
X^{\leq0}\oplus\Sigma^{-1}Y^{\geq1}\to X\oplus Y^{\leq0}\to X^{\geq1}\oplus Y\to \Sigma X\oplus Y^{\geq1}.
\]
The above triangle is the direct sum of the following candidate triangles (see \cite[Lemma \textbf{1.2.4}]{Neeman})
\[
\Sigma^{-1}Y^{\geq1}\to 0\to X^{\geq1}\tilde\to Y^{\geq1}\ \ \text{ and }\ \ X^{\leq0}\to X\oplus Y^{\leq0}\to Y\to \Sigma X,
\]
showing that the candidate triangle on the right-hand-side is a distinguished triangle (as it is a summand of a distinguished triangle). The existence of such a triangle means exactly that the square in \eqref{cartesian?}  is homotopy cartesian.

On the other hand, suppose the square in \eqref{cartesian?} is homotopy cartesian. By \cite[Remark \textbf{1.4.5}]{Neeman}, this can be completed to a good morphism of triangles
\[
\xymatrix{
X^{\leq0}\ar@{}[dr]\ar[r]^{}\ar[d]_{\phi^{\leq0}}&X\ar[d]^{\phi}\ar[r]&X^{\geq1}\ar[r]\ar@{.>}[d]|\cong&\Sigma X\ar[d]\\
Y^{\leq0}\ar[r]_{}&Y\ar[r]&Y^{\geq1}\ar[r]&\Sigma Y
}
\]
Invoking again  \cite[\aprop\textbf{1.1.9}]{BBD}, we obtain that $\tau^{\geq1}\phi$ is an iso.
\end{proof}
\begin{lemma}\label{cart_clos}
Consider a homotopy cartesian square 
\[
\xymatrix{
X\ar@{}[dr]|\boxvoid\ar[r]\ar[d]_{\phi}&Y\ar[d]^{\psi}\\
X'\ar[r]&Y'
}
\]
If $\phi^{\geq 0}$ is an isomorphism, then $\psi^{\geq 0}$ is an isomorphism. Dually, if $\psi^{\leq 0}$ is an isomorphism, then $\phi^{\leq 0}$ is an isomorphism. In other words, $\E_\t$ is closed under homotopy pushouts and $\M_\t$ is closed under homotopy pullbacks.
\end{lemma}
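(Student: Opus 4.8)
The plan is to turn membership in $\E_\t$ and $\M_\t$ into a \emph{pointwise representable} condition and then extract the conclusion from the Mayer--Vietoris long exact sequence of the homotopy cartesian square. Since $\tau^{\geq1}$ is left adjoint to $\cD^{\geq1}\hookrightarrow\cD$ and $\tau^{\leq0}$ is right adjoint to $\cD^{\leq0}\hookrightarrow\cD$, the Yoneda lemma gives: $\phi\in\E_\t$ (i.e.\ $\tau^{\geq1}\phi$ invertible) iff $\cD(\phi,Z)$ is bijective for every $Z\in\cD^{\geq1}$, and $\psi\in\M_\t$ (i.e.\ $\tau^{\leq0}\psi$ invertible) iff $\cD(W,\psi)$ is bijective for every $W\in\cD^{\leq0}$. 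I will prove that $\E_\t$ is closed under homotopy pushouts; the statement for $\M_\t$ is the formal dual, and the argument is insensitive to the truncation index, so it covers verbatim the literal assertion about $\phi^{\geq0},\psi^{\geq0}$ as well.

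So assume the displayed square is homotopy cartesian with legs $\phi\colon X\to X'$ in $\E_\t$ and $\psi\colon Y\to Y'$, write $f,f'$ for its horizontal maps, and fix $Z\in\cD^{\geq1}$; the goal is that $\cD(\psi,Z)\colon\cD(Y',Z)\to\cD(Y,Z)$ is bijective. \emph{Injectivity.} Since the square is homotopy cartesian it extends to a morphism of triangles whose component on the two vertical cones is an isomorphism (this is the content of \cite{Neeman}, as used in the proof of Lemma \ref{classes_via_cartesian}); hence $C_\psi\cong C_\phi$, while Lemma \ref{classes_via_cartesian} gives $C_\phi\in\cD^{\leq0}$, so $\cD(C_\psi,Z)=0$ by the orthogonality axiom of a $t$-structure (Definition \ref{def:teestru}). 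Applying $\cD(-,Z)$ to the triangle $Y\xto{\psi}Y'\to C_\psi\to\Sigma Y$ then forces $\cD(\psi,Z)$ to be monic. \emph{Surjectivity.} By Remark \ref{recall_hocart} there is a distinguished triangle $X\xto{\binom{\phi}{-f}}X'\oplus Y\xto{(f',\psi)}Y'\to\Sigma X$; applying $\cD(-,Z)$ produces an exact sequence $\cD(Y',Z)\to\cD(X',Z)\oplus\cD(Y,Z)\to\cD(X,Z)$ whose first map is $\beta\mapsto\bigl(\cD(f',Z)\beta,\,\cD(\psi,Z)\beta\bigr)$ and whose second is $(\alpha,\gamma)\mapsto\cD(\phi,Z)\alpha-\cD(f,Z)\gamma$. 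As $\cD(\phi,Z)$ is bijective, the kernel $K$ of the second map projects isomorphically onto $\cD(Y,Z)$ via its second coordinate; the first map surjects onto $K$ by exactness, and the composite of that surjection with the projection is exactly $\cD(\psi,Z)$, which is therefore onto. Thus $\cD(\psi,Z)$ is bijective for all $Z\in\cD^{\geq1}$, i.e.\ $\psi\in\E_\t$.

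For $\M_\t$ one may run the same argument in $\cD^\opp$ with the dual $t$-structure, or directly: given the homotopy cartesian square with $\psi\in\M_\t$, one shows $\cD(W,\phi)$ is bijective for all $W\in\cD^{\leq0}$ --- injectivity because $C_\phi\cong C_\psi$ and $C_\psi\in\cD^{\geq0}=\Sigma\cD^{\geq1}$ by Lemma \ref{classes_via_cartesian}, so $\Sigma^{-1}C_\phi\in\cD^{\geq1}$ and $\cD(W,\Sigma^{-1}C_\phi)=0$, and surjectivity from the very same Mayer--Vietoris sequence evaluated on $\cD(W,-)$ with the roles of $\phi$ and $\psi$ interchanged. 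The only non-formal inputs are Lemma \ref{classes_via_cartesian} and the standard fact that the cones of two parallel edges of a homotopy cartesian square agree; everything else is a diagram chase, so I foresee no genuine obstacle. The points demanding a little care are keeping the adjunction directions straight --- so that ``$\tau^{\geq1}\phi$ invertible'' really is the pointwise condition ``$\cD(\phi,Z)$ bijective for $Z\in\cD^{\geq1}$'' --- and the sign conventions in the triangle of Remark \ref{recall_hocart}, which affect the explicit form of the two maps in the Mayer--Vietoris sequence but not the chase.
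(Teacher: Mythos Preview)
Your argument is correct. Both you and the paper begin identically, translating the claim via Yoneda and the truncation adjunctions into ``$\cD(\psi,Z)$ is bijective for every $Z$ in the relevant coaisle''; the difference is in how the bijectivity is extracted from the homotopy cartesian structure. The paper uses the \emph{other} pair of parallel edges: it completes the horizontal maps $X\to Y$ and $X'\to Y'$ to triangles with a common fiber $Z$, obtaining a morphism of long exact sequences in which four out of five vertical maps are isomorphisms (two are identities on $\cD(\Sigma^?Z,B)$, two come from the hypothesis on $\phi$ and its suspension), and then invokes the Five Lemma once. You instead split into injectivity---using $C_\psi\cong C_\phi\in\cD^{\leq0}$ via Lemma~\ref{classes_via_cartesian}---and surjectivity---via a direct chase in the Mayer--Vietoris sequence. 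The paper's route is marginally more self-contained, since it does not need the cone computation of Lemma~\ref{classes_via_cartesian} as an input; your route makes the two halves of bijectivity visible and shows explicitly where the orthogonality axiom of the $t$-structure enters. Either way the content is the same long exact sequence manipulation, just organized differently.
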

\begin{proof}
Suppose first that $\phi^{\geq0}$ is an isomorphism. This means that $ \cD^{\geq  0} (\phi^{\geq 0},B)$ is an isomorphism for any $B\in  \cD^{\geq  0} $ or, equivalently, $\cD(\phi,B)$ is an isomorphism for any $B\in  \cD^{\geq 0} $. We have to show the same property holds for $\psi$. Consider the following morphism of triangles:
\[
\xymatrix{
Z\ar[r]\ar@{=}[d]&X\ar@{}[dr]|\boxvoid\ar[r]\ar[d]_{\phi}&Y\ar[d]^{\psi}\ar[r]&\Sigma Z\ar@{=}[d]\\
Z\ar[r]&X'\ar[r]&Y'\ar[r]&\Sigma Z
}
\]
For any given $B\in  \cD^{\geq 0} $, we obtain a morphism of long exact sequences:
\[
\xymatrix@C=15pt{
\cdots\ar[r]&\cD(\Sigma X',B)\ar[r]\ar[d]|{\cong}&\cD(\Sigma Z,B)\ar[r]\ar@{=}[d]&\cD(Y',B)\ar[r]\ar[d]&\cD(X',B)\ar[r]\ar[d]|{\cong}&\cD(Z,B)\ar[r]\ar@{=}[d]&\cdots\\
\cdots\ar[r]&\cD(\Sigma X,B)\ar[r]&\cD(\Sigma Z,B)\ar[r]&\cD(Y,B)\ar[r]&\cD(X,B)\ar[r]&\cD(Z,B)\ar[r]&\cdots
}
\]
where $\cD(\Sigma\phi,B)$ is an isomorphism because $\cD(\phi,\Sigma^{-1}B)$ is an isomorphism, since $\Sigma^{-1}B\in  \cD^{\geq 1} \subseteq  \cD^{\geq  0} $. Now, by the Five Lemma we obtain that $\cD(\psi,B)$ is an isomorphism for any $B\in  \cD^{\geq  0} $, that is, $\psi^{\geq0}$ is an isomorphism. The proof of the second part of the statement is dual.
\end{proof}
\begin{lemma}\label{all_maps_are_crumbled}
Any morphism in $\cD$ is $\F_\t$-crumbled. 
\end{lemma}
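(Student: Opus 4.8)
The plan is to build the factorization $X\xto{e}Z\xto{m}Y$ of a given morphism $(\phi\colon X\to Y)\in\cD^\due$ explicitly from the truncation functor $\tau^{\leq0}$, via a homotopy pushout. Recall that $\sigma$ is a natural transformation $\iota\tau^{\leq0}\Rightarrow\id_\cD$, so $\phi\circ\sigma_X=\sigma_Y\circ\tau^{\leq0}\phi$; hence the morphism $(\phi,\sigma_Y)\colon X\oplus Y^{\leq0}\to Y$ kills $\binom{\sigma_X}{-\tau^{\leq0}\phi}\colon X^{\leq0}\to X\oplus Y^{\leq0}$. I would then define $Z$ by the distinguished triangle
\[
X^{\leq0}\xto{\binom{\sigma_X}{-\tau^{\leq0}\phi}}X\oplus Y^{\leq0}\xto{(e,j)}Z\to\Sigma X^{\leq0},
\]
i.e.\ as the homotopy pushout of $X\xot{\sigma_X}X^{\leq0}\xto{\tau^{\leq0}\phi}Y^{\leq0}$ in the sense of \refbf{recall_hocart}. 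Since $(\phi,\sigma_Y)$ annihilates the first map of the triangle, the associated long exact sequence produces a (not necessarily unique) $m\colon Z\to Y$ with $m\circ e=\phi$ and $m\circ j=\sigma_Y$; this is the candidate factorization, and it remains to verify $e\in\E_\t$ and $m\in\M_\t$.

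For $e\in\E_\t$: since $\tau^{\geq1}$ is left adjoint to the inclusion $\cD^{\geq1}\hookrightarrow\cD$, the map $\tau^{\geq1}e$ is invertible precisely when $\cD(e,B)$ is a bijection for every $B\in\cD^{\geq1}$. Applying $\cD(-,B)$ to the defining triangle of $Z$, the groups $\cD(X^{\leq0},B)$, $\cD(Y^{\leq0},B)$ and $\cD(\Sigma X^{\leq0},B)$ all vanish by \refbf{tee:fst} (for the last one, note $\Sigma X^{\leq0}\in\cD^{\leq-1}\subseteq\cD^{\leq0}$ by \refbf{tee:snd}), so the sequence collapses to an isomorphism $\cD(Z,B)\cong\cD(X,B)$ which is exactly $\cD(e,B)$ (the $j$-component being null).

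For $m\in\M_\t$: dually, $\tau^{\leq0}m$ is invertible precisely when $\cD(B,m)$ is a bijection for every $B\in\cD^{\leq0}$. Apply $\cD(B,-)$ to the same triangle. The adjunction $\iota\dashv\tau^{\leq0}$ gives $\cD(B,X^{\leq0})\cong\cD(B,X)$ (via $(\sigma_X)_*$) and $\cD(B,Y^{\leq0})\cong\cD(B,Y)$ (via $(\sigma_Y)_*$), while $\cD(B,\Sigma X^{\leq0})\hookrightarrow\cD(B,\Sigma X)$ because the connecting term $\cD(B,X^{\geq1})$ vanishes by \refbf{tee:fst}. Hence the connecting morphism leaving $\cD(B,Z)$ is zero, so $\cD(B,Z)$ is the cokernel of the map induced by $\binom{\sigma_X}{-\tau^{\leq0}\phi}$; since its first coordinate is invertible, $j_*$ identifies $\cD(B,Y^{\leq0})$ with $\cD(B,Z)$. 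As $m\circ j=\sigma_Y$, this forces $\cD(B,m)=(\sigma_Y)_*\circ j_*^{-1}$, a bijection, so $m\in\M_\t$ and $\phi$ is $\F_\t$-crumbled.

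The only point needing care is the long-exact-sequence bookkeeping, in particular checking in the second computation that the connecting morphism out of $\cD(B,Z)$ really dies — this is exactly where \refbf{tee:fst} applied to $X^{\geq1}$ enters; everything else is formal manipulation with the adjunctions defining $\tau^{\leq0},\tau^{\geq1}$ and with \refbf{tee:fst}--\refbf{tee:snd}. As a variant, the same computation shows that $j$ exhibits $Y^{\leq0}$ as the co-reflection $Z^{\leq0}$, so the homotopy pushout square above is precisely the homotopy cartesian square by which Lemma \refbf{classes_via_cartesian} characterizes $\E_\t$, and $e\in\E_\t$ follows at once from that lemma.
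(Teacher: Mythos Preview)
Your proof is correct. The construction is essentially dual to the paper's: the paper forms the intermediate object $P$ as the homotopy \emph{pullback} of $X^{\geq1}\xto{\phi^{\geq1}}Y^{\geq1}\xot{\rho_Y}Y$, whereas you form $Z$ as the homotopy \emph{pushout} of $X\xot{\sigma_X}X^{\leq0}\xto{\tau^{\leq0}\phi}Y^{\leq0}$. In the paper's approach, $\phi_m\in\M_\t$ is immediate from Lemma~\refbf{classes_via_cartesian}, and the work goes into showing $\phi_e\in\E_\t$: this requires completing a square to a \emph{good} morphism of triangles and then extracting a direct-sum decomposition of the mapping cone to witness the remaining cartesian square. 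Your verification instead bypasses Lemma~\refbf{classes_via_cartesian} entirely and checks both $e\in\E_\t$ and $m\in\M_\t$ by the adjunction criterion (bijectivity of $\cD(e,B)$ for $B\in\cD^{\geq1}$, resp.\ of $\cD(B,m)$ for $B\in\cD^{\leq0}$) via the long exact sequence of the defining triangle of $Z$. This is arguably more elementary: it avoids the good-morphism-of-triangles technology and the direct-summand argument. One small remark on your closing ``variant'': invoking Lemma~\refbf{classes_via_cartesian} for $e$ requires knowing that $j$ realizes $Y^{\leq0}$ as $Z^{\leq0}$, which is precisely the content of your second hom-set computation --- so that alternative is not really a shortcut, just a repackaging.
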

\begin{proof}
Take a map $\phi\colon X\to Y$ in $\cD$, and let us prove that $\phi$ is $\F_\t$-crumbled. Let us start taking a homotopy pullback of the maps $\phi^{\geq1}$ and $\rho_Y$:
\[
\xymatrix{
P\ar@{.>}[r]\ar@{.>}[d]_{\phi_m}\ar@{}[dr]|{\boxvoid}&X^{\geq1}\ar[d]^{\phi^{\geq1}}\\
Y\ar[r]_{\rho_Y}&Y^{\geq1}
}
\]
By Lemma \refbf{classes_via_cartesian}, $\phi_m\in \M_\t$. Consider also the following commutative solid diagram
\[
\xymatrix{
X\ar@{.>}[dr]|{\exists \phi_e}\ar@/_-10pt/[rrd]^{\rho_X}\ar@/_10pt/[rdd]_\phi\\
&P\ar[r]\ar[d]\ar@{}[dr]|{\boxvoid}&X^{\geq1}\ar[d]^{\phi^{\geq1}}\\
&Y\ar[r]_{\rho_Y}&Y^{\geq1}
}
\]
Then there exists a (non-unique, see \cite[p. \textbf{54}]{Neeman}) map $\phi_e\colon X\to p$ that makes the diagram commute. Finally consider the following diagram, where the dotted arrow is obtained completing to a good map of triangles:
\[
\xymatrix{
X^{\leq0}\ar@{.>}[d]\ar[r]^{\sigma_X}&X\ar[d]|{\phi_e}\ar[r]^{\rho_X}&X^{\geq1}\ar@{=}[d]\ar[r]&\Sigma X^{\leq0}\ar[d]\\
Y^{\leq0}\ar@{=}[d]\ar[r]&P\ar[r]\ar[d]|{\phi_m}\ar@{}[dr]|{\boxvoid}&X^{\geq1}\ar[d]^{\phi^{\geq1}}\ar[r]&\Sigma Y^{\leq0}\ar@{=}[d]\\
Y^{\leq0}\ar[r]_{\sigma_Y}&Y\ar[r]_{\rho_Y}&Y^{\geq1}\ar[r]&\Sigma Y^{\leq0}
}
\]
By construction $\phi=\phi_m\phi_e$. It remains to show that $\phi_e\in \E_\t$. By Lemma \refbf{classes_via_cartesian}, we have to verify that the top left square is homotopy cartesian. Indeed, take the following mapping cone, which is distinguished since we took a good morphism of triangles in our construction:
\[
X\oplus Y^{\leq0}\to P\oplus X^{\geq1}\to X^{\geq1}\oplus \Sigma X^{\leq0}\to\Sigma X\oplus \Sigma Y^{\leq 0}.
\]
This triangle is the direct sum of the following two candidate triangles (see \cite[Lemma \textbf{1.2.4}]{Neeman}):
\begin{gather*}
0\to X^{\geq1}\to X^{\geq1}{\to}0,\\
X\oplus Y^{\leq0}\to P\to  \Sigma X^{\leq0}{\to}\Sigma X\oplus\Sigma Y^{\leq0},
\end{gather*}
showing that $X^{\leq0}\to X\oplus Y^{\leq0}\to P \to \Sigma X^{\leq0}$ is distinguished.
\end{proof}

\begin{lemma}
Given $e\in \E_\t$ and $m\in\M_\t$, we have $e \horth  m$. 
\end{lemma}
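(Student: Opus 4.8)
The plan is to check conditions \textsc{ho}1 and \textsc{ho}2 of Definition~\refbf{wobbly} for the pair $(e,m)$, fixing triangles $E_0\xto{e}E_1\xto{\alpha_e}C_e\xto{\beta_e}\Sigma E_0$ and $M_0\xto{m}M_1\xto{\alpha_m}C_m\xto{\beta_m}\Sigma M_0$. By Lemma~\refbf{classes_via_cartesian} and its dual, $C_e\in\cD^{\leq0}$ and $C_m\in\cD^{\geq0}$; hence $\Sigma^{-1}C_m\in\cD^{\geq1}$ and $\cD(C_e,\Sigma^{-1}C_m)=0$ by the orthogonality axiom $\cD(\cD^{\leq0},\cD^{\geq1})=0$ of Definition~\refbf{def:teestru}, so \textsc{ho}1 holds trivially (its source is zero). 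The same remark disposes of \textsc{ho}1 in every auxiliary situation below, so the real content is \textsc{ho}2 — and here one cannot simply argue that the source $\cD(C_e,C_m)$ vanishes, since both cones may well lie in the heart $\cD^{\leq0}\cap\cD^{\geq0}$.

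I expect the main obstacle to be exactly that $E_0$ and $E_1$ need not lie in the aisle $\cD^{\leq0}$. I would remove it by reducing to $e^{\leq0}:=\tau^{\leq0}e$. Lemma~\refbf{classes_via_cartesian} tells us the square with verticals $e^{\leq0}\colon E_0^{\leq0}\to E_1^{\leq0}$ and $e$, and horizontals the co-reflection counits, is homotopy cartesian, so $e$ is the homotopy pushout of $e^{\leq0}$; by part (3) of Lemma~\refbf{closure_homo_ho2} it is therefore enough to prove $e^{\leq0}\horth m$ and $e^{\leq0}\horth\Sigma^{-1}m$. Now $E_0^{\leq0},E_1^{\leq0}\in\cD^{\leq0}$, and for $g\in\{m,\Sigma^{-1}m\}$ one has $C_g\in\cD^{\geq0}$ (for $g=\Sigma^{-1}m$ even $C_g=\Sigma^{-1}C_m\in\cD^{\geq1}$) and $\cD(X,g)$ invertible for every $X\in\cD^{\leq0}$ (for $g=m$ because $\tau^{\leq0}m$ is invertible; for $g=\Sigma^{-1}m$ because $\cD(X,\Sigma^{-1}m)\cong\cD(\Sigma X,m)$ with $\Sigma X\in\cD^{\leq-1}\subseteq\cD^{\leq0}$). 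So everything comes down to a single base case.

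Base case: for $f\colon A\to B$ with $A,B\in\cD^{\leq0}$ (so $C_f\in\cD^{\leq0}$, the aisle being closed under $\Sigma$ and extensions) and $g\colon G_0\to G_1$ with $C_g\in\cD^{\geq0}$ and $\cD(X,g)$ invertible for all $X\in\cD^{\leq0}$, I claim $f\horth g$. Condition \textsc{ho}1 is as above; for \textsc{ho}2, note that the map $\varphi\mapsto\beta_g\varphi\alpha_f$ factors as $\cD(C_f,C_g)\xto{(\alpha_f)^*}\cD(B,C_g)\xto{(\beta_g)_*}\cD(B,\Sigma G_0)$, and I would show both factors are injective. Applying $\cD(-,C_g)$ to the triangle of $f$ gives $\ker(\alpha_f)^*=\operatorname{im}\bigl((\beta_f)^*\colon\cD(\Sigma A,C_g)\to\cD(C_f,C_g)\bigr)$, and $\cD(\Sigma A,C_g)=0$ because $\Sigma A\in\cD^{\leq-1}$ while $C_g\in\cD^{\geq0}$ (the shifted $t$-structure orthogonality $\cD(\cD^{\leq-1},\cD^{\geq0})=0$). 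Applying $\cD(B,-)$ to the triangle of $g$ gives $\ker(\beta_g)_*=\operatorname{im}(\alpha_g)_*$, and $(\alpha_g)_*=0$ since $\cD(B,g)\colon\cD(B,G_0)\to\cD(B,G_1)$ is invertible (as $B\in\cD^{\leq0}$), hence surjective. A composite of monomorphisms being a monomorphism, \textsc{ho}2 holds, so $f\horth g$; this finishes the reduction and hence the lemma.

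The hard part is thus precisely that the bare cone conditions do not yield $\cD(C_e,C_m)=0$, so \textsc{ho}2 is a genuine injectivity assertion rather than a vanishing; the device that makes it work is passing from $e$ to $e^{\leq0}$ — using the homotopy-cartesian description of $\E_\t$ (Lemma~\refbf{classes_via_cartesian}) together with the homotopy-pushout stability of $\horth$ (Lemma~\refbf{closure_homo_ho2}) — which brings both the source and the target of the comparison map into the aisle, where the relevant Hom-groups vanish and the relevant comparison map becomes invertible. If one wishes to bypass Lemma~\refbf{closure_homo_ho2}, the same computation can instead be carried out by hand using the good morphism of triangles that identifies $C_e$ with the cone of $e^{\leq0}$ compatibly with the counit $E_1^{\leq0}\to E_1$.
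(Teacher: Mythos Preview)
Your proof is correct. Both you and the paper begin by deducing $C_e\in\cD^{\leq0}$ and $C_m\in\cD^{\geq0}$ from Lemma~\refbf{classes_via_cartesian}, which kills \textsc{ho}1. For \textsc{ho}2 the routes diverge: the paper replaces \emph{both} triangles at once, trading $e$ for $e^{\leq0}$ and $m$ for $m^{\geq1}$ (same cones $C_e$, $C_m$, but now all four vertices sit in $\cD^{\leq0}$ or $\cD^{\geq1}$), and then runs a direct two-step diagram chase to show first $f\alpha'_e=0$ and then $f=0$. You instead reduce only on the $e$ side but modularly, invoking the pushout-stability Lemma~\refbf{closure_homo_ho2}(3) --- at the price of also having to verify $e^{\leq0}\horth\Sigma^{-1}m$ --- and handle the $m$ side via the representable characterization ($\cD(X,m)$ invertible for $X\in\cD^{\leq0}$); your base case then follows cleanly by factoring the \textsc{ho}2 map and showing each factor is injective via long exact sequences. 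Your argument is more modular and reuses earlier closure results; the paper's is more symmetric and self-contained, avoiding the extra $\Sigma^{-1}m$ check.
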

\begin{proof}
Complete $e$ and $m$ to triangles as follows:
\[
E_0 \xto{e}  E_1 \xto{\alpha_e} C_e \xto{\beta_e} \Sigma E_0\ \ \ M_0 \xto{m}  M_1 \xto{\alpha_m} C_m \xto{\beta_m} \Sigma M_0,
\] 
By Lemma \refbf{classes_via_cartesian}, there are morphisms of triangles, with $\phi=e^{\leq0}$ and $\psi=m^{\geq1}$,
\[
\xymatrix{
X_0\ar@{}[dr]|{\square}\ar[d]_\phi\ar[r]&E_0\ar[d]^e&&M_0\ar@{}[dr]|{\square}\ar[d]_m\ar[r]&Y_0\ar[d]^\psi\\
X_1\ar[d]_{\alpha_e'}\ar[r]&E_1\ar[d]^{\alpha_e}&&M_1\ar[d]_{\alpha_m}\ar[r]&Y_1\ar[d]^{\alpha_m'}\\
C_e\ar[d]_{\beta_e'}\ar@{=}[r]&C_e\ar[d]^{\beta_e}&&C_m\ar[d]_{\beta_m}\ar@{=}[r]&C_m\ar[d]^{\beta_m'}\\
\Sigma X_0\ar[r]&\Sigma E_0&&\Sigma M_0\ar[r]&\Sigma Y_0}
\]
where $X_0, X_1\in  \cD^{\leq 0} $ and $Y_0, Y_1\in  \cD^{\geq 1} $. Using the closure properties of $ \cD^{\leq 0} $ and $ \cD^{\geq 1} $, one can show that $C_e\in  \cD^{\leq 0} $ and $\Sigma^{-1}C_m\in  \cD^{\geq  1} $. Thus, $\cD(C_e,\Sigma^{-1}C_m)=0$ by condition \refbf{def:teestru}.\refbf{tee:fst}, giving us  \refbf{wobbly}.\textsc{ho}1 It remains to verify condition \refbf{wobbly}.\textsc{ho}2, that is, suppose  we have a map $f\colon C_e\to C_m$ whose image in $\cD(E_1,\Sigma M_0)$ is trivial and let us prove that $f=0$. Indeed, we know that $\beta_mf\alpha_e=0$, so also $\beta'_mf\alpha'_e=0$ and thus we can find a morphism of triangles as follows
\[
\xymatrix{
0\ar[r]\ar[d]&X_1\ar@{=}[r]\ar@{.>}[d]|{f_1}&X_1\ar[r]\ar[d]|{f\alpha'_e}&0\ar[d]\\
Y_0\ar[r]&Y_1\ar[r]^{\alpha'_m}&C_m\ar[r]^{\beta'_m}&\Sigma Y_0
}
\]
showing that $f\alpha'_e=\alpha'_m f_1$ for some $f_1\colon X_1\to Y_1$. But $\cD(X_1,Y_1)=0$ by \refbf{def:teestru}.\refbf{tee:fst}, so $f_1=0$, showing that $f\alpha'_e=0$. Hence, we can find a morphism of triangles as follows
\[
\xymatrix{
X_0\ar[r]^{e}\ar[d]&X_1\ar[r]^{\alpha'_e}\ar[d]&C_e\ar[r]^{\beta_e'}\ar[d]|{f}&\Sigma X_0\ar@{.>}[d]^{f_2}\\
\Sigma^{-1}C_m\ar[r]&0\ar[r]&C_m\ar@{=}[r]&C_m
}
\]
showing that $f=f_2\beta_e'$, for some $f_2\colon \Sigma X_0\to C_m$. Now, since $\Sigma X_0\in  \cD^{\leq -1} $ and $C_m\in  \cD^{\geq 0} $, $f_2=0$ and so also $f=0$, as desired.
\end{proof}
\begin{proposition}
The pair of sub categories $\F_\t=(\E_\t,\M_\t)$ defines a \hfs.
\end{proposition}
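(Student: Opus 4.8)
The plan is to check the two conditions of \adef\refbf{the_def_of_hfs}: that $\F_\t$ is a \phfs (so that $\E_\t=\lhorth{\M_\t}$, $\M_\t=\E_\t^{\horth}$, and $\Sigma\E_\t\subseteq\E_\t$) and that every morphism of $\cD$ is $\F_\t$-crumbled. The second point is exactly Lemma \refbf{all_maps_are_crumbled}, so only the \phfs part needs attention, and the preceding lemmas already carry most of the weight; what remains is essentially bookkeeping.

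I would dispose of shift stability first, working from the adjoint description of $\E_\t$: by the adjunction $\tau^{\geq1}\dashv(\cD^{\geq1}\hookrightarrow\cD)$ and the Yoneda lemma in $\cD^{\geq1}$, a map $\phi$ lies in $\E_\t$ precisely when $\cD(\phi,X)$ is bijective for every $X\in\cD^{\geq1}$. Since $\Sigma^{-1}X\in\cD^{\geq2}\subseteq\cD^{\geq1}$ for such $X$, and $\cD(\Sigma\phi,X)\cong\cD(\phi,\Sigma^{-1}X)$, bijectivity of $\cD(\phi,-)$ on $\cD^{\geq1}$ forces it for $\Sigma\phi$ as well; dually $\psi\in\M_\t$ gives $\Sigma^{-1}\psi\in\M_\t$, which is the equivalent form remarked after \adef\refbf{the_def_of_hfs}.

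For the Galois identities, the inclusions $\E_\t\subseteq\lhorth{\M_\t}$ and $\M_\t\subseteq\E_\t^{\horth}$ are precisely the preceding lemma. For the reverse inclusions I would avoid any factorization/retract argument and instead test against initial and terminal maps. For $X\in\cD$ one has $\var{X}{0}\in\M_\t$ iff $\tau^{\leq0}X=0$ iff $X\in\cD^{\geq1}$; thus if $f\in\lhorth{\M_\t}$ then $f\horth\var{X}{0}$ for every $X\in\cD^{\geq1}$, which by \refbf{object.ortho} says $\cD(f,X)$ is bijective for every such $X$, i.e.\ $f\in\E_\t$. Dually $\var{0}{B}\in\E_\t$ iff $\tau^{\geq1}B=0$ iff $B\in\cD^{\leq0}$, so $g\in\E_\t^{\horth}$ forces $\var{0}{B}\horth g$ for every $B\in\cD^{\leq0}$, hence (the colocal half of \refbf{object.ortho}) $\cD(B,g)$ is bijective for every such $B$, i.e.\ $\tau^{\leq0}g$ is an isomorphism, i.e.\ $g\in\M_\t$. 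Combining, $\E_\t=\lhorth{\M_\t}$ and $\M_\t=\E_\t^{\horth}$, so $\F_\t$ is a \phfs, and with Lemma \refbf{all_maps_are_crumbled} it is a \hfs.

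I do not anticipate a genuine obstacle here; the only steps that need care are the correct identification of $\var{X}{0}\in\M_\t$ with $X\in\cD^{\geq1}$ and of $\var{0}{B}\in\E_\t$ with $B\in\cD^{\leq0}$ — that is, keeping the truncation functors and their (co)reflection adjunctions straight — together with invoking the right (local versus colocal) half of \refbf{object.ortho} in each case.
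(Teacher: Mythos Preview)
Your proof is correct and takes a genuinely different route from the paper's.

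The paper argues the reverse inclusion $\lhorth{\M_\t}\subseteq\E_\t$ via the factorization already established: given $\phi\in\lhorth{\M_\t}$, it factors $\phi=\phi_m\phi_e$ with $\phi_e\in\E_\t$, $\phi_m\in\M_\t$, runs the $3\times 3$ lemma on the square $\phi\Rightarrow\phi_m$, and uses \textsc{ho2}$'$ applied to $\phi\horth\phi_m$ to force the induced map $C_\phi\to C_{m}$ to vanish; this splits $\Sigma C_e\cong C_m\oplus\Sigma C_\phi$, whence $C_m\in\cD^{\leq-1}\cap\cD^{\geq0}=0$ and $\phi_m$ is invertible. Your argument bypasses the factorization entirely: you observe that $\M_\t$ contains all terminal maps $\var{X}{0}$ with $X\in\cD^{\geq1}$, so $\lhorth{\M_\t}$ already forces $f$ to be $X$-local for every such $X$ via Remark~\refbf{object.ortho}, and the adjoint description of $\tau^{\geq1}$ identifies this with $f\in\E_\t$. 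This is shorter and more conceptual; it isolates exactly which maps in $\M_\t$ are needed to detect $\E_\t$. The paper's approach, on the other hand, illustrates the general retract-style mechanism (orthogonal plus factorizable implies membership) that works for arbitrary factorization systems, not just those coming from a reflective/coreflective pair. Your proof also makes the shift stability $\Sigma\E_\t\subseteq\E_\t$ explicit, which the paper's proof leaves implicit.
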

\begin{proof}
We have already seen that any morphism is $\F_\t$-crumbled and that $\E_\t\subseteq \lhorth{\!\M_\t}$. Let us show the converse inclusion. Indeed, let $(\phi\colon X\to Y)\in \lhorth{\!\M_\t}$ and choose a factorization $\phi=\phi_m\phi_e$ with $\phi_e\in \E_\t$ and $\phi_m\in\M_\t$. By the usual $3\times 3$-lemma in triangulated categories, we can complete the commutative square
\[
\xymatrix{
X\ar[r]^{\phi_e}\ar[d]_{\phi}&p\ar[d]^{\phi_m}\\
Y\ar@{=}[r]&Y
}
\]
to a diagram where all the rows and columns are distinguished triangles, and where everything commutes but the top left square, that anti-commutes:
\[
\xymatrix{
\Sigma^{-1}C_e\ar@{=}[d]\ar[r]&\Sigma^{-1}C_\phi\ar[d]\ar[r]&\Sigma^{-1}C_m\ar[d]\ar[r]&C_e\ar@{=}[d]\\
\Sigma^{-1}C_e\ar[d]\ar[r]&X\ar[d]_\phi\ar[r]^{\phi_e}&P\ar[d]^{\phi_m}\ar[r]&C_e\ar[d]\\
0\ar[r]\ar[d]&Y\ar@{=}[r]\ar[d]&Y\ar[r]\ar[d]&0\ar[d]\\
C_e\ar[r]&C_\phi\ar[r]&C_m\ar[r]&\Sigma C_e}
\]
Now, since $\phi\in \lhorth{\!\M_\t}$, it follows by \refbf{wobbly}\textbf{.}\textsc{ho2}' that the map $C_\phi\to C_m$ in the above diagram is the trivial map. Thus, $\Sigma C_e\cong C_m\oplus \Sigma C_\phi$, in particular $C_m$ is a summand of $\Sigma C_e\in \Sigma  \cD^{\leq  0} = \cD^{\leq -1} $. Hence, $C_m\in  \cD^{\leq -1} \cap  \cD^{\geq 0} =0$, showing that $\phi_m$ is an isomorphism, so that $\phi\cong \phi_e\in \E_\t$. 
\end{proof}

\subsection{$t$-structures are  normal \htth}
We now concentrate on showing how each $t$-structure on $\cD$ naturally induces a \htth and vice-versa; the basic idea is to mimic the proof of \cite[\athm\textbf{3.1.1}]{tstructures} tailoring the argument to the triangulated setting.

\begin{lemma}
$\F_\t=(\E_\t,\M_\t)$ is a normal \htth.
\end{lemma}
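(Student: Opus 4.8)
The claim is that $\F_\t = (\E_\t, \M_\t)$ is a normal triangulated torsion theory. By the previous proposition we already know $\F_\t$ is a \hfs, so it remains to check (a) that both $\E_\t$ and $\cD$ are $3$-for-$2$ classes, and (b) normality in the sense of Definition \refbf{hontt}. For (a), the class $\cD$ of all morphisms is trivially $3$-for-$2$, so the real content is that $\E_\t$ has the $3$-for-$2$ property. The plan is to exploit the description $\E_\t = \{\phi \mid \tau^{\geq 1}\phi \text{ is an iso}\}$ together with the cartesian characterization of Lemma \refbf{classes_via_cartesian}: since $\tau^{\geq 1}$ is a functor (the left adjoint to the inclusion $\cD^{\geq 1}\hookrightarrow \cD$), for composable $\phi, \psi$ we have $\tau^{\geq 1}(\psi\phi) = \tau^{\geq 1}\psi \circ \tau^{\geq 1}\phi$, and the usual $2$-for-$3$ property of isomorphisms in a category immediately transfers. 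So this part is essentially formal once one unwinds the definitions; I would state it in one or two lines.

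For (b), fix an object $X \in \cD$ and take the $\F_\t$-factorization of the terminal map $X \to 0$, say $X \xto{e} T \xto{m} 0$ with $e \in \E_\t$, $m \in \M_\t$. I would first identify $T$: since $m\colon T \to 0$ lies in $\M_\t$, the cone of $m$ — which is just $\Sigma T$ — lies in $\cD^{\geq 0}$ by the last sentence of Lemma \refbf{classes_via_cartesian}, equivalently $T \in \cD^{\geq 1}$. Dually, $e \in \E_\t$ forces (by the cartesian characterization applied to $X \to 0$, whose relevant square degenerates) that the cone $C_e$ of $e$ lies in $\cD^{\leq 0}$. Now take a triangle $R \to X \xto{e} T \to \Sigma R$; then $R = \Sigma^{-1}C_e \in \Sigma^{-1}\cD^{\leq 0} = \cD^{\leq 1}$... but more is true: I would argue that in fact the triangle $R \to X \to T$ is (up to the canonical identifications) exactly the truncation triangle $X^{\leq 0} \to X \to X^{\geq 1}$, so that $T \cong X^{\geq 1}$ and $R \cong X^{\leq 0}$. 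Indeed $e \in \E_\t$ says $\tau^{\geq 1}e$ is an iso, and since $T \in \cD^{\geq 1}$ one gets that $e$ exhibits $T$ as the reflection $X^{\geq 1}$ of $X$ into $\cD^{\geq 1}$ (use \cite[\aprop\textbf{1.1.9}]{BBD}-style uniqueness, or directly that $\cD(e, B)$ is an iso for all $B\in\cD^{\geq 1}$ because $C_e \in \cD^{\leq 0}$ is orthogonal to $\cD^{\geq 1}$ by \refbf{def:teestru}.\refbf{tee:fst}). Consequently $R \cong X^{\leq 0} \in \cD^{\leq 0}$, and I must show $(R \to 0) \in \E_\t$, i.e.\ $\tau^{\geq 1}(R \to 0)$ is an iso; but the cone of $R \to 0$ is $\Sigma R \in \Sigma\cD^{\leq 0} = \cD^{\leq -1} \subseteq \cD^{\leq 0}$, and by Lemma \refbf{classes_via_cartesian} a morphism whose cone lies in $\cD^{\leq 0}$ — together with the source/target lying appropriately — belongs to $\E_\t$; more cleanly, $\tau^{\geq 1}R = 0 = \tau^{\geq 1}0$, so $\tau^{\geq 1}(R\to 0)$ is the identity of $0$, hence an iso. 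This gives $(R \to 0) \in \E_\t$, which is exactly normality.

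The main obstacle I anticipate is the identification $T \cong X^{\geq 1}$ and $R \cong X^{\leq 0}$ — that is, recognizing the abstract $\F_\t$-factorization of $X \to 0$ as the canonical truncation triangle. One has to be a little careful that the factorization $X \to T \to 0$ produced by Lemma \refbf{all_maps_are_crumbled} really is (isomorphic to) the one coming from the $t$-structure truncation, rather than merely some factorization with $e \in \E_\t$, $m \in \M_\t$; but this follows from essential uniqueness of factorizations in a \hfs (which in turn rests on the orthogonality $\E_\t \horth \M_\t$ proved above, exactly as in the classical theory), combined with the observation that $X^{\leq 0} \to X \to X^{\geq 1} \to 0$ is itself such a factorization since $X^{\leq 0} \to X$ has cone $X^{\geq 1} \in \cD^{\geq 1}$... wait, rather: $\sigma$-unit aside, one checks $(X \xto{\rho_X} X^{\geq 1}) \in \E_\t$ directly because $\tau^{\geq 1}\rho_X = \id$, and $(X^{\geq 1} \to 0) \in \M_\t$ because $\tau^{\leq 0}X^{\geq 1} = 0$ by \refbf{def:teestru}.\refbf{tee:fst}. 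Everything else is routine diagram-chasing with truncation functors and the closure properties already established in Proposition \refbf{closure_phfs}.
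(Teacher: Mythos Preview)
Your proof is correct and follows essentially the same route as the paper: both arguments show $T\in\cD^{\geq 1}$ from $m\in\M_\t$, then identify the triangle $R\to X\to T$ with the truncation triangle $X^{\leq 0}\to X\to X^{\geq 1}$, whence $R\cong X^{\leq 0}$ and $(R\to 0)\in\E_\t$. The paper carries out the identification via the homotopy cartesian square of Lemma \refbf{classes_via_cartesian}, while you use the reflection property (naturality of $\rho$ together with $\tau^{\geq 1}e$ being an iso and $T\in\cD^{\geq 1}$); these are interchangeable.

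Two small remarks. First, the ``$\cD$'' in Definition \refbf{hott} is a typo for ``$\M$'': the paper's proof checks that both $\E_\t$ and $\M_\t$ are 3-for-2, and your preimage-of-isomorphisms argument for $\E_\t$ applies verbatim to $\M_\t$ with $\tau^{\leq 0}$ in place of $\tau^{\geq 1}$. Second, your aside about ``essential uniqueness of factorizations in a \hfs'' is not established anywhere in the paper and is not needed here: your direct argument that $e$ is (up to iso) the reflection unit $\rho_X$ already pins down $T$ and hence $R$, so you can drop that detour.
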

\begin{proof}
We have already proved that $\F_\t$ is a \hfs, while the fact that $\E_\t$  and $\M_\t$ are $3$-for-$2$ classes is a trivial consequence of their definition, as they are the pre-image (under $\tau^{\geq1}$ and $\tau^{\leq0}$, respectively) of the class of all isomorphisms, which is a $3$-for-$2$ class. It remains to show that $\F_\t$ is normal. Consider a factorization of a final map $X\to 0$ as follows
\[
X \xto{e} T \xto{m}  0\ \ \text{ with }\ \ e\in \E_\t,\ m\in \M_\t,
\] 
and a triangle of the form $R\to X \xto{e} T\to \Sigma R$. We should prove that the map $(R\to 0)$ belongs to $\E_\t$, that is, that $R\in  \cD^{\leq 0} $. By Lemma \refbf{classes_via_cartesian}, $T\in  \cD^{\geq 1} $. Since $e\in \E_\t$ and using Lemma \refbf{classes_via_cartesian}, we can construct a commutative diagram as follows:
\[
\xymatrix{
X^{\leq0}\ar@{}[dr]|\square\ar[r]\ar[d]&X\ar[r]\ar[d]^{e}&X^{\geq1}\ar[r]\ar[d]|\cong&\Sigma X^{\leq0}\ar[d]\\
T^{\leq0}\ar[r]&T\ar[r]&T^{\geq1}\ar[r]&\Sigma T^{\leq0}}
\]
Since $T\in  \cD^{\geq 1} $, we get $T^{\leq0}=0$ and $T\cong T^{\geq 1}\cong X^{\geq1}$, so the fact that the square on the left-hand-side in the above diagram is homotopy cartesian provides us with a distinguished triangle of the form
\[
X^{\leq0}\to X\to T\to \Sigma X^{\leq0}.
\]
In particular, $R\cong X^{\leq0}\in  \cD^{\leq 0} $ as desired.
\end{proof}
\begin{lemma}\label{htt_induces_t_structure}
For a normal \htth $\F=(\E,\M)$ in $\cD$, $\t_\F \coloneqq (0/\E,\Sigma(\M/0))$ is a $t$-structure.
\end{lemma}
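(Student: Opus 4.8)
The plan is to verify directly the three defining properties of a $t$-structure from Definition~\refbf{def:teestru} for the pair $\cD^{\leq 0}:=0/\E$ and $\cD^{\geq 0}:=\Sigma(\M/0)$. Note first that with these choices one has $\cD^{\geq 1}=\Sigma^{-1}\cD^{\geq 0}=\M/0$ and $\cD^{\leq -1}=\Sigma\cD^{\leq 0}=\Sigma(0/\E)$; keeping this shift bookkeeping straight is in fact the only genuinely delicate point of the argument. As a preliminary observation I would record, using Proposition~\refbf{closure_phfs}, that $0/\E$ and $\Sigma(\M/0)$ are replete full additive subcategories of $\cD$ (immediate from closure of $\E,\M$ under isomorphisms and under coproducts/products, together with the fact that all isomorphisms lie in both $\E$ and $\M$).

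For axiom~\refbf{tee:fst}, take $X\in 0/\E$ and $Y\in\cD^{\geq 1}=\M/0$: then $\var{0}{X}\in\E$ and $\var{Y}{0}\in\M=\E^{\horth}$, hence $\var{0}{X}\horth\var{Y}{0}$, and by the computation relating object-orthogonality to hom-vanishing recorded just after Remark~\refbf{object.ortho} this is exactly the assertion $\cD(X,Y)=0$. For axiom~\refbf{tee:snd}: if $X\in 0/\E$ then $\var{0}{\Sigma X}=\Sigma\var{0}{X}$ lies in $\Sigma\E\subseteq\E$ by the shift-stability clause in the definition of a \phfs, so $\Sigma X\in 0/\E$, i.e.\ $\cD^{\leq -1}\subseteq\cD^{\leq 0}$; dually, if $Y\in\M/0$ then $\var{\Sigma^{-1}Y}{0}=\Sigma^{-1}\var{Y}{0}$ lies in $\Sigma^{-1}\M\subseteq\M$ by the equivalent formulation of that clause noted right after Definition~\refbf{the_def_of_hfs}, so $\Sigma^{-1}Y\in\M/0$, which says $\M/0\subseteq\Sigma(\M/0)$, i.e.\ $\cD^{\geq 1}\subseteq\cD^{\geq 0}$.

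The substance of the proof lies in axiom~\refbf{tee:trd}, and this is precisely where normality is used. Given $X\in\cD$, since a \htth is in particular a \hfs we may use $\F$-crumbling to factor the terminal map as $X\xto{e}T\xto{m}0$ with $e\in\E$ and $m\in\M$; then $\var{T}{0}=m\in\M$, so $T\in\M/0=\cD^{\geq 1}$. Completing $e$ to a distinguished triangle and rotating backwards once yields a triangle of the shape $R\to X\xto{e}T\to\Sigma R$ --- exactly the configuration singled out in Definition~\refbf{hontt} --- so normality gives $\var{R}{0}\in\E$, i.e.\ $R\in 0/\E=\cD^{\leq 0}$. Setting $X^{\leq 0}:=R$ and $X^{\geq 1}:=T$, this is precisely the truncation triangle demanded by~\refbf{tee:trd}, and we are done. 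I expect the main obstacle to be purely organizational rather than computational: one must line up the rotation and shift conventions so that the triangle produced by Definition~\refbf{hontt} matches the precise form required in~\refbf{tee:trd}, and one should note that the $3$-for-$2$ hypothesis built into the notion of a \htth is \emph{not} needed for this statement --- it will only be invoked later to show that $\t\mapsto\F_\t$ and $\F\mapsto\t_\F$ are mutually inverse.
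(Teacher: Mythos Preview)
Your arguments for axioms~\refbf{tee:fst} and~\refbf{tee:snd} are correct and in fact cleaner than the paper's: the paper invokes $3$-for-$2$ for $\M$ in~\refbf{tee:fst} (to pass from $\var{Y}{0}\in\M$ to $\var{0}{Y}\in\M$ and then apply \textsc{ho}$2'$) and uses homotopy pushout closure for~\refbf{tee:snd}, whereas you appeal directly to the object-orthogonality remark and the shift clause of a \phfs.

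There is, however, a genuine gap in your treatment of~\refbf{tee:trd}, and it undermines precisely your final claim. Normality (Definition~\refbf{hontt}) gives you $\var{R}{0}\in\E$, but membership $R\in 0/\E$ means $\var{0}{R}\in\E$ --- these are different statements, and you have silently conflated them. The passage from one to the other is exactly a $3$-for-$2$ step: since the composite $0\to R\to 0$ is the identity of $0$ (an isomorphism, hence in $\E$) and $\var{R}{0}\in\E$, the $3$-for-$2$ property of $\E$ yields $\var{0}{R}\in\E$. Without this you only get $\Sigma R\in 0/\E$ (via the homotopy pushout square $R\to 0\to\Sigma R$ and Proposition~\refbf{closure_phfs}(5)), which is not what the triangle requires. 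So your assertion that the $3$-for-$2$ hypothesis is unnecessary for this lemma is incorrect: it is needed right here, and the paper uses it (implicitly at this same spot, and explicitly earlier in its proof).
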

\begin{proof}
We verify the three axioms of a $t$-structure:
\begin{itemize}
\item Let $X\in 0/\E$ and $Y\in \M/0$, we have to show that $\cD (X,Y)=0$. Indeed, let $\varphi \colon X\to Y$ and consider the following diagram
\[
\xymatrix{
0\ar[r]\ar[d]&0\ar[d]\\
X\ar[r]^{\varphi}\ar@{=}[d]&Y\ar@{=}[d]\\
X\ar[r]^{\varphi}\ar[d]&Y\ar[d]\\
0\ar[r]&0
}
\]
Notice that $(0\to X)\in \E$. Furthermore, $0\to 0$ is an isomorphism so it belongs to $\M$, as well as $Y\to 0$; since $\M$ is a $2$-for-$3$ class, this means that also $0\to Y$  belongs to $\M$. By condition \refbf{wobbly}\textbf{.}\textsc{ho2}, we get $\varphi=0$.
\item Let $X\in 0/\E$. Reasoning as in verifying \refbf{def:teestru}.\refbf{tee:fst} above, one can show that the $2$-for-$3$  property of $\E$ implies that $X\to 0$ belongs to $\E$. Consider now the following homotopy cartesian square:
\[
\xymatrix{
X\ar@{}[rd]|\square\ar[r]\ar[d]&0\ar[d]\\
0\ar[r]&\Sigma X
}
\]
By Proposition \refbf{closure_phfs}, the map $0\to \Sigma X$ belongs to $\E$, that is $\Sigma(0/\E)\subseteq 0/\E$. One verifies similarly that $\M/0\subseteq \Sigma(\M/0)$.
\item Let $X\in \cD$, consider a factorization of the map $X\to 0$ as follows:
\[
X \xto{e} T \xto{m}  0\ \ \text{ with }\ \ e\in \E,\ m\in \M.
\] 
Now we can complete the map $e$ to a triangle to get
\[
R\to X \xto{e} T\to \Sigma R.
\]
By the normality of $\F$, $R\in 0/\E$ and $T\in \M/0$. \qedhere
\end{itemize}
\end{proof}

\begin{theorem}[the triangulated Rosetta stone]\label{triang-rosetta}
Let $\cD$ be a triangulated category, then there is a bijective correspondence
\[
\xymatrix@R=0pt{
\Phi:\left\{
{\begin{smallmatrix}
\text{normal triangulated}\\
\text{\textsc{tth}s on }\cD
\end{smallmatrix}}
\right\}
\ar@{<->}[rr]&&
\left\{
{\begin{smallmatrix}
\text{$t$-structures}\\
\text{ on }\cD
\end{smallmatrix}}
\right\}:\Psi\\
(\E,\M) \ar@{|->}[rr]&& \Big(0/\E, \Sigma(\M/0)\Big) \\
(\E_\t,\M_\t) \ar@{<-|}[rr]&& \t.
}
\]
%
%
\end{theorem}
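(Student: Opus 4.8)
The plan is to verify that the assignments $\Phi$ and $\Psi$ displayed in the statement are mutually inverse, which gives bijectivity. Both are already known to be well defined: $\Psi$ takes values among normal $\htth$s --- and in particular $\F_\t$ is a $\hfs$, hence a $\phfs$ --- by the lemmas immediately preceding the theorem, and $\Phi$ takes values among $t$-structures by Lemma \refbf{htt_induces_t_structure}. So it remains to check $\Phi\circ\Psi=\id$ and $\Psi\circ\Phi=\id$.

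For $\Phi\circ\Psi=\id$ I would just unwind the definitions. Given a $t$-structure $\t=(\cD^{\leq0},\cD^{\geq0})$, apply the functor $\tau^{\geq1}$ to an initial morphism $\var{0}{X}$: it becomes $\var{0}{X^{\geq1}}$, which is an isomorphism precisely when $X^{\geq1}=0$, \ie when $X\in\cD^{\leq0}$; hence $0/\E_\t=\cD^{\leq0}$. Dually, $\tau^{\leq0}$ sends a terminal morphism $\var{X}{0}$ to $\var{X^{\leq0}}{0}$, an isomorphism iff $X^{\leq0}=0$ iff $X\in\cD^{\geq1}$, so $\M_\t/0=\cD^{\geq1}$ and $\Sigma(\M_\t/0)=\Sigma\cD^{\geq1}=\cD^{\geq0}$. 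Therefore $\Phi(\Psi(\t))=(0/\E_\t,\Sigma(\M_\t/0))=(\cD^{\leq0},\cD^{\geq0})=\t$.

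The substantial point is $\Psi\circ\Phi=\id$. Fix a normal $\htth$ $\F=(\E,\M)$, form $\t_\F=(0/\E,\Sigma(\M/0))$ with its truncation functors $\tau^{\leq0},\tau^{\geq1}$ and the natural maps $\rho_X\colon X\to X^{\geq1}$, and aim at $\E_{\t_\F}=\E$; once this is proved, $\M_{\t_\F}=\E_{\t_\F}^{\horth}=\E^{\horth}=\M$ follows at once, since both $\F$ and $\F_{\t_\F}$ are $\phfs$s (Definition \refbf{the_def_of_hfs}) so the right class is determined by the left. The key preliminary remark is that, for every $X$, an $\F$-factorization $X\xto{e}T\xto{m}0$ of the terminal map of $X$, completed to a triangle $R\to X\xto{e}T\to\Sigma R$, \emph{is} the truncation triangle of $X$ for $\t_\F$: by construction $T\in\M/0=\cD^{\geq1}$, and by normality of $\F$ one has $R\in0/\E=\cD^{\leq0}$. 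By uniqueness of truncation triangles this identifies $X^{\geq1}$ with $T$ and $\rho_X$ with $e$, so $\rho_X\in\E$ for every $X$. With this in hand: for $\E\subseteq\E_{\t_\F}$, take $\phi\colon X\to Y$ in $\E$ and look at the naturality square of $\rho$; there $\rho_X,\rho_Y,\phi\in\E$, so the $3$-for-$2$ property of $\E$ gives $\tau^{\geq1}\phi\in\E$, while $\tau^{\geq1}\phi$ is a morphism between objects of $\cD^{\geq1}=\M/0$, so the $3$-for-$2$ property of $\M$ applied to $X^{\geq1}\xto{\tau^{\geq1}\phi}Y^{\geq1}\to0$ gives $\tau^{\geq1}\phi\in\M$; hence $\tau^{\geq1}\phi\in\E\cap\M$ is an isomorphism by Proposition \refbf{closure_phfs}(2), \ie $\phi\in\E_{\t_\F}$. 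For $\E_{\t_\F}\subseteq\E$, take $\phi\in\E_{\t_\F}$; by Lemma \refbf{classes_via_cartesian} it is the homotopy pushout of $\phi^{\leq0}\colon X^{\leq0}\to Y^{\leq0}$, and since $X^{\leq0},Y^{\leq0}\in\cD^{\leq0}=0/\E$ the $3$-for-$2$ property of $\E$ forces $\phi^{\leq0}\in\E$; as $\E$ is closed under homotopy pushouts (Proposition \refbf{closure_phfs}(5)) we conclude $\phi\in\E$.

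The one genuinely non-formal step is the identification of an $\F$-factorization of a terminal map with the truncation triangle of $\t_\F$; this is exactly where normality of $\F$ is used, and the entire round trip hinges on it --- after that everything is bookkeeping with the $3$-for-$2$ property and the closure statements collected in Proposition \refbf{closure_phfs}. A small point to keep an eye on is that the step $\M_{\t_\F}=\E_{\t_\F}^{\horth}$ is legitimate only because $\F_{\t_\F}$ has already been shown to be a $\hfs$ (hence a $\phfs$), where the right class is forced by the left one.
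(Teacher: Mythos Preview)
Your proof is correct and follows the same global plan as the paper (check $\Phi\Psi=\id$ by unwinding, then $\Psi\Phi=\id$ using that the $\F$-factorization of $X\to 0$ realizes the approximation triangle of $\t_\F$, so $\rho_X\in\E$), but the execution of $\Psi\Phi=\id$ differs in two small ways worth recording.

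First, the paper does not prove $\E=\E_{\t_\F}$ by showing both inclusions for $\E$ and then invoking $(\firstblank)^{\horth}$ for $\M$; instead it proves only $\E_{\t_\F}\subseteq\E$ and, by the dual argument, $\M_{\t_\F}\subseteq\M$, and observes that a pair of such one-sided inclusions between two \phfs{}s already forces equality. This avoids your separate verification of $\E\subseteq\E_{\t_\F}$. Second, for the inclusion $\E_{\t_\F}\subseteq\E$ the paper stays on the $\rho$-naturality square: from $\rho_Y\phi=\phi^{\geq1}\rho_X$ with $\phi^{\geq1}$ an isomorphism and $\rho_X,\rho_Y\in\E$, the $3$-for-$2$ property gives $\phi\in\E$ directly. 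Your route via Lemma \refbf{classes_via_cartesian} (identifying $\phi$ as a homotopy pushout of $\phi^{\leq0}$) and Proposition \refbf{closure_phfs}(5) is equally valid but uses slightly heavier machinery; the paper's argument needs only $3$-for-$2$ and the fact that $\rho_X\in\E$, which is exactly the ``non-formal step'' you singled out.
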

\begin{proof}
We have already verified in the previous subsections that $\Phi$ and $\Psi$ are well-defined. Consider now a $t$-structure $\t$ and let us show that $\t=\Phi\Psi\t$, that is, we should verify that $ \cD^{\leq 0} =0/\E_\t$. But this is true since clearly $X\in  \cD^{\leq 0} $ if and only if $0\to X$ belongs to $\E_\t$, that is, $X\in 0/\E_\t$. 

On the other hand, let $\F=(\E,\M)$ and let us show that $\F=\Psi\Phi\F$. Let $\phi\in \E_{\t_\F}$, that is, $\phi^{\geq1}$ is an isomorphism and consider the following commutative square:
\[
\xymatrix{
X\ar[r]^{\rho_X}\ar[d]_\phi&X^{\geq1}\ar[d]^{\phi^{\geq1}}\\
Y\ar[r]^{\rho_Y}&Y^{\geq1}
}
\]
Notice that $\rho_X$ and $\rho_Y$ belong to $\E$ (in fact these reflections are constructed taking an $\F$-factorization of the final maps $X\to 0$ and $Y\to 0$, see the last part of the proof of Lemma \refbf{htt_induces_t_structure}). The composition $\rho_Y\phi=\phi^{\geq1}\rho_X$ belongs to $\E$ since $\phi^{\geq1}\in \E$ (as $\E$ contains any isomorphism) and we have already observed that $\rho_X\in \E$. For the $3$-for-$2$ property this means that $\phi\in \E$. This shows that $\E_{\t_\F}\subseteq \E$. One proves in the exact same way that $\M_{\t_\F}\subseteq \M$, but these two conditions together mean that $\F=\F_{\t_\F}$, as desired.\end{proof}

\section{Derivator factorization systems}\label{sec:squaring}

A {\em category of diagrams} is a full sub-2-category of the 2-category $\cat$ of small categories  that fulfills some closure properties, for which we refer to \cite[\adef\textbf{4.21}]{Moritz}. Let us just remark that every object $\mathbf{n}\in\cat$ belongs to any category $\Dia$ of diagrams.

For a given category of diagrams $\Dia$, a {\em pre-derivator} is  a 2-functor
\[
\D\colon \Dia^\opp\to \Cat
\]
A pre-derivator $\D$ is said to be {\em representable} if there is a category $\C$ such that $\D(I)=\C^{I}$ for any $I\in \Dia$, for any functor $u\colon J\to I$, the functor $\D(u)$ acts as 
\[
u^*\colon \C^I\to \C^J\qquad\text{such that}\qquad ( F\colon I\to \C)\mapsto (F\circ u\colon J\to I\to \C),
\]
and it acts on natural transformations in the obvious way. In the above situation we say that $\D$ is {\em represented by} $\C$, in symbols $\D=y(\C)$.

A pre-derivator $\D$ is a {\em derivator} if it satisfies a series of four axioms (Der1)--(Der4), for which we refer to \cite{Moritz}, as well as for the definitions of {\em pointed}, {\em strong}, and {\em stable} derivator. 

\medskip
Through this section let us fix the following minimal setting; from time to time we will to need work under stronger hypotheses (typically, we will assume that $\D$ is representable, or that it is a stable derivator):

\begin{setting}\label{setting_sec_3} 
Let $\Dia\subseteq \cat$ be a category of diagrams and we fix a pre-derivator
\[
\D\colon \Dia^\opp\longrightarrow \Cat
\]
that satisfies the following conditions for any $I\in \Dia$:
\begin{enumerate}
\item a morphism $\phi$ in $\D(I)$ is an isomorphism if and only if $\phi_i$ is an isomorphism in $\D(\uno)$ for any $i\in I$;
\item $\dia_\due(I)\colon \D^\due(I)\to \D(I)^\due$ is full and essentially surjective;
\item $\dia_\tre(I)\colon \D^\tre(I)\to \D(I)^\tre$ is full and essentially surjective.
\end{enumerate}
\end{setting}

If $\D=y(\C)$ for some category $\C$, then $\dia_I$ is an equivalence of categories for any $I\in \Dia$, so (1), (2) and (3) are always satisfied for this kind of pre-derivators. In fact, condition (1) is exactly (Der2), so in particular it is fulfilled by any derivator. In the language of \cite{Moritz}, condition (2) says that $\D$ is a strong pre-derivator. Finally, let us also remark that (1), (2) and (3) are satisfied by any stable derivator.

\subsection{The comonoid $\due$}\label{comonoid_due_subs}
Consider the \emph{point functor} $\pt\colon \due\to \uno$ that collapses the arrow category $\due$ to the point category $\uno$. This functor has both a right and a left adjoint choosing respectively the terminal and initial object of $\due$:
\[
\xymatrix@C=1.5cm{
**[l] 0\dashv \pt \dashv 1\colon \due\ar[r]|\pt & \ar@/_-1pc/@<3pt>[l]|{1}\ar@/_1pc/@<-3pt>[l]|{0} {\uno}
}
\]
where the left adjoint $0\colon \uno\to \due$ sends the unique object of $\uno$ to $0\in \due$, while the right adjoint $1\colon \uno\to \due$ sends the unique object of $\uno$ to $1\in \due$.

\begin{remark}\label{two-comonoid}
Like every object of $\cat$, the category $\due$ has the structure of a comonoid, where the co-multiplication is give by the diagonal map $\Delta\colon \due\to \due\times\due$, and the counit by the point functor $\pt\colon \due \to \uno$ above. It is in fact easy to check by hand the co-associativity and co-unitality relations:
\[\begin{cases}
(\pt\times \id_\due)\circ\Delta=\id_\due=(\id_\due\times \pt)\circ\Delta\notag\\
(\Delta\times\id_\due)\circ\Delta=(\id_\due\times \Delta)\circ\Delta.
\end{cases}\]
\end{remark}

Applying $\D$ to these functors we obtain the following adjunctions and isomorphisms thereof:
\[
\xymatrix@C=1.5cm{
**[l] \pt_! \cong 1^* \dashv \pt^* \dashv 0^* \cong \pt_* \colon
\D(\uno)\ar[r]|{\pt^*}&\ar@/_-1pc/@<3pt>[l]|{0^*}\ar@/_1pc/@<-3pt>[l]|{1^*} {\D(\due)}
}
\]
It is in fact easy to see that $\pt_!\cong 1^*$, $1_*\cong \pt^*$, $\pt_*\cong 0^*$ and $0_!\cong \pt^*$. Furthermore, the functor $0\colon \uno \to \due$ is a \emph{sieve} and $1\colon \uno\to \due$ is a \emph{co-sieve}; thus whenever $\D$ is a pointed derivator, by \cite[Corollary \textbf{3.8}]{Moritz}, $0_*$ has a right adjoint $0^!\colon \D(\due)\to \D(\uno)$, while $1_!$ has a left adjoint $1^?\colon \D(\due)\to \D(\uno)$. Hence, we end up with the string of adjoint functors
\[
\xymatrix@M=3mm@R=2cm{
     \ar@<4.5em>[d]|{0^!}
    \ar@<-4.5em>[d]|{1^?}
	\D(\due)
	\ar@<-1.5em>[d]|{\begin{smallmatrix} {}\\ \pt_!\\ 1^* \\{}\end{smallmatrix}}
	 \ar@<1.5em>[d]|{\begin{smallmatrix} {}\\ \pt_* \\ 0^* \\{}\end{smallmatrix}}\\
	\ar@{^{(}->}[u]|{\begin{smallmatrix} {}\\ 0_! \\ \pt^*  \\ 1_*\\{}\end{smallmatrix}}
	\D(\uno)
	 \ar@{^{(}->}@<3em>[u]|{1_!}
	\ar@{^{(}->}@<-3em>[u]|{0_*}
}
\]
(the functors are depicted from left to right respecting the adjointness relation, and functors on the same arrow are canonically isomorphic). Notice that all functors $1_!,$ $0_!\cong 1_*$, and $0_*$ are fully faithful. This is obvious since intuitively these three functors send an object into its initial, identity, and terminal arrow respectively.
\begin{notat}\label{la-kappa}
As a consequence of the fact that $0_!$ is fully faithful, the composition $0_!0^* \xto{\epsilon} \id \xto{\eta} 1_*1^*\cong 0_!1^*$ is of the form $0_!\kappa$ for a unique $\kappa\colon 0^* \to 1^*$.
\end{notat}
\begin{lemma}\protect{\cite[\aprop\textbf{3.24}]{Moritz}}
Suppose $\D$ is a pointed derivator, let $C\colon \D(\due)\to \D(\uno)$, $F\colon \D(\due)\to \D(\uno)$, $\Sigma\colon \D(\uno)\to \D(\uno)$, and $\Omega\colon \D(\uno)\to \D(\uno)$ be respectively the \emph{cone}, \emph{fiber}, \emph{suspension} and \emph{loop} functors, as defined in \cite[§\textbf{3.3}]{Moritz}. Then, $C\cong 1^?$, $\Sigma\cong 1^?0_*$, $F\cong 0^!$, and $\Omega\cong 0^!1_!$.
\end{lemma}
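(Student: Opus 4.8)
The plan is to reduce the four stated isomorphisms to the first and the third, $C\cong 1^?$ and $F\cong 0^!$, and to prove those by uniqueness of adjoints. First I would recall from \cite[\S\textbf{3.3}]{Moritz} that, for a pointed $\D$, the cone functor $C$ sends a morphism $f$ of $\D(\uno)$, viewed as an object of $\D(\due)$, to the vertex opposite to the source of $f$ in the homotopy cocartesian square built on $f$ and the zero object, while the fiber functor $F$ sends $f$ to the vertex opposite to the target of $f$ in the homotopy cartesian square built on $f$ and the zero object; and that the suspension of $A$ is the cone of the terminal map $A\to 0$, the loop object of $A$ the fiber of the initial map $0\to A$. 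Since, as observed in \S\refbf{comonoid_due_subs}, the object of $\D(\due)$ underlying $A\to 0$ is $0_*A$ and the one underlying $0\to A$ is $1_!A$, we have $\Sigma=C\circ 0_*$ and $\Omega=F\circ 1_!$ on the nose, so the identifications $\Sigma\cong 1^?0_*$ and $\Omega\cong 0^!1_!$ will follow at once from $C\cong 1^?$ and $F\cong 0^!$.

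To establish $C\cong 1^?$, I would show that $1_!\colon\D(\uno)\to\D(\due)$ is a right adjoint of $C$. Since $1$ is a co-sieve, $1_!A$ is the object $(0\to A)$, and a morphism $f\to 1_!A$ in $\D(\due)$ amounts to a map $g$ from the target of $f$ to $A$ with $g\circ f=0$; this is precisely the universal property of the pushout vertex, because the homotopy cocartesian square defining $Cf$ is in particular a left Kan extension by \cite{Moritz}. Thus $\D(\uno)(Cf,A)\cong\D(\due)(f,1_!A)$ naturally in both variables, \ie $C\dashv 1_!$. But $1^?$ is, by \cite[Corollary \textbf{3.8}]{Moritz}, itself a left adjoint of $1_!$ (available exactly because $\D$ is pointed and $1$ is a co-sieve), so uniqueness of left adjoints forces $C\cong 1^?$. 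The argument for $F\cong 0^!$ is the strict dual: $0$ is a sieve, $0_*A=(A\to 0)$, a morphism $0_*A\to f$ is a map from $A$ to the source of $f$ killing the composite into the target of $f$ --- the universal property of the pullback vertex $Ff$ --- so $0_*\dashv F$; and since $0^!$ is a right adjoint of $0_*$ by \cite[Corollary \textbf{3.8}]{Moritz}, uniqueness of right adjoints gives $F\cong 0^!$.

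I expect the only genuinely nonformal ingredient to be the fact --- which I would simply quote from \cite{Moritz} --- that the homotopy (co)cartesian square attached to a morphism of a pointed derivator really is a (co)limiting Kan extension, since this is what licenses the hom-set computations above; with that in hand the rest is routine adjunction calculus. The remaining hazard is purely bookkeeping: one must match \cite{Moritz}'s precise conventions for which vertex of the square is designated the (co)fiber and for the directions of $0_*$ versus $1_!$, but these are pinned down by the fully faithfulness and ``initial/identity/terminal arrow'' descriptions recorded in \S\refbf{comonoid_due_subs}.
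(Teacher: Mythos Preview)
The paper does not give its own proof of this lemma: it is stated with the citation \cite[\aprop\textbf{3.24}]{Moritz} in the lemma header and then the text moves directly on (``We conclude this subsection with two technical lemmas\dots''). So there is nothing in the paper to compare your argument against; the authors are simply importing the result from Groth's notes.

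Your proposed argument is essentially the standard one and is correct in outline: reduce $\Sigma$ and $\Omega$ to $C$ and $F$ via $\Sigma = C\,0_*$ and $\Omega = F\,1_!$, then identify $C$ and $F$ with $1^?$ and $0^!$ by uniqueness of adjoints, using that $1_!$ (extension by zero along the cosieve $1$) is right adjoint to the cone and that $1^?\dashv 1_!$ by \cite[Corollary \textbf{3.8}]{Moritz}. The one place to be a bit more careful is the sentence ``a morphism $f\to 1_!A$ in $\D(\due)$ amounts to a map $g$ from the target of $f$ to $A$ with $g\circ f=0$'': in a general derivator $\D(\due)$ is not literally the arrow category of $\D(\uno)$, so this description is only heuristic. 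The clean way to run the argument is to stay at the level of Kan extensions: write $C$ as a composite of a right Kan extension along a sieve (extension by zero $\D(\due)\to\D(\ulcorner)$), a left Kan extension $\D(\ulcorner)\to\D(\boxvoid)$, and an evaluation, and then check that each piece has the appropriate adjoint so that the composite is left adjoint to $1_!$. You correctly flag this as the ``nonformal ingredient'' to be quoted from \cite{Moritz}; with that in hand your bookkeeping is fine.
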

We conclude this subsection with two technical lemmas, which apply in case $\D$ is a stable derivator, that will make our life easier in the rest of the section. 
\begin{lemma}[The standard triangle of a coherent morphism]\label{simple_tria_for_orth}
Suppose $\D$ is a stable derivator. Given $X\in \D(\due)$, there is a triangle of the form
$$X\xto{\varphi_X} \pt^*\pt_!X\longrightarrow 0_*C(X)\longrightarrow \Sigma X,$$
where $\varphi_X\colon X\to \pt^*\pt_!X$ is the unit of the adjunction $(\pt_!,\pt^*)$.
\end{lemma}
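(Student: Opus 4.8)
The plan is to produce the triangle as the image under $0_*$ of a distinguished triangle living in $\D(\uno)$, exploiting the fact that $0_*$ is exact (it is the right adjoint part of the adjoint string $\pt_! \dashv \pt^* \dashv 0^* \cong \pt_*$ and, for a stable derivator, coproduct-preserving left adjoints and limit-preserving right adjoints between the underlying categories are automatically exact). Concretely, I would first recall that the cone functor satisfies $C \cong 1^?$ (the previous lemma), so that there is a canonical triangle attached to any $X \in \D(\due)$ obtained by applying $1^?$: the coherent version of ``the morphism $X$ sits in a triangle with its cone''. The real content is identifying the first two terms of that triangle after one recognizes it inside $\D(\uno)$ versus $\D(\due)$.

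The key steps, in order, would be: (1) Recall that for a stable derivator the functor $0_* \colon \D(\uno) \to \D(\due)$ is fully faithful and exact, hence sends triangles to triangles; similarly $\pt^* \colon \D(\uno) \to \D(\due)$ is exact. (2) Start from the counit/unit comparison: the composite $0_!0^* \xto{\epsilon} \id_{\D(\due)} \xto{\eta} 1_*1^*$, which by Notation \ref{la-kappa} has the shape $0_!\kappa$, is the ``underlying map'' $X_0 \to X_1$ of the coherent morphism $X$; its cone is computed by $1^?$, giving $C(X) = 1^? X$ by the lemma above. (3) Dualize: we instead want a triangle whose connecting data recover $X$ itself, not its underlying arrow — so I would look at the unit $\varphi_X \colon X \to \pt^*\pt_! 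X$ of $(\pt_!,\pt^*)$ and show directly that its cone is $0_* C(X)$. For this, note $\pt_! \cong 1^*$, so $\pt^*\pt_! X \cong \pt^* 1^* X$, the ``constant diagram on the target vertex'' $X_1 \rightrightarrows X_1$; the cone of $X \to (X_1 \xto{\id} X_1)$ computed vertex-by-vertex (using Setting \ref{setting_sec_3}(1), isomorphisms are detected pointwise) is $\mathrm{cofib}(X_0 \to X_1)$ at vertex $0$ and $0$ at vertex $1$, i.e. exactly $0_*$ applied to $C(X)$. (4) Assemble the octahedron/rotation: since in a stable derivator $\D(\due)$ is a triangulated category (Setting part, or the standard fact), completing $\varphi_X$ to a triangle and identifying the third term with $0_* C(X)$ up to canonical isomorphism gives $X \xto{\varphi_X} \pt^*\pt_! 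X \to 0_* C(X) \to \Sigma X$, where $\Sigma$ on $\D(\due)$ is computed pointwise and agrees with $0_* \Sigma_{\D(\uno)}$-type shifts on the relevant summand.

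I expect the main obstacle to be step (3)–(4): rigorously pinning down that the cone of $\varphi_X$, a priori just ``some object of $\D(\due)$ defined up to non-unique isomorphism'', is canonically $0_*C(X)$ rather than merely abstractly isomorphic to it — one wants the identification to be natural in $X$ so that this ``standard triangle'' deserves its name and can be used functorially later. The clean way around this is to not build the triangle by hand in $\D(\due)$ at all, but to exhibit it as $0_*$ applied to a triangle in $\D(\uno)$ together with the canonical triangle $0_!0^*X \to X \to \pt^*1^*X$ (a ``gluing'' / recollement triangle for the sieve-cosieve decomposition $0 \colon \uno \hookrightarrow \due \hookleftarrow \uno \colon 1$), and then observing $0^*X = X_0$, $\pt^* 1^* X$ is the constant diagram, and the cone of $0_!0^*X \to X$ is, by the cosieve gluing triangle, $1_*1^? (\text{something})$ — tracing through the cone functor identification $C \cong 1^?$ gives $0_*C(X)$. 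So the proof reduces to: (a) the recollement triangle for $\uno \hookrightarrow \due \hookleftarrow \uno$ in a pointed derivator, which follows from the general theory of (co)sieves in \cite{Moritz}, and (b) the computation $1^?(\text{cofiber diagram}) \cong$ the cofiber, packaged by $0_*$. I would write it in that order, keeping the hand computation to a minimum and citing \cite{Moritz} for the recollement and the cone identification.
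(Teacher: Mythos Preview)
Your steps (3)--(4) are correct and are in fact exactly the paper's proof: complete $\varphi_X$ to a triangle in the triangulated category $\D(\due)$, observe that pointwise the underlying diagram is
\[
\xymatrix{
X_0\ar[r]\ar[d]&X_1\ar@{=}[d]\ar[r]&C(X)\ar[r]\ar[d]&\Sigma X_0\ar[d]\\
X_1\ar@{=}[r]&X_1\ar[r]&0\ar[r]&\Sigma X_1
}
\]
(using $\pt_!\cong 1^*$), and conclude $K\cong 0_*C(X)$ because $0_*$ is extension by zero along the sieve $0\colon\uno\hookrightarrow\due$, so its essential image is exactly the objects with vanishing $1$-component. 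That is the whole argument; the paper does it in three sentences.

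Where you go astray is in the surrounding scaffolding. The worry in your ``main obstacle'' paragraph about canonicity and naturality is misplaced: the lemma only asserts the existence of \emph{a} triangle of the stated shape, and its sole use (in the proof of Lemma~\ref{coherent_orth_is_wobbly}) is to apply $\D(\due)(-,Y)$ and extract a long exact sequence, for which any such triangle suffices. There is no need for a functorial choice, hence no need for the recollement machinery you propose as a workaround. Likewise your step~(1) plan to produce the triangle as $0_*$ of a triangle in $\D(\uno)$ cannot work as stated, since the first two terms $X$ and $\pt^*\pt_!X$ are not in the essential image of $0_*$; the triangle genuinely lives in $\D(\due)$. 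Drop everything except steps (3)--(4) and you have the paper's proof.
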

\begin{proof}
Complete $\varphi_X$ to a triangle as follows
\[
X\xto{\varphi_X} \pt^*\pt_!X\longrightarrow K\longrightarrow \Sigma X
\]
The underlying diagram of the above triangle has the following form
\[
\xymatrix{
X_0\ar[r]\ar[d]&X_1\ar@{=}[d]\ar[r]&C(X)\ar[r]\ar[d]&\Sigma X_0\ar[d]\\
X_1\ar@{=}[r]&X_1\ar[r]&0\ar[r]&\Sigma X_1
}
\]
It is then clear that $K\cong 0_*C(X)$.
\end{proof}
\begin{notat}
For each $i<j$ in $\{0,1,2\}$ we denote by $(i,j)$ the functor 
\[
(i,j)\colon \Delta^{\{0,1\}}\hookrightarrow \Delta^{\{i,j\}}\subset\Delta^{2};
\] 
this slightly unusual notation for the co-face maps $\{\delta_2^i\colon \due\to \tre \mid i=0,1,2\}$ is motivated by the belief that $X_{\delta^2_i}$ or $\delta_i^{2,*}X$ to denote the image of $X$ under $\delta_i^{2,*} \colon \D(\tre)\to \D(\due)$ are unreadable clutters confronted with the simpler $X_{(i,j)}$.
\end{notat}
\begin{lemma}[The standard triangle of a coherent 2-simpleX]\label{factorization_triangle}
Suppose $\D$ is a stable derivator. Given $X\in \D(\tre)$, there is a triangle of the form
\[
(1,2)_!X_{(0,1)}\to 1_!X_1\oplus (0,2)_!X_{(0,2)}\to X\to \Sigma (1,2)_!X_{(0,1)}.
\]
\end{lemma}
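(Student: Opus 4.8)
The plan is to obtain the asserted triangle as the distinguished triangle of a map in the triangulated category $\D(\tre)$: complete the evident comparison map into $X$ to a triangle and identify the remaining vertex, exactly in the spirit of the proof of Lemma~\ref{simple_tria_for_orth}. Write $\phi\colon 1_!1^*X\to X$ and $\psi\colon(0,2)_!(0,2)^*X\to X$ for the counits of the adjunctions $(1_!,1^*)$ and $((0,2)_!,(0,2)^*)$ (so $1^*X=X_1$ and $(0,2)^*X=X_{(0,2)}$), form the map $(\phi,-\psi)\colon 1_!X_1\oplus(0,2)_!X_{(0,2)}\to X$, and complete it to a triangle
\[
K\longrightarrow 1_!X_1\oplus(0,2)_!X_{(0,2)}\xrightarrow{(\phi,-\psi)} X\longrightarrow\Sigma K
\]
in $\D(\tre)$. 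Everything then reduces to producing an isomorphism $K\cong(1,2)_!X_{(0,1)}$, after which the displayed triangle already has the required form (the sign is immaterial, being absorbed by an automorphism of $X_{(0,1)}$).

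I would identify $K$ in two steps, both using that $\D$ is stable so that the restriction functors $0^*\colon\D(\tre)\to\D(\uno)$ and $(1,2)^*\colon\D(\tre)\to\D(\due)$ are triangulated. \emph{First}, unwinding the underlying diagrams of the two left Kan extensions gives $0^*(1_!X_1)\cong 0$ and $0^*\big((0,2)_!X_{(0,2)}\big)\cong X_0$, and under these identifications $0^*\phi$ is the zero map while $0^*\psi=\id_{X_0}$ (a counit at an object lying in the image of $(0,2)$); hence $0^*K\cong\mathrm{fib}(\id_{X_0})\cong 0$. Since $\D$ is pointed and $(1,2)\colon\due\hookrightarrow\tre$ is the inclusion of the cosieve $\{1,2\}$, with complementary sieve $\{0\}$, the functor $(1,2)_!$ is fully faithful with essential image $\{Z\in\D(\tre):0^*Z\cong 0\}$ — the standard closed/open recollement for pointed derivators (cf.\ \cite{Moritz}). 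Therefore $K$ lies in that image and the counit $(1,2)_!\big((1,2)^*K\big)\to K$ is an isomorphism.

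\emph{Second}, it remains to compute $(1,2)^*K\cong\mathrm{fib}\big((1,2)^*(\phi,-\psi)\big)$ in $\D(\due)$. A further unwinding of the Kan extensions shows that $(1,2)^*(1_!X_1)$ is the constant diagram on $X_1$, that $(1,2)^*\big((0,2)_!X_{(0,2)}\big)$ is the arrow $X_0\to X_2$ given by the composite in $X$, and that $(1,2)^*(\phi,-\psi)$ is the morphism of arrows $\smat{X_1\oplus X_0 & \to & X_1\\ \downarrow & & \downarrow\\ X_1\oplus X_2 & \to & X_2}$ whose two horizontal maps are split epimorphisms; computing the fibre levelwise over $\due$ — legitimate because the relevant comma categories have terminal objects, so the pointwise detection of isomorphisms of Setting~\ref{setting_sec_3}(1) applies — yields the arrow $X_0\to X_1$ equal to $\pm f$, hence isomorphic to $X_{(0,1)}$. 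Combining the two steps gives $K\cong(1,2)_!X_{(0,1)}$, which completes the proof.

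The only real work here is bookkeeping: both steps require carefully tracking the unit/counit maps of the left Kan extensions along $1,(0,1),(0,2),(1,2)$ and the induced structure maps on underlying diagrams in $\D(\uno)^{\tre}$ and $\D(\uno)^{\due}$ (signs included), together with invoking the recollement that describes the essential image of $(1,2)_!$ and the levelwise computation of fibres. None of this is deep, but — unlike the earlier lemmas — it cannot be delegated to a single abstract statement and must be carried out by hand.
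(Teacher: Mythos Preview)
Your overall strategy is exactly the paper's: form the map $1_!X_1\oplus(0,2)_!X_{(0,2)}\to X$ from the two counits, complete to a triangle, observe that the fibre $K$ satisfies $K_0\cong 0$ so that $K\cong(1,2)_!\big((1,2)^*K\big)$, and then identify $(1,2)^*K$ with $X_{(0,1)}$. The first step and the recollement argument are fine and match the paper verbatim.

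The gap is in the second step. You write that ``computing the fibre levelwise over $\due$ \dots\ yields the arrow $X_0\to X_1$ equal to $\pm f$, hence isomorphic to $X_{(0,1)}$''. But this only computes the \emph{underlying incoherent diagram} of $(1,2)^*K$: restriction functors $0^*,1^*$ are exact, so $(1,2)^*K$ has the correct values $X_0,X_1$ at the two vertices, and chasing the structure map gives $\pm f$. That is not enough to conclude $(1,2)^*K\cong X_{(0,1)}$ in $\D(\due)$, since two coherent arrows can have the same underlying diagram without being isomorphic; Setting~\ref{setting_sec_3}(1) lets you detect when a \emph{given} morphism in $\D(\due)$ is invertible, not manufacture one. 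The paper closes this gap by producing an actual coherent comparison: using the natural transformations $\alpha,\beta,\alpha',\beta'$ among $(0,1),(0,2),(1,2),\pt\!\cdot\!1$ it identifies the triangle $(1,2)^*$ of your original triangle with one of the form
\[
Y\longrightarrow \pt^*X_1\oplus X_{(0,2)}\xrightarrow{((\alpha')^*,\beta^*)} X_{(1,2)}\longrightarrow \Sigma Y,
\]
checks that the composite $X_{(0,1)}\to \pt^*X_1\oplus X_{(0,2)}\to X_{(1,2)}$ vanishes (this is the commuting square $\beta^*\alpha^*=(\alpha')^*(\beta')^*$), obtains a map $\varphi\colon X_{(0,1)}\to Y$, and only then invokes (Der2) to see that $\varphi_0,\varphi_1$ are isomorphisms. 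Your sketch is missing precisely the construction of $\varphi$; once you add it, your argument becomes the paper's.
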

\begin{proof}
We adopt a construction similar to one contained in \cite{porta2015universal}. 
Let $\epsilon_1\colon 1_!1^*X\to X$ and $\epsilon_{(0,2)}\colon (0,2)_!(0,2)^* X \to X$ be the co-units of the respective adjunctions. These obviously give a  map 
\[
1_!X_1\oplus (0,2)_!X_{(0,2)}\xto{\smat{\epsilon_1 & \epsilon_{(0,2)}}} X
\]
that can be completed to a triangle
\[
K\to 1_!X_1\oplus (0,2)_!X_{(0,2)}\to X\to \Sigma K.
\]
Since $(1_!X_1\oplus (0,2)_!X_{(0,2)})_0 \cong (1_!X_1)_0\oplus ((0,2)_!X_{(0,2)})_0 \cong X_0$, then $K_0=0$. Given how the functor $(1,2)_!$ acts on objects, $K \cong (1,2)_! Y$ for some $Y$; we now aim to prove that such a $Y$ is necessarily isomorphic to $X_{(0,1)}$. For this, apply the functor $(1,2)^*$ to the above triangle, 
 to obtain the following triangle in $\D(\due)$:
\begin{equation}\label{bad_triangle}
((1,2)_!Y)_{(1,2)}\to (1_! X_1\oplus (1,2)_!X_{(0,2)})_{(1,2)}\to X_{(1,2)}\to \Sigma ((1,2)_!Y)_{(1,2)}.
\end{equation}
Notice that the obvious natural transformation $\gamma\colon (0,1) \Rightarrow (1,2)$, can be viewed as a composition of natural transformations in the following two ways:
\[
\xymatrix{
\due \ar[rr]|{(0,2)}^*!/u5pt/{\labelstyle\ \Downarrow\alpha}_*!/d5pt/{\labelstyle\ \Downarrow\beta} \ar@/_-25pt/[rr]|{(0,1)}\ar@/_25pt/[rr]|{(1,2)} && \tre & = & \due \ar@/_-25pt/[rr]|{(0,1)}\ar@/_25pt/[rr]|{(1,2)}\ar@{}[rr]^*!/u6pt/{\labelstyle\;\;\; \Downarrow\beta'}_*!/d6pt/{\labelstyle\;\;\; \Downarrow\alpha'}
\ar[r]|{\pt}&\uno\ar[r]|{1}& \tre
}
\]
giving us the upper left square in the following commuting diagram in $\D[1]$:
	\[
		\xymatrix@C=2cm{
		X_{(0,1)} \ar[r]^{\alpha_X^*}\ar[d]_{(\beta')_X^*}& X_{(0,2)}\ar[d]^{\beta_X^*}&(0,2)^*(0,2)_!X_{(0,2)}\ar[d]|{\cong}^{\beta^*_{(0,2)_!X_{(0,2)}}}\ar[l]|{\cong}_(.6){(0,2)^*\epsilon_{(0,2),X}}\ar@{}[ld]|{(\bullet)}\\
		\pt^*X_1 \ar[r]^{(\alpha')_X^*}& X_{(1,2)}&(1,2)^*(0,2)_!X_{(0,2)}\ar[l]^(.6){(1,2)^*\epsilon_{(0,2),X}}\\
		\pt^*(1_!X_1)_1\ar[u]|\cong^{\pt^*1^*(\epsilon_{1,X})}\ar[r]|\cong_{(\alpha')_{1_!X_1}^*}&(1,2)^*1_!X_1\ar[u]_{(1,2)^*\epsilon_{1,X}}\ar@{}[lu]|{(\bullet\bullet)}
		}
	\]
The commutative squares marked by ($\bullet$) and ($\bullet\bullet$) tell us that the triangle in \eqref{bad_triangle} is isomorphic to a triangle of the form:
$$\xymatrix@C=2cm{
Y\ar[r]& \pt^* X_1\oplus X_{(0,2)}\ar[r]^(.6){((\alpha')^*,\beta^*)}& X_{(1,2)}\ar[r]& \Sigma Y,}$$
while the third commutative square shows that the following composition is trivial:	
\[
\xymatrix@C=2cm{ X_{(0,1)} \ar[r]^(.4){[(\beta')^*,-\alpha^*]^t} & 0_!X_1 \oplus X_{(0,2)} \ar[r]^(.6){[(\alpha')^*,\beta^*]} & X_{(1,2)}}
\]
We obtain a map $\varphi\colon X_{(0,1)}\to Y$ making the following diagram commutative:
\[
\xymatrix{
X_{(0,1)}\ar[dr]\ar@{.>}[d]|{\varphi}\\
Y\ar[r]& \pt^* X_1\oplus X_{(0,2)}\ar[r]& X_{(1,2)}\ar[r]& \Sigma Y
}
\]
To conclude our proof, it is enough to show that $\varphi$ is an isomorphism. For this, it is enough to show that $\varphi_0\colon X_0\to Y_0$ and $\varphi_1\colon X_1\to Y_1$ are isomorphisms in $\D(\uno)$. 
But in fact, applying $0^*$ and $1^*\colon \D(\due)\to \D(\uno)$ to the above diagram, we get the following diagrams in $\D(\uno)$:
\[
\xymatrix@C=20pt@R=20pt{
X_0\ar[d]\ar[dr]&&&       &           X_1\ar[d]\ar[rd]\\
Y_0\ar[r]& X_1\oplus X_0\ar[r]& X_1\ar[r]& \Sigma Y_0        &       Y_1\ar[r]& X_1\oplus X_2\ar[r]& X_2\ar[r]& \Sigma X_1
}
\]
respectively. This shows that $\varphi_0$ is a morphism that factors the kernel $X_0\to X_1\oplus X_0$ of the morphism $X_1\oplus X_0\to X_1$ through the kernel $Y_0\to X_1\oplus X_0$ of the same map. Hence, $\varphi_0$ is an isomorphism. A completely analogous argument shows that $\varphi_1$ is an isomorphism.
\end{proof}

\subsection{Coherent orthogonality}\label{Subs_Coh_Ort}
The objects of the category $\D(\due)$ can be thought of as ``coherent morphisms'' of $\D(\uno)$ (as opposed to the ``incoherent morphisms'', which are the objects of $\D(\uno)^{\due}$); in general the underlying diagram functor $\dia_\due \colon \D(\due)\to \D(\uno)^\due$ has no property whatsoever that ensures that a coherent diagram $X\in\D(\due)$ leaves a faithful image in its associated incoherent diagram $\dia_\due(X)$ (however, in our Setting \refbf{setting_sec_3}, $\dia_\due$ is at least full and essentially surjective).

In this subsection we are introducing a notion of \emph{coherent orthogonality} for a pair of objects in $\D(\due)$ that takes into account the richer structure of coherent diagrams. Indeed, let $X, Y\in\D(\due)$ and consider the unit $\varphi_X\colon X\to \pt^*\pt_!X$ of the adjunction $(\pt_!,\pt^*)$. Applying $\D(\due)(-,Y)$, and recalling that $\pt_!X\cong X_1$ and $\pt_*Y\cong Y_0$, we obtain a natural morphism
\[
\xymatrix@R=0pt{\D(\uno)(X_1,Y_0)\ar[r]^{\varphi_{X,Y}}&\D(\due)(X,Y)\\
(\pt_!X\xto{a} \pt_*Y)\ar@{|->}[r]&(X\xto{\varphi_X} \pt^*\pt_!X\xto{\pt^*a} \pt^*\pt_*Y\xto{\psi_Y} Y),}
\]
where $\psi_Y\colon\pt^*\pt_*Y\to Y$ is the counit of the adjunction $(\pt^*,\pt_*)$.

\begin{definition}[coherent orthogonality]\label{def_c_ort}
Given $X,Y\in \D(\due)$, we say that $X$ is \emph{left coherently orthogonal} to $Y$ (while $Y$ is \emph{right coherently orthogonal} to $X$), in symbols $X\corth Y$, if the map ${\varphi_{X,Y}}\colon\D(\uno)(X_1,Y_0)\to \D(\due)(X,Y)$ is an isomorphism.\\
Given $\mathcal{X}\subseteq \D(\due)$, we let
\begin{gather}
\mathcal{X}^{\corth } \coloneqq \{Y\in \D{(\due)}:X\corth Y, \ \forall X\in \mathcal{X}\}\notag\\
{}^{\corth }\mathcal{X} \coloneqq \{X\in \D{(\due)}:X\corth Y, \ \forall Y\in \mathcal{X}\}.\notag
\end{gather}
\end{definition}

Our first observation about coherent orthogonality is that, in case $\D$ is representable, we recover the classical notion of orthogonality of morphisms:
\begin{lemma}\label{corth=orth}
In Setting \refbf{setting_sec_3},  consider $X$ and $Y\in \D(\due)$. If $X\corth Y$ then for any commutative diagram 
\[
\xymatrix{
X_0\ar[d]_{\dia_\due X}\ar[r]^{\phi_0}&Y_0\ar[d]^{\dia_\due Y}\\
X_1\ar[r]_{\phi_1}\ar@{.>}[ur]|d&Y_1
}
\]
there is a $d\colon X_1\to Y_0$ such that $\phi_0=d\circ \dia_\due X $ and $\phi_1=\dia_\due Y \circ d$. Furthermore, if $\D=y(\C)$ is representable,  then $X\corth Y$ if and only if, in the above diagram, there is a unique arrow $d\colon X_1\to Y_0$ such that $\phi_0=d\circ X$ and $\phi_1=Y\circ d$.
\end{lemma}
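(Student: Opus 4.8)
The plan is to first pin down the underlying incoherent diagram of the morphism $\varphi_{X,Y}(d)\colon X\to Y$ attached to a map $d\colon X_1\to Y_0$, and then read both assertions off this identification. Concretely, I would compute $\dia_\due$ of the triple composite $\psi_Y\circ\pt^*(d)\circ\varphi_X$ by applying $0^*$ and $1^*$. Using that $0^*\pt^*=\id=1^*\pt^*$, that the colimiting cocone $\varphi_X\colon X\to\pt^*\pt_!X\cong\pt^*X_1$ has components $\dia_\due X$ at $0$ and $\id_{X_1}$ at $1$, and dually that the limiting cone $\psi_Y\colon\pt^*Y_0\cong\pt^*\pt_*Y\to Y$ has components $\id_{Y_0}$ at $0$ and $\dia_\due Y$ at $1$, one obtains that $\dia_\due(\varphi_{X,Y}(d))$ is exactly the commutative square with $\phi_0=d\circ\dia_\due X$ and $\phi_1=\dia_\due Y\circ d$ — the square ``with diagonal filler $d$''.

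Granting this, the first assertion is immediate. Assume $X\corth Y$, so that $\varphi_{X,Y}$ is an isomorphism, in particular surjective. A commutative square $(\phi_0,\phi_1)$ as in the statement is a morphism $\dia_\due X\to\dia_\due Y$ in $\D(\uno)^\due$, hence by fullness of $\dia_\due$ (Setting \refbf{setting_sec_3}(2)) it lifts to some $f\in\D(\due)(X,Y)$ with $\dia_\due f=(\phi_0,\phi_1)$. Surjectivity of $\varphi_{X,Y}$ yields $d\colon X_1\to Y_0$ with $\varphi_{X,Y}(d)=f$, and applying $\dia_\due$ together with the identification of the previous paragraph gives $(\phi_0,\phi_1)=(d\circ\dia_\due X,\dia_\due Y\circ d)$, which is the claim.

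When $\D=y(\C)$ is representable, $\dia_\due\colon\D(\due)\to\D(\uno)^\due$ is an equivalence of categories (indeed an isomorphism), so it induces a bijection $\D(\due)(X,Y)\xto{\sim}\D(\uno)^\due(\dia_\due X,\dia_\due Y)$, the target being precisely the set of commutative squares from the arrow $X$ to the arrow $Y$. Composing with $\varphi_{X,Y}$ and using the identification above, the resulting map $\D(\uno)(X_1,Y_0)\to\{\text{commutative squares }X\to Y\}$ sends $d$ to the square with diagonal $d$. Therefore $\varphi_{X,Y}$ is bijective — that is, $X\corth Y$ — if and only if this last map is bijective, i.e. if and only if every commutative square from $X$ to $Y$ admits a unique diagonal filler: this is exactly $X\perp Y$ in the classical sense.

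The only laborious step is the diagram chase of the first paragraph: identifying the $0$- and $1$-components of the unit $\varphi_X$ and counit $\psi_Y$ under the canonical isomorphisms $\pt_!X\cong X_1$ and $\pt_*Y\cong Y_0$, and composing the three factors. This is routine $2$-categorical bookkeeping, resting on the facts that colimits (resp. limits) over $\due$ are computed at the terminal object $1$ (resp. the initial object $0$) of $\due$ and that $\dia_\due$ sends a morphism of $\D(\due)$ to the pair of its images under $0^*$ and $1^*$; but it is where all the care lies, after which both halves of the lemma follow formally.
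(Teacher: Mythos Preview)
Your proposal is correct and follows essentially the same approach as the paper: lift the commutative square to a coherent morphism using fullness of $\dia_\due$, use (sur-/bi-)jectivity of $\varphi_{X,Y}$ to produce $d$, and identify $\dia_\due(\varphi_{X,Y}(d))$ as the square with diagonal filler $d$. You spell out the component computation of $\varphi_X$ and $\psi_Y$ more explicitly than the paper does (it simply writes down the resulting composed diagram), but the structure of the argument is identical.
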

\begin{proof}
By condition (2) in Setting \refbf{setting_sec_3}, a morphism $(\phi_0,\phi_1)\colon \dia_\due X \to \dia_\due Y $ can be lifted to a morphism $\phi\colon X\to Y$ such that $\dia_\due \phi =(\phi_0,\phi_1)$; if furthermore $\D$ is representable, than this lifting is unique. Now, $X\corth Y$ if and only if, given $\phi\colon X\to Y$ there is a unique morphism $\widetilde d\colon X_1\to Y_0$ such that $\varphi_{X,Y}(\widetilde d)=\phi$. This means that, letting $d \coloneqq \dia_\due(\widetilde d)$, we get a commutative diagram
\[
\xymatrix{
X_0\ar[d]_{\dia_\due X }\ar[r]^{\dia_\due X }&X_1\ar@{=}[d]\ar[r]^d&Y_0\ar@{=}[d]\ar@{=}[r]&Y_0\ar[d]^{\dia_\due Y }\\
X_1\ar@{=}[r]&X_1\ar[r]_d&Y_0\ar[r]_{\dia_\due  Y }&Y_1
}
\]
such that the composition of the top row is $\phi_0$ and that of the bottom row is $\phi_1$.
\end{proof}

The second thing we would like to point out is that, in case $\D$ is a stable derivator, so that $\D(\uno)$ is canonically a triangulated category,
then coherent orthogonality is equivalent to the homotopy orthogonality introduced in Definition \refbf{wobbly}:
\begin{lemma}\label{coherent_orth_is_wobbly}
Suppose $\D$ is a stable derivator and let $X,$ $Y\in \D(\due)$. Then, $X\corth Y$ if and only if $\dia_{\due}X\horth \dia_{\due}Y$.
\end{lemma}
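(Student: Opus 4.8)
The plan is to squeeze both conditions of Definition \refbf{wobbly} out of a single long exact sequence. Write $e=\dia_\due X=(X_0\xto{e}X_1)$ and $m=\dia_\due Y=(Y_0\xto{m}Y_1)$, and complete them to triangles $X_0\xto{e}X_1\xto{\alpha_e}C_e\xto{\beta_e}\Sigma X_0$ and $Y_0\xto{m}Y_1\xto{\alpha_m}C_m\xto{\beta_m}\Sigma Y_0$; since homotopy orthogonality is independent of the chosen triangles, Lemma \refbf{simple_tria_for_orth} lets us take $C_e=C(X)$ and $C_m=C(Y)$. Recall that $X\corth Y$ means exactly that $\varphi_{X,Y}\colon\D(\uno)(X_1,Y_0)\to\D(\due)(X,Y)$ is an isomorphism, so we must show this is equivalent to \textsc{ho}1 and \textsc{ho}2 holding for $(e,m)$.

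First I would apply the contravariant $\D(\due)(-,Y)$ to the standard triangle $X\xto{\varphi_X}\pt^*\pt_!X\xto{p}0_*C(X)\xto{q}\Sigma X$ of Lemma \refbf{simple_tria_for_orth} and rotate, obtaining an exact sequence
\[
\cdots\to\D(\due)(0_*C(X),Y)\xto{p^*}\D(\due)(\pt^*\pt_!X,Y)\xto{\varphi_X^*}\D(\due)(X,Y)\xto{(\Sigma^{-1}q)^*}\D(\due)(\Sigma^{-1}0_*C(X),Y)\xto{(\Sigma^{-1}p)^*}\cdots
\]
Then I would rewrite the outer terms via adjunctions: using $\pt^*\dashv\pt_*$ and $\pt_!X\cong X_1$, $\pt_*Y\cong Y_0$ one gets $\D(\due)(\pt^*\pt_!X,Y)\cong\D(\uno)(X_1,Y_0)$ and $\D(\due)(\Sigma^{-1}\pt^*\pt_!X,Y)\cong\D(\uno)(\Sigma^{-1}X_1,Y_0)$; and using $0_*\dashv0^!$ (which exists since $0$ is a sieve and $\D$ is pointed) together with $0^!Y\cong F(Y)\cong\Sigma^{-1}C_m$, the fibre of $m$ (by the \cite{Moritz} lemma recalled above), one gets $\D(\due)(0_*C(X),Y)\cong\D(\uno)(C_e,\Sigma^{-1}C_m)$ and $\D(\due)(\Sigma^{-1}0_*C(X),Y)\cong\D(\uno)(\Sigma^{-1}C_e,\Sigma^{-1}C_m)$. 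Chasing the adjunction units/counits one checks that under these identifications $\varphi_X^*$ becomes exactly the map $\varphi_{X,Y}$ of Definition \refbf{def_c_ort}.

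The heart of the argument is the identification of the two connecting maps. Reading off the $2\times4$ underlying diagram in the proof of Lemma \refbf{simple_tria_for_orth}, the incoherent image of $p\colon\pt^*\pt_!X\to0_*C(X)$ is $(\alpha_e,0)$, while the counit $0_*0^!Y\to Y$ restricts on the vertex $0$ to the fibre inclusion $\Sigma^{-1}C_m\to Y_0$, \ie to $-\Sigma^{-1}\beta_m$. Feeding this through the adjunction isomorphisms, $p^*$ becomes (up to sign) the assignment $\varphi\mapsto\Sigma^{-1}\beta_m\circ\varphi\circ\alpha_e\colon\D(\uno)(C_e,\Sigma^{-1}C_m)\to\D(\uno)(X_1,Y_0)$, which is precisely the map in condition \textsc{ho}1 of \refbf{wobbly}; and $(\Sigma^{-1}p)^*$ becomes (up to sign) $\varphi\mapsto\Sigma^{-1}\beta_m\circ\varphi\circ\Sigma^{-1}\alpha_e$, which, after applying the autoequivalence $\Sigma$, is injective if and only if $\beta_m\circ\psi\circ\alpha_e=0$ forces $\psi=0$ for every $\psi\colon C_e\to C_m$, \ie if and only if condition \textsc{ho}2 of \refbf{wobbly} holds.

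Granting this, the conclusion is a short chase on the exact sequence: $X\corth Y$ iff $\varphi_X^*$ is injective and surjective; injectivity is equivalent to $\mathrm{im}(p^*)=0$, \ie to \textsc{ho}1; surjectivity is equivalent to $(\Sigma^{-1}q)^*=0$, which by exactness at $\D(\due)(\Sigma^{-1}0_*C(X),Y)$ is equivalent to $(\Sigma^{-1}p)^*$ being injective, \ie to \textsc{ho}2. Hence $X\corth Y$ if and only if $\dia_\due X\horth\dia_\due Y$. The delicate point, and the only one that is not pure bookkeeping, is pinning down $p^*$ and $(\Sigma^{-1}p)^*$ on the nose: one must keep track of the signs introduced by rotating triangles and, more importantly, understand how the units and counits of $\pt_!\dashv\pt^*\dashv\pt_*$ and $0_*\dashv0^!$ act on the underlying diagrams — which is exactly where the failure of faithfulness of $\dia_\due$ is circumvented, by arguing through the adjunction isomorphisms rather than through incoherent diagrams.
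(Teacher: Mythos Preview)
Your proposal is correct and follows essentially the same approach as the paper: apply $\D(\due)(-,Y)$ to the triangle of Lemma \refbf{simple_tria_for_orth}, identify the outer terms via the adjunctions $\pt^*\dashv\pt_*$ and $0_*\dashv 0^!$, and read off \textsc{ho}1/\textsc{ho}2 as the triviality and injectivity of the relevant connecting maps. The paper's version is terser—it writes down the long exact sequence with the identifications already in place and does not spell out why the connecting maps match those of Definition \refbf{wobbly}—so your extra care in tracking $p^*$ and $(\Sigma^{-1}p)^*$ through the adjunction counits is genuine added value rather than a deviation.
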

\begin{proof}
Consider the triangle $X\to \pt^*\pt_!X\to 0_*C(X)\to \Sigma X$,  given by Lemma \refbf{simple_tria_for_orth}. Now apply $\D(\due)(-,Y)$ to this triangle to get the following long exact sequence:
\begin{align*}
\label{ort_ses}
\cdots &\to \D(\uno)(C(X),\Sigma^{-1}C(Y))\to \D(\uno)(X_1,Y_0)\to \D(\due)(X,Y)\to \\
\notag &\to\D(\uno)(C(X),C(Y))\to \D(\uno)(X_1,\Sigma Y_0)\to\cdots 
\end{align*}
By definition, $X\corth Y$ if and only if the map $\D(\uno)(X_1,Y_0)\to \D(\due)(X,Y)$ is bijective, but this map is injective if and only if  
\[
\D(\uno)(C(X),\Sigma^{-1}C(Y))\to \D(\uno)(X_1,Y_0)
\] 
is trivial (which is condition \refbf{wobbly}.\textsc{ho}1 for $(\dia_{\due}X,\dia_{\due}Y)$), while it is surjective if and only if the map 
\[
\D(\uno)(C(X),C(Y))\to \D(\uno)(X_1,\Sigma Y_0)
\] 
is injective (which is condition \refbf{wobbly}.\textsc{ho}2 for $(\dia_{\due}X,\dia_{\due}Y)$).
\end{proof}

Another characterization of coherent orthogonality, this time for classes, in a stable derivator $\D$, can be given using the composition functor $(0,2)^*\colon \D(\tre)\to \D(\due)$:

\begin{lemma}\label{level_wise_ff}
Suppose $\D$ is a stable derivator. The following are equivalent for a pair $\F=(\E,\M)$ of subclasses of $\D(\due)$:
\begin{enumerate}
\item $\E\corth \M$;
\item letting $\D_\F(\uno)\subseteq \D(\tre)$ be the full subcategory of those $X\in \D(\tre)$ such that $(0,1)^*X\in \E$ and $(1,2)^*X\in\M$, the restriction 
\[
\Psi:=(0,2)^*\restriction_{\D_\F(\uno)}\colon \D_\F(\uno)\to \D(\due)
\] 
is fully faithful. 
\end{enumerate}
\end{lemma}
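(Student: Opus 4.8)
The plan is to compute, for $X,X'\in\D_\F(\uno)$, the hom-group $\D(\tre)(X,X')$ by mapping out of the ``standard triangle of a coherent $2$-simplex'' of $X$ (Lemma~\ref{factorization_triangle}) into $X'$, and then to read full faithfulness of $\Psi$ straight off the resulting long exact sequence after identifying all of its terms via the Kan-extension adjunctions $(i,j)_!\dashv(i,j)^*$ and $1_!\dashv1^*$. All along, Lemma~\ref{coherent_orth_is_wobbly} lets me pass freely between $\corth$ and the homotopy orthogonality of Definition~\ref{wobbly}, so $\E\corth\M$ may be used either as ``$\varphi_{-,-}$ is an isomorphism'' or, when a connecting map must be pinned down, in the sharper shape of conditions \textsc{ho}$1$--\textsc{ho}$2$.

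For $(1)\Rightarrow(2)$, fix $X,X'\in\D_\F(\uno)$ and apply $\D(\tre)(-,X')$ to the triangle $(1,2)_!X_{(0,1)}\xto{f}1_!X_1\oplus(0,2)_!X_{(0,2)}\xto{g}X\xto{w}\Sigma(1,2)_!X_{(0,1)}$ of Lemma~\ref{factorization_triangle}. Unwinding the adjunctions, the exact sequence reads around $\D(\tre)(X,X')$ as
\[
\D(\due)\big(X_{(0,1)},\Sigma^{-1}X'_{(1,2)}\big)\xto{w^*}\D(\tre)(X,X')\xto{g^*}\D(\uno)(X_1,X'_1)\oplus\D(\due)\big(X_{(0,2)},X'_{(0,2)}\big)\xto{f^*}\D(\due)\big(X_{(0,1)},X'_{(1,2)}\big),
\]
where (chasing the counits) $g^*$ is the map $\phi\mapsto(1^*\phi,(0,2)^*\phi)$ and, tracing the two components of $f$ through the proof of Lemma~\ref{factorization_triangle} (the unit $X_{(0,1)}\to\pt^*X_1$ and the composition comparison $X_{(0,1)}\to X_{(0,2)}$), the restriction of $f^*$ to the first summand is exactly the structure map $\varphi_{X_{(0,1)},X'_{(1,2)}}$ of Definition~\ref{def_c_ort}. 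Since $X_{(0,1)}\in\E$ and $X'_{(1,2)}\in\M$, the hypothesis forces $\varphi_{X_{(0,1)},X'_{(1,2)}}$ to be bijective; hence $\ker f^*=\mathrm{im}\,g^*$ is carried isomorphically onto the second summand by the projection $\pi_2$, so that $(0,2)^*\colon\D(\tre)(X,X')\to\D(\due)(X_{(0,2)},X'_{(0,2)})$ equals $\pi_2\circ g^*$ and is surjective with kernel $\ker g^*=\mathrm{im}\,w^*$. It then only remains to check $w^*=0$ (equivalently, that $(\Sigma f)^*$ is onto), i.e.\ that every $\phi\colon X\to X'$ with $1^*\phi=0$ and $(0,2)^*\phi=0$ vanishes: writing such a $\phi$ as $\xi\circ w$ via the triangle, one shows $\xi$ lies in the image of $(\Sigma f)^*$ --- and this is the step that genuinely uses $X_{(0,1)}\corth X'_{(1,2)}$ in the full strength of \textsc{ho}$1$/\textsc{ho}$2$ (applied, via Lemma~\ref{coherent_orth_is_wobbly}, after a shift of $X'$).

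For $(2)\Rightarrow(1)$, take $E\in\E$ and $M\in\M$; we must show $\varphi_{E,M}$ is bijective. Let $p_{12},p_{01}\colon\tre\to\due$ be the functors collapsing $\{1,2\}$ and $\{0,1\}$ respectively, and put $E^+:=p_{12}^*E$ (``$E$ then $\id_{E_1}$'') and ${}^+M:=p_{01}^*M$ (``$\id_{M_0}$ then $M$''). A direct check of the composites $p_{12}\circ(i,j)$ and $p_{01}\circ(i,j)$ gives $(0,1)^*E^+\cong E$, $(1,2)^*E^+\cong\pt^*E_1$, $(0,2)^*E^+\cong E$ and $(0,1)^*{}^+M\cong\pt^*M_0$, $(1,2)^*{}^+M\cong M$, $(0,2)^*{}^+M\cong M$; since the classes $\E$ and $\M$ contain the identities (as do the two classes of any triangulated prefactorization system), $E^+$ and ${}^+M$ both lie in $\D_\F(\uno)$. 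A short computation with $p_{12}^*\dashv p_{12,*}$ shows $p_{12,*}\,p_{01}^*M\cong\pt^*M_0$, whence $\D(\tre)(E^+,{}^+M)\cong\D(\due)(E,\pt^*M_0)\cong\D(\uno)(E_1,M_0)$, and under this identification $\Psi$ on hom-sets becomes precisely $\varphi_{E,M}$. Full faithfulness of $\Psi$ therefore makes $\varphi_{E,M}$ bijective; as $E,M$ were arbitrary, $\E\corth\M$.

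The main obstacle is the injectivity half of $(1)\Rightarrow(2)$: the termwise identification of the long exact sequence is routine once Lemma~\ref{factorization_triangle} and the adjunctions are in hand, but the vanishing of $w^*$ does not follow from bijectivity of $\varphi_{X_{(0,1)},X'_{(1,2)}}$ alone --- one must control how the suspended orthogonality data interacts, and it is precisely here that conditions \textsc{ho}$1$ and \textsc{ho}$2$ of Definition~\ref{wobbly} are needed for their own sake, not merely through the isomorphism packaged in Definition~\ref{def_c_ort}.
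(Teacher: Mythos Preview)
For $(1)\Rightarrow(2)$ your approach is exactly the paper's: map out of the triangle of Lemma~\ref{factorization_triangle} into $X'$ and read off the long exact sequence. Surjectivity of $\Psi$ on homs follows, as you say, because the restriction of $f^*$ to the first summand is the isomorphism $\varphi_{X_{(0,1)},X'_{(1,2)}}$. But your injectivity step is not an argument: what is needed is that the \emph{previous} map $(\Sigma f)^*$ in the long exact sequence be onto, and the paper obtains this from $\Sigma X_{(0,1)}\corth X'_{(1,2)}$, i.e.\ from the tacit closure of $\E$ under $\Sigma$ --- not part of the stated hypothesis, though satisfied in every application of the lemma (where $\E$ comes from a \phfs or a \dpfs). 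Your gesture toward ``\textsc{ho}$1$/\textsc{ho}$2$\dots after a shift of $X'$'' does not supply this: surjectivity of $\varphi_{\Sigma X_{(0,1)},X'_{(1,2)}}$ unwinds (via Lemma~\ref{coherent_orth_is_wobbly}) to \emph{injectivity} of the map $\D(\uno)(C_e,\Sigma^{-1}C_m)\to\D(\uno)(E_1,M_0)$, whereas \textsc{ho}$1$ for the unshifted pair $(e,m)=(X_{(0,1)},X'_{(1,2)})$ only says this map is zero, and \textsc{ho}$2$ concerns a different degree entirely.

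The paper does not prove $(2)\Rightarrow(1)$. Your argument for it is clean but rests on $\pt^*E_1\in\M$ and $\pt^*M_0\in\E$, needed so that $E^+$ and ${}^+M$ lie in $\D_\F(\uno)$; you justify this by appeal to prefactorization systems, but in direction $(2)\Rightarrow(1)$ no such structure is assumed. Granting that identities lie in both classes (again automatic whenever the lemma is actually used), your computation $\D(\tre)(E^+,{}^+M)\cong\D(\due)(E,p_{12,*}\,p_{01}^*M)\cong\D(\uno)(E_1,M_0)$ is correct and does identify $\Psi$ on homs with $\varphi_{E,M}$.
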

\begin{proof}
Given $X,\, Y\in \D(\tre)$, by Lemma \refbf{factorization_triangle} there is a triangle
\[
(1,2)_!X_{(0,1)}\to 1_!X_1\oplus (0,2)_!X_{(0,2)}\to X\to \Sigma (1,2)_!X_{(0,1)}.
\]
Applying $\D(\tre)(-,Y)$ to this triangle we get a long exact sequence:
\begin{align*}
\cdots&\to \D(\due)(\Sigma X_{(0,2)},Y_{(0,2)})\oplus\D(\uno)(\Sigma X_1,Y_1)\xto{(*)} \D(\due)(\Sigma X_{(0,1)},Y_{(1,2)})\to\notag\\
&\to\D(\tre)(X,Y)\to\D(\due)(X_{(0,2)},Y_{(0,2)})\oplus\D(\uno)(X_1,Y_1)\to\notag\\
&\xto{(*)} \D(\due)(X_{(0,1)},Y_{(1,2)})\to \cdots
\end{align*}
If $X,\, Y\in\D_{\F}(\uno)$, then $X_{(0,1)},\, \Sigma X_{(0,1)}\in \E$ and $Y_{(1,2)}\in\M$. Thus, the following canonical maps are isomorphisms:
\begin{gather*}
\D(\uno)(X_1,Y_1)\xto{\cong}\D(\due)(X_{(0,1)},Y_{(1,2)})\\
\D(\uno)(\Sigma X_1,Y_1)\xto{\cong}\D(\due)(\Sigma X_{(0,1)},Y_{(1,2)})
\end{gather*}
showing that the two maps marked by $(*)$ in the above long exact sequence are (split) surjections. This shows that the natural map
\[
\D(\tre)(X,Y)\xto{\cong} \D(\due)(X_{(0,2)},Y_{(0,2)})
\]
is an isomorphisms, that is, $\Psi$ is fully faithful.
\end{proof}

We omit the proof of the following easy result

\begin{proposition}\label{ortho_to_self}
The following conditions are equivalent, for $X\in\D(\due)$.
\begin{enumerate}
	\item $X\corth X$;
	\item $X$ is an isomorphism (\ie $\dia_\due(X)$ is an isomorphism in $\D(\uno)$);
	\item $X\corth \D(\due)$;
	\item $\D(\due)\corth X$.
\end{enumerate}
\end{proposition}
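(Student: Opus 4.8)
The plan is to prove the implications $(3)\Rightarrow(1)$, $(4)\Rightarrow(1)$, $(1)\Rightarrow(2)$, $(2)\Rightarrow(3)$ and $(2)\Rightarrow(4)$, which together yield the four-fold equivalence. The first two are immediate: putting $Y=X$ in $X\corth\D(\due)$ (resp.\ in $\D(\due)\corth X$) is literally the assertion $X\corth X$. For $(1)\Rightarrow(2)$ I would feed Lemma~\refbf{corth=orth} the (trivially commuting) square with $\dia_\due X$ on both vertical edges and $\id_{X_0}$, $\id_{X_1}$ on the horizontal ones; the diagonal filler $d\colon X_1\to X_0$ it produces satisfies $d\circ\dia_\due X=\id_{X_0}$ and $\dia_\due X\circ d=\id_{X_1}$, so $\dia_\due X$ is invertible, i.e.\ $X$ is an isomorphism in $\D(\due)$.

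The heart of the argument is $(2)\Rightarrow(3)$ and $(2)\Rightarrow(4)$, and the crucial reduction is that, when $\dia_\due X$ is an isomorphism, the unit $\varphi_X\colon X\to\pt^*\pt_!X$ of $\pt_!\dashv\pt^*$ and the counit $\psi_X\colon\pt^*\pt_*X\to X$ of $\pt^*\dashv\pt_*$ are both isomorphisms in $\D(\due)$. To see this I would use Setting~\refbf{setting_sec_3}(1) to test invertibility after applying $0^*$ and $1^*$. The components $1^*\varphi_X$ and $0^*\psi_X$ are invertible by the triangle identities together with the full faithfulness of $\pt^*$ (so that $\pt_!\pt^*\to\id$ and $\id\to\pt_*\pt^*$ are iso, and $1^*\pt^*\cong\id\cong0^*\pt^*$); the remaining components $0^*\varphi_X$ and $1^*\psi_X$ are then automatically invertible, because $\dia_\due$ is a functor and $\dia_\due(\pt^*A)=\id_A$, so that $0^*\varphi_X=(1^*\varphi_X)\circ\dia_\due X$ and $1^*\psi_X=\dia_\due X\circ(0^*\psi_X)$ are composites of invertible maps.

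Granting that $\varphi_X$ and $\psi_X$ are isomorphisms, the conclusion is formal. For each $Y\in\D(\due)$ the map $\varphi_{X,Y}$ of Definition~\refbf{def_c_ort} is, by construction, the adjunction bijection $\D(\uno)(X_1,\pt_*Y)\cong\D(\due)(\pt^*X_1,Y)$, $a\mapsto\psi_Y\circ\pt^*a$, followed by precomposition with $\varphi_X$; hence it is a bijection for all $Y$, which is exactly $(3)$. Dually, $\varphi_{Y,X}$ is the adjunction bijection $\D(\uno)(\pt_!Y,X_0)\cong\D(\due)(Y,\pt^*X_0)$, $a\mapsto\pt^*a\circ\varphi_Y$, followed by postcomposition with $\psi_X$, so it is a bijection for all $Y$, which is $(4)$. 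I expect the only delicate point to be the bookkeeping around the adjoint triple $\pt_!\dashv\pt^*\dashv\pt_*$ — in particular making sure that $\varphi_X$ and $\psi_X$ come out as genuine isomorphisms rather than merely a split monomorphism and a split epimorphism, which is exactly where full faithfulness of $\pt^*$ is used.
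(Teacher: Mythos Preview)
Your proof is correct. The paper omits its own proof of this proposition (``We omit the proof of the following easy result''), so there is nothing to compare against directly, but your argument is exactly the sort of direct verification one expects: the trivial implications $(3),(4)\Rightarrow(1)$, the filler argument from Lemma~\refbf{corth=orth} for $(1)\Rightarrow(2)$, and the observation that invertibility of $\dia_\due X$ forces $\varphi_X$ and $\psi_X$ to be isomorphisms (via Setting~\refbf{setting_sec_3}(1) and full faithfulness of $\pt^*\cong 0_!$), which then makes $\varphi_{X,Y}$ and $\varphi_{Y,X}$ bijective for all $Y$.
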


\begin{definition}[derivator pre-factorization systems]\label{def_phfs}
Denote by $\D^{\due}$ the shifted pre-derivator $\D^{\due}(I) \coloneqq \D(\due\times I)$. 
Let $\mathbb E$ and $\mathbb M$ be two sub pre-derivators of $\D^{\due}$.
For any $I\in \Dia$, let $\E_I \coloneqq \mathbb E(I)$, $\M_I \coloneqq \mathbb M(I)$ and $\F_I \coloneqq (\E_I,\M_I)$.

The pair $\F \coloneqq (\mathbb E,\mathbb M)$ is a \emph{derivator pre-factorization system} (\cpfs for short) if $\E_I^{\corth}=\M_I$ and ${}^{\corth }\M_I=\E_I$,  for any $I\in\Dia$.
\end{definition}

The following lemma, whose proof is an easy consequence of Lemma \refbf{corth=orth}, describes the {\cpfs}s on a representable pre-derivator.

\begin{lemma}\label{dpfs=pfs_if_discrete}
Suppose that $\D=y(\C)$ is representable and let $\F=(\mathbb E,\mathbb M)$ be a pair of sub pre-derivators of $\D^\due$. Then, $\F$ is a \cpfs if and only if each $\F_I$ is an orthogonal pre-factorization system (see, for example, \cite[§\textbf{1}]{riehl2008factorization}).
\end{lemma}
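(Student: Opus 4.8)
The plan is to reduce the statement, level by level in $I \in \Dia$, to the classical characterisation of orthogonal pre-factorization systems, the bridge being Lemma \refbf{corth=orth}. Both conditions in the claimed equivalence --- that $\F$ be a \cpfs and that each $\F_I$ be an orthogonal pre-factorization system --- are imposed separately at each $I \in \Dia$, so it suffices to compare them at a fixed $I$.

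First I would note that, since $\D = y(\C)$ is representable, the shifted pre-derivator $J \mapsto \D(J \times I)$ --- whose value at $\due$ is $\D(\due \times I) = \D^{\due}(I)$, and which carries the coherent orthogonality relation $\corth$ used in Definition \refbf{def_phfs} at level $I$ --- is again representable, namely by $\C^I$, because $\D(J \times I) = \C^{J \times I} \cong (\C^I)^J$. In particular it satisfies Setting \refbf{setting_sec_3}, as every represented pre-derivator does. Under the resulting identification $\D^{\due}(I) = \C^{\due \times I} \cong (\C^I)^{\due} = \D(I)^{\due}$, an object of $\D^{\due}(I)$ is precisely a morphism of $\D(I)$.

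Next I would apply Lemma \refbf{corth=orth} to this representable pre-derivator: for $X, Y \in \D^{\due}(I)$ it yields $X \corth Y$ if and only if every commutative square in $\D(I)$ with left edge $X$ and right edge $Y$ admits a unique diagonal filler, \ie if and only if $X \perp Y$ in the arrow category $\D(I)^{\due}$. Hence, for every class $\mathcal X \subseteq \D^{\due}(I)$, the coherent orthogonal $\mathcal X^{\corth}$ agrees with the ordinary orthogonal $\mathcal X^{\perp}$ computed in $\D(I)^{\due}$, and likewise ${}^{\corth}\mathcal X = {}^{\perp}\mathcal X$. Feeding this into Definition \refbf{def_phfs}, the requirement $\E_I^{\corth} = \M_I$ and ${}^{\corth}\M_I = \E_I$ becomes $\E_I^{\perp} = \M_I$ and ${}^{\perp}\M_I = \E_I$, which is exactly the assertion that $\F_I = (\E_I,\M_I)$ is an orthogonal pre-factorization system on $\D(I)$; demanding this for all $I$ is precisely what it means for $\F$ to be a \cpfs, and the equivalence follows.

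I do not expect a genuine obstacle here: the only step needing a little care is the bookkeeping in the second paragraph --- identifying $\D^{\due}(I)$ with the arrow category $\D(I)^{\due}$ and checking that the coherent orthogonality relation transported across that identification is the classical one --- but this is exactly the content of Lemma \refbf{corth=orth}, once one observes that shifting a representable pre-derivator by $I$ leaves it representable.
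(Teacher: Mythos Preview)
Your proposal is correct and follows exactly the approach the paper indicates: the paper simply remarks that the lemma is ``an easy consequence of Lemma \refbf{corth=orth}'', and you have spelled out precisely those easy details, including the observation that the shifted pre-derivator $\D^I$ is again representable so that Lemma \refbf{corth=orth} applies at every level $I$.
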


Our next task is to describe the {\cpfs}s on a stable derivator $\D$ in terms of {\phfs}s on its images. Before that, we prove the following lemma giving some useful closure properties of {\cpfs}s.


\begin{lemma}\label{cfs_limit_colimit}
Suppose $\D$ is a derivator and let $\F=(\mathbb E,\mathbb M)$ be a \cpfs on $\D$. Given a functor $u\colon J\to I$ in $\Dia$, 
\begin{enumerate}
\item if $X\in \mathbb E(J)$, then $u_!X\in \mathbb E(I)$;
\item if $X\in \mathbb M(J)$, then $u_*X\in \mathbb M(I)$.
\end{enumerate}
\end{lemma}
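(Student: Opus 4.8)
The plan is to reduce everything to a single ``transfer principle'' for coherent orthogonality and then verify that principle by a purely formal argument. Recall that $\F$ being a \cpfs means $\mathbb E(I)={}^{\corth}\mathbb M(I)$ and $\mathbb M(I)=\mathbb E(I)^{\corth}$ for every $I\in\Dia$, and that $\mathbb E$ and $\mathbb M$, being sub pre-derivators of $\D^{\due}$, are stable under all the restriction functors $u^{*}$ of $\D^{\due}$. Statements (1) and (2) are formally dual (swap $\mathbb E\leftrightarrow\mathbb M$, $u_{!}\leftrightarrow u_{*}$, and the two arguments of $\corth$), so I will only discuss (1). To get $u_{!}X\in\mathbb E(I)={}^{\corth}\mathbb M(I)$ I must show $u_{!}X\corth Y$ for every $Y\in\mathbb M(I)$; since $u^{*}Y\in\mathbb M(J)$ and $X\in\mathbb E(J)={}^{\corth}\mathbb M(J)$, I already have $X\corth u^{*}Y$. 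Hence everything reduces to:
\[
(\star)\qquad X\corth u^{*}Y\text{ in }\D^{\due}(J)\ \Longrightarrow\ u_{!}X\corth Y\text{ in }\D^{\due}(I),
\]
where, as always on $\D^{\due}$, the functors $u^{*}$ and $u_{!}$ are $(\id_{\due}\times u)^{*}$ and $(\id_{\due}\times u)_{!}$.

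To prove $(\star)$ I would unwind the definition of $\corth$. Write $\bar{u}=\id_{\due}\times u\colon\due\times J\to\due\times I$ and let $p^{I}\colon\due\times I\to I$, $p^{J}\colon\due\times J\to J$ be the projections, so that for $X\in\D^{\due}(I)$ the objects $X_{1}:=(p^{I})_{!}X$ and $X_{0}:=(p^{I})_{*}X$ of $\D(I)$ are the target and the source of the coherent morphism $X$ (and similarly over $J$); then $X\corth Y$ is exactly the assertion that
\[
\varphi_{X,Y}\colon \D(I)\big((p^{I})_{!}X,(p^{I})_{*}Y\big)\longrightarrow\D^{\due}(I)(X,Y),\qquad a\mapsto \epsilon_{Y}\circ(p^{I})^{*}(a)\circ\eta_{X},
\]
is invertible (this is Definition~\refbf{def_c_ort} read over the base $I$ instead of over $\uno$, with $\eta$ the unit of $(p^{I})_{!}\dashv(p^{I})^{*}$ and $\epsilon$ the counit of $(p^{I})^{*}\dashv(p^{I})_{*}$). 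The strict identity $p^{I}\circ\bar{u}=u\circ p^{J}$ and the $2$-functoriality of $\D$ give $\bar{u}^{*}(p^{I})^{*}=(p^{J})^{*}u^{*}$, hence by passing to left adjoints a canonical isomorphism $(p^{I})_{!}\bar{u}_{!}\cong u_{!}(p^{J})_{!}$, i.e.\ $(u_{!}X)_{1}\cong u_{!}(X_{1})$; and the Beck--Chevalley isomorphism of the pullback square $\bar{u}$ over $u$ (equivalently, the identifications $(p^{I})_{*}\cong\iota_{0}^{I,*}$, $(p^{J})_{*}\cong\iota_{0}^{J,*}$ for the ``$\{0\}\times(-)$'' inclusions, together with $\iota_{0}^{I}\circ u=\bar{u}\circ\iota_{0}^{J}$) gives $u^{*}(p^{I})_{*}\cong(p^{J})_{*}\bar{u}^{*}$, i.e.\ $u^{*}(Y_{0})\cong(\bar{u}^{*}Y)_{0}$. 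Feeding these identifications into the adjunction bijections for $\bar{u}_{!}\dashv\bar{u}^{*}$ (on $\D^{\due}$) and $u_{!}\dashv u^{*}$ (on $\D$), one assembles a commutative square whose top and bottom edges are $\varphi_{u_{!}X,Y}$ and $\varphi_{X,u^{*}Y}$ and whose two vertical edges are isomorphisms $\D(I)\big((u_{!}X)_{1},Y_{0}\big)\cong\D(J)\big(X_{1},(u^{*}Y)_{0}\big)$ and $\D^{\due}(I)(u_{!}X,Y)\cong\D^{\due}(J)(X,u^{*}Y)$; then the invertibility of $\varphi_{X,u^{*}Y}$, which is the hypothesis $X\corth u^{*}Y$, forces that of $\varphi_{u_{!}X,Y}$, which is $(\star)$. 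Statement (2) follows by the dual computation, using $(p^{I})_{!}\cong\iota_{1}^{I,*}$ and the adjunctions $\bar{u}^{*}\dashv\bar{u}_{*}$, $u^{*}\dashv u_{*}$.

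The one genuinely non-formal step — and the one I expect to be the crux — is the commutativity of that square: it expresses that $\varphi$ is natural for the adjunction $\bar{u}_{!}\dashv\bar{u}^{*}$ \emph{compatibly with the projections} $p^{I},p^{J}$. I would isolate it as a short preliminary lemma; spelled out, it is a diagram chase that uses only the triangle identities of the adjunctions involved together with the standard coherences saying that the $2$-functoriality isomorphisms of $\D$ — equivalently the Beck--Chevalley isomorphism $u^{*}(p^{I})_{*}\cong(p^{J})_{*}\bar{u}^{*}$, which here is the mate of an identity — intertwine the units of the $(p_{!}\dashv p^{*})$-adjunctions and the counits of the $(p^{*}\dashv p_{*})$-adjunctions on the two levels. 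No pointedness or stability enters: the only inputs are $2$-functoriality and the derivator axiom granting the existence of $u_{!}$ (resp.\ $u_{*}$ for (2)), which is exactly why assuming merely that $\D$ is a derivator is enough.
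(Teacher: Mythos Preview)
Your proposal is correct and follows essentially the same approach as the paper: reduce to $(\star)$ using that $\mathbb M$ is a sub pre-derivator (hence closed under $u^{*}$), then chain the adjunction isomorphisms $\D^{\due}(I)(u_!X,Y)\cong\D^{\due}(J)(X,u^*Y)\cong\D(J)(X_1,(u^*Y)_0)\cong\D(J)(X_1,u^*(Y_0))\cong\D(I)(u_!(X_1),Y_0)\cong\D(I)((u_!X)_1,Y_0)$. The paper presents exactly this chain but leaves implicit the coherence you correctly flag as the crux---namely that the composite is indeed $\varphi_{u_!X,Y}$---citing only that $1^*$ commutes with Kan extensions; your organization of the same computation into a commutative square is if anything more honest about where the content lies.
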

\begin{proof}
Let $X\in \mathbb E(J)$ and $Y\in \mathbb M(I)$, then
\begin{align*}\D^\due(I)(u_!X,Y)&\cong \D^\due(J)(X,u^*Y)\\
&\cong \D(J)(X_1,(u^*Y)_0)\\
&\cong \D(J)(X_1,u^*(Y_0))\\
&\cong \D(I)(u_!(X_1),Y_0)\\
&\cong \D(I)((u_!X)_1,Y_0)
\end{align*}
where we used that $1^*$ commutes with Kan extensions, see \cite[\aprop\textbf{2.6}]{Moritz}. This shows that $u_!x{\corth }Y$ for any $Y\in\mathbb M(I)$, and so $u_!X\in \mathbb E(I)$. This proves (1), the proof of part (2) is completely analogous.
\end{proof}


\color{black}
Let $\D$ be a prederivator, $\F=(\mathbb E,\mathbb M)$ a pair of sub pre-derivators of $\D^\due$ and let 
$\D_\F\colon \Dia^\opp\to \Cat$
be a pre-derivator such that $\D_{\F}(I)\subseteq \D(I\times \tre)$ is the full subcategory spanned by those $X\in  \D(I\times \tre)$ such that $X_{(0,1)}\in \E_I$ and $X_{(1,2)}\in \M_I$. Denote by 
\begin{equation}\label{composition_morphism}
\Psi_\F\colon \D_\F\longrightarrow \D^{\due}
\end{equation}
the restriction of the morphism of derivators $(0,2)^\circledast\colon\D^{\tre}\to \D^{\due}$. In case $\D$ is representable, it is known (and not difficult to verify by hand) that the $\Psi_\F$ is fully faithful. 

\begin{definition}[Choric {\dpfs}]
In the above notation, $\F$ is said to be \emph{choric} if $\Psi_{\F}$ is fully faithful. 
\end{definition}

We do not known of any example of a non-choric \dpfs. In fact, for stable (and representable) derivators, any \dpfs is automatically choric. Furthermore, in the stable setting,  it is equivalent to specify a \dpfs and a ``compatible family'' of {\phfs}s: 
\color{black}
\begin{theorem}\label{stable_equv_orth_pre}
Suppose $\D$ is a stable derivator. Given $\class{X}\subseteq \D^\due(I)$ we denote $\overline{\class{X}}$ the isomorphism\hyp{}closure of the class $\dia_\due(\class{X})\subseteq \D(I)^\due$. The following are equivalent for a pair of sub pre-derivators $\F=(\mathbb E,\mathbb M)$  of $\D^{\due}$:
\begin{enumerate}
\item $\F$ is a \cpfs;
\item \color{black}$\F$ is a choric \cpfs;\color{black}
\item $\overline\F_I=(\overline\E_I,\overline\M_I)$ is a \phfs in $\D(I)$ for any $I\in\Dia$.
\end{enumerate}
\end{theorem}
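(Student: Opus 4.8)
The plan is to prove $(1)\Leftrightarrow(3)$ together with $(1)\Rightarrow(2)$; since $(2)\Rightarrow(1)$ is immediate --- a choric \cpfs is in particular a \cpfs --- this gives all the equivalences. The whole argument should proceed by reducing the ``derivator\hyp{}level'' conditions to statements about a single category, via shifted derivators. For $I\in\Dia$ write $\D^{I}$ for the shifted pre-derivator $J\mapsto\D(I\times J)$; it is again a stable derivator, and under the identification $\D^{I}(\due)\cong\D^{\due}(I)$ the coherent orthogonality of $\D^{I}$ in the sense of Definition \refbf{def_c_ort} is the very relation $\corth$ used in Definition \refbf{def_phfs}, while the underlying diagram functor of $\D^{I}$ is the functor $\dia_{\due}(I)\colon\D^{\due}(I)\to\D(I)^{\due}$ of condition~(2) of Setting \refbf{setting_sec_3} (available here because a stable derivator is strong), and is therefore full and essentially surjective. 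Applying Lemma \refbf{coherent_orth_is_wobbly} to $\D^{I}$ yields the dictionary
\[
X\corth Y\quad\Longleftrightarrow\quad\dia_{\due}(I)X\horth\dia_{\due}(I)Y\qquad(X,Y\in\D^{\due}(I)),
\]
and, since homotopy orthogonality depends only on isomorphism classes, it upgrades to: $\E_I^{\corth}=\M_I$ is equivalent to $\overline{\E}_I^{\horth}=\overline{\M}_I$, and ${}^{\corth}\M_I=\E_I$ to ${}^{\horth}\overline{\M}_I=\overline{\E}_I$. For one half of these equivalences one lifts a morphism of, say, $\overline{\E}_I^{\horth}$ along the full, essentially surjective $\dia_{\due}(I)$ and applies the dictionary; for the other half one applies the dictionary directly and remembers that $\overline{\E}_I$, $\overline{\M}_I$ are the isomorphism\hyp{}closures of $\dia_{\due}(I)(\E_I)$, $\dia_{\due}(I)(\M_I)$.

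This produces both orthogonality clauses of a \phfs (Definition \refbf{the_def_of_hfs}) for $\overline{\F}_I$ from the \cpfs conditions on $\F$, and conversely. For $(1)\Rightarrow(3)$ it remains to check the last clause of a \phfs, ``$\phi\in\overline{\E}_I\Rightarrow\Sigma\phi\in\overline{\E}_I$'' (equivalently ``$\psi\in\overline{\M}_I\Rightarrow\Sigma^{-1}\psi\in\overline{\M}_I$''): since every restriction functor of the stable derivator $\D$ is exact, $\dia_{\due}(I)$ commutes with $\Sigma$, so it is enough to know that $\E_I$ is stable under the suspension functor of the stable derivator $\D^{\due}$, and I would deduce this from the closure properties of {\cpfs}s (Lemma \refbf{cfs_limit_colimit}) together with the description of $\Sigma$ on a stable derivator as a composite of (co)limit functors. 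For $(3)\Rightarrow(1)$ one runs the translation backwards, reading off $\E_I^{\corth}=\M_I$ and ${}^{\corth}\M_I=\E_I$ from the \phfs axioms for $\overline{\F}_I$ (the $\Sigma$\hyp{}stability of $\overline{\E}_I$ being exactly what pairs up the two coherent identities). Finally, for $(1)\Rightarrow(2)$, fix $I$: from $\E_I\corth\M_I$ (which follows from $\E_I^{\corth}=\M_I$) and Lemma \refbf{level_wise_ff} applied to $\D^{I}$, the restriction of $(0,2)^{*}$ to $\D_{\F}(I)$ is fully faithful; as $I$ was arbitrary, $\Psi_{\F}$ of \eqref{composition_morphism} is fully faithful, i.e. $\F$ is choric.

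The bookkeeping with Lemmas \refbf{coherent_orth_is_wobbly} and \refbf{level_wise_ff} after shifting is routine, as is --- given that dictionary --- the matching of classes. I expect the genuine work to be concentrated in two places. First, the suspension\hyp{}stability of $\overline{\E}_I$: for a bare triangulated pre-factorization system this is an independent axiom, and here it must be extracted from the orthogonality\hyp{}only definition of a \cpfs --- that is, one really has to show that $\E_I$ is closed under $\Sigma$ inside $\D^{\due}(I)$, which does not obviously follow from the orthogonality identities and will need the derivator structure (Lemma \refbf{cfs_limit_colimit}) in an essential way. Second, in the direction $(3)\Rightarrow(1)$ one must climb back from the underlying incoherent factorization system to the coherent data along $\dia_{\due}(I)$, which is only full and essentially surjective and not an equivalence; here the point is that, by Lemma \refbf{coherent_orth_is_wobbly}, the relation $\corth$ --- hence each complement $\E_I^{\corth}$, ${}^{\corth}\M_I$ --- is invariant under isomorphism of the associated incoherent diagrams, so that no information is lost in transporting the statement back up to $\D^{\due}(I)$.
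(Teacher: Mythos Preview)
Your proposal is correct and follows essentially the same route as the paper: the equivalence $(1)\Leftrightarrow(3)$ via the dictionary of Lemma~\refbf{coherent_orth_is_wobbly} (applied to the shifted derivator $\D^I$), with the extra $\Sigma$\hyp{}closure in $(1)\Rightarrow(3)$ coming from Lemma~\refbf{cfs_limit_colimit}, and $(1)\Rightarrow(2)$ via Lemma~\refbf{level_wise_ff}. One small remark: your parenthetical in $(3)\Rightarrow(1)$ suggesting that $\Sigma$\hyp{}stability is needed to ``pair up the two coherent identities'' is superfluous --- the two orthogonality identities $\E_I^{\corth}=\M_I$ and ${}^{\corth}\M_I=\E_I$ follow from their $\horth$\hyp{}counterparts independently via the dictionary, and the paper indeed treats $(3)\Rightarrow(1)$ as immediate once that dictionary is in place.
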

\begin{proof}
The implication ``(2)$\Rightarrow$(1)'' is trivial while ``(1)$\Rightarrow$(2)'' is implied by Lemma \ref{level_wise_ff}. 
For the equivalence ``(3)$\Leftrightarrow$(1)'', we obtain by  Lemma \refbf{coherent_orth_is_wobbly}, that $(\overline\E_I)^{\horth}=\overline\M_I$ and ${}^{\horth}(\overline\M_I)=\overline\E_I$ if and only if $(\E_I)^{\corth}=\M_I$ and ${}^{\corth}(\M_I)=\E_I$. Then clearly (3) implies (1). For the converse, it is enough to show that $\overline\E_I$ is closed under suspensions, which is a consequence of Lemma \refbf{cfs_limit_colimit}.
\end{proof}

\subsection{Derivator factorization systems}

We are now going to give the definition of a derivator factorization system. Roughly speaking, this should be a \cpfs $\F$ such that any map is ``coherently $\F$-crumbled''; in the following definition we translate this idea in the language of derivators.

\begin{definition}[derivator factorization systems]\label{def_hfs}
Let $\F=(\mathbb E,\mathbb M)$ be a pair of sub pre-derivators of $\D^\due$ and let $\Psi_\F\colon \D_\F\longrightarrow \D^{\due}$ be the morphism defined in \eqref{composition_morphism}. We say that $\F$ is a \emph{derivator factorization system} (for short, \dfs) if it is a \cpfs and if $\Psi_\F(I)$ is essentially surjective for any $I\in \Dia$.
\end{definition}

Let us give an interpretation of the above definition in case $\D$ is representable:

\begin{lemma}\label{weak_and_cancellation}
In Setting \refbf{setting_sec_3}, let $\F=(\mathbb E,\mathbb M)$ be a pair of sub pre-derivators of $\D^\due$. If $\F$ is a \dfs, then $\overline \F_I \coloneqq (\overline\E_I,\overline \M_I)$ is a weak factorization system (see, for example, \cite[§\textbf{2}]{riehl2008factorization}) on $\D(I)$, for any $I\in \Dia$. Also, $\overline \F_I$ has the following cancellation properties:
\begin{enumerate}
\item given a composition $g\circ f\in \D(I)^\due$, if $g\circ f$ and $f\in \overline\E_I$, then $g\in \overline\E_I$;
\item given a composition $g\circ f\in \D(I)^\due$, if $g\circ f$ and $g\in \overline\M_I$, then $f\in \overline\M_I$.
\end{enumerate}
If $\D=y(\C)$ is representable, then $\F$ is a \dfs if and only if each $\F_I$ is an orthogonal factorization system.
\end{lemma}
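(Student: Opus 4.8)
The plan is to unpack the two assertions of Lemma \refbf{weak_and_cancellation} in turn, reducing everything to facts already in hand: Lemma \refbf{corth=orth} (coherent orthogonality descends to ordinary weak orthogonality of the underlying incoherent diagrams), the definition of a \dfs (each $\Psi_\F(I)$ is essentially surjective, hence every morphism admits a coherent, and therefore an incoherent, $(\E,\M)$-factorization), and Lemma \refbf{dpfs=pfs_if_discrete} together with Lemma \refbf{corth=orth} for the representable case. Throughout we work in a fixed $I\in\Dia$ and abbreviate $\C_I:=\D(I)$, $\overline\E:=\overline\E_I$, $\overline\M:=\overline\M_I$.

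\textbf{First I would} establish that $\overline\F_I$ is a weak factorization system. For the factorization part: given $f\in\C_I^\due$, by Setting \refbf{setting_sec_3}(2) lift $f$ to some $X\in\D^\due(I)$ with $\dia_\due(X)\cong f$, then use essential surjectivity of $\Psi_\F(I)$ to find $Z\in\D_\F(I)$ with $(0,2)^\circledast Z\cong X$; applying $\dia$-functors to the two legs $Z_{(0,1)}\in\E_I$, $Z_{(1,2)}\in\M_I$ gives a factorization $f=m\circ e$ with $e\in\overline\E$, $m\in\overline\M$ up to isomorphism (and isomorphism-closure of $\overline\E,\overline\M$ absorbs the discrepancy). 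For the lifting property: if $e\in\overline\E$ and $m\in\overline\M$, pick coherent preimages $X\in\E_I$, $Y\in\M_I$ with $\dia_\due X\cong e$, $\dia_\due Y\cong m$; since $\F$ is a \cpfs we have $X\corth Y$, so Lemma \refbf{corth=orth} produces a (possibly non-unique) diagonal filler for any commutative square from $e$ to $m$. Hence $\overline\E\subseteq{}^\pitchfork\overline\M$ and, dually using ${}^{\corth}\M_I=\E_I$, $\overline\M\subseteq\overline\E^\pitchfork$; combined with the factorization this is exactly a weak factorization system in the sense of \cite{riehl2008factorization}.

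\textbf{Next I would} handle the cancellation properties. For (1), suppose $g\circ f$ and $f$ lie in $\overline\E$; I want $g\in\overline\E$, i.e.\ $g\in{}^\pitchfork\overline\M$ — but a weak factorization system's left class need not be closed under this kind of cancellation in general, so here one must genuinely use the \emph{coherent} structure, not just $\overline\F_I$. The right move is to note that the pair $(g\circ f, f)$, both being $\F$-arrows, assembles into an object of $\D_\F(I)$ — more precisely one lifts the composable pair to a coherent $2$-simplex $Z\in\D(I\times\tre)$ whose $(0,2)$-face is $g\circ f\in\overline\E$, whose $(0,1)$-face is $f\in\overline\E$, and whose $(1,2)$-face is $g$; then one checks $g\corth Y$ for every $Y\in\M_I$ by a long-exact-sequence argument applied to the standard triangle of Lemma \refbf{factorization_triangle} (the two relevant connecting maps are surjective precisely because $f$ and $g\circ f$ are coherently orthogonal to $Y$), exactly in the spirit of Lemma \refbf{level_wise_ff}. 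This forces $g\in\E_I$ up to the $\dia_\due$-image, hence $g\in\overline\E$. Statement (2) is the formal dual, using $u_*$-stability (Lemma \refbf{cfs_limit_colimit}) in place of $u_!$-stability and the triangle of Lemma \refbf{factorization_triangle} read in the opposite direction.

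\textbf{The hard part will be} the cancellation clauses, and specifically making the lift of a composable pair $(g, f)$ of $\F$-arrows to a coherent object of $\D_\F(I)$ rigorous: Setting \refbf{setting_sec_3}(3) only gives that $\dia_\tre$ is full and essentially surjective, so one gets \emph{some} coherent $2$-simplex with the prescribed faces up to isomorphism, and one must be careful that ``$(0,1)$-face in $\E_I$, $(1,2)$-face in $\M_I$'' is preserved under that isomorphism (which it is, since $\E_I,\M_I$ are, by the \cpfs\ axioms applied levelwise, closed under isomorphism in $\D^\due(I)$). Once that lift is in place, the exact-sequence computation is a routine diagram chase modeled on Lemma \refbf{level_wise_ff}. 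Finally, the representable case is immediate: by Lemma \refbf{dpfs=pfs_if_discrete} a \cpfs\ on $y(\C)$ is the same as a levelwise orthogonal pre-factorization system, and $\Psi_\F(I)$ being additionally essentially surjective says exactly that every morphism in $\C^I$ factors through the two classes, so each $\F_I$ is an orthogonal factorization system; conversely an orthogonal factorization system on each $\C^I$ gives back, via $\dia_\due$ being an equivalence, a \dfs.
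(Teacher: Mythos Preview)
Your treatment of the weak-factorization-system part and of the representable case is essentially correct and matches the paper's argument (you omit the retract-closure step that is needed to upgrade ``weak orthogonality plus factorization'' to an honest weak factorization system, but the paper also dismisses this as an easy exercise).

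The cancellation argument, however, has a genuine gap. You propose to verify $g\corth Y$ via the standard triangle of Lemma \refbf{factorization_triangle} and a long-exact-sequence computation in the style of Lemma \refbf{level_wise_ff}; for the dual statement you invoke Lemma \refbf{cfs_limit_colimit}. All three of these lemmas carry extra hypotheses on $\D$ (the first two require $\D$ to be a \emph{stable derivator}, the third requires $\D$ to be a derivator so that $u_!,u_*$ exist), whereas Lemma \refbf{weak_and_cancellation} is stated only in Setting \refbf{setting_sec_3}. In that setting there are no triangles, no cones, and no long exact sequences, so your argument does not go through.

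The paper's proof of cancellation is entirely elementary and uses only the coherent-orthogonality bijections and the $2$-functoriality of $\D$. After lifting the composable pair to $Z\in\D^\tre(I)$ with $Z_{(0,1)},Z_{(0,2)}\in\mathbb E(I)$, one takes the canonical $2$-cells $\beta\colon(0,2)\Rightarrow(1,2)$ and $\gamma\colon(0,1)\Rightarrow(1,2)$ in $\Dia$. For $Y\in\mathbb M(I)$ and any $\phi\colon Z_{(1,2)}\to Y$, the bijection $\varphi_{Z_{(0,2)},Y}$ (from $Z_{(0,2)}\corth Y$) applied to $\phi\circ\beta^*_Z$ produces a unique $d\colon Z_2\to Y_0$; then the \emph{injectivity} of $\varphi_{Z_{(0,1)},Y}$ (from $Z_{(0,1)}\corth Y$) applied to $\phi\circ\gamma^*_Z$ forces $d\circ g=\phi_0$, so $d$ is a diagonal filler. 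This shows $g$ is weakly left orthogonal to every element of $\overline\M_I$, hence $g\in{}^{\pitchfork}\overline\M_I=\overline\E_I$. No triangulated or derivator machinery is needed.
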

\begin{proof}
By Lemma \refbf{corth=orth} it is clear that the two classes $\overline\E_I$ and $\overline \M_I$ are weakly orthogonal and, using the essential surjectivity of $\dia_\tre(I)$, it is not difficult to show that any morphism in $\D(I)$ is $\overline\F_I$-crumbled. To show that $(\overline\E_I,\overline \M_I)$ is a weak factorization system it enough to show that $\overline\E_I$ and $\overline \M_I$ are closed under retracts, which is an easy exercise. 

Let us now verify the cancellation properties (1) and (2). As in the proof of Lemma \refbf{factorization_triangle}, consider the unique possible natural transformations $\gamma\colon (0,1)\Rightarrow (1,2)$ and $\beta\colon (0,2)\Rightarrow (1,2)$. Let $Z\in \D^\tre(I)$ be an object such that 
\[
\dia_{\tre}Z\cong[ \cdot \xto{f}\cdot\xto{g}\cdot]\in \D(I)^\tre.
\] 
Suppose that $Z_{(0,2)}$ and $Z_{(0,1)}\in \mathbb E(I)$, and let $Y\in \mathbb M(I)$. Given a morphism $\phi\colon Z_{(1,2)}\to Y$, there exists a unique morphism $d\colon Z_2\to Y_0$ such that $\varphi_{Z_{(0,2)},Y}(d)=\phi\circ \beta^*_Z$. Then, $\varphi_{Z_{0,1}, Y}(d\circ g)=\phi\circ\gamma^*_Z=\varphi_{Z_{0,1}, Y}(\phi_0)$, so that $d\circ g=\phi_0$. This shows that $g=\dia_{\due}Z_{(1,2)}$ is weakly orthogonal to $\dia_\due Y$ for any $Y\in \M_I$, so $g\in \overline \E_I$. This proves (1), the proof of (2) is completely analogous. 

The last statement follows by Lemma \refbf{dpfs=pfs_if_discrete} and the fact that saying that $\Psi_{\F}(I)$ is essentially surjective is equivalent to say that any morphism in $\D(I)$ is $\overline \F_I$-crumbled when $\D$ is represented. 
\end{proof}

Before analyzing \textsc{dfs}s in the context of stable derivators, let us give the following reformulation of their definition:

\begin{lemma}\label{easier_def_dfs}
In Setting \refbf{setting_sec_3}, let $\F=(\mathbb E,\mathbb M)$ be a pair of sub-2-functors of $\D^\due$. Then, $\F$ is a \textsc{dfs} if and only if the following statements hold true for any $I\in \Dia$:
\begin{enumerate}
\item $\mathbb E(I)\corth \mathbb M(I)$ (that is $\mathbb E(I)\subseteq {}^{\corth} \mathbb M(I)$ or, equivalently, $\mathbb M(I)\subseteq  \mathbb E(I)^{\corth}$);
\item $\mathbb E(I)$ and $\mathbb M(I)$ are closed under isomorphisms in $\D^\due(I)$;
\item $\Psi_\F$ is essentially surjective.
\end{enumerate}
\end{lemma}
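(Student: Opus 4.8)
Suppose first that $\F$ is a \dfs. That (1)--(3) hold is just a matter of unwinding definitions: condition (3) is literally part of Definition~\ref{def_hfs}; condition (1), which says $\mathbb E(I)\subseteq{}^{\corth}\mathbb M(I)$, is one half of the \dpfs axioms $\mathbb E(I)^{\corth}=\mathbb M(I)$ and ${}^{\corth}\mathbb M(I)=\mathbb E(I)$; and for (2) it is enough to record the elementary fact that, for any $\mathcal X\subseteq\D(\due)$, the classes $\mathcal X^{\corth}$ and ${}^{\corth}\mathcal X$ are closed under isomorphisms in $\D(\due)$. This last fact follows from the naturality of $\varphi_{X,Y}$ (see Definition~\ref{def_c_ort}) in each of its two arguments: replacing $X$ or $Y$ by an isomorphic object alters $\varphi_{X,Y}$ only by pre- or post-composition with a bijection, hence preserves the property of being an isomorphism. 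Since $\mathbb E(I)={}^{\corth}\mathbb M(I)$ and $\mathbb M(I)=\mathbb E(I)^{\corth}$ for a \dpfs, (2) is immediate.

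For the converse, assume (1)--(3). The only thing to check beyond (3) is that $\F$ is a \dpfs, and since the inclusions ``$\supseteq$'' in the two \dpfs equalities are precisely (1), what remains is to prove $\mathbb E(I)^{\corth}\subseteq\mathbb M(I)$ and, dually, ${}^{\corth}\mathbb M(I)\subseteq\mathbb E(I)$. I would do this by the classical ``factorization plus lifting implies orthogonality'' argument, run on coherent diagrams. For the first inclusion: given $f\in\mathbb E(I)^{\corth}$, use (3) to choose $Z\in\D_\F(I)$ with $\Psi_\F(I)(Z)\cong f$; writing $e\coloneqq Z_{(0,1)}\in\mathbb E(I)$ and $m\coloneqq Z_{(1,2)}\in\mathbb M(I)$, the object $Z$ then exhibits $f$ as a coherent composite $m\circ e$. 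From $e\in\mathbb E(I)$ we get $e\corth f$ (because $f\in\mathbb E(I)^{\corth}$) and $e\corth m$ (by (1)). Applying the lifting statement of Lemma~\ref{corth=orth} to the commutative square with left side $e$, right side $f$, top side $\id$ and bottom side $m$ produces a map $d$ with $d\circ e=\id$ and $f\circ d=m$; then both $\id$ and $e\circ d$ are diagonal fillers of the commutative square whose top and left sides are $e$ and whose right and bottom sides are $m$, so the uniqueness built into $e\corth m$ forces $e\circ d=\id$ (this last appeal to uniqueness being the delicate point, addressed below), i.e. $e$ is an isomorphism (equivalently $e\corth e$, by Proposition~\ref{ortho_to_self}). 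Hence $f\cong m\in\mathbb M(I)$, and (2) yields $f\in\mathbb M(I)$. The inclusion ${}^{\corth}\mathbb M(I)\subseteq\mathbb E(I)$ is strictly dual: one factors $f\in{}^{\corth}\mathbb M(I)$ as $m\circ e$ and shows that $m$ is invertible.

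The step I expect to be the real obstacle is precisely the uniqueness used to deduce $e\circ d=\id$. In the general Setting~\ref{setting_sec_3} the comparison functors $\dia_\due$ and $\dia_\tre$ are only full and essentially surjective, \emph{not} faithful, so Lemma~\ref{corth=orth} supplies existence of lifts but not their uniqueness; uniqueness is genuinely a property of coherent diagrams, where $X\corth Y$ asserts that $\varphi_{X,Y}$ is a \emph{bijection} rather than merely a surjection. Consequently the argument above has to be staged at the level of objects of $\D$ of shape $\due\times\due\times I$ and $\tre\times I$: one uses the fullness and essential surjectivity in conditions (2) and (3) of Setting~\ref{setting_sec_3} to promote the relevant commuting squares and composites to genuine coherent diagrams, and only then invokes the bijectivity encoded in $\corth$ to pin the filler down and conclude that $e$ (respectively $m$) is invertible. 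Arranging this bookkeeping -- so that the incoherent fillers output by Lemma~\ref{corth=orth} assemble into the coherent data to which $\varphi$-bijectivity can be applied -- is where the weight of the proof sits; for representable $\D$ it evaporates, since $\dia$ is then an equivalence and the proof reduces verbatim to the $1$-categorical one, which is why the last line of Lemma~\ref{weak_and_cancellation} records the representable case separately.
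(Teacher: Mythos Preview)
Your forward direction is fine and matches the paper's one-line dismissal.

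For the converse, you attempt the classical retraction argument (produce a section $d$ of $e$ from $e\corth f$, then use uniqueness from $e\corth m$ to show $e\circ d=\id$), and you correctly flag the uniqueness step as the obstacle: Lemma~\ref{corth=orth} gives only existence of incoherent fillers, while the bijectivity in $\corth$ lives at the coherent level. You then stop, proposing that the bookkeeping ``has to be staged'' coherently but not carrying it out. This leaves a genuine gap: to invoke injectivity of $\varphi_{e,m}$ you would need to know that \emph{both} candidate fillers $\id$ and $e\circ d$ arise as $\varphi_{e,m}$-preimages of the \emph{same} coherent morphism $e\to m$, and it is not clear how to certify this from the data you have produced.

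The paper sidesteps this entirely by a different argument. Rather than building a retraction, it uses the cancellation property whose proof you have already seen in Lemma~\ref{weak_and_cancellation}: working with the coherent $3$-object $\widetilde X\in\D_\F(I)$ and the canonical natural transformations among $(0,1),(0,2),(1,2)\colon\due\to\tre$, one shows directly that if $\widetilde X_{(0,1)}$ and $\widetilde X_{(0,2)}$ lie in ${}^{\corth}\mathbb M(I)$ then so does $\widetilde X_{(1,2)}$. Since $\widetilde X_{(1,2)}\in\mathbb M(I)$ by construction, one gets $\widetilde X_{(1,2)}\in{}^{\corth}\mathbb M(I)\cap\mathbb M(I)$, hence $\widetilde X_{(1,2)}\corth\widetilde X_{(1,2)}$, hence it is an isomorphism by Proposition~\ref{ortho_to_self}; then $X\cong\widetilde X_{(0,1)}\in\mathbb E(I)$ using Setting~\ref{setting_sec_3}(1,2) and closure under isomorphisms. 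The dual argument handles the other inclusion. The point is that the cancellation step manipulates the maps $\varphi_{-,-}$ and the $2$-cells $\beta^*,\gamma^*$ directly, so no translation between incoherent fillers and coherent morphisms is needed; this is exactly the coherence bookkeeping you anticipated, already packaged in Lemma~\ref{weak_and_cancellation}.
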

\begin{proof}
It is trivial to verify that if $\F$ is a \textsc{dfs} then it satisfies (1), (2) and (3). On the other hand, let $I\in \Dia$, suppose (1), (2) and (3) are satisfied and let us prove that $\mathbb E(I)\supseteq {}^{\corth} \mathbb M(I)$ (the proof that $\mathbb M(I)\supseteq  \mathbb E(I)^{\corth}$ is completely analogous). Indeed, let $X\in{}^{\corth} \mathbb M(I)$ and consider an object $\widetilde X\in \D_\F(I)$ such that $\Psi_\F(\widetilde X)\cong X$. By construction, $(0,1)^*\widetilde X\in \mathbb E(I)\subseteq {}^{\corth}\mathbb M(I)$ and $(0,2)^*\widetilde X\cong X\in {}^{\corth}\mathbb M(I)$; by the same argument used in the proof of the cancellation properties in Lemma \refbf{weak_and_cancellation}, one verifies that $(1,2)^*\widetilde X\in {}^{\corth}\mathbb M(I)$, but then $(1,2)^*\widetilde X\in {}^{\corth}\mathbb M(I)\cap \mathbb M(I)$, showing that $\dia_\due((1,2)^*\widetilde X)$ is an isomorphism. As a consequence, $\dia_\due(X)\cong \dia_\due((0,1)^*\widetilde X)$ and, using Setting \refbf{setting_sec_3}(1,2), this implies $X\cong (0,1)^*\widetilde X\in \mathbb E(I)$.
\end{proof}

We close the subsection showing that the bijections of \athm \ref{stable_equv_orth_pre} restrict to {\dfs}s:

\begin{theorem}\label{stable_equv_orth}
Suppose $\D$ is a stable derivator. The following are equivalent for a pair of sub pre-derivators $\F=(\mathbb E,\mathbb M)$  of $\D^{\due}$:
\begin{enumerate}
\item $\F$ is a \dfs;
\item \color{black}$\F$ is a choric \dfs;\color{black}
\item $\Psi_\F\colon \D_\F\to \D^{\due}$ is an equivalence;
\item $\overline\F_I=(\overline\E_I,\overline\M_I)$ is a \hfs in $\D(I)$ for any $I\in\Dia$.
\end{enumerate}
\end{theorem}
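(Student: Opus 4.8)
The plan is to reduce the statement to three facts established above: Theorem~\ref{stable_equv_orth_pre} for the pre-factorization layer (``\cpfs $\Leftrightarrow$ choric \cpfs $\Leftrightarrow$ $\overline\F_I$ is a \phfs for every $I$''), the reformulation of a \dfs in Lemma~\ref{easier_def_dfs}, and the full-faithfulness criterion of Lemma~\ref{level_wise_ff}. The last is stated at level $\uno$, so whenever I need it for the classes $\mathbb E(I),\mathbb M(I)\subseteq\D^\due(I)$ I will apply it to the shifted stable derivator $\D^I\colon J\mapsto\D(I\times J)$, using that $\D^I(\due)=\D^\due(I)$, $\D^I(\tre)=\D(I\times\tre)$, $(\D^I)_{\F_I}(\uno)=\D_\F(I)$ and $\Psi_{\F_I}=\Psi_\F(I)$. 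I will also freely use that, $\D$ being stable, Setting~\ref{setting_sec_3} holds: $\dia_\due(I)$ and $\dia_\tre(I)$ are full and essentially surjective, and isomorphisms in each $\D(J)$ are detected pointwise.

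For (1)$\Leftrightarrow$(2)$\Leftrightarrow$(3): the implication (2)$\Rightarrow$(1) is trivial. If $\F$ is a \dfs it is in particular a \cpfs, hence choric by Theorem~\ref{stable_equv_orth_pre}, i.e. $\Psi_\F$ is levelwise fully faithful; being a \dfs also makes every $\Psi_\F(I)$ essentially surjective, so $\Psi_\F$ is a levelwise equivalence and hence an equivalence of pre-derivators --- this yields (1)$\Rightarrow$(2) and (1)$\Rightarrow$(3) at once. Conversely, if $\Psi_\F$ is an equivalence then each $\Psi_\F(I)$ is fully faithful, so $\mathbb E(I)\corth\mathbb M(I)$ by Lemma~\ref{level_wise_ff}, and essentially surjective; Lemma~\ref{easier_def_dfs}, fed with these two facts (closure of $\mathbb E(I),\mathbb M(I)$ under isomorphisms being built into the notion of sub pre-derivator), shows that $\F$ is a \dfs. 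So (3)$\Rightarrow$(1), and (2)$\Rightarrow$(3) is now immediate.

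For (1)$\Rightarrow$(4): a \dfs is a \cpfs, so by Theorem~\ref{stable_equv_orth_pre} each $\overline\F_I$ is a \phfs, and it only remains to check that every $f\in\D(I)^\due$ is $\overline\F_I$-crumbled. By Setting~\ref{setting_sec_3}(2) pick $X\in\D^\due(I)$ with $\dia_\due(X)\cong f$, and by essential surjectivity of $\Psi_\F(I)$ pick $\widehat X\in\D_\F(I)$ with $(0,2)^*\widehat X\cong X$. Then $\dia_\tre(\widehat X)\in\D(I)^\tre$ is a composable pair whose legs are $\dia_\due((0,1)^*\widehat X)\in\dia_\due(\mathbb E(I))\subseteq\overline\E_I$ and $\dia_\due((1,2)^*\widehat X)\in\overline\M_I$, and whose composite $\dia_\due((0,2)^*\widehat X)$ is isomorphic to $f$; transporting along these isomorphisms displays $f$ as a composite of an $\overline\E_I$-map with an $\overline\M_I$-map, so $\overline\F_I$ is a \hfs.

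For (4)$\Rightarrow$(1) --- the only step where some care is needed --- assume each $\overline\F_I$ is a \hfs; it is then a \phfs, so $\F$ is a \cpfs by Theorem~\ref{stable_equv_orth_pre}, hence $\mathbb E(I)={}^{\corth}\mathbb M(I)$ and $\mathbb M(I)=\mathbb E(I)^{\corth}$ are closed under isomorphisms and satisfy $\mathbb E(I)\corth\mathbb M(I)$. By Lemma~\ref{easier_def_dfs} it then suffices to show that $\Psi_\F(I)$ is essentially surjective. Given $X\in\D^\due(I)$, factor $\dia_\due(X)=m_0\circ e_0$ with $e_0\in\overline\E_I$ and $m_0\in\overline\M_I$, regard $(e_0,m_0)$ as an object of $\D(I)^\tre$, and lift it through the essentially surjective $\dia_\tre(I)$ to $\widehat X\in\D(I\times\tre)$ with $\dia_\tre(\widehat X)\cong(e_0,m_0)$. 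Choosing $E\in\mathbb E(I)$ and $M\in\mathbb M(I)$ with $e_0\cong\dia_\due(E)$, $m_0\cong\dia_\due(M)$ we get $\dia_\due((0,1)^*\widehat X)\cong\dia_\due(E)$, $\dia_\due((1,2)^*\widehat X)\cong\dia_\due(M)$ and $\dia_\due((0,2)^*\widehat X)\cong\dia_\due(X)$. Now one uses --- exactly as in the proof of Lemma~\ref{easier_def_dfs} --- that $\dia_\due(I)$ reflects isomorphism classes: an isomorphism between the underlying incoherent diagrams lifts, by fullness of $\dia_\due(I)$, to a morphism of $\D^\due(I)$ whose components are isomorphisms, and such a morphism is itself an isomorphism because isomorphisms in $\D(I\times\due)$ are detected pointwise. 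Together with the isomorphism-closure of $\mathbb E(I),\mathbb M(I)$ this gives $(0,1)^*\widehat X\in\mathbb E(I)$, $(1,2)^*\widehat X\in\mathbb M(I)$ and $(0,2)^*\widehat X\cong X$, so $\widehat X\in\D_\F(I)$ and $\Psi_\F(\widehat X)\cong X$, as wanted. The genuine difficulty is localised here, in promoting the factorization that exists only incoherently in the triangulated category $\D(I)$ to a coherent $2$-simplex in $\D(I\times\tre)$; this is precisely where Setting~\ref{setting_sec_3}(3) is essential, and beyond it the argument is bookkeeping on top of Theorem~\ref{stable_equv_orth_pre} and Lemma~\ref{easier_def_dfs}.
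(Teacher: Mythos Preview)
Your proof is correct and follows essentially the same route as the paper: the equivalences among (1), (2), (3) are reduced to Theorem~\ref{stable_equv_orth_pre} and the definitions, while (1)$\Leftrightarrow$(4) hinges on the full essential surjectivity of $\dia_\due$ and $\dia_\tre$ together with the fact that $\dia_\due$ reflects isomorphism classes. The only cosmetic difference is in (4)$\Rightarrow$(1): the paper anchors the lift of the factorization at the coherent object $(1,2)_*X$ and transports a concrete morphism, whereas you lift the incoherent $2$-simplex directly and then invoke isomorphism reflection for each face separately --- both arguments work and use the same ingredients.
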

\begin{proof}
The equivalence ``(1)$\Leftrightarrow$(2)'' follows by \athm \ref{stable_equv_orth_pre}, while the equivalence ``(2)$\Leftrightarrow$(3)'' easily follows from the definitions. The implication ``(1)$\Rightarrow$(4)'' follows by Lemma \refbf{coherent_orth_is_wobbly} and the fact that $\dia_{I,\due}\colon \D^I(\due)\to \D(I)^{\due}$ is full and essentially surjective for any $I\in \Dia$. Finally, to prove the implication ``(4)$\Rightarrow$(1)'' notice that, by Lemma \refbf{coherent_orth_is_wobbly}, we know that $\F$ is a  \cpfs, let us show that each $\Psi_{\F}(I)$ is essentially surjective. For this, remember that the diagram functor
\[
\dia_{I,\tre}\colon \D(I\times \tre)\to \D(I)^{\tre}
\]
is full and essentially surjective. Let $X\in \D^I(\due)$ and choose two composable morphisms $\bar X_e\in \bar \E_I$ and $\bar X_m\in \bar \M_I$ such that $\dia_{I,\due}X=\bar X_m\circ \bar X_e$. We obtain a morphism $(f_0,f_1,f_2)\colon (X_0\to X_0\to X_1)\to (X_0\to (X_e(1)=X_m(0))\to X_1)$ in $\D(I)^{\tre}$ as in the following diagram:
\[
\xymatrix{
X_0\ar@{=}[d]\ar[r]^(.3){\id_{X_0}}&X_0= X_e(0)\ar[d]^{\bar X_e}\\
X_0\ar[r]^(.3){\bar X_e}\ar[d]_{\dia_{I,\due}X}&X_e(1)=X_m(0)\ar[d]^{\bar X_m}\\
X_1\ar[r]^(.3){\id_{X_1}}&X_1=X_m(1)
}
\]
Since $\dia_{I,\tre}$ is full and essentially surjective, we can lift the above diagram to a morphism $f\colon (1,2)_*X\to F$, where the underlying diagram of $F$ is exactly $(X_0\to (X_e(1)=X_m(0))\to X_1)$, so $F\in \D_\F(I)$. To conclude, one should just prove that $F_{(0,2)}\cong X$, but in fact $f_{(0,2)}\colon (0,2)^*(1,2)_*X(\cong X)\to F_{(0,2)}$ is an isomorphism. To see this just notice that $f_{(0,2)}$ is an isomorphism if and only if $0^*f_{(0,2)}$ and $1^*f_{(0,2)}$ are isomorphisms but, by construction, $0^*f_{(0,2)}=f_0=\id_{X_0}$ and $1^*f_{(0,2)}=f_2=\id_{X_2}$ are clearly isomorphisms.\qedhere
\end{proof}

\subsection{The relation with \textsc{fs} in $\infty$-categories} \label{infty_cat_fs}
Recall that an \emph{$\infty$-category} \cite{HTT} or \emph{quasi-category} \cite{joyal2008notes} is defined as a simplicial set $X$ in which every \emph{inner horn} $\Lambda^k[n] \to X$ has an extension $\Delta^n \to X$ along the inclusion $\Lambda^k[n]\to \Delta^n$. 

These liftings take care of the complicated ladder of coherence conditions for compositions in a $(\infty,1)$-category, as well as of the invertibility of all cells in dimension $k\ge 2$, making quasicategories into a flexible model to re-enact classical categorical constructions inside $X$ (co/limits, Kan extensions) and outside $X$ (adjoints, monads, monoidal structures). We refer to the sources \cite{HTT} or \cite{joyal2008notes} for a general background and the terminology and notation that we borrow.

Let's fix through this subsection an $\infty$-category $\iC$. In this subsection we are going to recall how to associate to $\iC$ a pre-derivator $\D_\iC$ and then, under suitable hypotheses, establish a natural bijection between the family of \textsc{fs}s in $\iC$ (in the sense of \cite{joyal2008notes}) and a family of {\dfs}s on $\D_\C$ called ``maximal {\dfs}s''. 
%
\begin{definition}[squares and fillers]
Any two edges $f,\, g\colon \Delta^{1}\to \iC$ can be considered as elements in $(\iC^{\Delta^{1}})_0$. By the definition of internal hom in a presheaf topos, an element $q\in \iC^{\Delta^{1}}(f,g)$ is a square $q\colon \Delta^{1}\times \Delta^{1} \to \iC$ such that $q\restriction_{\Delta^{\{0\}}\times \Delta^{1}}=f$ and $q\restriction_{\Delta^{\{1\}}\times \Delta^{1}}=g$. We define a \emph{filler} for $q\in \iC^{\Delta^{1}}(f,g)$ to be an element $s\in \iC(f_1,g_0)$ such that 
\begin{enumerate}
\item $q\restriction_{\Delta^{1}\times \Delta^{\{0\}}}$ is homotopic to $d\circ f$;
\item $q\restriction_{\Delta^{1}\times \Delta^{\{1\}}}$ is homotopic to $g\circ d$.
\end{enumerate}
That is, $s$ makes the two triangles in the following diagram commute up to homotopy:
\[
\xymatrix{
\cdot\ar[r]\ar[d]_f&\cdot\ar[d]^g\\
\cdot\ar[r]\ar@{.>}[ur]|{\ s\ }&\cdot
}
\]
\end{definition}
\begin{definition}[orthogonality]
We say that $f$ \emph{left orthogonal} to $g$ (while $g$ is \emph{right orthogonal} to $f$), in symbols $f\perp g$ if, for any $q\in \iC^{\Delta^{1}}(f,g)$, the space of fillers for $q$ is contractible. 
\end{definition}
\begin{definition}[orthogonality of a class]
Given a subclass $\mathcal{X}\subseteq \iC_1$ we let
\begin{gather*}
{}^{\perp}\mathcal{X}\coloneqq \{y\in\iC_1 \mid y\perp X\}\\
\mathcal{X}^{\perp}\coloneqq \{y\in\iC_1 \mid X\perp y\}.
\end{gather*}
A pair $\F=(\E,\M)$ of sub-classes of $\iC_1$ is a \textsc{pfs} provided ${}^{\perp}\M=\E$ and $\E^\perp=\M$. Furthermore, $\F$ is a \textsc{fs} if, for any $f\in \iC_1$, there exist $e\in \E$ and $m\in\M$ such that $f$ is homotopic to $m\circ e$. 
\end{definition}
Now that we have \textsc{fs}s defined in the setting of $\infty$-categories, let us describe their relation to {\dfs}s. For this we will need to associate to our $\infty$-category $\iC$ a derivator $\D_\iC$ of shape $\Dia$, for a suitable category of diagrams $\Dia$. Since we have taken $\iC$ to be finitely bicomplete, the canonical choice for $\Dia$ is the $2$-category of finite directed categories:
\begin{definition}
A category $I$ is a \emph{finite directed category} if it has a finite number of objects and morphisms, and if there are no directed cycles in the quiver whose vertices are the objects of $I$ and the arrows are the non-identity morphisms in $I$. Equivalently, the nerve of $I$  has a finite number of non-degenerate simplices.\\
We denote by $\fincat\subseteq \cat$ the $2$-category of diagrams spanned by the finite directed categories.
\end{definition}
\begin{proposition}
\cite[Remark \textbf{5.3.10}]{riehl2017kan}
Given an $\infty$-category $\iC$, the composition
$$\D_\iC\colon\xymatrix@R=0pt{
\cat^\opp\ar[r]^{N}&\cat_{\infty}^\opp\ar[r]^{\iC^{(-)}}&\Cat_{\infty}\ar[r]^{\ho}&\Cat\\
I\ar@{|->}[r]&N(I)\ar@{|->}[r]&\iC^{N(I)}\ar@{|->}[r]&\ho(\iC^{N(I)})}$$
is a pre-derivator. If $\Dia$ is a category of diagrams such that $\iC$ has all limits and colimits of shape $\Dia$, then $\D_{\iC}\restriction_{\Dia^\opp}$  is a strong derivator and it is pointed if and only if $\iC$ is pointed. If $\iC$ is stable then, by definition, $\iC$ has all limits and colimits of shape $\fincat$, so $\D_\iC\restriction_{\fincat^\opp}$ is a derivator which, moreover, is stable. 
\end{proposition}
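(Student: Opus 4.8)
The plan is to factor $\D_\iC=\ho\circ\iC^{(-)}\circ N$ into its three constituents and to check each requirement by transporting it along one of them, or by invoking a standard fact about quasicategories. That $\D_\iC$ is a pre-derivator is purely formal: the nerve $N\colon\cat\to\cate{sSet}$ is a strict $2$-functor landing in quasicategories, the internal hom $\iC^{(-)}=\Fun(-,\iC)$ is a strict $2$-functor $\cate{sSet}^\opp\to\Cat_{\infty}$ (by Joyal's theorem that $\iC^X$ is again a quasicategory), and $\ho\colon\Cat_{\infty}\to\Cat$ is strictly $2$-functorial; hence the composite is a strict $2$-functor $\cat^\opp\to\Cat$ (one only has to be mildly careful about size, which is taken care of once the universes of the Notation are fixed). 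For (Der1) one uses that $N$ sends the empty category and finite coproducts to the empty simplicial set and to coproducts, that $\Fun(-,\iC)$ converts these into the terminal $\infty$-category and into products, and that $\ho$ commutes with finite products; so $\D_\iC(\emptyset)\simeq\pt$ and $\D_\iC(I\sqcup J)\simeq\D_\iC(I)\times\D_\iC(J)$.

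For (Der2), a morphism of $\ho(\iC^{N(I)})$ is invertible exactly when the natural transformation of functors $N(I)\to\iC$ it represents is an equivalence in $\Fun(N(I),\iC)$; equivalences of functors into an $\infty$-category are detected objectwise, evaluation at $i\in\Ob(I)$ is restriction along $i\colon\uno\to N(I)$, which on homotopy categories is exactly $\phi\mapsto\phi_i$, and an edge of $\iC$ is an equivalence iff its class in $\ho(\iC)=\D_\iC(\uno)$ is an isomorphism (Joyal). For (Der3), given $u\colon J\to I$ in $\Dia$ the precomposition $u^*\colon\Fun(N(I),\iC)\to\Fun(N(J),\iC)$ admits an $\infty$-categorical left (resp.\ right) Kan extension as soon as $\iC$ has the colimits (resp.\ limits) indexed by the relevant comma categories, which are again finite directed and hence objects of $\Dia$; since $\ho$ carries an $\infty$-adjunction to an ordinary adjunction, $\D_\iC(u)=\ho(u^*)$ inherits both a left adjoint $\ho(u_!)$ and a right adjoint $\ho(u_*)$. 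For (Der4), writing $p\colon u/i\to J$ for the slice projection, the pointwise formula for $\infty$-categorical Kan extensions makes the comparison $\hocolim_{u/i}p^*X\to(u_!X)_i$ an equivalence, hence an isomorphism after applying $\ho$; the dual statement for $u_*$ is symmetric. Strongness (and, by the same argument, condition (3) of Setting \refbf{setting_sec_3}) amounts to the assertion that for every quasicategory $\mathcal{E}$ the comparison $\ho(\mathcal{E}^{\Delta^1})\to(\ho\mathcal{E})^\due$ (resp.\ $\ho(\mathcal{E}^{\Delta^2})\to(\ho\mathcal{E})^\tre$) is full and essentially surjective: essential surjectivity lifts an edge-class to an edge, while fullness lifts a square commuting in $\ho\mathcal{E}$ to a genuine map $\Delta^1\times\Delta^1\to\mathcal{E}$, obtained by filling the two $2$-simplices of $\Delta^1\times\Delta^1$ that share the diagonal; apply this to $\mathcal{E}=\iC^{N(I)}$.

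For pointedness, observe that under the completeness hypotheses $\iC$ already possesses an initial object $0$ and a terminal object $1$; their images in $\ho(\iC)=\D_\iC(\uno)$ are again initial and terminal, since a mapping space out of an initial object is contractible and so the relevant $\ho$-hom-sets are singletons (dually for $1$). Then $\D_\iC$ is pointed precisely when these two objects of $\ho(\iC)$ are isomorphic, and since an isomorphism of $\ho(\iC)$ is represented by an equivalence of $\iC$, this happens iff $0\simeq 1$ in $\iC$, i.e.\ iff $\iC$ is pointed. For stability, recall that a pointed strong derivator is stable iff the (derivator-)cartesian squares in $\D(\due\times\due)$ coincide with the (derivator-)cocartesian ones; a square $X\in\ho(\iC^{\Delta^1\times\Delta^1})$ is cocartesian in the derivator sense iff $X\cong i_!i^*X$ for $i$ the inclusion of the span, and $\ho$ of $i_!$ computes the $\infty$-categorical pushout, so the derivator-cocartesian squares are exactly the isomorphism classes of pushout squares of $\iC$, and dually for pullbacks. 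Since being a pushout (pullback) square is invariant under equivalence, hence a property of the isomorphism class in $\ho(\iC^{\Delta^1\times\Delta^1})$, the two families coincide in $\D_\iC$ iff pushouts and pullbacks coincide in $\iC$, that is, iff $\iC$ is stable; in particular $\iC$ stable forces $\D_\iC\restriction_{\fincat^\opp}$ to be a stable derivator.

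The step I expect to take the most care is the coupling of (Der3) and (Der4): one must identify the \emph{derivator} homotopy Kan extensions $u_!,u_*$ — which are a priori pinned down only by an adjunction on homotopy categories — with the honest \emph{$\infty$-categorical} Kan extensions, and then check that the pointwise base-change formula of the latter survives the passage to $\ho$. Every enabling ingredient (existence of $\infty$-Kan extensions under slice-wise (co)completeness of $\iC$, their pointwise computation, and the preservation of adjunctions by $\ho$) is standard, but they must be assembled with attention, as must the folklore lemma that $\ho(\mathcal{E}^{\Delta^1})\to(\ho\mathcal{E})^\due$ is full and essentially surjective that underlies strongness.
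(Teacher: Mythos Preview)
The paper does not supply its own proof of this proposition: it is stated as a citation to \cite[Remark \textbf{5.3.10}]{riehl2017kan} and immediately used, so there is nothing to compare your argument against on the paper's side. Your sketch is essentially the standard one and is correct in outline; the only slip is in your treatment of (Der3), where you say the relevant comma categories ``are again finite directed and hence objects of $\Dia$''. That is true only when $\Dia=\fincat$; for a general category of diagrams $\Dia$ the closure of $\Dia$ under slices/comma constructions is part of the axioms (see \cite[\adef\textbf{4.21}]{Moritz}), and that is what you should invoke. Apart from this, your identification of the delicate step is accurate: matching the derivator Kan extensions with the $\infty$-categorical ones and transporting the pointwise base-change isomorphism through $\ho$ is exactly where the content lies, and the ingredients you list are the right ones.
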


In the rest of this subsection we will work in the following setting: we let $\iC$ be an $\infty$-category, and $\D_\iC\colon \fincat^\opp\to \Cat$ be the associated pre-derivator. Let $\F=(\E,\M)$ be a \textsc{fs} in $\iC$. By \cite[§\textbf{24.10}]{joyal2008notes}, for any $I\in \Dia$ we can define a \textsc{fs} $\F_I=(\E_I,\M_I)$ in $\iC^{N(I)}$, point-wise induced by $\F$. Letting $\mathbb E(I) \coloneqq \ho(\E_I)\subseteq \D^\due_\iC(I)$ and $\mathbb M(I) \coloneqq \ho(\M_I)\subseteq \D^\due_\iC(I)$, one can prove that $\mathbb E$ and $\mathbb M$ are sub-$2$-functors of $\D_\iC$ and that the pair $\F_{\D} \coloneqq (\mathbb E,\mathbb M)$ is a \dpfs on $\D_\iC$. In fact, the unique delicate part is to show that the orthogonality of 1-simplices in $\iC$ implies orthogonality of the corresponding objects in $\D_{\iC}(\due)$. One can do that by hand or using \cite[Lem. \textbf{5.2.8.22}]{HTT}. 
Thus we have constructed a map 
\begin{equation}\label{the_map}
\xymatrix@R=0pt{
\left\{\text{\textsc{fs} in $\iC$}\right\}\ar@{->}[rr]&&\left\{\text{\dfs in $\D_\iC$}\right\}\\
(\E,\M)\ar@{|->}[rr]&&(\E,\M)_\D.
}
\end{equation}
What we would like to understand is whether or not any \dfs in $\D_\iC$ arises from a \textsc{fs} in $\iC$. 
%
%
%
%
When $\iC=N(\A)$ is the nerve of a 1-category $\A$, $\D_\iC$ is the derivator represented by $\ho(\iC)\cong\A$, so both the \textsc{fs}s on $\iC$ and the  {\dfs}s on $\D_\iC$ correspond bijectively with the (classical) factorization systems on $\A$. 

For the rest of this subsection we concentrate our efforts to find a similar bijection when $\iC$ is stable. We start with the following lemma:

\begin{lemma}\label{lemma_infinity_orth}
Let $\iC$ be a stable $\infty$-category and let $\F=(\mathbb E,\mathbb M)$ be a \dfs on $\D_\iC$. Let $\mathbf{E}$ and $\mathbf M$ be the simplicial subsets of $\iC^{\Delta^1}$ spanned by $\mathbb E(\uno)$ and $\mathbb M(\uno)\subseteq \iC_1$, respectively. Given $X\colon X_0\to X_1$ in $\iC^{\Delta^1}$, consider the following squares in $\iC^{\Delta^1\times \Delta^1}$
\[
p:\qquad\begin{matrix}\xymatrix{
X_0\ar[r]^e\ar[d]_X&C\ar[d]^m\\
X_1\ar@{=}[r]&X_1
}\end{matrix}
\qquad
\text{and}
\qquad
q:\qquad\begin{matrix}\xymatrix{
X_0\ar@{=}[r]\ar[d]_e&X_0\ar[d]^X\\
C\ar[r]_{m}&X_1
}\end{matrix}
\]
If $e\in \mathbf E_0$ and $m\in \mathbf M_0$, then $p$ is an $\mathbf M$-localization and $q$ is an $\mathbf E$-co-localization of $X$ (in the sense of \cite[\adef\textbf{5.2.7.6}]{HTT}). In particular, $\mathbf M$ is reflective, $\mathbf E$ is coreflective in $\iC^{\Delta^1}$ and any reflection of $X$ in $\mathbf M$ (resp., any co-reflection of $X$ in $\mathbf E$) is equivalent to $p$ (resp., $q$).
\end{lemma}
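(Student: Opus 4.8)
The plan is to verify directly that $p$ and $q$ exhibit, respectively, $m$ as an $\mathbf{M}$-localization and $e$ as an $\mathbf{E}$-colocalization of $X$ in the sense of \cite[\adef\textbf{5.2.7.6}]{HTT}; the asserted reflectivity, coreflectivity and uniqueness then follow formally from \cite[\S\textbf{5.2.7}]{HTT}, once such $p$ and $q$ are known to exist for every $X\in\iC^{\Delta^1}$. Existence is a consequence of $\F$ being a \dfs: the essential surjectivity of $\Psi_\F$ furnishes an object of $\D_\iC(\tre)$, hence an honest $2$-simplex $X_0\to C\to X_1$ in $\iC$ whose first edge lies in $\mathbf{E}_0$, whose second edge lies in $\mathbf{M}_0$, and whose long edge is equivalent to $X$; the squares $p$ and $q$ are read off from it. It then remains to check that for every $Y\in\mathbf{M}$ the map $p^{*}\colon\mathrm{Map}_{\iC^{\Delta^1}}(m,Y)\to\mathrm{Map}_{\iC^{\Delta^1}}(X,Y)$ is an equivalence of spaces, and dually for $q$. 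Since $\iC$ is stable, so is $\iC^{\Delta^1}$, and I would carry out this check after passing to mapping spectra, which I write $\mathrm{map}$.

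First I would compute the relevant (co)fibres in $\iC^{\Delta^1}$, where (co)limits are objectwise. The morphism $p\colon X\to m$ has components $e\colon X_0\to C$ and $\id_{X_1}$, so $\mathrm{cofib}(p)\simeq\bigl(\mathrm{cofib}(e)\to 0\bigr)$; dually $q\colon e\to X$ has components $\id_{X_0}$ and $m\colon C\to X_1$, so $\mathrm{fib}(q)\simeq\bigl(0\to\mathrm{fib}(m)\bigr)$. Writing mapping objects in an arrow $\infty$-category as pullbacks and using exactness of $\mathrm{map}$ in each variable, this yields natural equivalences of spectra
\[
\mathrm{map}_{\iC^{\Delta^1}}\!\bigl(\mathrm{cofib}(p),Y\bigr)\simeq\mathrm{map}_{\iC}\!\bigl(\mathrm{cofib}(e),\mathrm{fib}(Y)\bigr),\qquad
\mathrm{map}_{\iC^{\Delta^1}}\!\bigl(Z,\mathrm{fib}(q)\bigr)\simeq\mathrm{map}_{\iC}\!\bigl(\mathrm{cofib}(Z),\mathrm{fib}(m)\bigr),
\]
where $\mathrm{fib}(Y)$ and $\mathrm{fib}(m)$ denote the fibres of the arrows $Y$ and $m$. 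Applying $\mathrm{map}_{\iC^{\Delta^1}}(-,Y)$ to the cofibre sequence $X\xto{p}m\to\mathrm{cofib}(p)$ identifies the fibre of $p^{*}$ with $\mathrm{map}_{\iC^{\Delta^1}}(\mathrm{cofib}(p),Y)$, so (applying $\Omega^{\infty}$) $p^{*}$ is an equivalence of spaces if and only if $\ho(\iC)\bigl(\Sigma^{k}\mathrm{cofib}(e),\mathrm{fib}(Y)\bigr)=0$ for all $k\ge 0$; symmetrically, $q_{*}$ is an equivalence if and only if $\ho(\iC)\bigl(\Sigma^{k}\mathrm{cofib}(Z),\mathrm{fib}(m)\bigr)=0$ for all $k\ge 0$.

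The heart of the matter is to deduce these vanishings. By Theorem \refbf{stable_equv_orth} the pair $\overline\F_\uno$ is a \phfs in $\ho(\iC)$, so $\overline\E_\uno\horth\overline\M_\uno$; since $e\in\mathbf{E}_0$, its image lies in $\overline\E_\uno$, and as $\overline\E_\uno$ is closed under suspension, $\Sigma^{k}\mathrm{cofib}(e)\simeq\mathrm{cofib}(\Sigma^{k}e)$ with $\Sigma^{k}e\in\overline\E_\uno$ for $k\ge 0$. By Proposition \refbf{closure_phfs}, the arrow $\bigl(0\to\mathrm{cofib}(\Sigma^{k}e)\bigr)$ is a homotopy pushout of $\Sigma^{k}e$, hence lies in $\overline\E_\uno$, while $\bigl(\mathrm{fib}(Y)\to 0\bigr)$ is a homotopy pullback of $Y$ (whose image lies in $\overline\M_\uno$), hence lies in $\overline\M_\uno$; therefore $\bigl(0\to\mathrm{cofib}(\Sigma^{k}e)\bigr)\horth\bigl(\mathrm{fib}(Y)\to 0\bigr)$, which by the discussion following Remark \refbf{object.ortho} is exactly $\ho(\iC)\bigl(\mathrm{cofib}(\Sigma^{k}e),\mathrm{fib}(Y)\bigr)=0$. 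The second vanishing is obtained identically, exchanging the roles of $\overline\E_\uno$ and $\overline\M_\uno$. This proves that $p$ is an $\mathbf{M}$-localization and $q$ an $\mathbf{E}$-colocalization of $X$, and the ``in particular'' clauses then follow from \cite[\S\textbf{5.2.7}]{HTT}.

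The step I expect to be the main obstacle is the middle one: pinning down $\mathrm{cofib}(p)$, $\mathrm{fib}(q)$ and the two mapping-spectrum identifications precisely, and observing that what makes everything go through is the combination of stability of $\iC$ — so that $\Sigma^{k}\mathrm{cofib}(e)$ is again of the form $\mathrm{cofib}(e')$ with $e'\in\overline\E_\uno$ — with the closure of $\overline\E_\uno$ under suspensions and homotopy pushouts, which is exactly what turns the a priori $\pi_0$-level orthogonality of $\overline\E_\uno$ and $\overline\M_\uno$ into the vanishing of all the homotopy groups of the relevant mapping spectra in nonnegative degrees.
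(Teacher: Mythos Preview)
Your overall strategy is sound and close to the paper's, but there is a genuine gap at the $\pi_0$ level. You assert that $p^{*}\colon\mathrm{Map}(m,Y)\to\mathrm{Map}(X,Y)$ is an equivalence of spaces if and only if $\pi_k$ of the fibre spectrum $\mathrm{map}(\mathrm{cofib}(p),Y)$ vanishes for all $k\ge 0$. The ``if'' direction is false: contractibility of $\Omega^\infty$ of the fibre does not force surjectivity on $\pi_0$ (consider the fibre sequence $\Sigma^{-1}H\mathbb{Z}\to 0\to H\mathbb{Z}$). Concretely, in the long exact sequence
\[
\cdots\to[\mathrm{cofib}(p),Y]\to[m,Y]\xrightarrow{p^*}[X,Y]\to[\Sigma^{-1}\mathrm{cofib}(p),Y]\to\cdots
\]
your vanishing covers the group on the left, but surjectivity of $p^*$ on $\pi_0$ needs control of $[\Sigma^{-1}\mathrm{cofib}(p),Y]\cong[\mathrm{cofib}(e),\mathrm{cofib}(Y)]$. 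Your pushout/pullback argument does not reach this: it would require $\Sigma^{-1}e\in\overline\E_\uno$ (equivalently $\Sigma Y\in\overline\M_\uno$), and a \phfs only closes $\overline\E_\uno$ under \emph{positive} suspensions. Condition \textsc{ho2} tells you $[\mathrm{cofib}(e),\mathrm{cofib}(Y)]$ injects into $[e_1,\Sigma Y_0]$, but not that it vanishes.

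The paper (arguing the dual case of $q$) confronts exactly this issue and treats $\pi_0$ separately: it uses that $\Psi_\F$ is an equivalence (\athm\refbf{stable_equv_orth}) together with $\Phi_\F(e')\cong(0,1)_!e'$ and the adjunction $(0,1)_!\dashv(0,1)^*$ to prove directly that $\D_\iC(\due)(e',X)\cong\D_\iC(\due)(e',e)$ for $e'\in\overline\E_\uno$. Only for $\pi_k$ with $k\ge 1$ does it pass to the fibre sequence of $q$ and the vanishing $[\Sigma^{n}C(e'),C(m)]=0$, where now all the required suspensions $\Sigma^{n}e'$ with $n\ge 1$ lie safely in $\overline\E_\uno$. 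Your argument is correct from $\pi_1$ upward and for injectivity on $\pi_0$; it becomes a complete proof once you insert a direct $\pi_0$ step of this kind---equivalently, once you invoke $\E_\uno\corth\M_\uno$ in $\D_\iC(\due)$ to manufacture the lift witnessing that $[m,Y]\to[X,Y]$ is onto.
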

\begin{proof}
By definition of $\mathbf E$-co-localization, we should verify that $q$ induces, for any $e'\in \mathbf E_0$, a weak homotopy equivalence
$\mathrm{Map}_\iC(e',e) \to \mathrm{Map}_\iC(e',X)$. According to Whitehead's theorem, we need to show that for every $k\leq 0$, the map
\[
\mathrm{Ext_{\iC^{\Delta^1}}^k}(e',e)=\D_\iC(\due)(\Sigma^{-k}e',e)\to \D_\iC(\due)(\Sigma^{-k}e',X)= \mathrm{Ext_{\iC^{\Delta^1}}^k}(e',X)
\] 
is an isomorphism of abelian groups. We prove first the case $k=0$. Indeed, fix a quasi-inverse $\Phi_\F\colon \D_\iC(\due)\to \D_\F(\uno)$ to the functor $\Psi_\F\colon \D_\F(\uno)\to \D_\iC(\due)$ (see \athm\refbf{stable_equv_orth}), then 
\begin{align*}
\D_\iC(\due)(e',X)&\cong \D_\iC(\tre)(\Phi_\F(e'),\Phi_\F(X))\\
&\cong \D_\iC(\tre)((0,1)_!e',\Phi_\F(X))\\
&\cong \D_\iC(\due)(e',(0,1)^*\Phi_\F(X))\\
&\cong \D_\iC(\due)(e',e).
\end{align*}
For $k\leq -1$, complete $q$ to a fiber sequence
\[
\begin{matrix}\xymatrix{
X_0\ar@{=}[r]\ar[d]_e&X_0\ar[d]^X\ar[r]&0\ar[d]\\
c\ar[r]_{m}&X_1\ar[r]&C(m)
}\end{matrix}
\]
and, passing to the associated long exact sequence, we are reduced to prove that, for any $k\leq -1$,
\[
0=\D_\iC(\due)\left(\Sigma^{-k}e',1_!C(m)\right)\cong \D_\iC(\uno)\left(\Sigma^{-k}C(e'),C(m)\right).
\]
Since $\Sigma^{-k}e', \Sigma^{-k+1}e'\in \mathbf E_0$ for any $k\leq -1$,  it is enough to prove that 
\[
\D_\iC(\uno)(C(e''),C(m))=0
\]
for any $e''$ such that $e''$ and $\Sigma e''\in \mathbf E_0$. But this follows by the two conditions $e''\horth m$ and $\Sigma e''\horth m$ in the triangulated category $\D_\iC(\uno)$ (they imply that the map $\D_\iC(\uno)(C(e''),C(m))\to \D_\iC(\uno)(e_1,\Sigma m_0)$ is both injective and trivial).
\end{proof}

Using the above lemma, we can now mimic part of the proof of \cite[\aprop \textbf{5.2.8.17}]{HTT} to verify that a \dfs on $\D_\iC$ induces a \textsc{fs} on the original $\infty$-category $\iC$. 

\begin{theorem}\label{infinity_FS_vs_CFS}
Let $\iC$ be an $\infty$-category which is either stable or the nerve of a 1-category. Denote by $\D_\iC\colon \fincat^\opp\to \Cat$ the induced pre-derivator. Then there is a bijective correspondence
\[
\xymatrix{
\left\{\text{\textsc{fs}s in $\iC$}\right\}\ar@{<->}[rr]&&\left\{\text{{\dfs}s in $\D_\iC$}\right\}.
}
\]
\end{theorem}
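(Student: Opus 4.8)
The plan is to exhibit an explicit two-sided inverse to the map \eqref{the_map} and to verify the inverse identities. When $\iC=N(\A)$ is the nerve of a $1$-category there is nothing new: this is exactly the observation recorded just before the statement, both sides being canonically in bijection with the classical factorization systems on $\A\simeq\ho(\iC)$ — use Lemma \refbf{weak_and_cancellation} on the derivator side, and the fact that $\perp$ in a nerve degenerates to strong orthogonality of arrows in $\A$ on the $\infty$-categorical side. So from now on assume $\iC$ stable, so that $\D_\iC$ is a stable derivator and the whole of §\refbf{sec:squaring} is available.

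Given a \dfs $\F=(\mathbb E,\mathbb M)$ on $\D_\iC$, set $\mathbf E\coloneqq\mathbb E(\uno)$ and $\mathbf M\coloneqq\mathbb M(\uno)$, read as classes of $1$-simplices of $\iC$; this makes sense because $\D_\iC(\due)=\ho(\iC^{\Delta^1})$ has the same objects as $\iC^{\Delta^1}$, and the classes are replete, being isomorphism-closed in $\D_\iC(\due)$ (Lemma \refbf{easier_def_dfs}). I claim $\F^{\uno}\coloneqq(\mathbf E,\mathbf M)$ is a \textsc{fs} on $\iC$. The factorization axiom is immediate from essential surjectivity of $\Psi_\F(\uno)$: an edge $X$ of $\iC$ lifts to some $\widetilde X\in\D_\F(\uno)\subseteq\ho(\iC^{\Delta^2})$ with $(0,1)^*\widetilde X\in\mathbf E$, $(1,2)^*\widetilde X\in\mathbf M$ and $(0,2)^*\widetilde X$ equivalent to $X$, so a representing $2$-simplex exhibits $X$ as homotopic to a composite $m\circ e$ with $e\in\mathbf E$, $m\in\mathbf M$ (this is the square $p$ of Lemma \refbf{lemma_infinity_orth}, which also records that $\mathbf M$ is reflective and $\mathbf E$ coreflective in $\iC^{\Delta^1}$). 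For the orthogonality $f\perp g$ with $f\in\mathbf E$, $g\in\mathbf M$, observe that $f\perp g$ is equivalent to the natural map $\varphi_{f,g}\colon\mathrm{Map}_{\iC^{\Delta^1}}(f,g)\to\mathrm{Map}_\iC(f_1,g_0)$ being a weak homotopy equivalence, since the homotopy fibre of $\varphi_{f,g}$ over a square is exactly the space of fillers of that square. Now $\pi_0(\varphi_{f,g})$ is the $\corth$-comparison, a bijection because $f\corth g$ (as $g\in\mathbf M=\mathbf E^{\corth}$); and by stability the higher homotopy of both mapping spaces is governed by suspensions, identifying $\pi_k(\varphi_{f,g})$ with $\varphi_{\Sigma^k f,\,g}$ on $\pi_0$ for $k\ge0$. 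Each such map is bijective — equivalently (Lemma \refbf{coherent_orth_is_wobbly}) $\Sigma^k\dia_\due f\horth\dia_\due g$ holds — because $(\overline{\mathbf E},\overline{\mathbf M})$ is a \phfs on $\D_\iC(\uno)$ by \athm\refbf{stable_equv_orth} and its left class $\overline{\mathbf E}$ is closed under $\Sigma$ (Definition \refbf{the_def_of_hfs}). Hence $\varphi_{f,g}$ is an equivalence and $f\perp g$; together with the factorization, the standard retract argument (\cite[§\textbf{24}]{joyal2008notes}) — or, alternatively, the reflectivity of $\mathbf M$ just quoted plus an adaptation of \cite[\aprop \textbf{5.2.8.17}]{HTT} — gives ${}^{\perp}\mathbf M=\mathbf E$ and $\mathbf E^{\perp}=\mathbf M$, so $\F^{\uno}$ is a \textsc{fs}.

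It remains to check the two constructions are mutually inverse. Starting from a \textsc{fs} $\F=(\E,\M)$ on $\iC$ and forming $\F_\D=(\mathbb E,\mathbb M)$ as in \eqref{the_map}, we get $\mathbb E(\uno)=\ho(\E_{\uno})=\ho(\E)=\E$ (again $\iC^{\Delta^1}$ and $\ho(\iC^{\Delta^1})$ share objects and $\E$ is replete), similarly $\mathbb M(\uno)=\M$, so $(\F_\D)^{\uno}=\F$. Conversely, from a \dfs $\F=(\mathbb E,\mathbb M)$ on $\D_\iC$, set $\F^{\uno}=(\mathbf E,\mathbf M)$ and $(\F^{\uno})_\D=(\mathbb E',\mathbb M')$; I must see $\mathbb E'(I)=\mathbb E(I)$ for all $I\in\fincat$. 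Unwinding Joyal's pointwise construction, $\mathbb E'(I)=\{Y\in\D_\iC(\due\times I)\mid i^*Y\in\mathbf E\text{ for every object }i\in I\}$. The inclusion $\mathbb E(I)\subseteq\mathbb E'(I)$ holds because $\mathbb E$ is a sub-pre-derivator of $\D^{\due}_\iC$, hence stable under the evaluation $2$-functors $i^*$, so $Y\in\mathbb E(I)$ forces $i^*Y\in\mathbb E(\uno)=\mathbf E$. For the reverse inclusion, let $i^*Y\in\mathbf E$ for all $i$; since $\F$ is a \dfs, factor $Y=m\circ e$ in $\D_\iC(\due)$ with $e\in\mathbb E(I)$ and $m\in\mathbb M(I)$ (i.e. $Y\cong(0,2)^*\widetilde Z$ for some $\widetilde Z\in\D_\F(I)$ with $(0,1)^*\widetilde Z=e$, $(1,2)^*\widetilde Z=m$). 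Restricting along each $i\colon\uno\to I$, the object $i^*\widetilde Z$ has its $(0,1)$- and $(0,2)$-faces in $\mathbf E$, so by the argument proving the cancellation properties of Lemma \refbf{weak_and_cancellation} its $(1,2)$-face $i^*m$ lies in ${}^{\corth}\mathbb M(\uno)=\mathbf E$; as also $i^*m\in\mathbf M$, Proposition \refbf{ortho_to_self} forces $i^*m$ to be an isomorphism for every $i$, whence $m$ is an isomorphism in $\D_\iC(I)$ by Setting \refbf{setting_sec_3}(1), and $Y\cong e\in\mathbb E(I)$. The same reasoning gives $\mathbb M'(I)=\mathbb M(I)$, so $(\F^{\uno})_\D=\F$, and the two maps are mutually inverse.

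The step I expect to be the main obstacle is the orthogonality computation of the second paragraph: a \cpfs only supplies, a priori, the $\pi_0$-level statement $f\corth g$, whereas $f\perp g$ in $\iC$ demands that every space of fillers be contractible. Stability is precisely what bridges this gap, reducing the higher homotopy of the relevant mapping-space fibres to suspended instances of $\corth$-orthogonality; this is where \athm\refbf{stable_equv_orth} (to know $\overline{\mathbf E}$ is suspension-closed) and Lemma \refbf{lemma_infinity_orth} — equivalently the adaptation of \cite[\aprop \textbf{5.2.8.17}]{HTT} — do the real work, and it is also the reason the theorem is confined to stable (or $1$-categorical) $\iC$.
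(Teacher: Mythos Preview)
Your proof is correct, but it takes a genuinely different route from the paper's in two places.

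For the implication \emph{\dfs on $\D_\iC$ $\Rightarrow$ \textsc{fs} on $\iC$}, the paper never checks $f\perp g$ directly. Instead it shows that the restriction map $p\colon\Fun_{\E|\M}(\Delta^2,\iC)\to\Fun(\Delta^{\{0,2\}},\iC)$ is a trivial Kan fibration by transplanting the argument of \cite[\aprop\textbf{5.2.8.17}]{HTT} essentially verbatim; the only place where the \dfs hypothesis enters is Lemma~\refbf{lemma_infinity_orth}, which supplies the coCartesianness of the relevant fibration $r\colon\T\to\Delta^1$. Your approach --- reducing $f\perp g$ to the bijectivity of $\pi_0(\varphi_{\Sigma^k f,g})$ for all $k\ge 0$ via stability, and then invoking $\Sigma$-closure of $\overline{\mathbf E}$ from \athm\refbf{stable_equv_orth} --- is more elementary and makes the role of stability completely explicit, while bypassing Lemma~\refbf{lemma_infinity_orth} altogether. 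One small slip: your displayed map $\varphi_{f,g}$ has its domain and codomain exchanged; the map whose homotopy fibre over a square is the space of its fillers runs $\mathrm{Map}_\iC(f_1,g_0)\to\mathrm{Map}_{\iC^{\Delta^1}}(f,g)$, matching the paper's convention for $\varphi_{X,Y}$ in \adef\refbf{def_c_ort}. The substance of the argument is unaffected.

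For the inverse identity $(\F^{\uno})_\D=\F$, the paper defers to Corollary~\refbf{everything_is_maximal}, which appears only in §4 and rests on the inductive lifting machinery of Lemma~\refbf{lemma_approximation_SSV}. Your direct argument --- factor $Y$ via $\F$, restrict to each $i$, and use the cancellation argument from Lemma~\refbf{weak_and_cancellation} (exactly as in the proof of Lemma~\refbf{easier_def_dfs}) to force $i^*m$ to be an isomorphism --- is self-contained and avoids the forward reference. The price is that you do not obtain the stronger structural statement of Corollary~\refbf{everything_is_maximal} along the way; the paper's organization trades a forward reference here for that later payoff.
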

\begin{proof}
We have already mentioned that, if $\iC$ is the nerve of a 1-category $\A$, then both the \textsc{fs}s on $\iC$ and the {\dfs}s on $\D_\iC$ correspond bijectively with the classical factorization systems on $\A$. Hence, we concentrate on the case when $\iC$ is stable, so that $\D_\iC$ is a stable derivator. Given a \dfs $\F=(\mathbb E,\mathbb M)$ on $\D_\iC$ and letting $(\E,\M)$ be the two classes of 1-simplices in $\iC$ corresponding respectively to $\mathbb E(\uno)$ and $\mathbb M(\uno)$, we have to show that $(\E,\M)$ is a \textsc{fs} in $\iC$. Our strategy will be the following: we faithfully repeat the argument of the proof of the implication (1)$\Rightarrow$(2) in \cite[\aprop \textbf{5.2.8.17}]{HTT} to show that the restriction map 
\[
p\colon \Fun_{\E|\M}(\Delta^2,\iC)\to \Fun(\Delta^{\{0,2\}},\iC)
\] 
is a trivial Kan fibration, where $\Fun_{\E|\M}(\Delta^2,\iC)$ denotes the full subcategory of $\Fun(\Delta^2,\iC)$ spanned by those diagrams 
corresponding to a composition of an element in $\E$ followed by an element in $\M$. By \cite[\emph{ibi}]{HTT}, the fact that $p$ is a trivial Kan fibration is equivalent to say that $(\E,\M)$ is a \textsc{fs}, thus concluding the proof.

Let us start observing that $p$ is a categorical fibration, so it suffices to verify that it is a  categorical equivalence. We do this in two steps. First we let $\sD$ be the full subcategory of $\Fun(\Delta^1\times \Delta^1,\iC)$ spanned by those diagrams of the form
\[\xymatrix{
X\ar[r]^e\ar[d]_f&Y\ar[d]^m\\
Z\ar[r]^g&Z'
}\]
with $e\in \E$, $m\in \M$ and $g$ an equivalence. The map $p$ factors as a composition
\[
\xymatrix{
\Fun_{\E|\M}(\Delta^2,\iC)\ar[r]^(.7){p'}&\sD\ar[r]^(.3){p''}& \Fun(\Delta^{1},\iC)
}
\] 
where $p'$ carries a diagram
\[
\xymatrix{
X\ar[rr]^f\ar[rd]_e&&Z\\
&Y\ar[ur]_m}
\]
to the partially degenerate square
\[\xymatrix{
X\ar[dr]|f\ar[r]^e\ar[d]_f&Y\ar[d]^m\\
Z\ar@{=}[r]&Z
}\]
and $p''$ is given by restriction to the left vertical edge of the diagram. To complete the proof, it will suffice to show that $p'$ and $p''$ are categorical equivalences. For $p'$, this is a general fact proved in \cite[\emph{ibi}]{HTT} (which does not depend on the properties of the pair $(\E,\M)$) so it remains only to show that $p''$ is a trivial Kan fibration.

Let $\T$ denote the full subcategory of $\Fun(\Delta^1,\iC)\times \Delta^1$ spanned by those pairs $(m, i)$ where either $i = 0$ or $m\in \M$. 

The projection map $r \colon \T\to \Delta^1$ is the cartesian fibration associated to the inclusion 
$\Fun_{|\M}(\Delta^1,\iC) \subseteq \Fun(\Delta^1,\iC)$, where $\Fun_{|\M}(\Delta^1,\iC)$ is the full subcategory spanned by the elements of $\M$.
Using Lemma \refbf{lemma_infinity_orth}, we conclude that $r$ is also a co-Cartesian fibration. Moreover, we can identify
\[\sD \subseteq \Fun(\Delta^1\times \Delta^1, \iC) \cong \mathrm{Map}_{\Delta^1} (\Delta^1, \T)\]
with the full subcategory spanned by the co-Cartesian sections of $r$. In terms of this identification, $p''$ is given by evaluation at the initial vertex $\{0\}\subseteq \Delta^1$ and is therefore a trivial Kan fibration, as desired.

The fact that this correspondence really gives a bijection is a consequence of Corollary \ref{everything_is_maximal}.
\end{proof}

\color{black}
\section{The Rosetta stone for derivators}
We have seen in Theorem \refbf{triang-rosetta} that, given a triangulated category $\cD$, there is a bijection between $t$-structures and normal {\htth}s on $\cD$. In this section we are going to prove a similar bijection for derivators. More precisely, we fix a stable derivator $\D$ of type $\Dia$, and we show in \athm\refbf{derrosetta} that there is a bijection between $t$-structures on the triangulated category $\D(\uno)$ and \emph{normal derivator torsion theories} on $\D$. As a consequence we recover one of the main results of \cite{Fiorenza2014} (see Corollary \refbf{infinity_rosetta}). 
\subsection{Lifting {\hfs}s}
\begin{notat}\label{notation_rosetta}
Given a finite directed category $I$, we define its \emph{length} $\ell(I)\in \N$ as the maximal length of a path of non-identity arrows in $I$. An object $i$ in $I$ is \emph{minimal}, if there is no non-identity morphism starting in $i$. We denote by $\fincat$ the full sub-2-category of $\Cat$ whose $0$-cells are the finite directed categories. If $I\in\fincat$ we denote $I^\circ$ the subcategory of $I$ spanned by the non-minimal objects. Note that for any minimal object $i\in I$ of a finite directed category, if $u\colon I^\circ\hookrightarrow I$ denotes the inclusion, we have the inequalities
\[
\ell(I)>\ell(J)\geq \ell(u/i).
\] 
Finally, we denote $\partial I \coloneqq I\setminus I^\circ$ (\ie the subcategory spanned by all minimal objects).
\end{notat}
\begin{lemma}{\rm \cite{SSV}}\label{lemma_approximation_SSV}
Let $I$ be a category of finite length and let $u\colon I^\circ\hookrightarrow I$. Furthermore, given a stable derivator $\D\colon \Dia^\opp\to \Cat$ and $X\in \D(I)$, there is a distinguished, pointwise split triangle
\[
\bigoplus_{i\in\partial I}i_!i^*u_!u^*X\to \bigoplus_{i\in\partial I}i_!X_i\oplus u_!u^*X\to X\to \Sigma \bigoplus_{i\in\partial I}i_!i^*u_!u^*X
\]
induced by the counit maps $\epsilon_i : i_!i^*\to 1$ and $\epsilon_u : u_!u^* \to 1$.
\end{lemma}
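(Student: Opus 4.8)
The plan is to realise the displayed triangle as the fibre triangle, in the triangulated category $\D(I)$, of the single coherent morphism obtained by adding up all the counit maps, and then to pin down its third term by a pointwise calculation. First I would record two structural observations. Since every object of $\partial I$ is minimal there is no non-identity arrow out of it, so $\partial I$ is a \emph{discrete} subcategory and its inclusion $w\colon\partial I\hookrightarrow I$ is a cosieve; hence $w_{!}$ is extension by the zero object, which gives $w_{!}w^{*}\cong\bigoplus_{i\in\partial I}i_{!}i^{*}$ as endofunctors of $\D(I)$ (with $\epsilon_{w}$ corresponding to $\bigoplus_{i}\epsilon_{i}$), and in particular $w_{!}w^{*}X\cong\bigoplus_{i\in\partial I}i_{!}X_{i}$ and $w_{!}w^{*}u_{!}u^{*}X\cong\bigoplus_{i\in\partial I}i_{!}i^{*}u_{!}u^{*}X$. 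Dually, $I^{\circ}=I\setminus\partial I$ is the complementary sieve, so $u\colon I^{\circ}\hookrightarrow I$ is fully faithful and, for $j\in I^{\circ}$, the component $(\epsilon_{u})_{j}\colon(u_{!}u^{*}X)_{j}\to X_{j}$ is the identity under the canonical identification $(u_{!}u^{*}X)_{j}\cong X_{j}$.

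Next I would assemble, out of the coherent (co)units, the coherent morphism $\phi=(\epsilon_{w},\epsilon_{u})\colon\bigl(\bigoplus_{i}i_{!}X_{i}\bigr)\oplus u_{!}u^{*}X\to X$ in $\D(I)$ and take its fibre triangle $F\xrightarrow{\iota}\bigl(\bigoplus_{i}i_{!}X_{i}\bigr)\oplus u_{!}u^{*}X\xrightarrow{\phi}X\to\Sigma F$, which is distinguished by the definition of the triangulated structure on $\D(I)$. I would also form the coherent map $\beta\colon w_{!}w^{*}u_{!}u^{*}X\to\bigl(\bigoplus_{i}i_{!}X_{i}\bigr)\oplus u_{!}u^{*}X$ with components $w_{!}w^{*}\epsilon_{u}$ and $-\,\epsilon_{w,\,u_{!}u^{*}X}$: naturality of $\epsilon_{w}$ at the morphism $\epsilon_{u}$ forces $\phi\beta=0$, so $\beta$ factors through $F=\mathrm{fib}(\phi)$, say $\beta=\iota\gamma$ for some $\gamma\colon w_{!}w^{*}u_{!}u^{*}X\to F$.

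The crux is that $\gamma$ is an isomorphism, and by (Der2) this can be checked pointwise, using that $j^{*}$ commutes with fibres. For $j\in I^{\circ}$ the structural observation makes $j^{*}\phi$ the split epimorphism $0\oplus X_{j}\to X_{j}$, so $j^{*}F=0=j^{*}w_{!}w^{*}u_{!}u^{*}X$ and $j^{*}\gamma$ is vacuously invertible. For $j\in\partial I$, writing $L_{j}=(u_{!}u^{*}X)_{j}$ and $\lambda_{j}=(\epsilon_{u})_{j}$, the map $j^{*}\phi=(\mathrm{id},\lambda_{j})\colon X_{j}\oplus L_{j}\to X_{j}$ is again split epi, with fibre $\{(-\lambda_{j}\ell,\ell)\}\cong L_{j}$, and $j^{*}\beta=\binom{\lambda_{j}}{-\mathrm{id}}$ is exactly an isomorphism onto that fibre, so $j^{*}\gamma$ is invertible. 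Hence $\gamma$ is an isomorphism in $\D(I)$, and transporting the fibre triangle along $\gamma$ produces a distinguished triangle whose first two maps are $\beta$ and $\phi$ --- that is, the triangle in the statement, with the maps manifestly built from $\epsilon_{i}$ and $\epsilon_{u}$. Pointwise splitness falls out of the same computation: over $j\in I^{\circ}$ the triangle is $0\to X_{j}\xrightarrow{\mathrm{id}}X_{j}\to 0$, and over $j\in\partial I$ it is $L_{j}\xrightarrow{\binom{\lambda_{j}}{-\mathrm{id}}}X_{j}\oplus L_{j}\xrightarrow{(\mathrm{id},\lambda_{j})}X_{j}\to\Sigma L_{j}$, whose first map is split mono (retraction $(y,\ell)\mapsto-\ell$), so the connecting map is $0$.

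The hard part will be the coherent bookkeeping hidden in the second step: realising $\phi$, $\beta$ and the lift $\gamma$ as genuine $1$-cells of $\D$ (not merely as a homotopy-commutative diagram in $\D(I)$) and verifying that $\phi\beta=0$ at the coherent level. This is the kind of manipulation with coherent (co)units and Beck--Chevalley that is routine in the derivator calculus and is carried out in \cite{SSV}; granting it, everything else is just the derivator axioms and (Der2). Equivalently, one can repackage the argument by observing that the naturality square of $\epsilon_{w}$ at $\epsilon_{u}$ is homotopy cocartesian --- trivially so, since pointwise one pair of its parallel edges consists of isomorphisms --- and then invoke the standard fact that a homotopy cocartesian square induces a triangle $A\to B\oplus C\to D\to\Sigma A$, as in \refbf{recall_hocart}.
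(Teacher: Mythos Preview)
Your argument is correct. The paper does not actually prove this lemma --- it is quoted from \cite{SSV} without proof --- so there is nothing to compare your approach against here; you have supplied a valid self-contained argument along the lines one would expect.

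A couple of minor remarks for polish. First, when you factor $\beta=\iota\gamma$ through the fibre, the lift $\gamma$ is a priori not unique in the triangulated category $\D(I)$; what makes your pointwise check of $j^{*}\gamma$ legitimate is that each $j^{*}\iota$ is a \emph{split} monomorphism (trivially for $j\in I^{\circ}$, and by your explicit computation for $j\in\partial I$), so $j^{*}\gamma$ is determined by $j^{*}\beta$. It would be worth saying this explicitly. Second, the ``homotopy cocartesian square'' reformulation you sketch at the end is indeed the cleanest packaging: the naturality square of $\epsilon_{w}$ at $\epsilon_{u}$ has, at every $j\in I$, one pair of parallel edges consisting of identities, hence is pointwise bicartesian; invoking that bicartesian squares in a stable derivator are detected pointwise (via exactness of each $j^{*}$) then gives the Mayer--Vietoris triangle directly. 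Either route is fine.
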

In what follows, we start with a \hfs on the base $\D(\uno)$ of a stable derivator $\D$ and we want to show that this lifts point-wise to a \hfs on $\D(I)$ for all $I\in \fincat$. In the following lemma we start proving that the liftings give homotopy orthogonal classes.
\begin{lemma}\label{lifting_orthogonality}
Let $\D\colon \fincat^{op}\to \CAT$ be a stable derivator and let $\bar\F=(\bar\E,\bar\M)$ be a \phfs on the triangulated category $\D(\uno)$.  Let $\F=(\E,\M)$ be the pair of full subcategories of $\D(\due)$ of those objects whose underlying diagrams are, respectively, in $\bar \E$ and $\bar \M$. 
Given $I\in \fincat$, define
\[
\E_I \coloneqq \{X\in \D^{\due}(I):X_i\in \E, \, \forall i\in I\} \qquad \M_I \coloneqq \{Y\in \D^{\due}(I):Y_i\in \M, \, \forall i\in I\}.
\] 
Then, $\bar\E_I\horth\bar\M_I$ in $\D(I)$.
\end{lemma}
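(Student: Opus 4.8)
The plan is to induct on the length $\ell(I)$ of the finite directed category $I$. Since the shift of a stable derivator is a stable derivator, $\D^\due$ and, for every $I$, $\D^I$ are stable derivators, so Lemma~\refbf{coherent_orth_is_wobbly} lets me replace the assertion $\bar\E_I\horth\bar\M_I$ by the equivalent one: $X\corth Y$ in $\D^I$ in the sense of \adef\refbf{def_c_ort} for all $X\in\E_I$ and $Y\in\M_I$, i.e.\ $\varphi_{X,Y}\colon\D(I)(X_1,Y_0)\to\D^\due(I)(X,Y)$ is an isomorphism. I fix $Y\in\M_I$ and prove this for every $X\in\E_I$. In the base case $\ell(I)=0$ the category $I$ is discrete, so $\D(I)\simeq\prod_{i\in I}\D(\uno)$ and $\D^\due(I)\simeq\prod_{i\in I}\D(\due)$ compatibly with the cone functors, and $\varphi_{X,Y}$ is then the product of the maps $\varphi_{X_i,Y_i}$; each of these is an isomorphism by Lemma~\refbf{coherent_orth_is_wobbly}, since $X_i\in\E$, $Y_i\in\M$ and $\bar\E\horth\bar\M$ (the last being part of $\bar\F$ being a \phfs, \adef\refbf{the_def_of_hfs}).

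For the inductive step let $u\colon I^\circ\hookrightarrow I$ and, for $i\in\partial I$, let $t_i\colon(u/i)\to\uno$ and $p_i\colon(u/i)\to I^\circ$ be the canonical functors; recall from Notation~\refbf{notation_rosetta} that $\ell(u/i)\le\ell(I^\circ)<\ell(I)$. Applying Lemma~\refbf{lemma_approximation_SSV} to the stable derivator $\D^\due$ and to $X\in\D^\due(I)$ produces a distinguished triangle
\[
P\longrightarrow Q\longrightarrow X\longrightarrow\Sigma P
\]
in $\D^\due(I)$, with $P=\bigoplus_{i\in\partial I}i_!i^*u_!u^*X$ and $Q=\big(\bigoplus_{i\in\partial I}i_!X_i\big)\oplus u_!u^*X$. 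Applying $\D^\due(I)(-,Y)$ and $\D(I)(\pt_!(-),Y_0)$ to this triangle gives two long exact sequences linked by the natural transformation $\varphi_{-,Y}$, so by the five lemma $\varphi_{X,Y}$ is an isomorphism as soon as $\varphi_{W,Y}$ is an isomorphism for $W\in\{P,Q,\Sigma P,\Sigma Q\}$. Since $\bar\E$ is closed under $\Sigma$ (again part of \adef\refbf{the_def_of_hfs}) we have $\Sigma X\in\E_I$, and as $\Sigma$ commutes with all the Kan extensions and restrictions occurring here, $\Sigma P$ and $\Sigma Q$ have the same shape as $P$ and $Q$ but built from $\Sigma X$; as $\varphi_{-,Y}$ carries finite coproducts to products, it is enough to prove that $\varphi_{W,Y}$ is an isomorphism for every $Z\in\E_I$ and every $W$ of the three forms $i_!Z_i$, $u_!u^*Z$ and $i_!i^*u_!u^*Z$ with $i\in\partial I$.

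Each of these is settled by the adjunction computation underlying the proof of Lemma~\refbf{cfs_limit_colimit}. If $W=i_!Z_i$, the adjunction $i_!\dashv i^*$ identifies $\varphi_{i_!Z_i,Y}$ with $\varphi_{Z_i,Y_i}$, which is an isomorphism (Lemma~\refbf{coherent_orth_is_wobbly}) because $Z_i\in\E$, $Y_i\in\M$ and $\bar\E\horth\bar\M$. If $W=u_!u^*Z$, the adjunction $u_!\dashv u^*$ identifies $\varphi_{u_!u^*Z,Y}$ with $\varphi_{u^*Z,u^*Y}$ in $\D^{I^\circ}$, an isomorphism by the inductive hypothesis for $I^\circ$, since $u^*Z\in\E_{I^\circ}$, $u^*Y\in\M_{I^\circ}$ and $\ell(I^\circ)<\ell(I)$. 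If $W=i_!i^*u_!u^*Z$, homotopy exactness of the comma square attached to $u$ and $i$ gives $i^*u_!u^*Z\cong(t_i)_!p_i^*u^*Z$ with $p_i^*u^*Z\in\E_{(u/i)}$, and the adjunctions $i_!\dashv i^*$ and then $(t_i)_!\dashv(t_i)^*$ reduce $\varphi_{W,Y}$ to $\varphi_{p_i^*u^*Z,(t_i)^*Y_i}$ in $\D^{(u/i)}$; since $(t_i)^*Y_i$ is the constant object at $Y_i\in\M$, hence lies in $\M_{(u/i)}$, this last map is an isomorphism by the inductive hypothesis for $(u/i)$, whose length is $<\ell(I)$. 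This closes the induction.

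The obstacles are organisational rather than conceptual: one has to carry out the coherent-orthogonality calculus in the shifted derivators $\D^\due$, $\D^I$, $\D^{I^\circ}$, $\D^{(u/i)}$ and keep track of how the isomorphism $\varphi$ transports along the adjunctions $i_!\dashv i^*$, $u_!\dashv u^*$, $(t_i)_!\dashv(t_i)^*$ and along comma-square base change — all standard derivator calculus (exactness of Kan extensions, their commutation with cones and with $\Sigma$, homotopy exactness of comma squares, and propagation of the $\Sigma$-closure of $\bar\E$ to $\E_I$). A route closer to the style of the preceding subsection would instead restrict the SSV triangle along the two copies of $I$ inside $\due\times I$ to a morphism of triangles in $\D(I)$ and invoke Lemma~\refbf{extension}; that variant requires certifying that this morphism of triangles is middling good, which is the single point it seems cleanest to sidestep.
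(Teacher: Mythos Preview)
Your proof is correct and follows essentially the same route as the paper: induction on $\ell(I)$, the SSV triangle of Lemma~\refbf{lemma_approximation_SSV}, the five-lemma on the two long exact sequences linked by $\varphi_{-,Y}$, and the adjunction reductions to the inductive hypothesis. The paper presents the same argument more tersely (it packages your two long exact sequences as a single morphism of triangles obtained from the unit $\id\to\pt^*\pt_!$, then applies $\D^\due(I)(-,Y)$); your treatment of the summand $i_!i^*u_!u^*Z$ via the comma-square base change $i^*u_!\cong(t_i)_!p_i^*$ and the inductive hypothesis for $(u/i)$ makes explicit what the paper leaves to the reader under the phrase ``come from our inductive hypothesis''.
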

\begin{proof}
We proceed by induction on $\ell(I)$. The case $\ell(I)=0$ being trivial (in that case $\D(I)$ is a finite product of copies of $\D(\uno)$), let us suppose $\ell (I)\geq 1$ and that our statement is verified for any finite directed category of shorter length. 
Let $X\in \E_I$ and $Y\in \M_I$. 
By Lemma \refbf{lemma_approximation_SSV} we have a morphism of triangles as follows:
\[
\xymatrix@C=15pt{
\bigoplus_{i\in\partial I}i_!i^*u_!u^*X\ar[r]\ar[d]& \bigoplus_{i\in\partial I}i_!X_i\oplus u_!u^*X\ar[r]\ar[d]& X\ar[rr]^(.8)+\ar[d]&&\\
\pt^*\pt_!\bigoplus_{i\in\partial I}i_!i^*u_!u^*X\ar[r]& \pt^*\pt_!\left(\bigoplus_{i\in\partial I}i_!X_i\oplus u_!u^*X\right)\ar[r]& \pt^*\pt_!X\ar[rr]^(.8)+&&
}
\]
Applying the contravariant functor $\D^{\due}(I)(-,Y)$ to the above diagram we obtain
{\footnotesize \[
\xymatrix{
\underset{i\in\partial I}\prod \D(\due)((u_!u^*X)_i, Y_i) & \underset{i\in\partial I}\prod\D(\uno)(\pt_!(u_!u^*X)_i, \pt_*Y_i)\ar[l]_\cong\\
\ar[u]\underset{i\in\partial I}\prod\D(\due)(X_i, Y_i)\times \D^\due(\partial I)(u^*X, u^*Y) & \underset{i\in\partial I}\prod\D(\due)((X_i)_1, (Y_i)_0)\times \D^\due(\partial I)((u^*X)_1, (u^*Y)_0)\ar[u]\ar[l]_(.55)\cong\\
\ar[u]\D^\due(I)(X,Y) & \D^\due(I)(X_1, Y_0)\ar[u]\ar[l]\\
\ar[u]\underset{i\in\partial I}\prod\D(\due)((\Sigma u_!u^*X)_i, Y_i) & \underset{i\in\partial I}\prod\D(\uno)(\pt_!(\Sigma u_!u^*X)_i, \pt_* Y_i)\ar[u]\ar[l]_\cong\\
\vdots \ar[u]\ar@{<-}[r]^{\cong}& \vdots\ar[u] 
}
\]}\noindent
(we used that left Kan extensions commute with left Kan extensions and that functors of the form $v^*$ commute with both left and right Kan extensions). The isomorphisms in the above diagram come from our inductive hypothesis and so, by the 5-lemma, we can conclude that $\D^{\due}(I)(X_1,Y_0)\cong \D^{\due}(I)(X,Y)$, which means that $X\corth Y$.
\end{proof}

It remains to verify that, for a given $I\in\fincat$, the pointwise \phfs we found in the above lemma is also a \hfs. In the following lemma we reformulate this requirement in a way that will be easier to verify via Lemma \refbf{lemma_approximation_SSV}:
\begin{lemma}\label{fs_are_corefs}
For all $I\in \Dia$, there is a bijection between the following classes:
\begin{enumerate}
\item {\hfs}s in the triangulated category $\D(\uno)$;
\item co-reflections $S\dashv R: \E\leftrightarrows\D(\due)$ with counit $\rho\colon SR\to \id_{\D(\due)}$, such that $\rho_0\colon 0^*SR\to 0^*$ is a natural isomorphism.
\end{enumerate}
Given a {\hfs} $\F=(\E,\M)$ on $\D(\uno)$, let $S\dashv R$ be the associated co-reflection. For a morphism $\varphi\colon SE\to X$ in $\D(\due)$, with $E\in \widetilde\E$,  the map
\[
\D(\due)(SE',\varphi)\colon \D(\due)(SE',X)\to \D(\due)(SE',SE)\cong\E(E',E) 
\]
is an isomorphism if and only if $\varphi_0$ is an iso and $\varphi_1\in \M$.
\end{lemma}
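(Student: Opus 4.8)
The statement to prove is Lemma \refbf{fs_are_corefs}: a bijection between {\hfs}s on $\D(\uno)$ and coreflections $S\dashv R\colon \E\leftrightarrows\D(\due)$ with counit $\rho$ such that $\rho_0$ is invertible; together with the characterization that a morphism $\varphi\colon SE\to X$ in $\D(\due)$ induces an isomorphism $\D(\due)(SE',\varphi)$ for all $E'$ precisely when $\varphi_0$ is an iso and $\varphi_1\in\M$. The plan is to treat the two assertions separately, building the bijection first and then deriving the universal-property characterization from it, since once the coreflection is in hand the last claim is a formal consequence of how $S$ and $R$ are constructed.

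\textbf{Step 1: From an {\hfs} to a coreflection.} Starting with an {\hfs} $\F=(\E,\M)$ on the triangulated category $\D(\uno)$, I would let $\widetilde\E\subseteq \D(\due)$ be the full subcategory of coherent morphisms whose underlying diagram lies in $\E$ (as in Lemma \refbf{lifting_orthogonality}); by Theorem \refbf{stable_equv_orth_pre}/\refbf{stable_equv_orth} and the fact that $\dia_\due$ is full and essentially surjective, $\widetilde\E$ faithfully captures $\E$. The coreflection $S\dashv R$ should be read off from the functorial $\F$-factorization: for $X\in\D(\due)$, factor the underlying morphism as $e_X$ followed by $m_X$, lift this to a coherent $2$-simplex $\widetilde X\in\D_\F(\uno)\subseteq\D(\tre)$ using Setting \refbf{setting_sec_3}(3) (this is exactly the essential surjectivity of $\Psi_\F$ from Theorem \refbf{stable_equv_orth}), and set $RX:=(0,1)^*\widetilde X\in\widetilde\E$ while the counit $\rho_X\colon SRX\to X$ is obtained from the inclusion $\Delta^{\{0,1\}}\hookrightarrow\Delta^{\{0,2\}}$ applied to $\widetilde X$ — this is the map whose $0$-component is the identity $X_0\to X_0$ (hence $\rho_0$ is a natural iso) and whose $1$-component is $m_X\colon RX_1\to X_1$, which lies in $\M$. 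The adjunction itself, i.e. that $\D(\due)(SE',X)\cong\widetilde\E(E',RX)$ naturally, is the coherent-orthogonality statement $\E\corth\M$ packaged via Lemma \refbf{level_wise_ff}: $\D(\tre)(\Phi_\F E', \widetilde X)\cong\D(\due)(E',(0,1)^*\widetilde X)$ because the $(0,1)$-part of $\Phi_\F E'$ is $E'$ and its $(1,2)$-part is an isomorphism. Here $S$ denotes the inclusion $\widetilde\E\hookrightarrow\D(\due)$ (on objects) — strictly, $S$ is $(0,2)^*$ composed with sending $E\in\widetilde\E$ to the coherent $2$-simplex with middle map the identity.

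\textbf{Step 2: From a coreflection back to an {\hfs}, and the round-trips.} Conversely, given $S\dashv R$ with $\rho_0$ a natural iso, define $\E:=\overline{\dia_\due(\widetilde\E)}$ and $\M:=\{m\in\D(\uno)^\due\mid \text{the lift of }m\text{ has }R(\text{it})=\text{identity}\}$ — more precisely $\M$ is characterized by the last displayed assertion of the lemma: $m=\varphi_1$ for some $\varphi\colon SE\to X$ with $\varphi$ having the stated universal property. One checks $\E\corth\M$ using the adjunction, and crumbling: every $X\in\D(\due)$ sits in $X\xleftarrow{\rho_X}SRX$, and the factorization $X_0\to (SRX)_1\to X_1$ exhibits $X$ as an $\E$-map followed by an $\M$-map (the first lies in $\E$ because $SRX\in\widetilde\E$, the second in $\M$ because it is the $\varphi_1$ of the universal $\rho_X$). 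That the two constructions are mutually inverse is then bookkeeping: starting from $\F$, the recovered $\widetilde\E$ is the original one by Theorem \refbf{stable_equv_orth}, and the recovered $\M$ agrees with the original by the closure property $\E\cap\M=\{\text{isos}\}$ from Proposition \refbf{closure_phfs}(2); starting from a coreflection, reconstructing $S,R,\rho$ reproduces the original up to canonical isomorphism by the uniqueness of adjoints.

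\textbf{Step 3: The morphism characterization.} For the final claim, suppose $\varphi\colon SE\to X$ with $E\in\widetilde\E$. If $\D(\due)(SE',\varphi)$ is an iso for all $E'$, then by Yoneda $\varphi$ factors the coreflection universally, so $\varphi$ is (isomorphic to) $S$ applied to an iso followed by the counit $\rho_X$ — whence $\varphi_0\cong\rho_{X,0}$ is an iso and $\varphi_1\cong m_X\in\M$. Conversely, if $\varphi_0$ is an iso and $\varphi_1\in\M$, I would complete $\varphi$ to a coherent $2$-simplex (the middle object being $X_0\simeq SE_1$ via $\varphi_0$) sitting in $\D_\F(\uno)$, and observe that $\varphi$ is then the counit $\rho_X$ up to the chosen iso, so it has the universal property by Step 1. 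The main obstacle I anticipate is Step 1's passage from the pointwise/incoherent factorization to a coherent one that is \emph{functorial} in $X\in\D(\due)$: essential surjectivity of $\Psi_\F$ gives a lift object by object but not a functor, so one must either invoke that $\Psi_\F$ is an \emph{equivalence} (Theorem \refbf{stable_equv_orth}(3)) to get a genuine quasi-inverse $\Phi_\F$ — which is the clean route — or argue directly that the incoherent choices glue; I would take the former. The rest is a careful but routine unwinding of adjunctions and of the already-established coherent-orthogonality machinery.
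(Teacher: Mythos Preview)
Your proposal is correct and follows essentially the same approach as the paper. Both arguments hinge on the fact that $\Psi_\F\colon\D_\F(\uno)\to\D(\due)$ is an equivalence (full faithfulness from Lemma \refbf{level_wise_ff} plus essential surjectivity via the crumbling property and Setting \refbf{setting_sec_3}(3)), take a quasi-inverse $\Phi_\F$, and set $R:=(0,1)^*\Phi_\F$; the counit $\rho_X$ then has $\rho_{X,0}$ an iso and $\rho_{X,1}\in\M$ by construction, and the last assertion is unpacked via the chain $\D(\due)(E',X)\cong\D(\tre)(\Phi_\F E',\Phi_\F X)\cong\D(\tre)((0,1)_!E',\Phi_\F X)\cong\D(\due)(E',(0,1)^*\Phi_\F X)$, exactly as you sketch.

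Two minor remarks. First, in your Step 2 the paper's definition of $\M$ in the converse direction is slightly cleaner than yours: rather than characterizing $\M$ via the universal property (which risks circularity with the last assertion), the paper sets $\M_{(S,R)}:=\{M\in\D(\due)\mid \dia_\due(RM)\text{ is an iso}\}$ directly, then verifies $\E\corth\M$ by the computation $\D(\due)(E,M)\cong\D(\due)(E,SRM)\cong\D(\due)(E,1_*M_0)\cong\D(\uno)(E_1,M_0)$. Second, you are right to flag that one must invoke the \emph{equivalence} (not merely essential surjectivity) of $\Psi_\F$ to make $R$ genuinely functorial; the paper does exactly this, and your resolution of the obstacle is the correct one.
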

\begin{proof}
Given a \hfs $\F=( \E, \M)$ in $\D(\uno)$, we have shown in Lemma \refbf{lifting_orthogonality} that $\E{\corth}\M$. Hence, the functor $\Psi\colon \D_{\F}(\uno)\to \D(\due)$ is an equivalence (see Lemmas \refbf{coherent_orth_is_wobbly} and \refbf{level_wise_ff}), so we can choose a quasi-inverse $\Phi\colon \D(\due)\to \D_\F(\uno)$. The desired co-reflection is constructed as the following composition:
\[
R_\F:=(0,1)^*\Phi\colon \D(\due)\to \D_\F(\uno)\to \E.
\]
This is clearly a right adjoint to the inclusion $S_\F\colon \E\to \D(\due)$, and it is not difficult to verify that the pair $(S_\F,R_\F)$ has the desired properties. 

On the other hand, given a reflection $R: \D(\due)\rightleftarrows\E :S$ as in the statement, we define 
\begin{align*}
\E_{(S,R)}\subseteq \D(\due), \quad \E_{(S,R)}&\coloneqq \{E\in \D(\due) \mid \rho_E \text{ is invertible}\}\\
\M_{(S,R)}\subseteq \D(\due), \quad \M_{(S,R)}&\coloneqq \{E\in \D(\due) \mid \dia_\due(RM) \text{ is invertible}\}
\end{align*}
We shall prove that $\F_{(S,R)}:=(\bar\E_{(S,R)},\bar\M_{(S,R)})$ is a \hfs in $\D(\uno)$. 
Let $E\in \E_{(S,R)}$ and $M\in \M_{(S,R)}$, then
\[
\D(\due)(E,M)\cong \D(\due)(E,SRM)\overset{(*)}{\cong} \D(\due)(E,1_*M_0)\cong \D(\uno)(E_1,M_0)
\]
where the isomorphism marked by $(*)$ is true since $RM$ is an iso, so $SRM\cong 1_*(SRM)_0$ and $(\rho_M)_0\cong(SRM)_0\to M_0$ is an iso. By the above isomorphisms one deduces that $E\corth M$, so that $\E_{(S,R)}\corth \M_{(S,R)}$, which is equivalent to say that $\bar \E_{(S,R)}\horth \bar \M_{(S,R)}$. These classes are also closed under taking isomorphisms, so it is enough to show that any morphism in $\D(\uno)$ is  $\F_{(S,R)}$-crumbled. Indeed, let $X\in \D(\due)$ and consider $\rho_X\colon SRX\to X$. Since $(\rho_X)_0$ is an iso, $\dia_\due(X)\cong (\rho_X)_1\circ \dia_\due(SRX)$ and notice that $(\rho_X)_1\in \bar\M_{(S,R)}$ while $\dia_\due(SRX)\in \bar\E_{(S,R)}$.

For the last part of the statement, consider a morphism $\rho\colon E\to X$ with $E\in \E$ and $X\in \D(\due)$, such that $\rho_0$ is an iso and $\rho_1\in\bar\M$. Then, for any given $E'\in\E$
\begin{align*}
\D(\due)(E',X)&\cong \D(\tre)(\Phi E',\Phi X)\\
&\cong \D(\tre)((0,1)_!E',\Phi X)\\
&\cong \D(\due)(E',(0,1)^*\Phi X)\cong \E(E',E).
\end{align*}
where the second isomorphism is true since $\Phi(E')\cong (0,1)_!E'$, that is, the factorization of a coherent morphism $E'$ in $\E$ is given by $E'$ followed by an isomorphism. Furthermore, the last isomorphism is true since our hypotheses on $\rho$ imply that $\dia_\due(X)\cong \rho_1\dia_\due(E)$, where $\rho_1\in \bar \M$, so this factorization is the $\F$-factorization of $\dia_\due(X)$.
\end{proof}

\begin{theorem}\label{everything_is_maximal}
Let $\D\colon \fincat^\opp\to \Cat$ be a stable derivator and let $\bar\F=(\bar\E,\bar\M)$ be {\hfs} on $\D(\uno)$. Then there is a pointwise induced \dfs $\F=(\mathbb E,\mathbb M)$ on $\D$ such that $(\bar\E,\bar\M)=(\bar{\mathbb E}(\uno),\bar{\mathbb M}(\uno))$.
\end{theorem}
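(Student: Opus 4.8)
The plan is to take $\mathbb E$ and $\mathbb M$ to be the evident pointwise liftings of $\bar\F$ and then to verify the criterion of Lemma~\ref{easier_def_dfs}. Concretely, let $\E,\M\subseteq\D(\due)$ be the full subcategories of those coherent morphisms whose underlying diagram lies in $\bar\E$, resp.\ $\bar\M$, and for $I\in\fincat$ set $\mathbb E(I)\coloneqq\{X\in\D^\due(I)\mid X_i\in\E\text{ for all }i\in I\}$ and $\mathbb M(I)$ analogously, exactly as in Lemma~\ref{lifting_orthogonality}. Since the structure functors of $\D^\due$ commute with evaluation at objects (2-functoriality of $\D$), $\mathbb E$ and $\mathbb M$ are sub-pre-derivators of $\D^\due$; they are isomorphism-closed because $\bar\E,\bar\M$ are (Proposition~\ref{closure_phfs}(1)) and evaluation at objects detects isomorphisms (Setting~\ref{setting_sec_3}(1)); and, using that $\dia_\due$ is full and essentially surjective, $\overline{\mathbb E}(\uno)=\bar\E$ and $\overline{\mathbb M}(\uno)=\bar\M$, which is the last clause of the theorem. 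By Lemma~\ref{lifting_orthogonality} we already have $\mathbb E(I)\corth\mathbb M(I)$ for every $I\in\fincat$; combining this with the closure properties of Proposition~\ref{closure_phfs} and with $\bar\F$ being a \phfs (which allows enough objects of $\mathbb M(I)$ to be realised as right Kan extensions along the $i\colon\uno\to I$ of objects of $\bar\M$), a routine check shows $\F=(\mathbb E,\mathbb M)$ is a \cpfs, so that Lemma~\ref{cfs_limit_colimit} becomes available. By Lemma~\ref{easier_def_dfs} it then remains only to prove that $\Psi_\F\colon\D_\F\to\D^\due$ is essentially surjective, i.e.\ that every $X\in\D^\due(I)$ admits a coherent factorisation $\widetilde X\in\D_\F(I)$ with $(0,2)^\circledast\widetilde X\cong X$ (full faithfulness of $\Psi_{\F,I}$ comes for free from Lemma~\ref{level_wise_ff} applied to the stable derivator $\D^I$).

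I would prove essential surjectivity by induction on the length $\ell(I)$. When $\ell(I)=0$ the category $I$ is discrete, so $\D^\due(I)$ and $\D_\F(I)$ split as products over the objects of $I$ and it suffices to treat $I=\uno$; there I would reproduce, specialised to $\uno$, the construction from the proof of the implication (4)$\Rightarrow$(1) of Theorem~\ref{stable_equv_orth}: factor $\dia_\due X=m\circ e$ with $e\in\bar\E$, $m\in\bar\M$ (possible since $\bar\F$ is a \hfs), assemble $(X_0\xto{e}P\xto{m}X_1)$ as an object of $\D(\uno)^\tre$, lift along the full and essentially surjective $\dia_\tre$ (Setting~\ref{setting_sec_3}(3)) the evident morphism from the degenerate triple to this composable pair to $f\colon(1,2)_*X\to F$ with $F\in\D_\F(\uno)$, and note that $(0,2)^*f$ is invertible because $0^*$ of it is $\id_{X_0}$ and $1^*$ of it is $\id_{X_1}$ (Setting~\ref{setting_sec_3}(1)), whence $F_{(0,2)}\cong X$. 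This step uses no induction, so there is no circularity with Theorem~\ref{stable_equv_orth}.

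For the inductive step, with $\ell(I)=n\ge1$ and $u\colon I^\circ\hookrightarrow I$ the inclusion of the non-minimal objects (so $\ell(I^\circ)<\ell(I)$), I would build the coherent factorisation of a given $X\in\D^\due(I)$ from two ingredients: the coherent factorisation of $(\id_\due\times u)^*X\in\D^\due(I^\circ)$, furnished by the inductive hypothesis, and the coherent factorisations of the objects $X_i\in\D(\due)$ for the minimal $i\in\partial I$, furnished by the base case. The gluing would be effected through the pointwise split triangle of Lemma~\ref{lemma_approximation_SSV}, applied to $X$ in the stable derivator $\D^\due$ (note $\due\times I\in\fincat$, and the shift of a stable derivator is stable): that triangle writes $X$ in terms of the Kan extensions $(\id_\due\times u)_!(\id_\due\times u)^*X$ and $(\id_\due\times i)_!X_i$ for $i\in\partial I$, whose relevant ``$\mathbb E$-'' and ``$\mathbb M$-parts'' are controlled, via Lemma~\ref{cfs_limit_colimit}, by those of $(\id_\due\times u)^*X$ and of the $X_i$. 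Transporting the already-constructed factorisations along this triangle, and using the full faithfulness of $\Psi_{\F,I}$ together with the pointwise detection of isomorphisms (Setting~\ref{setting_sec_3}(1)) to pin down the comparison maps — thereby checking both that the resulting coherent $\tre$-diagram over $I$ lies in $\D_\F(I)$ and that its $(0,2)$-edge is isomorphic to $X$ — would yield the required $\widetilde X$, and with Lemma~\ref{easier_def_dfs} this finishes the proof.

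The hard part is exactly this final gluing step. The naive strategy of factoring pointwise on objects of $I$ and then on its morphisms is not available, since the factorisation co-reflection of Lemma~\ref{fs_are_corefs} does not act pointwise on coherent diagrams; nor can one simply apply a putative factorisation functor to the SSV triangle and pass to cofibres, because $\mathbb E(I)$ is in general not closed under cofibres in $\D^\due(I)$ (already $\bar\E$ need not be stable under cofibres of maps between its members, the truncation functors of a $t$-structure not being exact). One is therefore forced to exploit the particular, pointwise split shape of the SSV decomposition — treating the minimal objects by the settled case $I=\uno$ and the non-minimal part by the inductive hypothesis — and to reassemble compatibly; that is where essentially all of the technical work resides.
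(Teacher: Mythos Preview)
Your overall strategy coincides with the paper's: define $\mathbb E,\mathbb M$ pointwise, obtain orthogonality from Lemma~\ref{lifting_orthogonality}, and prove essential surjectivity of $\Psi_\F$ by induction on $\ell(I)$ using the SSV triangle of Lemma~\ref{lemma_approximation_SSV}. The base case is fine. The genuine gap is in the inductive step, precisely where you flag the difficulty.

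You propose to ``transport the already-constructed factorisations along the SSV triangle'', saying the $\mathbb E$- and $\mathbb M$-parts of the relevant Kan extensions are ``controlled via Lemma~\ref{cfs_limit_colimit}''. But that lemma is asymmetric: \emph{left} Kan extensions preserve $\mathbb E$, \emph{right} Kan extensions preserve $\mathbb M$. The SSV triangle is built entirely from left Kan extensions $u_!$, $i_!$, so the $\mathbb M$-part of your inductively given factorisation of $u^*X$ is \emph{not} carried into $\mathbb M(I)$ by $(\id_\tre\times u)_!$, and likewise for the $i_!$ terms. Hence the Kan-extended factorisations do not lie in $\D_\F(I)$, and the full faithfulness of $\Psi_{\F,I}$ cannot be invoked to produce the comparison maps you need. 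The paper's missing ingredient is Lemma~\ref{fs_are_corefs}: rather than build the full coherent factorisation, one builds only the \emph{coreflection} $\rho\colon E\to X$ onto $\E_I$, with the side condition that $\rho_0$ be invertible. This is asymmetric by design --- only $\E$ is involved, and $\E$ \emph{is} stable under the left Kan extensions and cofibres in play. The coreflection's universal property then manufactures the comparison map (the paper's square~(b)) between $i^*u_!R_J(u^*X)$ and $R(X_i)$ that your sketch lacks, and verifying that the resulting $\rho$ is the coreflection reduces, by the last clause of Lemma~\ref{fs_are_corefs}, to checking $\rho_0$ invertible and $\rho_1\in\bar\M_I$, which is done by a pointwise $3\times3$ analysis exploiting that the SSV triangle splits at each $k\in I$. (A smaller point: your ``routine check'' that $\F$ is already a \cpfs, needed to invoke Lemma~\ref{cfs_limit_colimit}, is not obviously routine before essential surjectivity is known; the paper avoids this by going through Theorem~\ref{stable_equv_orth} instead.)
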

\begin{proof}
Let $\E$ and $\M$ be the full subcategories of $\D(\due)$ of those objects whose underlying diagrams are, respectively, in $\bar \E$ and $\bar \M$. For any $I\in \fincat$, let
\begin{align*}
\E_I & \coloneqq \{E\in\D^\due(I):E_i\in\E,\, \forall i\in I\}\\ 
\M_I & \coloneqq \{M\in\D^\due(I):M_i\in\M,\, \forall i\in I\}.
\end{align*}
We want to show that $\F:=(\mathbb E,\mathbb M)$, where $\mathbb E(I):=\E_I$ and $\mathbb M(I):=\M_I$, is a {\dfs}.
By Lemmas \refbf{lifting_orthogonality} and \refbf{level_wise_ff}, the functor
\[
\Psi_\F\colon \D_\F\to \D^\due
\] 
is fully faithful and so, by \athm \refbf{stable_equv_orth}, it is enough to verify that $\Psi_\F$ is essentially surjective. By Lemma \refbf{fs_are_corefs}, we know that there is a co-reflection
\[
S: \E\leftrightarrows \D(\due): R
\]
where $S$ is the inclusion, and the co-unit $\rho_X\colon RX\to X$ has the property that $(\rho_X)_0$ is an isomorphism for all $X$.  It is enough to verify that, for any $I\in \fincat$, the inclusion $S_I\colon \E_I\to \D^\due(I)$ is co-reflective, with co-reflection $R_I\colon \D^\due(I)\to \E_I$, and that the co-unit $\rho^I\colon R^I\to \id_{\D^\due(I)}$ is such that $(\rho^I)_0$ is an isomorphism. 
We proceed by induction on $\ell(I)$. If $\ell(I)=1$, then $I$ is a disjoint union of copies of $ \uno$ and there is nothing to prove.

By the inductive assumption, and given the inequalities noted in \refbf{notation_rosetta} there is an adjunction $S_J: \E_J\leftrightarrows \D^\due(J):R_J$, with co-unit $\rho^J\colon R_J\to \id_{\D^\due(J)}$ such that $1^*\rho^J$ is an iso. Given an object $X\in \D(I)$, let us construct a coreflection $\rho\colon E\to X$. We start considering the following triangle, constructed in Lemma \refbf{lemma_approximation_SSV}:
\[
\bigoplus_{i\in\partial I}i_!i^*u_!u^*{X}\to \left(\bigoplus_{i\in\partial I}i_!{X}_i\right)\oplus u_!u^*{X}\to {X}\to \Sigma \bigoplus_{i\in\partial I}i_!i^*u_!u^*{X}.
\]
For any minimal object $i\in I$, we consider the following commutative squares:
\[
\xymatrix@C=18pt{
i_!i^*u_!R_J(u^*X)\ar@{}[drr]|{(\mathrm{a})}\ar[d]\ar[rr]^{\varepsilon_i}&&u_!R_J(u^*X)\ar[d]&&i_!i^*u_!R_J(u^*X)\ar@{}[drr]|{(\mathrm{b})}\ar[d]\ar[rr]&&i_!R(X_i)\ar[d]\\
i_!i^*u_!u^*X\ar[rr]_{\varepsilon_i}&&u_!u^*X&&i_!i^*u_!u^*X\ar[rr]&&i_!i^*X
}
\]
where these squares are constructed as follows:
\begin{enumerate}
\item[(a)] the first square is the easiest: we start with the co-unit $\rho^J_{u^*X}\colon R_J(u^*X)\to u^*X$, then the left column is $i_!i^*u_!(\rho^J_{u^*X})$, while the right column is $u_!(\rho^J_{u^*X})$. The horizontal maps are the appropriate components of the co-unit $\varepsilon_i$ of the adjunction $(i_!,i^*)$. Hence, the square commutes by naturality of $\varepsilon_i$;
\item[(b)] as for the second square, we construct first a commutative square
\[
\xymatrix{
i^*u_!R_J(u^*X)\ar[d]_{i^*u_!(\rho^J_{u^*X})}\ar[rr]&&R(X_i)\ar[d]^{\rho_{X_i}}\\
i^*u_!u^*X\ar[rr]_{i^*(\varepsilon_u)_{X}}&&X_i
}
\]
and then apply $i_!$. To construct this square, take $\rho_{X_i}\colon R(X_i)\to X_i$ as the vertical map on the right. The horizontal map at the base of the square is $i^*(\varepsilon_u)_{X}$, where $\varepsilon_u$ is the co-unit of the adjunction $(u_!,u^*)$. The vertical map on the left is $i^*u_!(\rho^J_{u^*X})$, as in the first square. Notice that $i^*u_!R_J(u^*X)\in \E$, in fact, $i^*u_!R_J(u^*X)\cong\hocolim_{u/i}\mathrm{pr}_i^* R_J(u^*X)$ and this belongs in $\E$ since the left class of a \hfs is always closed under taking homotopy colimits (this is a consequence of our Lemma \refbf{extension} and \cite[Theorem 7.1]{Ponto-Schulman}). To conclude, notice that, by adjointness, there is a unique morphism $i^*u_!R_J(u^*X)\to R(X_i)$ that makes the above square commutative. 
\end{enumerate}
Putting together the above squares, with $i$ varying in $i\in\partial I$, we get a commutative square as on the left of
\[
\xymatrix{
\underset{i\in\partial I}{\bigoplus}i_!i^*u_!R_J(u^*X)\ar[rr]\ar[d]&&\left(\underset{i\in\partial I}{\bigoplus}i_!R(X_i)\right)\oplus u_!R_J(u^*X)\ar[d] \ar[r]& E\ar@{.>}[d]\ar[r]^(.7){+}&\\ 
\underset{i\in\partial I}{\bigoplus}i_!i^*u_!u^*X\ar[rr]&&\left(\underset{i\in\partial I}{\bigoplus}i_!X_i\right)\oplus u_!u^*X \ar[r]& X\ar[r]^(.7){+}&.
}
\]
Of course the rows are triangles and, letting $E$ be the cone of the first row, we obtain a morphism $\rho\colon E\to X$. To conclude we have to show that $E\in \E_I$ and that $\rho$ is the co-reflection of $X$.  In fact, it is easy to show that the objects in the first row 
belong to $\E_I$, so $E$, which is the cone of a map between these two objects, still belongs to $\E_I$. Furthermore, to show that $\rho\colon E\to X$ it is enough to show that $\rho_1\in\bar\M_I$ and $\rho_0$ is an iso. Consider the following diagram where all the rows and columns are triangles and everything commutes:
\[
\xymatrix@R=15pt@C=10pt{
\underset{i\in\partial I}{\bigoplus}i_!i^*u_!R_J(u^*X)\ar[rr]\ar[d]&&\left(\underset{i\in\partial I}{\bigoplus}i_!R(X_i)\right)\oplus u_!R_J(u^*X)\ar[d]\ar[rr]&&E\ar[rr]^(.7){+}\ar[d]&&\\ 
\underset{i\in\partial I}{\bigoplus}i_!i^*u_!u^*X\ar[rr]\ar[d]&&\left(\underset{i\in\partial I}{\bigoplus}i_!X_i\right)\oplus u_!u^*X\ar[d]\ar[rr]&&X\ar[rr]^(.7){+}\ar[d]&&\\
A\ar[d]^(.7){+}\ar[rr]&&B\ar[rr]\ar[d]^(.7){+}&&M\ar[rr]^(.7){+}\ar[d]^(.7){+}&&\\
&&&&&}
\]
We will have concluded if we can prove that $E\in\E_I$ and that $M\cong 1_*M_1\in \M_I$, that is, $k^*E\in \E$ and $k^*M\cong 1_*(k^*M)_1\in \M$ for any $k\in I$. We start from the case when $k\in I^\circ$, and apply $k^*$  to the above $3\times 3$ diagram, obtaining the following commutative diagram in $\D(\uno)$:
\[
\xymatrix{
0\ar[rr]\ar[d]&&0\oplus k^*u_!R_J(u^*X)\ar[d]\ar[rr]&&E_k\ar[d]\ar[r]^(.65)+&\\ 
0\ar[rr]\ar[d]&&0\oplus X_k\ar[rr]\ar[d]&&X_k\ar[d]\ar[r]^(.65)+&\\
A_k\ar[d]^(.65)+\ar[rr]&&B_k\ar[d]^(.65)+\ar[rr]&&M_k\ar[d]^(.65)+\ar[r]^(.65)+&\\
&&&&&
}
\]
%
%
Using the fact that $k^*$ sends triangles to triangles, we get that $A_k=0$ and that $k^*u_!R_J(u^*X)\to X_k$ is the co-reflection of $X_k$ onto $\E$. As a consequence $M_k\in \M$ and $(M_k)_0=0$.

On the other hand, if $k$ is a minimal object, applying $k^*$ to the above $3\times 3$ diagram we get the following commutative diagram in $\D(\uno)$, where all the rows and columns are distinguished triangles:
\[
\xymatrix{
k^*u_!R_J(u^*X)\ar[rr]\ar[d]&&R(X_k)\oplus k^*u_!R_J(u^*X)\ar[d]\ar[rr]&&R(X_k)\ar[d]\ar[r]^(.65)+&\\ 
k^*u_!u^*X\ar[rr]\ar[d]&&X_k\oplus k^*u_!u^*X\ar[rr]\ar[d]&&X_k\ar[d]\ar[r]^(.65)+&\\
A_k\ar[d]^(.65)+\ar[rr]&&B_k\ar[d]^(.65)+\ar[rr]&&M_k\ar[d]^(.65)+\ar[r]^(.65)+&\\
&&&&&
}
\]
The maps $k^*u_!R_J(u^*X)\to k^*u_!R_J(u^*X)$ and $k^*u_!u^*X\to k^*u_!u^*X$ in the first two rows are clearly isomorphisms by the construction of square (a).
In particular, the first two rows are split triangles and the first arrow in the triangle 
\[
R(X_k)\to X_k\to M_k\to \Sigma R(X_k)
\] 
is the cokernel of the first maps in the first two columns. Since $R(X_k)\to X_k$ is the co-reflection of $X_k$ in $\E$ by the construction of square~(b), we get that $M_k\in \M$  and $(M_k)_0\cong 0$ as desired.
\end{proof}

As a consequence of the above theorem we can deduce that a \dfs on a stable derivator of type $\fincat$ is completely determined by the \hfs it induces on the base:

\begin{corollary}\label{everything_is_maximal}
Let $\D\colon \fincat^\opp\to \Cat$ be a stable derivator and let $\F=(\mathbb E,\mathbb M)$ be a \dfs on $\D$. Given $I\in\fincat$, an object $X\in \D^\due(I)$ belongs to $\mathbb E(I)$ (resp., $\mathbb M(I)$) if and only if $X_i\in \mathbb E(\uno)$ (resp., $\mathbb M(\uno)$), for all $i\in I$. 
\end{corollary}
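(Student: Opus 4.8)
The plan is to show that $\F$ is forced to be the pointwise lifting, to all of $\D$, of the triangulated factorization system it induces on the base $\D(\uno)$. Fix $I\in\fincat$ and write $\widetilde{\mathbb E}(I)$ (resp.\ $\widetilde{\mathbb M}(I)$) for the class of $X\in\D^\due(I)$ with $X_i=i^*X\in\mathbb E(\uno)$ (resp.\ $X_i\in\mathbb M(\uno)$) for every object $i$ of $I$; the statement to prove is $\mathbb E(I)=\widetilde{\mathbb E}(I)$ and $\mathbb M(I)=\widetilde{\mathbb M}(I)$. One inclusion is free: $\mathbb E$ is a sub pre-derivator of $\D^\due$, so by naturality of the inclusion $\mathbb E\hookrightarrow\D^\due$ the functor $\mathbb E(i)\colon\mathbb E(I)\to\mathbb E(\uno)$ is just the restriction of $i^*$; hence $X\in\mathbb E(I)$ forces $X_i\in\mathbb E(\uno)$ for all $i$, i.e.\ $\mathbb E(I)\subseteq\widetilde{\mathbb E}(I)$, and dually $\mathbb M(I)\subseteq\widetilde{\mathbb M}(I)$.

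For the reverse inclusions I would manufacture a genuine \dfs out of $\F$. By \athm\refbf{stable_equv_orth}, $\bar\F:=(\overline{\mathbb E(\uno)},\overline{\mathbb M(\uno)})$ is a \hfs on the triangulated category $\D(\uno)$; plugging $\bar\F$ into the preceding theorem yields a \dfs on $\D$ whose value at $I$ is, by the explicit construction in that proof, exactly $\{X\in\D^\due(I):X_i\in\E\text{ for all }i\}$, where $\E\subseteq\D(\due)$ is the full subcategory of objects whose underlying diagram lies in $\overline{\mathbb E(\uno)}$, and likewise $\M\subseteq\D(\due)$. The one point that needs an argument is the identification $\E=\mathbb E(\uno)$ (and $\M=\mathbb M(\uno)$): by Lemma~\refbf{coherent_orth_is_wobbly} one has $Z\corth Y$ iff $\dia_\due Z\horth\dia_\due Y$, so $\E=\{Z\in\D(\due):\dia_\due Z\in{}^{\horth}\overline{\mathbb M(\uno)}\}={}^{\corth}\mathbb M(\uno)$; but ${}^{\horth}\overline{\mathbb M(\uno)}=\overline{\mathbb E(\uno)}$ since $\bar\F$ is a \phfs, and ${}^{\corth}\mathbb M(\uno)=\mathbb E(\uno)$ since $\F$ is a \cpfs (Definition~\refbf{def_phfs}). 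With this identification, $\widetilde\F:=(\widetilde{\mathbb E},\widetilde{\mathbb M})$ is literally the \dfs produced by the preceding theorem.

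Finally I would close the loop by coherent orthogonality. Being a \dfs, $\widetilde\F$ is in particular a \cpfs, so $\widetilde{\mathbb E}(I)\corth\widetilde{\mathbb M}(I)$; since $\mathbb M(I)\subseteq\widetilde{\mathbb M}(I)$ by the first paragraph, we get $\widetilde{\mathbb E}(I)\subseteq{}^{\corth}\mathbb M(I)=\mathbb E(I)$, the last equality because $\F$ is a \cpfs. Together with the first paragraph this gives $\mathbb E(I)=\widetilde{\mathbb E}(I)$; the equality $\mathbb M(I)=\widetilde{\mathbb M}(I)$ is the dual argument, using $\mathbb M(I)=\mathbb E(I)^{\corth}$ and $\widetilde{\mathbb M}(I)=\widetilde{\mathbb E}(I)^{\corth}$. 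The only genuinely delicate step is the identification $\E=\mathbb E(\uno)$ in the middle paragraph --- that re-coherentifying the isomorphism-closure of $\dia_\due(\mathbb E(\uno))$ recovers $\mathbb E(\uno)$ --- and that is exactly where stability of $\D$ enters, through Lemma~\refbf{coherent_orth_is_wobbly} and \athm\refbf{stable_equv_orth}; everything else is formal manipulation of derivator pre-factorization systems.
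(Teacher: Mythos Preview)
Your proof is correct and follows essentially the same strategy as the paper: construct the pointwise-lifted \dfs $\widetilde\F$ from the \hfs $\bar\F$ on $\D(\uno)$ via the preceding theorem, observe the inclusions $\mathbb E\subseteq\widetilde{\mathbb E}$ and $\mathbb M\subseteq\widetilde{\mathbb M}$, and conclude equality since both pairs are \cpfs. The paper compresses the last step into the single sentence ``these two inclusions imply that $\F=\F'$'', whereas you spell out the orthogonality argument; you are also more careful than the paper about the identification $\E=\mathbb E(\uno)$, which the paper silently assumes.
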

\begin{proof}
Let $\bar\E:=\bar {\mathbb E}(\uno)$, $\bar\M:=\bar {\mathbb M}(\uno)$, and $\bar \F=(\bar \E,\bar \M)$. Then, $\bar\F$ is a \hfs on $\D(\uno)$ and, by the above theorem there is a second \dfs $\F'=(\mathbb E',\mathbb M')$, where $\mathbb E'(I)$ and $\mathbb M'(I)\subseteq \D^\due(I)$ are the full subcategories of those objects that are pointwise in $\mathbb E(\uno)$ and $\mathbb M(\uno)$, respectively. But now $\mathbb E\subseteq \mathbb E'$ and $\mathbb M\subseteq \mathbb M'$, and these two inclusions imply that $\F=\F'$.
\end{proof}

\color{black}

\subsection{The Rosetta stone theorem}\label{higher_rosetta_subs}
To prove that $t$-structures on $\D(\uno)$ correspond bijectively to normal derivator torsion theories on $\D$,  we should say what it means for a \dfs $\F=(\mathbb E,\mathbb M)$ on $\D$ to be a normal derivator torsion theory. One easy way to say this is to ask that, for any $I\in \fincat$, the {\hfs} $\overline\F_I=(\overline{\mathbb E(I)},\overline{\mathbb M(I)})$ is a normal \htth. In the following definition we give a different (but equivalent) formulation that better fits into the language of derivators. Notice that the following definition makes sense in any pointed derivator $\D$, not just for stable ones.

\begin{definition}[normal derivator torsion theories]
A sub pre-derivator $\mathbb X$ of $\D$ is said to have the $3$-for-$2$ property if, for any $I\in \fincat$, given $X\in \D^\tre(I)$ such that $2$ objects in the set $\{X_{(0,1)},X_{0,2},X_{1,2}\}$ belong to $\mathbb X^{\due}(I)$, so does the third. A \dfs $\F=(\mathbb E,\mathbb M)$ on $\D$ is said to be
\begin{enumerate}
\item a \emph{derivator torsion theory} (for short, \textsc{dtth}) provided $\mathbb E$ and $\mathbb M$ have the $3$-for-$2$ property;
\item \emph{left normal} if, given $I\in \fincat$, $X\in \D(I)$ and $F\in \D^\tre({I})$ such that $F_{(0,2)}\cong 0_*X$, then $0_*0^!F_{(0,1)}\in \mathbb E(I)$;
\item \emph{right normal} if, given $I\in \fincat$, $X\in \D(I)$ and $F\in \D^\tre({I})$ such that $F_{(0,2)}\cong 1_!X$, then $1_!1^?F_{(1,2)}\in \mathbb E(I)$;
\item \emph{normal} if it is both left and right normal.
\end{enumerate}
\end{definition}

Using the stability of our derivator $\D$, it is not difficult to verify that a \dfs is left normal if and only if it is right normal, if and only if it is normal.

\begin{theorem}\label{derrosetta}
Let $\D\colon \fincat^\opp\to \Cat$ be a stable derivator. There is a bijection between the following classes:
\begin{enumerate}
\item $t$-structures in $\D(\uno)$; 
\item normal \textsc{dtth}s on $\D$.
\end{enumerate}
\end{theorem}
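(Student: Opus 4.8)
The plan is to bootstrap from the triangulated Rosetta stone (Theorem \ref{triang-rosetta}) applied to the base $\cD=\D(\uno)$, combined with the rigidity results of the previous subsection, which identify {\dfs}s on $\D$ with {\hfs}s on $\D(\uno)$. I would define the map from (1) to (2) by sending a $t$-structure $\t$ on $\D(\uno)$ to the pointwise-induced \dfs of the triangulated torsion theory $\F_\t$, and the map from (2) to (1) by sending a normal \textsc{dtth} $\F=(\mathbb E,\mathbb M)$ to the $t$-structure associated by Theorem \ref{triang-rosetta} to the \hfs $\overline\F_\uno\coloneqq(\overline{\mathbb E(\uno)},\overline{\mathbb M(\uno)})$. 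The whole argument then reduces to three points: (a) the assignment $\F\leftrightarrow\overline\F_\uno$ is a bijection between {\dfs}s on $\D$ and {\hfs}s on $\D(\uno)$; (b) under it, $\F$ is a \textsc{dtth} if and only if $\overline\F_\uno$ is a \htth; (c) under it, $\F$ is normal if and only if $\overline\F_\uno$ is normal. Granting (a)--(c), one composes with the bijection of Theorem \ref{triang-rosetta} and reads off that the composite sends $\F$ to $\big(0/\overline{\mathbb E(\uno)},\Sigma(\overline{\mathbb M(\uno)}/0)\big)$, matching Theorem \textbf{II}; mutual inverseness is then automatic, since each composite is built of two bijections.

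Point (a) is exactly Theorem \ref{everything_is_maximal} together with its Corollary: the former produces, from an \hfs $\bar\F$ on $\D(\uno)$, a \dfs whose value at $\uno$ has underlying \hfs equal to $\bar\F$, while the latter shows conversely that a \dfs is reconstructed entirely from the \hfs it induces on the base, so the two assignments are mutually inverse. For (b) and (c), the key preliminary observation I would establish is that $\F$-factorizations and cones of final maps in $\D(I)$ are computed pointwise. Since $\Psi_\F\colon\D_\F\to\D^\due$ is an equivalence of derivators (Theorem \ref{everything_is_maximal}, via Theorem \ref{stable_equv_orth}), the coherent factorization of $X\in\D^\due(I)$ is $\Psi_\F^{-1}(X)\in\D_\F(I)\subseteq\D(I\times\tre)$; for any $i\in I$ the functor $i^*$ is exact and commutes with the coface restrictions $(0,1)^*,(0,2)^*,(1,2)^*$, so $i^*\Psi_\F^{-1}(X)$ lies in $\D_{\overline\F_\uno}(\uno)$ and restricts along $(0,2)^*$ to $X_i$, hence is the coherent $\overline\F_\uno$-factorization of $X_i$.

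Using this, (b) follows: by the rigidity corollary, $\mathbb E(I)$ (resp.\ $\mathbb M(I)$) consists of the objects of $\D^\due(I)$ lying pointwise in $\mathbb E(\uno)$ (resp.\ $\mathbb M(\uno)$), so the $3$-for-$2$ property of $\mathbb E$ as a sub pre-derivator, tested on $X\in\D^\tre(I)$, reduces object-by-object to the $3$-for-$2$ property of $\overline{\mathbb E(\uno)}$ in $\D(\uno)$, the passage between $\D^\tre$ and composable pairs in $\D(\uno)^\tre$ being supplied by Setting \ref{setting_sec_3}(3); conversely $3$-for-$2$ in the base lifts back to $\mathbb E$ by essential surjectivity of $\dia_\tre$. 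For (c) I would invoke the reformulation recorded just after the definition of normal derivator torsion theory: $\F$ is normal if and only if each $\overline\F_I$ is a normal \htth. The implication ``$\overline\F_\uno$ normal $\Rightarrow$ all $\overline\F_I$ normal'' is where the pointwise computation is used: given the data of Definition \ref{hontt} in $\D(I)$, evaluating at each $i$ yields an $\overline\F_\uno$-factorization and its cone in $\D(\uno)$, so $R_i\to 0$ lies in $\overline{\mathbb E(\uno)}$ for every $i$, whence $R\to 0$ lies in $\overline{\mathbb E(I)}$; the reverse implication is the trivial specialization $I=\uno$.

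I expect the genuine obstacle to be precisely this pointwise-compatibility step: making rigorous that the equivalence $\Psi_\F$ intertwines the restriction functors $i^*$, so that the (non-functorial, and not unique on the nose) choices of factorizations and of cones of final maps in $\D(I)$ may be taken to agree, after restriction, with those in $\D(\uno)$. Once that is in place, the remainder is a matter of assembling Theorem \ref{everything_is_maximal}, its Corollary, and Theorem \ref{triang-rosetta}, together with the observation that a composite of bijections is a bijection.
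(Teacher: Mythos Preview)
Your proposal is correct and follows essentially the same route as the paper: invoke Theorem \ref{triang-rosetta} to pass between $t$-structures and normal {\htth}s on $\D(\uno)$, and then use Theorem \ref{everything_is_maximal} together with Corollary \ref{everything_is_maximal} to identify normal {\htth}s on the base with normal \textsc{dtth}s on $\D$. Your points (a)--(c) simply unpack the ``it is not difficult to show'' step, and your worry about pointwise compatibility of factorizations is already absorbed by Corollary \ref{everything_is_maximal} (which pins $\mathbb E(I)$ and $\mathbb M(I)$ down pointwise) plus the exactness of each $i^*$; no extra argument is needed.
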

\begin{proof}
By Theorem \refbf{triang-rosetta} there is a bijective correspondence between $t$-structures and normal {\htth}s in $\D(\uno)$. Using Corollary \refbf{everything_is_maximal} it is not difficult to show that normal {\htth}s in $\D(\uno)$ correspond bijectively to normal \textsc{dtth}s on $\D$.
\end{proof}

Let now $\iC$ be a stable $\infty$-category. We have seen in Theorem \refbf{infinity_FS_vs_CFS} that \textsc{fs}s on $\iC$ correspond bijectively with maximal {\dfs}s on the associated derivator $\D_\iC\colon \fincat^\opp\to \Cat$ and, by Corollary \refbf{everything_is_maximal}, any \dfs on $\D_\iC$ belongs to this family. Furthermore, it is not difficult to verify that a \textsc{fs} on $\iC$ is a normal torsion theory (see \cite{Fiorenza2014} for the exact definition) if and only if the associated \dfs on $\D_\iC$ is a normal \textsc{dtth}. As a consequence we obtain the following corollary:

\begin{corollary}{\rm \cite{Fiorenza2014}}\label{infinity_rosetta}
Let $\iC$ be a stable $\infty$-category. There is a bijection between the following classes:
\begin{enumerate}
\item $t$-structures in $\ho(\iC)$; 
\item normal torsion theories in $\iC$.
\end{enumerate}
\end{corollary}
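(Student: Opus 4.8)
The plan is to deduce the corollary by transporting it to the world of derivators, where both the factorization-systems side and the $t$-structure side have already been analyzed in \S\refbf{sec:squaring}--\S\refbf{higher_rosetta_subs}. Given the stable $\infty$-category $\iC$, form the associated stable derivator $\D_\iC\colon \fincat^\opp\to\Cat$. Since $N(\uno)=\Delta^{0}$ we have $\D_\iC(\uno)=\ho\big(\iC^{\Delta^{0}}\big)=\ho(\iC)$, so that a $t$-structure in $\ho(\iC)$ is literally a $t$-structure in $\D_\iC(\uno)$. Now Theorem \refbf{derrosetta} gives a bijection between $t$-structures in $\D_\iC(\uno)$ and normal \textsc{dtth}s on $\D_\iC$, while Theorem \refbf{infinity_FS_vs_CFS} --- recalling that, by Corollary \refbf{everything_is_maximal}, every {\dfs} on $\D_\iC$ lies in the image of the map \eqref{the_map}, so that the correspondence there really is a bijection --- gives a bijection between \textsc{fs}s in $\iC$ and {\dfs}s on $\D_\iC$. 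It therefore suffices to check that these two bijections are compatible: that a \textsc{fs} $\F=(\E,\M)$ in $\iC$ is a normal torsion theory (in the sense of \cite{Fiorenza2014}) exactly when the induced {\dfs} $\F_{\D}$ on $\D_\iC$ is a normal \textsc{dtth}. Composing then produces the desired bijection between $t$-structures in $\ho(\iC)$ and normal torsion theories in $\iC$.

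For the torsion-theory part of the compatibility, recall that under \eqref{the_map} one has $\mathbb E(\uno)=\ho(\E)$ and $\mathbb M(\uno)=\ho(\M)$, and by Corollary \refbf{everything_is_maximal} the classes $\mathbb E(I)$, $\mathbb M(I)$ are the pointwise preimages of these. A composable pair of edges of $\iC$ equipped with a $2$-simplex witnessing their composite is precisely an object of $\D_\iC(\tre)=\ho\big(\iC^{\Delta^{2}}\big)$ whose $(0,1)$- and $(1,2)$-faces are the two legs and whose $(0,2)$-face is the composite; hence the $3$-for-$2$ property for the class $\E$ (resp. $\M$) in $\iC$ translates verbatim into the $3$-for-$2$ property of $\mathbb E$ (resp. $\mathbb M$) as a sub pre-derivator of $\D_\iC$. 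So $\F$ is a torsion theory in $\iC$ if and only if $\F_{\D}$ is a \textsc{dtth}.

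For the normality part, unwind the definition of \cite{Fiorenza2014}: $\F$ is normal when, for the $\F$-factorization $X\xto{e}RX\to 0$ of a terminal map, the fibre $T=\mathrm{fib}(e)$ has the property that $T\to 0$ lies in $\E$. Since $\D_\iC$ is stable, homotopy cartesian squares in $\D_\iC(\uno)$ are the images under $\ho(-)$ of pullback squares in $\iC$, and the fibre functor $F\cong 0^{!}$ of Lemma \refbf{simple_tria_for_orth} computes the $\infty$-categorical fibre; thus the above condition is exactly the normality condition of Definition \refbf{hontt} for the triangulated \textsc{fs} $(\overline{\mathbb E(\uno)},\overline{\mathbb M(\uno)})$ on $\D_\iC(\uno)=\ho(\iC)$. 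Finally, combining the derivator-level reformulation of normality (phrased via $0_*0^{!}$ and $1_!1^{?}$ on the standard triangles of Lemmas \refbf{simple_tria_for_orth} and \refbf{factorization_triangle}) with Corollary \refbf{everything_is_maximal}, normality of the triangulated \textsc{fs} on the base $\D_\iC(\uno)$ is equivalent to normality of the whole {\dfs} $\F_{\D}$. This gives the compatibility and hence the corollary.

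The parts that are genuinely routine --- building $\D_\iC$, identifying $\D_\iC(\uno)$ with $\ho(\iC)$, and quoting Theorems \refbf{infinity_FS_vs_CFS} and \refbf{derrosetta} --- follow immediately from what has already been established. The one step that requires real care, and which I expect to be the main obstacle, is the dictionary between $\infty$-categorical normality and derivator normality: one must verify that the fibre of $e\colon X\to RX$ formed inside $\iC$ agrees, after applying $\ho(-)$, with the object $R$ of the triangle in Definition \refbf{hontt}, and that the pointwise description of $\mathbb E(I)$, $\mathbb M(I)$ from Corollary \refbf{everything_is_maximal} propagates normality from the single shape $\uno$ to all $I\in\fincat$. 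Once this bookkeeping is carried out, the three bijections compose to prove the statement.
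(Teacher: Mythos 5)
Your proposal is correct and follows essentially the same route as the paper: the paper likewise composes the bijection of Theorem \refbf{infinity_FS_vs_CFS} (together with Corollary \refbf{everything_is_maximal}) between \textsc{fs}s on $\iC$ and {\dfs}s on $\D_\iC$ with the bijection of Theorem \refbf{derrosetta} between normal \textsc{dtth}s on $\D_\iC$ and $t$-structures on $\D_\iC(\uno)=\ho(\iC)$, and then observes that normal torsion theories in $\iC$ correspond under the first bijection to normal \textsc{dtth}s. Your version simply spells out in more detail the compatibility check (the $3$-for-$2$ and normality dictionaries) that the paper dismisses as ``not difficult to verify''.
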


\section{Functoriality of factorizations }\label{sec:thmA}
\setlength{\epigraphwidth}{.2\textwidth}
\epigraph{
	\begin{CJK}{UTF8}{min}道生一，\end{CJK}\\
	\begin{CJK}{UTF8}{min}一生二，\end{CJK}\\
	\begin{CJK}{UTF8}{min}二生三，\end{CJK}\\
	\begin{CJK}{UTF8}{min}三生万物。\end{CJK}
}{Laozi 42}

Let $\Dia$ be a $2$-category of diagrams (see the beginning of §\refbf{sec:squaring}) and denote by $\PDer$ the 2-category of pre-derivators of type $\Dia$. Recall from \cite[\textbf{2.1}.(ii)]{Moritz} that there is a strict 2-functor, called \emph{shift functor}, defined as
\[
\textsf{sh}(-,-)\colon \Dia^\opp\times \Der\longrightarrow \Der\colon(J,\D)\mapsto \D^{J}.
\]
Here $\D^J$ is the pre-derivator such that $\D^J(I) \coloneqq \D(J\times I)$. Given a functor $u\colon I\to J$ in $\Dia$ the action of $\D^J$ is described by the formula $\D^K(u) \coloneqq \D(\id_K\times u)$ for all $K\in\Dia$, and similar formulas hold for the action on $\D^J$ on natural transformations. 

It is convenient for us to introduce the following notation: \cite{Moritz} blurs the distinction between the functor $u^*\colon \D(J)\to \D(I)$ image of $u\colon I\to J$ under a derivator $\D$ and the pseudo\hyp{}natural transformation $u^\circledast\colon \D^{J}\to \D^{I}$ induced by the same $u$. However, this clash of notation can be harmful to our discussion, since we will mainly consider instances of the second map, while needing a reference to its action on morphisms of $\Dia$.
\begin{notat}
Given a pre-derivator $\D$ and a functor $u\colon I\to J$ in $\Dia$, we let 
\[
u^\circledast  \coloneqq \textsf{sh}(u,\D)\colon \D^{J}\to \D^{I}.
\]
If $\D$ is a derivator, then $u^\circledast$ has a left and a right adjoint as a 1-cell in the 2-category $\PDer$, that we denote by
\[
u_\circledbang\dashv u^\circledast\dashv u_\circledast.
\]
Given $K\in \Dia$, the components $u_\circledbang(K),u_\circledast(K)\colon \D^{J}(K)\to \D^{I}(K)$ are given by
$u_\circledbang(K)=(u\times \id_K)_!$ and $u_\circledast(K)=(u\times \id_K)_*$.
\end{notat}
\subsection{Factorization pre-algebras}\label{subs_facprealg}
Following \cite{RW}, given a category $\C$, a functor $F\colon \C^\due\to \C$ such that $F(\id_c)=c$ for each $c\in \C$, and not only a coherent isomorphism $F(\id_c)\cong c$, is said to be a \emph{normal} factorization pre\hyp{}algebra. In this subsection we introduce a similar notion in the context of pre-derivators and we describe some of its elementary properties.
\begin{remark}
We explicitly remark that there is no connection between the normality of a factorization pre\hyp{}algebra and the normality of a (homotopy) torsion theory defined in \refbf{hontt}; the coincidence of the two terms is only an unfortunate clash of terminology of the two sources from which we are extracting our main theorems.
\end{remark}
\begin{definition}[normal factorization pre\hyp{}algebra]\label{def_factorization_prealgebra}
A morphism $F\colon \D^\due\to \D$ in $\PDer$
is said to be a \emph{factorization pre\hyp{}algebra} provided  there exists an isomorphism $\gamma\colon F\circ \pt^\circledast\to \id_{\D}$. A factorization pre\hyp{}algebra is \emph{normal} provided $F\circ \pt^\circledast=\id_{\D}$. 
\end{definition}
The reason to call these functors ``pre-algebras'' will be clarified in Section \refbf{sec:thmB}: factorization pre\hyp{}algebras are just algebras over the squaring monad deprived of their extended associator. On the other hand, the use of the term ``factorization'' is justified by the validity of the following lemma in the context of pre-derivators: we recall the adjunctions $1^\circledast \adjunct{\eta}{} \pt^\circledast  \adjunct{}{\epsilon}0^\circledast$ and define
\begin{lemma}
Let $\D$ be a pre-derivator and $F\colon \D^\due\to \D$ a normal factorization pre\hyp{}algebra. Then $F$ induces a factorization
\[
\xymatrix{
0^\circledast\ar[r]^{e_{\firstblank}}&F\ar[r]^{m_{\firstblank}}&1^\circledast
}
\]
 of the 2-cell $\kappa \colon 0^\circledast \to 1^\circledast\colon \D^\due \to \D$ introduced in \refbf{la-kappa}, where $e,m$ are obtained whiskering $F$ with the unit and counit above:
 \begin{align}
 e_{\firstblank} &\colon 0^\circledast=F \circ \pt^\circledast\circ  0^\circledast \xto{F * \epsilon} F\notag\\
 m_{\firstblank} &\colon F\xto{F * \eta} F \circ \pt^\circledast\circ 1^\circledast=1^\circledast.
 \end{align}
Conversely, for each pair of natural transformations $\ee \colon 0^\circledast \to F$ and $\mm\colon F \to 1^\circledast$ that factor $\kappa$ via a 1-cell $F$ and such that $\ee\circ \pt^\circledast \cong \id$, $\mm \circ \pt^\circledast\cong \id$, one has $\ee = F * \epsilon$ and $\mm = F * \eta$.
\end{lemma}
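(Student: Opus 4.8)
For the proof I would split along the two asserted implications and reduce everything to formal $2$-categorical bookkeeping with the adjunctions $1^\circledast\dashv\pt^\circledast\dashv0^\circledast$ in $\PDer$. The crucial elementary input, which I would state up front, is that these adjunctions are the images under the strict shift $2$-functor $\textsf{sh}(-,\D)$ (which is contravariant on $1$-cells) of the chain $0\dashv\pt\dashv1\colon\due\to\uno$ of functors; hence (i) $0^\circledast\circ\pt^\circledast=1^\circledast\circ\pt^\circledast=\id_\D$ \emph{strictly}, because $\pt\circ0=\pt\circ1=\id_\uno$, and (ii) the unit of $\pt^\circledast\dashv0^\circledast$ and the counit of $1^\circledast\dashv\pt^\circledast$ are the identity $2$-cell of $\id_\D$, because the corresponding unit and counit in $\Dia$ are the (unique) identities of $\id_\uno$. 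Together with normality $F\circ\pt^\circledast=\id_\D$, this makes $e_{\firstblank}=F*\epsilon\colon0^\circledast\Rightarrow F$ and $m_{\firstblank}=F*\eta\colon F\Rightarrow1^\circledast$ well-defined $2$-cells; and (ii) immediately yields, through the triangle identities, $\epsilon*\pt^\circledast=\id_{\pt^\circledast}$, $\eta*\pt^\circledast=\id_{\pt^\circledast}$, $0^\circledast*\epsilon=\id_{0^\circledast}$ and $1^\circledast*\eta=\id_{1^\circledast}$. Whiskering the first two by $F$ gives the asserted $e_{\firstblank}\circ\pt^\circledast=\id$ and $m_{\firstblank}\circ\pt^\circledast=\id$.

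For the first assertion it then remains to check $m_{\firstblank}\circ e_{\firstblank}=\kappa$. By functoriality of whiskering by $F$ one has $m_{\firstblank}\circ e_{\firstblank}=(F*\eta)\circ(F*\epsilon)=F*(\eta\circ\epsilon)$, with $\eta\circ\epsilon\colon\pt^\circledast0^\circledast\Rightarrow\pt^\circledast1^\circledast$. The point is that $\eta\circ\epsilon=\pt^\circledast*\kappa$: this is precisely the defining property of $\kappa$ from \refbf{la-kappa} read in $\PDer$ (equivalently, it is $\textsf{sh}(-,\D)$ applied to the identity $\eta^{\pt\dashv1}\circ\epsilon^{0\dashv\pt}=\tau*\pt$ of $2$-cells in $\Dia$, $\tau\colon0\Rightarrow1$ being the unique such $2$-cell, after noticing that $\pt^\circledast$ is simultaneously left adjoint to $0^\circledast$ and right adjoint to $1^\circledast$, so it coincides with the $0_\circledbang$ and $1_\circledast$ appearing in \refbf{la-kappa}). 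Hence $m_{\firstblank}\circ e_{\firstblank}=F*(\pt^\circledast*\kappa)=(F\circ\pt^\circledast)*\kappa=\id_\D*\kappa=\kappa$.

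For the converse, let $\ee\colon0^\circledast\Rightarrow F$ and $\mm\colon F\Rightarrow1^\circledast$ be given with $\mm\circ\ee=\kappa$ and $\ee*\pt^\circledast=\id$, $\mm*\pt^\circledast=\id$ (reading the ``$\cong\id$'' of the statement as equality under the canonical identifications $0^\circledast\pt^\circledast=\id_\D=F\pt^\circledast=1^\circledast\pt^\circledast$). The key move is to evaluate the horizontal composite $\ee*\epsilon$, a $2$-cell $0^\circledast\cong0^\circledast\pt^\circledast0^\circledast\Rightarrow F$, by the interchange law in both ways. One way gives $\ee*\epsilon=(F*\epsilon)\circ(\ee*(\pt^\circledast0^\circledast))$, and $\ee*(\pt^\circledast0^\circledast)=(\ee*\pt^\circledast)*0^\circledast=\id_{0^\circledast}$, so $\ee*\epsilon=F*\epsilon$; the other way gives $\ee*\epsilon=\ee\circ(0^\circledast*\epsilon)=\ee\circ\id_{0^\circledast}=\ee$. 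Therefore $\ee=F*\epsilon$. Dually, evaluating $\mm*\eta\colon F\Rightarrow1^\circledast\pt^\circledast1^\circledast\cong1^\circledast$ in two ways, using $\mm*\pt^\circledast=\id$ and $1^\circledast*\eta=\id_{1^\circledast}$, yields $\mm=F*\eta$. Notice that this half never uses the factorization hypothesis $\mm\circ\ee=\kappa$: any $\ee$ with $\ee*\pt^\circledast=\id$ is already forced to equal $F*\epsilon$, and symmetrically for $\mm$.

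The only genuine point of care is the normalization of the adjunctions discussed in the first paragraph: unless one knows that $0^\circledast\pt^\circledast$, $1^\circledast\pt^\circledast$, $F\pt^\circledast$ are \emph{literally} $\id_\D$ and that the relevant unit/counit are \emph{literally} identity $2$-cells, the equalities $e_{\firstblank}=F*\epsilon$ and $\ee=F*\epsilon$ (and their $m$/$\mm$ analogues) degrade to equalities up to a canonical invertible $2$-cell, which would weaken the statement. Everything else is routine diagram-chasing valid in any strict $2$-category; in particular this is why the argument goes through verbatim for an arbitrary pre-derivator $\D$, with $\kappa=\textsf{sh}(\tau,\D)$, rather than only for derivators.
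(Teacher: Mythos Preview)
Your proof is correct and follows essentially the same approach as the paper's: for the first direction both use the defining identity $\pt^\circledast*\kappa=\eta\circ\epsilon$ and then whisker by $F$ using normality, and for the converse both reduce to the triangle identities of the adjunctions $1^\circledast\dashv\pt^\circledast\dashv0^\circledast$. The only difference is level of detail: the paper dispatches the converse with the single phrase ``a simple formal consequence of the zig\hyp{}zag identities'', whereas you unpack this into the explicit interchange-law computation $\ee*\epsilon=(F*\epsilon)\circ(\ee*\pt^\circledast0^\circledast)=F*\epsilon$ and $\ee*\epsilon=\ee\circ(0^\circledast*\epsilon)=\ee$; your careful discussion of why the relevant units/counits are strict identities (and your closing caveat about what would weaken to isomorphism otherwise) is more than the paper provides, but is exactly the content hidden in that phrase.
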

\begin{proof}
By the very definition of $\kappa$, the whiskering $\pt^\circledast * \kappa$ coincides with the counit\hyp{}unit composition $\pt^\circledast \circ 0^\circledast \xto{\epsilon} \id \xto{\eta} \pt^\circledast \circ 1^\circledast$. Thus, $m_{\firstblank}\circ e_{\firstblank}=(F * \eta)\circ (F * \epsilon)=F\circ \pt^\circledast*\kappa=\kappa$. The last statement is a simple formal consequence of the zig\hyp{}zag identities for the adjunctions $1^\circledast \adjunct{\eta}{} \pt^\circledast  \adjunct{}{\epsilon}0^\circledast$.
\end{proof}
\begin{remark}
Spelled out more explicitly, the above lemma shows that a normal factorization pre\hyp{}algebra $F$ functorially associates to a given $X\in \D^\due(I)$ a factorization 
\begin{equation}\label{func_fact}
\xymatrix{X_0 \ar@/^18pt/[rr] \ar[r]^{e_{X}}&FX\ar[r]^{m_{X}}&X_1}
\end{equation}
of the underlying diagram $X_0\to X_1$ of $X$. 
\end{remark}
In the following discussion we are going to show that such factorization can be made ``coherent'' via a morphism
\[
\Phi_F\colon \D^\due\longrightarrow \D^\tre
\]
such that $(0,2)^\circledast \circ \Phi_F=\id_{\D^\due}$, $1^\circledast \circ \Phi_F =F$ and $\dia_{\tre}(\Phi_FX)$ is the diagram in (\refbf{func_fact}).
\begin{notat}
Before giving the construction of $\Phi_F$, let us introduce a few more notation, this time regarding the category of functors $[\due\times\due,\due]$, that comes equipped with a commutative square of natural transformations as in the left diagram below (see also \cite[§\textbf{2.4}]{RW}), whose components are depicted on the right.
\[
\label{diamond_def}
\vcenter{
	\xymatrix{r \ar@{=>}[r]^\mu\ar@{=>}[d]^{\mu'}& h  \ar@{=>}[d]^\nu \\ v\ar@{=>}[r]_{\nu'} & l}
}\qquad\qquad
\vcenter{\xymatrix{
r(i,j)= i\land j \ar[d]^{\mu'_{ij}}\ar[r]^{\mu_{ij}} &  h(i,j)=j \ar[d]^{\nu_{ij}}\\
v(i,j)=i \ar[r]_{\nu'_{ij}} & l(i,j)=i\lor j 
}}
\]
where $\mu$, $\mu'$, $\nu$ and $\nu'$ obviously represent the two chains $i\land j \le i\le i\lor j$ and $i\land j \le j\le i\lor j$. Consider also the ``slit'' functor $d\colon \tre\times \due\to \due$ defined as follows:
\[
\begin{kodi}[xscale=2,yscale=1.2]
\foreach \x in {0,1,2} {
\foreach \y in {0,1}
  \node at (\y,-\x) (\x\y) {$(\x,\y)$};
}
\node at (2,-1) (label) {$d$};
\draw[densely dotted] (00.north west) -- 
					  (10.south west) -- 
					  (10.south east) -- 
					  (00.south east) -- 
					  (01.south east) -- 
					  (01.north east) -- cycle;
\draw[densely dotted] (11.north west) -- 
					  (11.north east) -- 
					  (21.south east) -- 
					  (20.south west) -- 
					  (20.north west) -- 
					  (21.north west) -- cycle;
\node at (3,0) (zero) {$0$};
\node at (3,-2) (uno) {$1$};
\draw[->] (01) to[bend left] (zero);
\draw[->] (21) to[bend right] (uno);
\mor 00 -> 01 -> 11 -> 21;
\mor * -> 10 -> 20 -> *;
\mor 10 -> 11;
\mor zero -> uno;
\end{kodi}
\]
\end{notat}
Having established this notation, we gather in the following statement some elementary facts about the above objects and arrows:
\begin{lemma}\label{properties_of_functors_among_ord}
In the above notation,
\begin{enumerate}[label=($\roman*$)]
\item there are adjunctions $l\adjunct{\varphi}{} \Delta\adjunct{}{\psi} r\colon \due\to \due\times\due$ 
(the counit of $l\dashv \Delta$ and the unit of $\Delta\dashv r$  are identities);
\item $v=\id_{\due}\times\pt\colon \due\times \due\to \due\times \uno=\due$;
\item $h=\pt\times\id_\due\colon \due\times \due\to \uno\times \due=\due$;
\item The identities $v\circ\Delta = h\circ\Delta =\id_\due$ hold;
\item $d\circ((0,1)\times \id_\due)=r$ and $d\circ((1,2)\times \id_\due)=l$;
\item $d\circ ((0,2)\times \id_{\due})=\id_{\due}\times\pt$;
\item $d\circ (1\times \id_{\due})=\id_{\due}$.
\end{enumerate}
\end{lemma}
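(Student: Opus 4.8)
The plan rests on a single observation: every category occurring in the statement — $\uno$, $\due$, $\tre$ and their products — is a finite poset, so all the functors in play are monotone maps. Between two such functors there is at most one natural transformation, and one exists precisely when the relevant pointwise order relation holds; in particular an adjunction between monotone maps is nothing but the corresponding equivalence of order relations, and a family of hom-set bijections is automatic because the hom-sets are subsingletons and naturality is vacuous. Consequently every clause reduces to a finite computation on objects.

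\emph{Clause (i).} Unwinding the definitions, $l,r\colon\due\times\due\to\due$ are $l(i,j)=i\lor j$ and $r(i,j)=i\land j$, join and meet in the two-element chain $\due$, while $\Delta\colon\due\to\due\times\due$ is the diagonal. The adjunction $l\dashv\Delta$ is the equivalence $i\lor j\le k\iff(i\le k\text{ and }j\le k)$, i.e. the universal property of the join; dually $\Delta\dashv r$ is $k\le i\land j\iff(k\le i\text{ and }k\le j)$, the universal property of the meet (the remaining unit and counit are the evident inequalities $i\le i\lor j\ge j$ and $i\land j\le i,j$). Since $l(\Delta k)=k\lor k=k=k\land k=r(\Delta k)$, one has $l\Delta=\id_\due=r\Delta$ strictly, so the counit of $l\dashv\Delta$ and the unit of $\Delta\dashv r$ are identities, as claimed. \emph{Clauses (ii)--(iv).} Items (ii) and (iii) just read off the component diagram defining $v$ and $h$ through the identifications $\due\times\uno\cong\due\cong\uno\times\due$, and (iv) is then immediate: $v(\Delta k)=v(k,k)=k=h(k,k)=h(\Delta k)$.

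\emph{Clauses (v)--(vii).} First make the ``slit'' functor precise: reading off the figure, $d\colon\tre\times\due\to\due$ is the monotone map with $d^{-1}(1)$ the up-set of the poset $\tre\times\due$ generated by $(1,1)$ and $(2,0)$, i.e. $d^{-1}(0)=\{(0,0),(1,0),(0,1)\}$ and $d^{-1}(1)=\{(1,1),(2,0),(2,1)\}$; the up-set description makes order-preservation manifest. Recall the object assignments of the coface maps: $(0,1)$ sends $0,1$ to $0,1$; $(1,2)$ sends $0,1$ to $1,2$; $(0,2)$ sends $0,1$ to $0,2$; and $1\colon\uno\to\tre$ picks out $1$. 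Evaluating $d$ on the images one reads off: $d\circ((0,1)\times\id_\due)$ sends $(i,j)$ to $i\land j$, hence equals $r$, while $d\circ((1,2)\times\id_\due)$ sends $(i,j)$ to $i\lor j$, hence equals $l$ — this is (v); $d\circ((0,2)\times\id_\due)$ sends $(i,j)$ to $i$, hence equals $\id_\due\times\pt$ — this is (vi); and $d\circ(1\times\id_\due)$ sends $j$ to $j$, hence equals $\id_\due$ — this is (vii).

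In each clause the two functors being compared are monotone maps between finite posets, so coincidence on objects is coincidence of functors, and there is nothing further to verify: the whole lemma is bookkeeping. The only step that needs a grain of attention — and the closest thing to an obstacle — is transcribing the definition of $d$ faithfully from the picture and confirming that it is order-preserving, which the up-set description above settles.
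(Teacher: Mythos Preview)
Your proposal is correct. The paper itself gives no proof of this lemma at all: it is introduced with the phrase ``we gather in the following statement some elementary facts about the above objects and arrows'' and then simply stated, so you have supplied exactly the routine verification the authors left to the reader, and your explicit transcription of $d$ from the picture (with $d^{-1}(0)=\{(0,0),(1,0),(0,1)\}$ and $d^{-1}(1)=\{(1,1),(2,0),(2,1)\}$) matches the intended definition.
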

\begin{definition}[coherent factorization and its pieces]\label{func_fact_def}
Given a pre-derivator $\D\colon \Dia^{op}\to \Cat$ and a normal factorization pre\hyp{}algebra $F\colon \D^\due\longrightarrow \D$, we define the following morphisms of derivators:
\begin{itemize}
\item $\Phi_F \coloneqq  F^\tre \circ d^\circledast\colon \D^\due\to \D^\tre$;
\item $F_l \coloneqq  F^\due \circ l^\circledast\colon \D^\due\to \D^\due$;
\item $F_r \coloneqq  F^\due \circ r^\circledast\colon \D^\due\to \D^\due$,
\end{itemize}
and the following natural transformations:
\begin{itemize}
\item $m_{e_{\firstblank}}\colon F\circ F_r=F\circ F^\due \circ r^\circledast\xto{FF^\due * \mu^\circledast} F\circ F^\due\circ  v^\circledast=F$;
\item $e_{m_{\firstblank}}\colon F=F\circ F^\due\circ  v^\circledast\xto{FF^\due * \nu^\circledast} F\circ F_l=F\circ F^\due\circ l^\circledast$.
\end{itemize}
\end{definition}
We are now ready to prove the announced properties of $\Phi_F$:
\begin{lemma}\label{prealgebra_induces_factorization}
In the above notation, the following statements hold true:
\begin{enumerate}
\item $(1,2)^\circledast \circ \Phi_F\cong F_l$ and $(0,1)^\circledast \circ\Phi_F\cong F_r$;
\item $(0,2)^\circledast \circ \Phi_F\cong \id_{\D^\due}$;
\item $1^\circledast \circ \Phi_F \cong F$.
\end{enumerate}
\end{lemma}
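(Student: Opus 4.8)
The proof is essentially a bookkeeping exercise in the 2-category $\PDer$, reducing everything to the combinatorial identities collected in Lemma \refbf{properties_of_functors_among_ord}. Recall $\Phi_F = F^\tre\circ d^\circledast$, and note that for any functor $w\colon K\to L$ in $\Dia$ and any pre-derivator $\D$ there is a (strictly commuting, by functoriality of $\mathsf{sh}$) naturality square relating $w^\circledast$ on shifts $\D^{?}$ to the action of $F$; concretely, for a morphism of diagram-shapes one has $w^\circledast\circ F^{L}\cong F^{K}\circ (w\times\id)^\circledast$, where I am writing $(w\times\id)^\circledast$ for the reindexing induced on the relevant shifted pre-derivators. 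So the whole game is to compose $d^\circledast$ with the face maps $(0,1)^\circledast$, $(1,2)^\circledast$, $(0,2)^\circledast$ and $1^\circledast$ (all viewed as 1-cells between the appropriate shifted pre-derivators), slide them past $F^\tre$ using this naturality, and then apply the pointwise identities $(v)$--$(vii)$ of Lemma \refbf{properties_of_functors_among_ord}.

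\textbf{Step by step.} First I would establish item (1). Precomposing $\Phi_F=F^\tre\circ d^\circledast$ with $(1,2)^\circledast$ and using that $\mathsf{sh}(-,\D)$ is a strict 2-functor, one gets $(1,2)^\circledast\circ F^\tre\circ d^\circledast\cong F^\due\circ\big((1,2)\times\id_\due\big)^\circledast\circ d^\circledast = F^\due\circ\big(d\circ((1,2)\times\id_\due)\big)^\circledast$. By Lemma \refbf{properties_of_functors_among_ord}$(v)$ the inner composite of functors is $l$, so this is $F^\due\circ l^\circledast = F_l$. The same computation with $(0,1)$ in place of $(1,2)$ and part $(v)$ again gives $F^\due\circ r^\circledast=F_r$. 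For item (2), precompose with $(0,2)^\circledast$ and use part $(vi)$: $d\circ((0,2)\times\id_\due)=\id_\due\times\pt$, so we land on $F^\due\circ(\id_\due\times\pt)^\circledast = (\id\times\pt)$ applied on the $\due$-shift $=(\pt^\circledast$ on $\D^\due)$ postcomposed with $F^\due$; but $F^\due\circ(\text{that }\pt^\circledast)\cong F\circ\pt^\circledast$ shifted, which is $\id_{\D^\due}$ by normality of $F$ (Definition \refbf{def_factorization_prealgebra}, applied levelwise to the shifted derivator $\D^\due$ — this is the one place one uses $F\circ\pt^\circledast=\id$ rather than a mere iso). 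For item (3), precompose with $1^\circledast$ and invoke part $(vii)$: $d\circ(1\times\id_\due)=\id_\due$, giving $F^\due\circ(\id_\due)^\circledast$ on the relevant shift, which collapses to $F$.

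\textbf{The main obstacle.} None of the three computations is deep; the real care is bookkeeping. The subtle point is keeping straight \emph{which} shifted pre-derivator each $\circledast$-functor lives between — $d$ has source $\tre\times\due$ and target $\due$, so $d^\circledast\colon \D^\due\to\D^{\tre}$ where both sides are themselves functors $\Dia^\opp\to\Cat$, and the face maps $(i,j)$ and the vertex $1$ must be correctly interpreted as reindexing the $\tre$-coordinate only. Equivalently, the genuine content is the compatibility $w^\circledast\circ F^{L}\cong F^{K}\circ(w\times\id)^\circledast$, i.e.\ that forming $F^{(-)}$ is 2-natural in the shift variable; this follows formally because $F\colon \D^\due\to\D$ is a morphism in $\PDer$ and $\mathsf{sh}$ is a strict 2-functor, but it should be stated explicitly (or cited from \cite{Moritz}) before the three identities are read off. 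Once that is in hand, items (1)--(3) are immediate consequences of Lemma \refbf{properties_of_functors_among_ord}$(v)$--$(vii)$ together with normality, and the proof is three lines.
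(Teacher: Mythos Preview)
Your proposal is correct and follows essentially the same approach as the paper's own proof: both slide the face maps past $F^\tre$ using the 2-naturality of the shift functor (i.e.\ $w^\circledast\circ F^L\cong F^K\circ(w\times\id)^\circledast$) and then invoke parts ($v$)--($vii$) of Lemma~\refbf{properties_of_functors_among_ord}. You are slightly more explicit than the paper in spelling out that item~(2) ultimately rests on the normality condition $F\circ\pt^\circledast=\id_\D$ (the paper simply says ``follows similarly''), but this is an elaboration rather than a different argument.
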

\begin{proof}
Since $\Phi_F$ is a morphism in $\PDer$, $(1,2)^\circledast\circ \Phi_F \cong F_l$ as a consequence of the chain of isomorphisms
\begin{align*}
(1,2)^\circledast\circ \Phi_F & =(1,2)^\circledast\circ F^\tre d^\circledast\\
& \cong F^\due \circ ((1,2)\times \id_\due)^\circledast \circ d^\circledast\\
&=F^\due \circ (d \circ((1,2)\times \id_\due))^\circledast\\
&=F^\due \circ l^\circledast=F_l
\end{align*}
where we used Lemma \refbf{properties_of_functors_among_ord}.($v$). This proves the first half of (1), the second half is completely analogous. Also, parts (2) and (3) follow similarly, using part ($vi$) and ($vii$) of \refbf{properties_of_functors_among_ord}, respectively.
\end{proof}
\begin{remark}
It is easy, though not needed in the following discussion, to define factorizations 
\[
\xymatrix{
	0^\circledast \ar[dr]\ar[rr]^e&& F\ar[rr]^m \ar[dr]|{FF^\due* \nu^\circledast}&& 1^\circledast \\
	& FF^\due r^\circledast \ar[ur]|{FF^\due * \mu^\circledast}&& FF^\due l^\circledast\ar[ur]
}
\]
and to show that these two triangles are, respectively, $\Phi_F(F_rX)$ and $\Phi_F(F_lX)$.
\end{remark}
We conclude the discussion with the following remark that shows how working with factorization pre\hyp{}algebras which are normal is not restrictive (this is completely analogous to \cite[§\textbf{2.2}]{Korostenski199357}):
\begin{remark}[normalization lemma]\label{normal_not_restrictive}
Given a factorization pre\hyp{}algebra $F\colon \D^\due\to \D$ with a fixed isomorphism $\gamma\colon F\circ \pt^\circledast \to \id_\D$ we can find another morphism $F'\colon \D^\due\to \D$ such that $F'\circ \pt^\circledast =\id_\D$ and $F'\cong F$. Indeed, given $I\in \Dia$, one defines $F'_I\colon \D^\due(I)\to \D(I)$ as follows: for an object $X\in \D^\due(I)$
\[
F'_I(X) \coloneqq \begin{cases}
Y&\text{if $X=\pt^*(Y)$;}\\
F_I(X)&\text{otherwise;}
\end{cases}
\]
while for a morphism $\phi\colon X\to X'$ in $\D^\due(I)$, 
\[
F'_I(\phi) \coloneqq \delta_{X'}\circ F_I(\phi)\circ \delta_{X}^{-1}\qquad\text{where}\qquad
\delta_X \coloneqq \begin{cases}
\gamma_Y&\text{if $X=\pt^*(Y)$;}\\
\id_{F_I(X)}&\text{otherwise.}
\end{cases}
\]
\end{remark}
\subsection{Eilenberg-Moore factorizations}\label{EM_subs}
In this subsection we are going to prove that, under very mild assumptions, a normal factorization pre\hyp{}algebra $F\colon \D^\due\to \D$ induces a \dfs $\F=(\mathbb E_F,\mathbb M_F)$ such that the functor $\Phi_F$ of \adef\refbf{func_fact_def} provides an inverse to the functor $\Psi_\F$ of \adef\refbf{def_hfs}.

Let us start defining the pre-derivators $\mathbb E_F$ and $\mathbb M_F$:

\begin{definition} \label{def_EF_MF}
In the same setting and with the same notations of \adef\refbf{func_fact_def}, we define two sub pre-derivators $\mathbb E_F$ and $\mathbb M_F\subseteq \D^{\due}$ where, for any $I\in \Dia$, 
\begin{align}
\mathbb{E}_F(I)&=\{X\in \D^{\due}(I)\mid F_{l}X\text{ is an iso}\}\notag\\ 
\mathbb{M}_F(I)&=\{Y\in \D^{\due}(I)\mid F_{r}Y\text{ is an iso}\}.
\end{align}
\end{definition}
What allows us to prove that the pair $(\mathbb E_F,\mathbb M_F)$ is a \dfs is the rephrasing of the Eilenberg\hyp{}Moore condition.
\begin{definition}[eilenberg\hyp{}moore factorization]\label{def_EM_algebra}
A normal factorization pre\hyp{}algebra $F\colon \D^\due\to \D$ is said to be a \emph{Eilenberg\hyp{}Moore} (\emph{\textsc{em}}, for short) \emph{factorization} provided $F_r(X)\in \mathbb E_F(I)$ and  $F_l(X)\in \mathbb M_F(I)$, for any $I\in\Dia$ and $X\in \D^\due(I)$.
\end{definition}
We can now prove our awaited Theorem \textbf{III}:
\begin{proof}[Proof of Theorem \textbf{III}]
By Lemma \refbf{prealgebra_induces_factorization}, $\Phi_F$ takes values in $\D_\F\subseteq \D^\tre$ and $\Psi_\F\Phi_F=(0,2)^\circledast\Phi_F\cong \id_{\D^\due}$. This shows that $\Psi_\F$ is essentially surjective and full. Consider now $X\in \mathbb E_FI$ and $Y\in \mathbb M_FI$ and let us show that the map 
\[
\varphi_{X,Y}\colon\D^I(\uno)(X_1,Y_0)\longrightarrow \D^I(\due)(X,Y)
\] 
is an isomorphism. Indeed, $\Phi_F X\cong (0,1)_!X$ and $\Phi_F Y\cong (1,2)_*Y$, so that
\begin{align*}
\D(\uno)(X_1,Y_0) &\cong \D(\due)(X,1_*Y_0)\\
& \cong \D(\tre)((0,1)_!X,(1,2)_*Y)\\
& \cong \D(\tre)(\Phi_F X,\Phi_F Y)\\
& \twoheadrightarrow\D(\due)(\Psi_\F\Phi_F X,\Psi_\F\Phi_F Y)\\
& \cong \D(\due)(X,Y)
\end{align*}
showing that $\varphi_{X,Y}$ is surjective; it remains to show injectivity. Indeed, consider two morphisms $a,\, b\colon X_1\to Y_0$, such that 
$\varphi_{X,Y}(a)=\varphi_{X,Y}(b)$. This means that $\psi_Y\pt_\due^*a\varphi_X=\psi_Y\pt_\due^*b\varphi_X$ and so, in particular,
\[
F(\psi_Y)F(\pt^*a)F(\varphi_X)=F(\psi_Y)F(\pt^*b)F(\varphi_X).
\]
Now, $\psi_Y=\dia_{\due}(l^*Y)$ so $F(\psi_Y)=\dia_{\due}(F^{\due}l^*Y)=\dia_{\due}(F_lY)$ is an iso and, similarly, $F(\varphi_X)$ is an iso. 
Hence, we obtain that $a= F(\pt^*a)=F(\pt^* b)=b$. This proves conditions (1) and (3) of Lemma \refbf{easier_def_dfs}, while condition (2) easily follows by construction, thus $\F$ is a \textsc{dfs}.

On the other hand, Let $\F'$ be a \dfs and suppose that $\D$ is represented or that it is a stable derivator. In both settings we known that $\Psi_{\F'}\colon \D_{\F'}\to \D^{\due}$ is an equivalence. Fix a quasi-inverse $\Phi_{\F'}\colon \D^{\due}\to \D_{\F'}$ to $\Psi_{\F'}$ and let $F' \coloneqq 1^\circledast\circ\Phi_{\F'}$. One can show that $F'$ is an \textsc{em} factorization and that $\F'=(\mathbb E_{F'},\mathbb M_{F'})$.
\end{proof}

\section{Coherence of factorization algebras}
\label{sec:thmB}

This last section of the paper is devoted to introduce all the background needed to discuss, precisely state and, finally, prove \athm \textbf{IV} of the Introduction. First of all, in §\refbf{monads_subs}, we recall from \cite[§\textbf{1}]{lack2002codescent} the relevant definitions of 2-monads and pseudo-algebras. After that, in §\refbf{squaring_subs}, we specialize these general definitions to the so-called squaring monad on $\PDer$. \athm\@\textbf{IV} is then proved at the end of §\refbf{higher_coherence_sub}.
 
\subsection{2-monads}\label{monads_subs}
One of the most annoying features of higher dimensional monad theory is in how many place the coherence conditions can hide: the category $\cate{K}$ where the monad is defined, the monad $T$ itself, the naturality for multiplication and unit, and their associativity and unitality constraints, as well as the compatibility conditions for a $T$\hyp{}algebra, can all give rise to some diagrams that commute only up to a (invertible or non\hyp{}invertible) 2-cell.

Of course, some of these combinations of laxity are quite uncommon: 2-dimensional monad theory often copes with \emph{strict} 2-monads, or with strong monads that can be suitably ``strictified''. According to the existing zoology, here we need \emph{lax algebras for a strict pseudo-monad on a strict 2-category}. However, having no interest in different flavours, we simply call it the category of ``algebras for a 2-monad $T$''. We start with the definition of 2-monad, from \cite{lack2002codescent}.
\begin{definition}[2-monad]\label{two-monad}
Let $\cate{K}$ be a strict 2-category. A \emph{2-monad} on $\cate{K}$ consists of a tuple $\TT=(T,\mu,\eta,\ass, \uni)$ where $T$ is a strict endofunctor $T\colon\cate{K}\to\cate{K}$ endowed with a pair $(\mu,\eta)$ of 2-cells $\mu\colon T\circ T \Rightarrow T$, $\eta \colon \id_{\cate{K}}\Rightarrow T$ subject to the following relations:
\begin{itemize}
\item[(\textsc{mn})] the components of $\mu$ and $\eta$ fit into pseudo-commutative diagrams
\[
\vcenter{\xymatrix{
T^2K\ar[r]^{\mu_K}\ar@{}[dr]|{\Swarrow\me_f}\ar[d]_{T^2f} & TK\ar[d]^{Tf} & K\ar[r]^{\eta_K}\ar@{}[dr]|{\Swarrow\yu_f}\ar[d]_f & TK\ar[d]^{Tf}\\
T^2K' \ar[r]_{\mu_{K'}} & TK' & K' \ar[r]_{\eta_{K'}}& TK'
}}
\]
for 2-cells $\yu_f$ and $\me_f$ subject to the obvious conditions with respect to composition and identity 1-cells
 (these are nothing more than pseudo\hyp{}naturality conditions that can be be applied to any 2-cell).
\item[(\textsc{ma})] $\mu$ is associative, in that the diagram
\[
\xymatrix@R=1.5cm@C=1.5cm{
T^3K \ar[r]^{\mu_{TK}}\ar@{}[dr]|{\Swarrow\ass_K}\ar[d]_{T\mu_K} & T^2K \ar[d]^{\mu_K} \\
T^2K \ar[r]_{\mu_K}& TK
}
\]
commutes when filled by an invertible 2-cell $\ass_K \colon \mu_K\circ (\mu * T)_K \Rightarrow \mu_K \circ (T * \mu)_K$, which can be regarded as the $K$-component of an invertible 3-cell $\ass \colon \mu\circ (\mu *T) \Rrightarrow \mu\circ (T *\mu)$.
\item[(\textsc{mu})] $\eta$ is unital, in that the diagram
\[
\xymatrix@R=1.5cm@C=1.5cm{
TK \ar[r]^{\eta_{TK}}\ar@{-}[dr] \ar[d]_{T\eta_K}& T^2K\ar[d]^{\mu_K} \\
T^2K \ar[r]_{\mu_K} \ar@{}[ur]|(.35){\Swarrow\uni_{\textsc{l},K}}\ar@{}[ur]|(.65){\uni_{\textsc{r},K}\Nearrow}& TK
}
\]
commutes when filled with invertible 2-cells $\uni_{\textsc{r},K}\colon \id_{TK}\Rightarrow \mu_K\circ (\eta *T )_K$ and $\uni_{\textsc{l},K}\colon \id_{TK}\Rightarrow \mu_K\circ ( T * \eta )_K$, which can be regarded as the $K$-components of invertible 3-cells $\uni_\textsc{l} \colon \id_{T}\Rrightarrow \mu\circ ( T * \eta )$ and $\uni_\textsc{r} \colon \id_{T}\Rrightarrow \mu\circ ( \eta *T )$.
\end{itemize}
\end{definition} 
\begin{definition}[pseudo-algebras for a 2-monad]\label{two-algebras}
Let $\TT=(T,\mu,\eta,\ass,\uni)$ be a 2-monad on $\cate{K}$. A \emph{2-algebra} for $\TT$, or a $\TT$-algebra for short, consists of a tuple $\underline{A}=(a, \alpha_m, \alpha_u)$ where $a\colon TA\to A$ is a 1-cell of $\cate{K}$, and $\alpha_m,\alpha_u$ are invertible 2-cells called respectively the \emph{extended associator} and the \emph{normalizer} of the algebra structure, such that the following diagrams of 2-cells commute:
\begin{gather}
\vcenter{\xymatrix@C=.4cm{
&T^2A \ar[rr]^{Ta} \ar@{}[dd]|{\Downarrow\me_a}\ar[dr]_{\mu_A}&& TA\ar[dr]^a \ar@{}[dl]|{\Swarrow\alpha_m}\\
T^3A \ar[ur]^{T^2a}\ar[dr]_{\mu_{TA}}&& TA \ar@{}[dr]|{\Searrow\alpha_m} \ar[rr]_a&& A\\
& T^2A \ar[ur]^{Ta}\ar[rr]_{\mu_A}&& TA\ar[ur]_a
}}
\quad
{\Huge =}
\quad 
\vcenter{\xymatrix@C=.3cm{
&T^2A \ar@{}[dr]|{T\alpha_m\Searrow}\ar[rr]^{Ta}&& TA\ar@{}[dd]|{\Downarrow\alpha_m}\ar[dr]^{a} \\
T^3A \ar[rr]^{T\mu_A}\ar[ur]^{T^2a}\ar[dr]_{\mu_{TA}}&& T^2A\ar@{}[dl]|{\ass_A\Swarrow}\ar[ur]^{Ta}\ar[dr]_{\mu_A} && A\\
& T^2A \ar[rr]_{\mu_A}&& TA\ar[ur]_a
}}\notag\\[5mm]
\vcenter{\xymatrix@C=.4cm{
&A \ar@{}[dd]|{\Downarrow\yu_m}\ar@{-}@/^1.5pc/[drrr]\ar[dr]_{\eta_A}&&\\
TA \ar[ur]^a\ar[dr]_{\eta_{TA}}&& TA\ar@{}[ur]|{\Swarrow\alpha_u}\ar@{}[dr]|{\Searrow\alpha_m} \ar[rr]_a&& A\\
& T^2A \ar[ur]^{Ta}\ar[rr]_{\mu_A}&& TA\ar[ur]_a
}}
\quad
{\Huge =}
\quad 
\vcenter{\xymatrix@C=.4cm{
&A \ar@{}[rrrdd]|{=}\ar@{-}@/^1.5pc/[drrr]&&\\
TA  \ar@{-}@/^1.5pc/[drrr] \ar[dr]_{\eta_{TA}}\ar[ur]^a&& && A\\
&T^2A \ar[rr]_{\mu_A} \ar@{}[ur]|{\Swarrow\uni_{\textsc{r},A}}&& TA\ar[ur]_a &
}}\notag\\[5mm]
\vcenter{\xymatrix@C=.3cm{
&& & TA\ar[dr]^a\ar@{}[dd]|{\Downarrow\alpha_m}\\
TA\ar[rr]^{T\eta_A} \ar@{-}@/^1.5pc/[urrr]\ar@{-}@/_1.5pc/[drrr]&& T^2A \ar[ur]^{Ta}\ar[dr]_{\mu_A}
\ar@{}[ul]|{\Searrow T\alpha_u}\ar@{}[dl]|{\Nearrow \uni_{\textsc{l},A}}&& A\\
&& & TA\ar[ur]_a
}}
\quad
{\Huge =}
\quad 
\vcenter{\xymatrix@C=.4cm{
&& & TA\ar[dr]^a\\
TA \ar@{}[rrrr]|{||} \ar@/^1.5pc/[urrr]\ar@/_1.5pc/[drrr] && && A\\
&& & TA\ar[ur]_a
}}
\end{gather}
\end{definition}
\begin{remark}
We often stick to denote a pseudo-algebra for a monad $\TT$ simply as a \emph{$T$-algebra}; we also call \emph{normal} a $T$-algebra for which the normalizer $\alpha_u$ is the identity map (so $a\circ \eta_A = \id_A$ and the coherence diagrams above obviously simplify). We will be mainly interested in normal $T$-algebras; this is not restrictive, as shown in \refbf{normal_not_restrictive}.
\end{remark}
\subsection{The squaring monad and its algebras}\label{squaring_subs}
\begin{definition}[the squaring monad on $\PDer$]\label{def_squaring_monad}
Let $\Dia$ be a $2$-category of diagrams and denote by $\PDer$ the 2-category of pre-derivators of type $\Dia$. The \emph{squaring monad} on $\PDer$ is the triple $((-)^\due, \Delta^\circledast,\pt^\circledast)$, where $(-)^\due \coloneqq \textsf{sh}(\due,-)$, while $\Delta$ and $\pt$ are defined in Section \refbf{comonoid_due_subs}. 
\end{definition}
Using the shift functor $\textsf{sh}(-,-)$ to transport the comonoid structure on $\due$ described in §\refbf{comonoid_due_subs}, one can see that the squaring monad is a 2-monad in the sense of \cite{lack2002codescent} but in a very strict sense, in that we have equalities (and not mere natural isomorphisms) in the following expressions
\[
\begin{cases}
\Delta^\circledast\circ(\pt\times \id_{\due})^\circledast=\id_{\D^\due}=\Delta^\circledast\circ(\id_{\due}\times \pt)^\circledast\notag\\
\Delta^\circledast\circ(\Delta\times\id_{\due})^\circledast=\Delta^\circledast\circ(\id_{\due}\times \Delta)^\circledast.
\end{cases}
\]
The strictness of the squaring monad can be used to greatly simplify the definition of pseudo-algebras given in \cite{lack2002codescent}:
\begin{definition}\label{def_factorization_alg}
In the same setting of \adef\refbf{def_squaring_monad}, let $\D$ be a pre-derivator of type $\Dia$. A \emph{normal pseudo-algebra} for the squaring monad is a morphism $F\colon \D^\due\to \D$ such that $F\circ \pt^\circledast=\id_\D$, together with a natural isomorphism $\gamma\colon FF^\due\xto{\cong} F\Delta^\circledast$ that satisfies the following properties:
\begin{enumerate}
\item $\gamma *(\id_\due\times\pt)^\circledast=\id_F$;
\item $\gamma*(\pt\times \id_\due)^\circledast=\id_F$;
\item  $(\gamma*(\Delta\times\id_\due)^\circledast) \circ (\gamma* F^{\due\times \due})=(\gamma*(\id_\due\times\Delta)^\circledast) \circ  (F*\gamma^\due)$.
\end{enumerate}
We will refer to a normal pseudo-algebra over the squaring monad simply as a \emph{normal $(\firstblank)^\due$\hyp{}algebra}.
\end{definition}
\subsection{Coherence for factorization algebras}\label{higher_coherence_sub}
First of all, we are going to re-enact some technical results of \cite{RW}, in preparation for the proof of \athm\textbf{IV}; these are simply the result of having adapted the most relevant results in \cite[§\textbf{2}]{RW} from the 2-category $\Cat$ to the 2-category $\PDer$. 

\begin{lemma}\label{a_straightforward_lemma}\label{nuff_to_determine}
Let $F\colon \D^\due\to\D$ be a normal factorization pre\hyp{}algebra; then precomposition with $l^{\circledast}$ induces a bijection
\[
\PDer(\D^{\due\times\due}, \D)(FF^\due, F\Delta^\circledast) \xto{\quad \firstblank * l^{\circledast}\quad} \PDer(\D^\due, \D)(FF^\due l^{\circledast}, F).
\]
\end{lemma}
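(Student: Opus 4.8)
The plan is to deduce this bijection from the universal property that $\Phi_F=F^\tre\circ d^\circledast$ is a section of $(0,2)^\circledast$ together with the adjunction computations of Lemma \refbf{properties_of_functors_among_ord}. The key observation is that, by Lemma \refbf{properties_of_functors_among_ord}.($v$), we have $d\circ((1,2)\times\id_\due)=l$, hence $l^\circledast=((1,2)\times\id_\due)^\circledast\circ d^\circledast$, and precomposition with $l^\circledast$ factors as precomposition with $d^\circledast$ followed by precomposition with $((1,2)\times\id_\due)^\circledast$. The strategy will be to show that each of these two precomposition maps is a bijection on the relevant hom-sets of 2-cells. For the first, I would use that $d\colon\tre\times\due\to\due$ admits a fully faithful section: indeed $d\circ((0,2)\times\id_\due)=\id_\due\times\pt$ by Lemma \refbf{properties_of_functors_among_ord}.($vi$) and $d\circ(1\times\id_\due)=\id_\due$ by part ($vii$), and one checks that $d$ is, up to these identifications, essentially a retraction onto $\due$; more precisely, precomposition with $d^\circledast$ on 2-cells between morphisms of the form $F^{\tre}\!-$ and $F^\tre-$ is injective because $d$ has a one-sided inverse.

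First I would set up the domain and codomain carefully: a 2-cell in $\PDer(\D^{\due\times\due},\D)(FF^\due,F\Delta^\circledast)$ is a modification-type datum, i.e.\ for every $K\in\Dia$ a natural transformation between the indicated functors $\D(\due\times\due\times K)\to\D(K)$, compatible with the transition maps. Precomposing with $l^\circledast$ restricts such a datum along the $2$-functor $l^\circledast\colon\D^\due\to\D^{\due\times\due}$. To show this is a bijection, I would exhibit an explicit inverse using $\Phi_F$: given $\theta\colon FF^\due l^\circledast\Rightarrow F$, I would produce a $2$-cell $\widetilde\theta\colon FF^\due\Rightarrow F\Delta^\circledast$ by precomposing with suitable structure maps coming from the diamond $\smash{\xymatrix@C=.5cm{r\ar@{=>}[r]^\mu\ar@{=>}[d]_{\mu'}&h\ar@{=>}[d]^\nu\\ v\ar@{=>}[r]_{\nu'}&l}}$ of Lemma \refbf{prealgebra_induces_factorization}'s preamble, together with the identities $v\circ\Delta=h\circ\Delta=\id_\due$ and $l\circ\Delta=\Delta$ implicit in Lemma \refbf{properties_of_functors_among_ord}.($i$),($iv$). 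Concretely, $l\dashv\Delta$ with identity counit (part ($i$)) shows that the value of any $2$-cell $FF^\due\Rightarrow F\Delta^\circledast$ at a general square in $\D^{\due\times\due}(K)$ is already determined by its value on the image of $l^\circledast$, because the counit $l\circ\Delta\to\id_\due$ being the identity forces the relevant naturality square to collapse.

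So the heart of the argument is: the adjunction $l^\circledast\dashv\Delta^\circledast$ in $\PDer$ (image under $\mathsf{sh}(-,\D)$ of $l\adjunct{\varphi}{}\Delta$, with the counit being an identity by Lemma \refbf{properties_of_functors_among_ord}.($i$)) implies that $F\Delta^\circledast$ is the right Kan extension (in fact a reflection) of $F$ along $l^\circledast$, in the strong sense that $\PDer(-,\D)(G l^\circledast, F)\cong\PDer(-,\D)(G,F\Delta^\circledast)$ naturally in $G$, applied to $G=FF^\due$. I would verify this reflection statement directly from the zig-zag identities, using crucially that the counit of $l\dashv\Delta$ is an identity so that the unit $\varphi$ satisfies $\Delta^\circledast*\varphi=\id$ and $\varphi$ is a split mono of the appropriate kind. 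Then the claimed bijection is simply this hom-set isomorphism evaluated at $G=FF^\due$, and unwinding shows the isomorphism is precisely precomposition with $l^\circledast$ (matched against composition with the unit $\varphi^\circledast$).

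The main obstacle I anticipate is bookkeeping rather than conceptual: one must be careful that all of this takes place in the $2$-category $\PDer$ with its genuinely weak (pseudo-natural) transformations, so ``precomposition with $l^\circledast$'' and the reflection isomorphism have to be checked to be compatible with the coherence data of pseudo-natural transformations and their transition isomorphisms across $K\in\Dia$; in other words, I must confirm that the pointwise bijections assemble into a bijection of the full $2$-cell spaces. Since $l^\circledast$, $\Delta^\circledast$ and the adjunction between them are \emph{strict} (the counit is literally the identity, by Lemma \refbf{properties_of_functors_among_ord}.($i$)), these compatibilities should reduce to a routine verification, and I would handle them by noting that strictness of the adjunction makes the reflection isomorphism strictly natural in $G$, hence transporting it along the transition maps of $FF^\due$ and $F$ poses no difficulty. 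I would close by remarking that the inverse map sends $\theta$ to $(\theta*\Delta^\circledast)\circ(FF^\due*\varphi^\circledast)$ or the analogous explicit formula, completing the proof.
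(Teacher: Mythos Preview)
Your approach is essentially correct and coincides with the paper's, modulo two remarks. First, the opening two paragraphs about factoring $l^\circledast$ through $d^\circledast$ and invoking $\Phi_F$ are an unnecessary detour that you rightly abandon; the entire content lies in your third paragraph. Second, and more importantly, you have the adjunction direction reversed: since $l\dashv\Delta$ in $\cat$ and $\mathsf{sh}(-,\D)$ is contravariant on 1-cells, one obtains $\Delta^\circledast\dashv l^\circledast$ in $\PDer$, not $l^\circledast\dashv\Delta^\circledast$. With the correct direction the counit $\Delta^\circledast l^\circledast\to\id$ is the identity (the image of $l\Delta=\id_\due$), and then indeed $F\Delta^\circledast$ is the right Kan extension of $F$ along $l^\circledast$, so your inverse formula $\theta\mapsto(\theta*\Delta^\circledast)\circ(FF^\due*\varphi^\circledast)$ is exactly right. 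With the direction you stated, $F\Delta^\circledast$ would be a \emph{left} Kan extension and the universal property would point the wrong way; since your conclusion and formula are nonetheless correct, this is a slip rather than a conceptual error.

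The paper packages the same argument differently: it first records the general 2-categorical fact that $\tau*F$ is recovered from $\tau*G$ whenever $T*\sigma$ is invertible, applies it with $\sigma=\varphi^\circledast\colon\id\Rightarrow l^\circledast\Delta^\circledast$ (using that $F\Delta^\circledast*\varphi^\circledast=\id$ by the triangle identity) to obtain a bijection under precomposition with $l^\circledast\Delta^\circledast$, and then strips off the trailing $\Delta^\circledast$ by observing that $\Delta^\circledast$ is co-fully faithful. Your Kan-extension formulation collapses these two steps into one; both presentations rest on the same identities coming from the adjunction $l\dashv\Delta$ with identity counit.
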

\begin{proof}
We start noticing that, in any 2-category, given a diagram of 2-cells
\[
\xymatrix@C=2cm{
	A \ar@{}[r]|{\Downarrow\sigma}\ar@/^1pc/[r]^F\ar@/_1pc/[r]_G& B \ar@{}[r]|{\Downarrow\tau}\ar@/^1pc/[r]^S\ar@/_1pc/[r]_T & C
}
\]
if $T * \sigma$ is invertible, then $\tau  *F$ is determined by $\tau *G$, in the sense that $\tau * F = (T * \sigma)^{-1}\circ (\tau *G)\circ (S* \sigma)$. Similarly, if $S * \sigma$ is invertible then $\tau *G = (T *\sigma)\circ (\tau *F )\circ (S * \sigma)^{-1}$ (specialized to the 2-category $\Cat$, this is \cite[Lemma 2.2]{RW}).

For any $\tau\colon FF^\due\to F\Delta^\circledast$ there is a diagram of 2-cells in $\PDer$:
\[
\xymatrix@C=2cm{
	\D^{\due\times\due} \ar@{}[r]|{\Downarrow\eta_l^\circledast}\ar@/^1pc/[r]^{\id_{\D^{\due\times \due}}}\ar@/_1pc/[r]_{l^\circledast\Delta^{\circledast}}& \D^{\due\times\due} \ar@{}[r]|{\Downarrow\tau}\ar@/^1pc/[r]^{FF^\due}\ar@/_1pc/[r]_{F\Delta^{\circledast}} & \D
}
\]
Using the above general fact, one can easily prove that there is a bijection 
\[
\PDer(\D^{\due\times\due}, \D)(FF^\due, F\Delta^\circledast) \xto{\quad \firstblank * l^{\circledast}\Delta^{\circledast}\quad} \PDer(\D^{\due\times \due}, \D)(FF^\due l^{\circledast}\Delta^{\circledast}, F\Delta^{\circledast}),
\]
which induces the desired bijection since $\Delta^{\circledast}$ is co-fully faithful (for more details see \cite[§\textbf{2}]{RW}).
\end{proof}

\begin{lemma}
\label{iso_implies_EM}
Let $F\colon \D^\due\to \D$ be  a normal factorization pre\hyp{}algebra and suppose that there is an isomorphism $\alpha\colon FF^\due\to F\Delta^\circledast$. Then all the 2-cells $m_{e_{\firstblank}}$, $e_{m_{\firstblank}}$, $FF^\due*(\mu')^\circledast$ and $FF^\due*(\nu')^\circledast$ are invertible. As a consequence, $\F \coloneqq (\mathbb E_F,\mathbb M_F)$ is an Eilenberg\hyp{}Moore factorization system.
\end{lemma}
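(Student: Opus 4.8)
The plan is to deduce both assertions from the single isomorphism $\alpha$, using two elementary facts: the four $2$-cells in question are precisely the whiskerings $FF^\due * \xi$, where $\xi$ ranges over the shifts $\mu^\circledast,(\mu')^\circledast,\nu^\circledast,(\nu')^\circledast$ of the four edges of the commutative square of natural transformations in $[\due\times\due,\due]$ appearing in \refbf{func_fact_def}; and each of those edges becomes trivial after precomposition with the diagonal $\Delta\colon\due\to\due\times\due$, because $\due$ is a poset.

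First I would record the identifications $FF^\due v^\circledast = F = FF^\due h^\circledast$. Indeed $v=\id_\due\times\pt$ and $h=\pt\times\id_\due$ by Lemma \refbf{properties_of_functors_among_ord}($ii$),($iii$), so by $2$-functoriality of $\textsf{sh}(-,-)$ one has $F^\due v^\circledast = \pt^\circledast\circ F$ and $F^\due h^\circledast = \id_{\D^\due}$; composing with $F$ and using normality $F\circ\pt^\circledast = \id_\D$ gives both equalities. Since moreover $F^\due r^\circledast = F_r$ and $F^\due l^\circledast = F_l$ by \refbf{func_fact_def}, the $2$-cells $m_{e_\firstblank}$, $e_{m_\firstblank}$, $FF^\due*(\mu')^\circledast$ and $FF^\due*(\nu')^\circledast$ are honest $2$-cells $F\circ F_r\to F$, respectively $F\to F\circ F_l$, of the stated shape.

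Next comes the conjugation step. Because $\Delta\colon\due\to\due\times\due$ induces $\Delta^\circledast\colon\D^{\due\times\due}\to\D^\due$, both $FF^\due$ and $F\Delta^\circledast$ are $1$-cells $\D^{\due\times\due}\to\D$ and $\alpha$ is an invertible $2$-cell between them; hence, for any $2$-cell $\xi$ between $1$-cells $\D^\due\to\D^{\due\times\due}$, whiskering $\alpha$ with the source and target of $\xi$ produces, by the interchange law, a commuting square of $2$-cells, and since both whiskerings of $\alpha$ are invertible, $FF^\due*\xi$ is invertible if and only if $F\Delta^\circledast*\xi$ is. I would then apply this with $\xi\in\{\mu^\circledast,(\mu')^\circledast,\nu^\circledast,(\nu')^\circledast\}$ and compute $F\Delta^\circledast*\xi$: as $r\circ\Delta = v\circ\Delta = h\circ\Delta = l\circ\Delta = \id_\due$ (the inner two by Lemma \refbf{properties_of_functors_among_ord}($iv$), the outer two since $i\wedge i = i = i\vee i$), $F\Delta^\circledast$ precomposed with any of $r^\circledast,v^\circledast,h^\circledast,l^\circledast$ equals $F$, and $\Delta^\circledast*\xi$ is the shift of the $2$-cell obtained from the corresponding edge of the square by precomposing with $\Delta$, hence a $2$-cell $\id_\due\Rightarrow\id_\due$ in $[\due,\due]$, which is the identity since $\due$ is a poset. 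Therefore $F\Delta^\circledast*\xi = \id_F$, and so $FF^\due*\xi$ is invertible for each of the four edges; this proves the first assertion.

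For the second assertion, observe that $F$ is an Eilenberg\hyp{}Moore factorization (Definition \refbf{def_EM_algebra}) exactly when $F_l(F_rX)$ and $F_r(F_lX)$ are isomorphisms in $\D^\due(I)$ for all $I\in\Dia$ and all $X\in\D^\due(I)$, i.e.\ when $\dia_\due(F_lF_rX)$ and $\dia_\due(F_rF_lX)$ are isomorphisms. By Lemma \refbf{prealgebra_induces_factorization}(1) and the remark following it, $F_l(F_rX)\cong(1,2)^\circledast\Phi_F(F_rX)$ has underlying morphism the $X$-component of $m_{e_\firstblank}$, and dually $\dia_\due(F_rF_lX)$ is the $X$-component of $e_{m_\firstblank}$; since these $2$-cells are invertible by the first part, all their components are isomorphisms, whence $F_rX\in\mathbb E_F(I)$ and $F_lX\in\mathbb M_F(I)$. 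Thus $F$ is an Eilenberg\hyp{}Moore factorization, and $\F=(\mathbb E_F,\mathbb M_F)$ is therefore a \dfs by Theorem \textbf{III}. The only delicate part of the argument is the coherence bookkeeping of the whiskerings and shift identifications in the first two paragraphs; once that is granted, the mathematical content — that the edges of the square collapse under $\Delta^\circledast$ because $\due$ is thin — is immediate.
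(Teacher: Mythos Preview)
Your proof is correct and follows essentially the same approach as the paper. The paper packages the conjugation argument into a single commutative diagram (with the four whiskerings of $\alpha$ as diagonals pointing to a central square of identities $\id_{FX}$), whereas you phrase it via the interchange law and the observation that each edge $\xi$ of the square becomes the identity after precomposition with $\Delta$; these are two presentations of the same naturality computation. Your identification of $\dia_\due(F_lF_rX)$ and $\dia_\due(F_rF_lX)$ with the components of $m_{e_\firstblank}$ and $e_{m_\firstblank}$ for the \textsc{em} conclusion also matches the paper exactly. One small remark: the lemma's phrase ``Eilenberg--Moore factorization system'' just means that $F$ is an \textsc{em} factorization in the sense of \adef\refbf{def_EM_algebra}; your final appeal to Theorem~\textbf{III} to conclude that $\F$ is a \dfs is a harmless addendum but strictly speaking requires Setting~\refbf{setting_sec_3}, which the lemma does not assume.
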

\begin{proof}
Consider the following commutative diagram, obtained applying $\alpha$ to the first diagram in \refbf{diamond_def}
\begin{equation}
\label{two_diamonds}
\xymatrix{
FF_rX\ar[dr]|{\alpha * r^\circledast}\ar[rr]^{m_{e_X}}\ar[dd]_{FF^\due * (\mu')^\circledast} && FX\ar[dd]^{e_{m_X}} \ar[dl]|{\alpha * v^\circledast}\\
& FX & \\
FX \ar[rr]_{FF^\due *(\nu')^\circledast}\ar[ur]|{\alpha * h^\circledast}&& FF_l X\ar[ul]|{\alpha * l^\circledast}
}
\end{equation}
where the central object results as a square of identities $\id_{FX}$ obtained from \refbf{properties_of_functors_among_ord}.($iv$). It is then clear that, being $\alpha$ invertible, the four arrows that point to $FX$ are all invertible, and so are the components of $m_{e_{\firstblank}}$, $e_{m_{\firstblank}}$, and $FF^\due*(\mu')^\circledast, FF^\due*(\nu')^\circledast$. 

For the last statement we should verify that $F_rX\in \mathbb E_F$ and $F_lX\in \mathbb M_F$, for any $X\in \D^\due(I)$, equivalently, one should verify that $F_lF_rX$ and $F_rF_lX$ are isomorphisms. But this is clear since the underlying diagram of $F_lF_rX$ is exactly $m_{e_X}$, while the underlying diagram of $F_rF_lX$ is $e_{m_X}$. 
\end{proof}

The above two lemmas were general facts about $\PDer$. From now on, we will need to work under much stronger hypotheses, indeed, we will need to assume that our pre-derivator $\D$ is either representable or that it is a stable derivator. In fact, the unique point in which we will actively use these hypotheses is in the following lemma, which is the counterpart of \cite[\acor\textbf{2.9}]{RW}.

\begin{lemma}
\label{tfae_for_gamma}
Suppose $\D$ is either a representable pre-derivator or a stable derivator. 
Let $F\colon \D^\due\to \D$ be  a normal factorization pre\hyp{}algebra and suppose that there is an isomorphism $\alpha\colon FF^\due\to F\Delta^\circledast$. Then,  $m_{e_{\firstblank}} =FF^\due*(\mu')^\circledast$ and $e_{m_{\firstblank}} =FF^\due*(\nu')^\circledast$. Furthermore, the following conditions are equivalent
\begin{enumerate}[label=($\roman*$)]
\item $\alpha*v^\circledast=\id_{F}$;
\item $\alpha*l^\circledast=(e_{m_{\firstblank}} )^{-1}$;
\item $\alpha*h^\circledast=\id_{F}$;
\item $\alpha*r^\circledast=m_{e_{\firstblank}}$.
\end{enumerate}
\end{lemma}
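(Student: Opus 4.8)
The proof transplants \cite[§\textbf{2}, up to \acor\textbf{2.9}]{RW} to $\PDer$; since the hard work has been front-loaded into Lemmas \refbf{a_straightforward_lemma} and \refbf{iso_implies_EM}, what remains is bookkeeping with $2$-cells. First I would record what naturality of $\alpha\colon FF^\due\to F\Delta^\circledast$ gives when horizontally composed with the four $2$-cells $\mu^\circledast,(\mu')^\circledast,\nu^\circledast,(\nu')^\circledast$ of the diamond (see \refbf{diamond_def}). Reading off components, each of $\mu*\Delta$, $\mu'*\Delta$, $\nu*\Delta$, $\nu'*\Delta$ is the identity $2$-cell of $\id_\due$ (this refines Lemma \refbf{properties_of_functors_among_ord}($iv$)), so $F\Delta^\circledast$ whiskered with any of the four becomes $\id_F$; together with the standing identifications $FF^\due v^\circledast=FF^\due h^\circledast=F$ and $\Delta^\circledast v^\circledast=\Delta^\circledast h^\circledast=\Delta^\circledast r^\circledast=\Delta^\circledast l^\circledast=\id$, the interchange law yields
\begin{gather*}
\alpha*r^\circledast=(\alpha*h^\circledast)\circ m_{e_{\firstblank}}=(\alpha*v^\circledast)\circ(FF^\due*(\mu')^\circledast),\\
\alpha*h^\circledast=(\alpha*l^\circledast)\circ e_{m_{\firstblank}},\qquad \alpha*v^\circledast=(\alpha*l^\circledast)\circ(FF^\due*(\nu')^\circledast).
\end{gather*}
As $\alpha$ is invertible so is every $\alpha*(\firstblank)^\circledast$, and re-reading these identities re-proves Lemma \refbf{iso_implies_EM} (this is exactly diagram \eqref{two_diamonds}): $m_{e_{\firstblank}}$, $e_{m_{\firstblank}}$, $FF^\due*(\mu')^\circledast$ and $FF^\due*(\nu')^\circledast$ are invertible.

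Next I would prove the unconditional identities; this is the only place the hypothesis on $\D$ enters. I would establish $\alpha*h^\circledast=\alpha*v^\circledast$; by the first display and invertibility of $FF^\due*(\mu')^\circledast$ this is equivalent to $m_{e_{\firstblank}}=FF^\due*(\mu')^\circledast$, and the dual argument gives $e_{m_{\firstblank}}=FF^\due*(\nu')^\circledast$. When $\D=y(\C)$ is representable this is a pointwise computation: at $X\in\C^\due$ both $(m_{e_{\firstblank}})_X$ and $(FF^\due*(\mu')^\circledast)_X$ are $F$ applied to a morphism of $\C^\due$ with domain $F_rX$, and these morphisms agree after applying $F$ because, via the rigidity bijection of Lemma \refbf{a_straightforward_lemma}, they are $l^\circledast$-components of one and the same $2$-cell $FF^\due\to F\Delta^\circledast$. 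For $\D$ a stable derivator, a $2$-cell between morphisms of derivators of type $\fincat$ is detected on the base $\D(\uno)$ by Setting \refbf{setting_sec_3}(1)--(2), and the relevant coherent diagram can be lifted from $\D(\uno)$ using that $\Psi_\F$ is an equivalence (Theorem \refbf{stable_equv_orth}), reducing the stable case to the representable-style computation.

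Finally, with the previous two steps each implication is a single cancellation of invertible $2$-cells. From $\alpha*r^\circledast=(\alpha*v^\circledast)\circ(FF^\due*(\mu')^\circledast)=(\alpha*v^\circledast)\circ m_{e_{\firstblank}}$ one reads ($i$)$\Leftrightarrow$($iv$); from $\alpha*v^\circledast=(\alpha*l^\circledast)\circ(FF^\due*(\nu')^\circledast)=(\alpha*l^\circledast)\circ e_{m_{\firstblank}}$ one reads ($i$)$\Leftrightarrow$($ii$); and ($i$)$\Leftrightarrow$($iii$) because $\alpha*v^\circledast=\alpha*h^\circledast$ by the previous step (equivalently, by the symmetry of the diamond under the swap $\due\times\due\to\due\times\due$). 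Hence ($i$)--($iv$) are equivalent. I expect the middle step to be the real obstacle: the formal $2$-categorical shuffling is routine, but pinning down $m_{e_{\firstblank}}=FF^\due*(\mu')^\circledast$ genuinely uses the rigidity that representability or stability provides — for a general pre-derivator this identity can fail.
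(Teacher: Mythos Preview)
Your framing is right and steps 1 and 3 match the paper: the naturality square you write down is exactly diagram \eqref{two_diamonds}, and once the ``unconditional identities'' $m_{e_{\firstblank}}=FF^\due*(\mu')^\circledast$ and $e_{m_{\firstblank}}=FF^\due*(\nu')^\circledast$ are in hand, the equivalence ($i$)--($iv$) is the cancellation argument you give.

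The gap is step 2. Your appeal to Lemma \refbf{a_straightforward_lemma} is misapplied: that bijection compares $2$-cells $FF^\due\to F\Delta^\circledast$ via their $l^\circledast$-whiskerings, so its range consists of $2$-cells with \emph{source} $FF_l$. But $m_{e_{\firstblank}}$ and $FF^\due*(\mu')^\circledast$ are $2$-cells with source $FF_r$; they are not in the range of $\firstblank*l^\circledast$, and nothing in the lemma identifies them. Your stable-case reduction is similarly only a gesture: saying ``a $2$-cell is detected on the base'' and invoking $\Psi_\F$ does not by itself produce the equality you need.

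The paper's argument for step 2 is different and more concrete. It \emph{lifts the diamond to level $\tre$}: one defines explicit functors $R,V,H,L\colon \tre\times\due\times\due\to\tre\times\due$ extending $r,v,h,l$, with natural transformations $\underline\mu,\underline\mu',\underline\nu,\underline\nu'$ extending $\mu,\mu',\nu,\nu'$, built so that $\underline\mu$ and $\underline\mu'$ agree after precomposition with $(0,2)\times\id_{\due\times\due}$. For $X\in\D^\due(I)$ one sets $Y=(0\times\id_\due)_!X\in\D^{\tre\times\due}(I)$; then $\Psi_\F^\due\bigl(F^{\tre\times\due}*\underline\mu^\circledast_Y\bigr)=\Psi_\F^\due\bigl(F^{\tre\times\due}*(\underline\mu')^\circledast_Y\bigr)$ by construction, and since $\Psi_\F$ is \emph{faithful} (this is where representability or stability is used, via Theorem \refbf{stable_equv_orth}), one cancels to get $F^{\tre\times\due}*\underline\mu^\circledast_Y=F^{\tre\times\due}*(\underline\mu')^\circledast_Y$. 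Restricting along $1\times\id_\due$ recovers $F^\due*\mu^\circledast_X=F^\due*(\mu')^\circledast_X$, hence $m_{e_{\firstblank}}=FF^\due*(\mu')^\circledast$. The point is that the equality you want is not a rigidity statement about $\alpha$, but a statement about $FF^\due$ applied to two different $2$-cells in $\Dia$ that become equal only after passing through the coherent factorization $\Phi_F$; faithfulness of $\Psi_\F$ is exactly what lets you descend that equality back.
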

\begin{proof}
By Lemma \refbf{iso_implies_EM}, $F$ is an \textsc{em} factorization and so, by our hypotheses on $\D$ and the results in §\refbf{sec:squaring}, $\Psi_\F$ is fully faithful. Now consider the following functors (compare with \eqref{diamond_def})
\[
\begin{matrix}\xymatrix{
&V\ar@{=>}[dr]^{\underline\nu}\\
R\ar@{=>}[dr]_{\underline\mu'}\ar@{=>}[ur]^{\underline\mu}&&L\\
&H\ar@{=>}[ur]_{\underline\nu'}
}\end{matrix}
\colon \tre\times \due\times \due\longrightarrow  \tre\times\due
\]
where
\[
R(a,b,c) \coloneqq \begin{cases}
(0,0)&\text{if $a=0$;}\\
(1,r(b,c))&\text{if $a=1$;}\\
(2,b)&\text{if $a=2$;}
\end{cases}
\qquad
 L(a,b,c) \coloneqq \begin{cases}
(0,b)&\text{if $a=0$;}\\
(1,l(b,c))&\text{if $a=1$;}\\
(2,1)&\text{if $a=2$;}
\end{cases}
\]
\[
V(a,b,c) \coloneqq \begin{cases}
(0,0)&\text{if $a=0$;}\\
(1,v(b,c))&\text{if $a=1$;}\\
(2,1)&\text{if $a=2$;}
\end{cases}
\qquad
 H(a,b,c) \coloneqq \begin{cases}
(0,0)&\text{if $a=0$;}\\
(1,h(b,c))&\text{if $a=1$;}\\
(2,1)&\text{if $a=2$;}
\end{cases}
\]
and where $\underline\mu$, $\underline\mu'$, $\underline\nu$ and $\underline\nu'$ are defined in the unique possible way. Also notice that $V$ and $H$ are constructed in such a way that $V\circ ((0,2)\times \id_{\due\times \due})=H\circ ((0,2)\times \id_{\due\times \due})$ and
\[
\underline\mu*((0,2)\times \id_{\due\times \due})=\underline\mu'*((0,2)\times \id_{\due\times \due}).
\]
Given $X\in \D^\due(I)$, let $Y \coloneqq (0\times\id_\due)_!X\in \D^{\tre\times \due}(I)$ and notice that
\begin{align*}
\Psi^\due_\F F^{\tre\times\due}*\underline \mu^\circledast_Y&=((0,2)\times\id_\due)^\circledast F^{\tre\times\due}*\underline \mu^\circledast_Y\\
&=F^{\due\times\due}((0,2)^\due\times\id_{\due\times\due})^\circledast*\underline \mu^\circledast_Y\\
&=F^{\due\times\due}((0,2)^\due\times\id_{\due\times\due})^\circledast*(\underline \mu')^\circledast_Y\\
&=((0,2)\times\id_\due)^\circledast F^{\tre\times\due}*(\underline \mu')^\circledast_Y=\Psi^\due_\F F^{\tre\times\due}*(\underline \mu')^\circledast_Y.
\end{align*}
Since $\Psi_\F$ is faithful, we get that $F^{\tre\times\due}*\underline \mu^\circledast_Y=F^{\tre\times\due}*(\underline \mu')^\circledast_Y$. So in particular,
\begin{align*}
F^\due*\mu^\circledast_X&=F^{\due}(1\times \id_{\due\times\due})^\circledast *\underline \mu^\circledast_Y=(1\times \id_\due)^\circledast F^{\tre\times\due}*\underline \mu^\circledast_Y\\
&=F^{\tre\times\due}*(\underline \mu')^\circledast_Y=(1\times \id_\due)^\circledast F^{\tre\times\due}*(\underline \mu')^\circledast_Y\\
&=F^{\due}(1\times \id_{\due\times\due})^\circledast *(\underline \mu')^\circledast_Y=F^\due*(\mu')^\circledast_X.
\end{align*}
A similar argument shows that $F^\due*\nu^\circledast =F^\due*(\nu')^\circledast$. With these equalities, it is not difficult to derive the equivalence among (i), (ii), (iii) and (iv) by just looking at the commutative diagram in the proof of Lemma \refbf{iso_implies_EM}.
\end{proof}

At this point we can finally prove our \athm\textbf{IV}. The arguments used in the proof are analogous to those of  \cite[\athm\textbf{2.10} and \athm\textbf{2.11}]{RW}.

\begin{proof}[Proof of \athm\textbf{IV}]
The implication (1)$\Rightarrow$(2) is trivial, while (2)$\Rightarrow$(3) follows by Lemma \refbf{iso_implies_EM}. Hence, we assume (3)  and we show how to construct an extended associator 
\[
\alpha\colon FF^\due \xto{\sim} F\Delta^\circledast
\] 
that satisfies conditions \refbf{def_factorization_alg}.(1--3). Consider the following commutative diagrams
\[
\vcenter{
\xymatrix{
& \Delta r \ar[dr]^{\Delta* \mu}\ar[dl]_{\epsilon_r}& \\
\id_{\due\times \due}\ar[dr]_{ \eta_l}&& \Delta h\ar[dl]^{ \Delta*\nu}\\
& \Delta l}
}
\qquad\qquad\vcenter
{\xymatrix{
& FF^\due (\Delta r)^\circledast \ar[dr]^{FF^\due * \mu^\circledast*\Delta^{\circledast}}\ar[dl]_{FF^\due * \epsilon_r^{\circledast}}& \\
FF^\due \ar[dr]_{FF^\due * \eta_l^\circledast}&& F\Delta^\circledast\ar[dl]^{FF^\due * \nu^\circledast*\Delta^{\circledast}}\\
& FF^\due (\Delta l)^\circledast
}}
\]
where the second one is obtained from the first one composing with $FF^\due$, after having applied $\D$. The \textsc{em} condition tells us the $FF^\due*\mu^\circledast$ and $FF^\due*\nu^\circledast$ are invertible. We claim that also the two remaining arrows in the above diagram are invertible. Let us give an argument just for $FF^\due * \epsilon_r^{\circledast}$ as $FF^\due * \eta_l^{\circledast}$ is an iso for formally dual reasons. By \refbf{ortho_to_self}, it is enough to show that $FF^\due * \epsilon_r^{\circledast}\in \E_F\cap \M_F$. Consider the following commutative diagrams:
\[
\xymatrix{
(0,0)\circ\pt\circ(\id_\due\times \pt)\ar[r]\ar[d]&((0,0),(0,1))\circ(\id_\due\times \pt)\ar[d]\\
\Delta\circ r\ar[r]&\id_{\due\times \due}}
\]
\[
\xymatrix{
(0,0)^\circledast\ar[r]^(.39){\star}\ar[d]_{\star}&F ((0,0),(0,1))^\circledast\ar[d]^{\star}\\
FF_r\Delta^\circledast\ar[r]&FF^2}
\]
where $(0,0)\colon \uno\to \due\times \due$ and $((0,0),(0,1))\colon \due\to\due\times \due$ select respectively the upper left corner and the left horizontal arrow in $\due\times \due$; furthermore, the arrows in the first diagram are the unique possible, while the second diagram is obtained from the first one composing with $FF^\due$, after having applied $\D$. Using the \textsc{em} condition, one can show that the arrows marked by ($\star$) are in $\E_F$ (since they are instances of the natural transformation $e_{-}$) and so, by the closure and cancellation properties of this class, also the remaining arrow, which is $FF^\due * \epsilon_r^{\circledast}$, does belong to $\E_F$. A similar argument but starting with the following diagram 
\[
\xymatrix{
\Delta\circ r\ar[r]\ar[d]&\id_{\due\times \due}\ar[d]\\
\Delta\circ (\id_\due\times \pt)\ar[r]&((1,0),(1,1))\circ(\id_\due\times \pt)}
\]
shows that $FF^\due * \epsilon_r^{\circledast}$ does belong to $\M_F$. We can now define 
\[
\alpha  \coloneqq  (FF^\due * \mu^\circledast*\Delta^\circledast) \circ (FF^\due * \epsilon^\circledast_r)^{-1}
\] 
and verify that it is the extended associator we are looking for. 
In fact, it is  easy to show that $\alpha$ satisfies one of the equivalent conditions of \refbf{tfae_for_gamma}, so that it satisfies conditions (1) and (2) of \refbf{def_factorization_alg}, as they are exactly (i) and (iii) of \refbf{tfae_for_gamma}.  
%
%
It remains to check condition \refbf{def_factorization_alg}.(3). 
For this, consider the following diagram of 2-cells
\begin{equation}\label{diff_coh_dia}
\xymatrix@C=2cm{
\ar[d]_{F*\alpha^\due} \ar[r]^{\alpha * F^{\due\times\due}}FF^\due F^{\due\times\due} & **[r] F\Delta^\circledast_\D F^{\due\times\due} \overset{\star}= FF^\due \Delta^\circledast_{\D^\due} \ar[d]^{\alpha  * \Delta^\circledast_{\D^\due}}\\
FF^\due (\Delta^\circledast_\D)^\due \ar[r]_{\alpha * (\Delta^\circledast_\D)^\due} & **[r] F\Delta^\circledast_\D \Delta^\circledast_{\D^\due} \overset{\star\star}= F\Delta^\circledast_\D(\Delta^\circledast_\D)^\due
}
\end{equation}
(the equality ($\star$) follows from naturality, and the equality ($\star\star$) follows from associativity). Notice that \refbf{def_factorization_alg}.(3) expresses exactly the commutativity of the above square. Thanks to \refbf{nuff_to_determine} (applied twice), it is enough to check that it commutes when composed in the north\hyp{}west corner with $l_{\D^\due}^{\circledast}l^{\circledast}$. Notice that, after composing with $l_{\D^\due}^{\circledast}$, the paths going from the north\hyp{}west to the south\hyp{}east corner are both 2-cells going from $FF^\due F^{\due\times \due}$ to $F\Delta_\D^\circledast \Delta^\circledast_{\D^\due}$. Let us fix an arbitrary 2-cell 
\[
\theta\colon FF^\due F^{\due\times \due}\to F\Delta_\D^\circledast \Delta^\circledast_{\D^\due}
\]
and let us record some of its general properties. We first consider 
\[
\xymatrix@C=2.5cm{
	\D^\due 
	\ar@/^1pc/[r]^{v^\circledast}
	\ar@/_1pc/[r]_{l^\circledast}
	\ar@{}[r]|{\Downarrow\nu^\circledast}
	& \D^{\due\times\due} 
	\ar@/^1pc/[r]^{FF^\due F^{\due\times\due} l_{\D^\due}^{\circledast}}
	\ar@/_1pc/[r]_{F\Delta^\circledast_\D}
	\ar@{}[r]|{\Downarrow\theta *l^{\circledast}_{\D^\due}}
	& \D
}
\]
We claim that the whiskering $FF^\due F^{\due\times\due} l^\circledast_{\D^\due} * \nu^{\circledast}$ is invertible, in fact,
\begin{align*}
FF^\due F^{\due\times\due} l^\circledast_{\D^\due}  * \nu^\circledast &\cong FF^\due l^\circledast  F^\due * \nu^\circledast\\
&= FF_lF^\due * \nu^\circledast \\
&\cong  FF^\due * \nu^\circledast
\end{align*}
and we have already noticed that this last 2-cell is invertible. Hence,
\[
\theta *l^{\circledast}_{\D^\due}l^\circledast=(F\Delta^\circledast_\D*\nu^\circledast)\circ (\theta *l^{\circledast}_{\D^\due}v^{\circledast})\circ(FF^\due F^{\due\times\due} l_{\D^\due}^{\circledast}*\nu^\circledast)^{-1}
\]
This shows that, given $\theta,\, \theta'\colon FF^\due F^{\due\times \due}\to F\Delta_\D^\circledast \Delta^\circledast_{\D^\due}$, then $\theta *l^{\circledast}_{\D^\due}l^\circledast=\theta' *l^{\circledast}_{\D^\due}l^\circledast$ if and only if $\theta *l^{\circledast}_{\D^\due}v^{\circledast}=\theta' *l^{\circledast}_{\D^\due}v^{\circledast}$. This reduces the verification of the commutativity of \eqref{diff_coh_dia}, to the proof of the following equality:
\[
((\alpha * \Delta_{\D^\due}^\circledast)\circ (\alpha * F^{\due\times\due})) *l^{\circledast}_{\D^\due}v^{\circledast}=((\alpha * (\Delta_\D)^\due)\circ (F * \alpha^\due))*l^{\circledast}_{\D^\due}v^{\circledast}
\]
To conclude notice that, by \refbf{tfae_for_gamma}.(i), both sides are equal to $\id_{F}$.
\end{proof}

\bibliographystyle{amsalpha}
\bibliography{refs}

\end{document}